\newcommand{\sumsub}[1]{\sum_{\substack{#1}}} 
\newcommand{\onto}{\twoheadrightarrow} 
\newcommand{\newword}[1]{\emph{#1}}
\newcommand{\tleq}{\trianglelefteq\,}
\newcommand{\cleq}{\preccurlyeq}
\newcommand{\ZZ}{\mathbb{Z}}
\newcommand{\NN}{\mathbb{N}}
\newcommand{\XX}{\mathbb{X}}
\newcommand{\YY}{\mathbb{Y}}
\newcommand{\NNinf}{\NN \cup \{\infty\}}
\newcommand{\NNl}{\NN^\lev}
\newcommand{\NNinfl}{(\NNinf)^\lev}
\newcommand{\Q}{\mathbb{Q}}
\newcommand{\PP}{\mathbb{P}} 
\newcommand{\perm}{\mathfrak{S}} 
\newcommand{\vect}[1]{\mathbf{#1}} 
\newcommand{\vectinf}{\boldsymbol{\infty}}
\newcommand{\svect}[1]{\boldsymbol{#1}} 
\DeclareMathOperator{\clr}{\gamma}
\newcommand{\lev}{\mathit{l}} 
\newcommand{\colvec}[1]{\left( \begin{smallmatrix} #1 \end{smallmatrix} \right)}
\newcommand{\aaa}{\mathbf{a}}
\newcommand{\bb}{\mathbf{b}}
\newcommand{\cc}{\mathbf{c}}
\newcommand{\ee}{\mathbf{e}}
\newcommand{\etabasis}{\eta}
\newcommand{\emptycomp}{(\,)}
\DeclareMathOperator{\thetamap}{\Theta}
\DeclareMathOperator{\thetamapk}{\thetamap^{\vect{k}}}
\newcommand{\zetaQ}{\zeta_{\calQ}}  
\newcommand{\zetabarQ}{\bar{\zeta}_{\calQ}}
\newcommand{\zetaQk}{\zeta_{\calQ}^{\vect{k}}} 
\newcommand{\nuQ}{\nu_{\calQ}}
\newcommand{\nubarQ}{\bar{\nu}_{\calQ}}
\newcommand{\nuQk}{\nu_{\calQ}^{\vect{k}}}
\newcommand{\mon}[2]{(#1, #2)}
\newcommand{\ecmon}{\mon{\ee}{\cc}}
\newcommand{\calQ}{\mathcal{Q}}
\DeclareMathOperator{\Comp}{Comp}
\DeclareMathOperator{\OComp}{OComp}   
\DeclareMathOperator{\Des}{Des}   
\DeclareMathOperator{\Peak}{Peak}
\DeclareMathOperator{\dofI}{\mathbf{d}}
\DeclareMathOperator{\cofI}{\mathbf{c}}
\DeclareMathOperator{\pofI}{\mathbf{p}}
\DeclareMathOperator{\wDes}{\mathbf{Des}} 
\DeclareMathOperator{\odd}{odd}
\newcommand{\comp}{\,\vDash\,}  
\newcommand{\rev}[1]{\overleftarrow{#1}}
\DeclareMathOperator{\lst}{last}
\DeclareMathOperator{\std}{\mathrm{st}}
\DeclareMathOperator{\len}{\ell}
\DeclareMathOperator{\mdeg}{\mathbf{deg}}
\DeclareMathOperator{\supp}{supp} 
\DeclareMathOperator{\spp}{\mathit{sp}} 
\DeclareMathOperator{\apode}{\mathsf{s}} 
\DeclareMathOperator{\apodeQ}{\apode_{\mathcal{Q}}}  
\DeclareMathOperator{\rk}{\mathbf{rk}} 
\newcommand{\QSym}{\mathit{QSym}}   
\newcommand{\QSyml}{\QSym^{(\lev)}}  
\newcommand{\QSymll}{\QSym^{[ \lev ] }}
\newcommand{\NSym}{\mathbf{Sym}} 
\newcommand{\NSyml}{\NSym^{(\lev)}} 
\newcommand{\FQSym}{\mathbf{FQSym}}
\newcommand{\FQSyml}{\FQSym^{(\lev)}}
\newcommand{\Sym}{\mathit{Sym}}      
\newcommand{\Syml}{\Sym^{(\lev)}}
\newcommand{\NPl}{\mathbf{P}^{(\lev)}}
\newcommand{\GPosl}{\mathcal{R}^{(\lev)}}
\newcommand{\GPoslk}{\mathcal{R}^{(\lev),\vect{k}}}
\newcommand{\ideal}{\mathcal{I}}
\newcommand{\Ssubalg}{\mathcal{S}}
\newcommand{\abring}{\field\langle \aaa, \bb \rangle}  
\newcommand{\kodd}{\mathcal{O}}    
\newcommand{\keven}{\mathcal{E}}
\newcommand{\hopf}{\mathcal{H}}
\def\field{\mathbb{Q}}
\newcommand{\lyndon}{\mathcal{L}}
\theoremstyle{plain}
\newtheorem{theorem}{Theorem}[section]
\newtheorem{prop}[theorem]{Proposition}
\newtheorem{lemma}[theorem]{Lemma}
\newtheorem{corollary}[theorem]{Corollary}
\theoremstyle{definition}
\newtheorem{definition}[theorem]{Definition}
\newtheorem{example}[theorem]{Example}
\theoremstyle{definition}
\newtheorem{remark}[theorem]{Remark}
\theoremstyle{remark}
\numberwithin{equation}{section}
\renewcommand{\phi}{\varphi}
\title[Multigraded Hopf algebras and odd and even subalgebras]{Multigraded combinatorial Hopf algebras and refinements of odd and even subalgebras}
\author{Samuel K. Hsiao}
\address{Mathematics Program, Bard College, Annandale-on-Hudson, NY 12504}
\email{hsiao@bard.edu}
\author{Gizem Karaali}
\address{Department of Mathematics, Pomona College, Claremont, CA 91711}
\email{gizem.karaali@pomona.edu}
\keywords{Combinatorial Hopf algebra, multigraded Hopf algebra, quasisymmetric function, symmetric function, noncommutative symmetric function, Eulerian poset}
\subjclass[2000]{05E99, 16W30, 06A07, 06A11}
\thanks{This work began while S.K. Hsiao was supported by an NSF Postdoctoral Research Fellowship}
\begin{document}

\maketitle

\begin{abstract}
We develop a theory of multigraded (i.e., $\NNl$-graded) combinatorial Hopf algebras modeled on the theory of graded combinatorial Hopf algebras developed by Aguiar, Bergeron, and Sottile [Compos.\ Math.\ 142 (2006), 1--30]. In particular we introduce the notion of canonical $\vect{k}$-odd and $\vect{k}$-even subalgebras associated with any multigraded combinatorial Hopf algebra, extending simultaneously the work of Aguiar et al.\ and Ehrenborg. Among our results are specific categorical results for higher level quasisymmetric functions, several basis change formulas, and a generalization of the descents-to-peaks map. 
\end{abstract}

{\fontsize{12pt}{12pt}\selectfont

\parskip=1pt

\setcounter{tocdepth}{2}
\tableofcontents

}

\section{Introduction}\label{S:intro}

Quasisymmetric functions appear throughout algebraic combinatorics, in contexts that often seem unrelated. Within the framework of combinatorial Hopf algebras developed in \cite{ABS06}, this can be explained in category-theoretic terms: The combinatorial Hopf algebra $\QSym$ of quasisymmetric functions is a terminal object of the category of combinatorial Hopf algebras. In this framework, a special role is claimed by the odd subalgebra of  $\QSym$, the algebra of peak functions, as the terminal object of the category of odd combinatorial Hopf algebras. 

In this paper, we work with a natural generalization of $\QSym$. Specifically we focus on $\QSyml$, the multigraded (i.e., $\NNl$-graded) algebra of quasisymmetric functions of level $\lev$, and show that in the corresponding category of multigraded combinatorial Hopf algebras, an analogous universal property still holds (Theorem \ref{T:QSymlUniversal}). Along the way we obtain several more general results about the objects of this category, which are natural multigraded analogues of results in \cite{ABS06}. 

Another goal of ours is to obtain a refinement of the notions of odd and even subalgebras of \cite{ABS06} by developing ``$\vect{k}$-analogues" of the relevant constructions. We attain this goal in Sections \ref{S:defkoddkeven} and \ref{S:koddkevenQSym}. There we show that given a multigraded Hopf algebra $\hopf = \bigoplus_{\vect{n} \in \NNl} \hopf_{\vect{n}}$ and a character on $\hopf$, for every $\vect{k} \in \NNinfl$ we have two canonically defined Hopf subalgebras $\kodd^{\vect{k}}(\hopf)$ and $\keven^{\vect{k}}(\hopf)$. As expected our $\vect{k}$-analogues generalize the original notions: When $\lev = 1$ and $\vect{k} = \infty$, $\kodd^{\vect{k}}(\hopf)$ and $\keven^{\vect{k}}(\hopf)$ are precisely the odd and even subalgebras of \cite{ABS06}. Moreover in the $\lev = 1$ case our construction refines what is known about $\QSym$ by providing a sequence of Hopf subalgebras 
\[ 
\QSym = \kodd^0(\QSym) \supsetneq \kodd^2(\QSym) \supsetneq \kodd^4(\QSym) \supsetneq \cdots \supsetneq \kodd^\infty(\QSym) 
\]
with certain universal properties. This sequence includes the algebra of Billey-Haiman shifted quasisymmetric functions (our $\kodd^2(\QSym)$) and Stembridge's peak algebra (our $\kodd^\infty(\QSym)$). When $\lev > 1$ and $\vect{k} = (\infty, \ldots, \infty)$, our $\kodd^{\vect{k}}(\QSyml)$ may be regarded as a higher level analogue of the peak algebra.

This work stems from several related but distinct threads of earlier research. In the following we summarize a few key points related to quasisymmetric functions, Eulerian posets, and colored and multigraded Hopf algebras. For more details we refer the reader to our bibliography. 

Quasisymmetric functions were first defined explicitly by Gessel \cite{Gessel83}, who introduced them as generating functions for weights of $P$-partitions and gave applications to permutation enumeration. The Hopf algebra structure on $\QSym$ was studied in detail by Malvenuto and Reutenauer \cite{MR95}. The Hopf algebraic approach allows us to interpret a multitude of constructions related to quasisymmetric functions in one uniform manner. A particularly relevant construction in this flavor is Ehrenborg's $F$-homomorphism, which associates each graded poset $P$ with a quasisymmetric function $F(P)$ that encodes the flag $f$-vector of $P$. When $P$ is restricted to the class of Eulerian posets, the $F(P)$ span a Hopf subalgebra of $\QSym$. Viewed within the combinatorial Hopf algebra framework of \cite{ABS06}, this Hopf subalgebra is precisely the odd subalgebra of $\QSym$, which also happens to be Stembridge's peak algebra \cite{Stembridge97}. We study multigraded analogues of these ideas in this paper. For example we define a Hopf algebra of multigraded posets, and then consider a generalization of Ehrenborg's $F$-homomorphism from this Hopf algebra to $\QSyml$ (see Example~\ref{Ex:GradedPosets}). As one would expect, the image of an Eulerian multigraded poset under this homomorphism always lies in the odd subalgebra $\kodd^{(\infty, \ldots, \infty)}(\QSyml)$ (cf.\ Example~\ref{Ex:EulerianPoset}). This in turn implies that the natural level $\lev$ versions of the generalized Dehn-Sommerville relations hold for multigraded Eulerian posets (see Example~\ref{Ex:GeneralizedDH} and Remark~\ref{Re:GeneralizedDH}).

We actually develop the multigraded version of the story relating graded posets and Hopf algebras even further. In a somewhat different context,  Ehrenborg \cite{Ehrenborg01} introduced a refinement of the notion of an Eulerian poset, called a $k$-Eulerian poset ($k\in \NN \cup \{ \infty \}$), and proposed that one could define canonical algebras corresponding to $k$-Eulerian posets in a way that would generalize the notion of an odd subalgebra\footnote{Actually Ehrenborg's work takes place in the setting of Newtonian coalgebras, or infinitesimal Hopf algebras \cite{Aguiar02}, where instead of ``odd subalgebra" one has the analogous notion of ``Eulerian subalgebra," but it is easy to translate his idea into the language of combinatorial Hopf algebras.}. We take this idea one step further by providing multigraded versions of the constructions suggested by Ehrenborg. In particular our algebras $\kodd^{\vect{k}}(\hopf)$ are in some sense multigraded generalizations of the $\mathcal{E}_k(A)$ mentioned in \cite[\S5]{Ehrenborg01}. 

The level $\lev$ quasisymmetric functions $\QSyml$ considered in this paper were introduced by Poirier \cite{Poirier98} to handle enumeration problems involving colored permutations (i.e., elements of wreath products $\ZZ_\lev\wr \perm_n$). The term ``level $\lev$" was coined by Novelli and Thibon \cite{NT04} and refers to the larger of the two algebras appearing in Poirier's work\footnote{The reader may like to refer to \cite[\S6.2]{BauHoh08} for an analysis of the two different colored generalizations of quasisymmetric functions in \cite{Poirier98}}. The terminology and basic Hopf algebraic properties of $\QSyml$ that we build on, as well as two other examples of multigraded Hopf algebras that we discuss--the higher level noncommutative symmetric functions and free quasisymmetric functions--are due to Novelli and Thibon \cite{NT04, NT08}. Our paper adds the category-theoretic perspective and gives concrete results about new combinatorially interesting bases, subalgebras, and maps within $\QSyml$, as well as maps from other Hopf algebras to $\QSyml$. 

We should mention that the smaller of Poirier's two algebras, usually referred to as the algebra of {\it colored quasisymmetric functions}, denoted here by $\QSymll$, has been studied by many authors. Baumann and Hohlweg \cite{BauHoh08} proved that $\QSymll$ is a Hopf algebra and related it to the larger Hopf algebra $\QSyml$. Their work places $\QSymll$ within a general descent theory for wreath products and reveals the functorial nature of many related colored constructions, notably the colored descent algebras of Mantaci and Reutenauer \cite{ManReu95}. In subsequent work, Bergeron and Hohlweg \cite{BerHoh06} continued to explain and unify various colored constructions, and they proposed a theory of colored combinatorial Hopf algebras analogous to the theory developed in \cite{ABS06}; this theory is developed further in \cite{HP09}. Some connections between our work and the Bergeron-Hohlweg theory are discussed briefly in \S\S\ref{SS:EarlyColoredHAs}. One key point that distinguishes our viewpoint is our emphasis on the $\NNl$-graded structure. It seems, however, that many of the $\vect{k}$-refinements that we consider here should translate into analogous constructions within the framework suggested by Bergeron and Hohlweg.

One final topic that we consider in this paper is the notion of a $\vect{k}$-analogue of the classic descents-to-peaks map on quasisymmetric functions, which shows up naturally in several different settings. When viewing $\QSym$ and the peak algebra as arising from ordinary and enriched $P$-partitions, the descents-to-peaks map is Stembridge's $\theta$-map \cite{Stembridge97}, which sends the $P$-partition weight enumerator of a labeled poset to the enriched $P$-partition weight enumerator of the same poset. In the setting of noncommutative symmetric functions, the dual of the descents-to-peaks map appears in the work of Krob, Leclerc and Thibon \cite{KLT97} as the specialization at $q=-1$ of the $A\to (1-q)A$ transform. Yet another interpretation of this map involving flag enumeration in oriented matroids was given in \cite{BHW03}. We do not attempt to develop level $\lev$ versions or $\vect{k}$-analogues of these general frameworks which give rise to the descents-to-peaks map (although see \cite{HP09} for a related story on colored $P$-partitions). Instead we employ the character-theoretic approach of \cite{ABS06} to define the appropriate $\vect{k}$-analogues $\thetamap^{\vect{k}}:\QSyml \to \QSyml$ and give explicit formulas to help compute these maps.

The rest of this paper is organized as follows. In Section \ref{S:background} we provide the necessary background on $\lev$-partite numbers and vector compositions, and introduce several examples of multigraded Hopf algebras. In Section \ref{S:TheoryMCHA} we study the category of multigraded combinatorial Hopf algebras and formulate the universal property of $\QSyml$. We also describe how to relate our constructions to earlier work. Section \ref{S:defkoddkeven} contains our basic results on $\vect{k}$-odd and $\vect{k}$-even Hopf subalgebras. In particular we show that a substantial part of the standard theory of combinatorial Hopf algebras developed in \cite{ABS06} goes through for these $\vect{k}$-analogues. Section~\ref{S:koddkevenQSym} contains more explicit descriptions of the $\vect{k}$-odd and $\vect{k}$-even Hopf subalgebras of $\QSyml$. We focus mostly on the $\vect{k}$-odd algebras, for which we describe various bases and compute Hilbert series. In Section \ref{S:ktheta} we define the $\vect{k}$-analogue of the descents-to-peaks map and introduce $\vect{k}$-analogues of the basis of peak functions.

We thank Marcelo Aguiar, Nantel Bergeron, Christophe Hohlweg, Jean-Yves Thibon and the two anonymous referees for helpful comments.


\section{Background and examples of multigraded Hopf algebras}
\label{S:background}


Throughout this paper, let $\NN=\{\, 0,1,2,\dots \, \}$ denote the set of nonnegative integers and $\PP=\{1,2,\ldots\}$ denote the set of positive integers. If $m, n\in \NN$ then  $[n]=\{1,2,\ldots,n\},$ and $[n,m]=\{n,n+1,n+2,\ldots,m\}$. In particular, $[0]=\emptyset$ and $[n,m]=\emptyset$ if $n>m.$
In this paper, we use the term \newword{multigraded} to mean \newword{$\NNl$-graded}, where $\lev$ is a fixed positive integer. We call the elements of $\NNl$ \newword{$\lev$-partite number}s and treat them as column vectors. Thus $\vect{0}\in \NNl$ denotes the zero vector, and for each $i\in [0,\lev-1]$, $\vect{e}_i \in \NNl$ denotes the coordinate vector with a $1$ in position $i$ and $0$ everywhere else. When needed, we add vectors componentwise, and we sometimes use a partial order on $\NNl$ given by $\vect{i} \le \vect{j}$ if each coordinate of $\vect{i}$ is less than or equal to the corresponding coordinate of $\vect{j}$. The $\lev$-partite number
\[
\vect{n} = \begin{pmatrix} n_0 \\ \vdots \\ n_{l-1}\end{pmatrix} \in \NN^{\lev}
\] 
is said to have \newword{weight} $|\vect{n}| = n_1 + \cdots + n_{\lev}$ and  \newword{support} $\supp(\vect{n}) = \{ i \in [0, \lev-1] \mid n_i \ne 0 \}$. 


This section provides the necessary background on $\lev$-partite numbers and vector compositions, and introduces several examples of multigraded Hopf algebras that appear throughout the paper. These algebras are
\begin{center}
\begin{tabular}{cl}
$\GPosl$ & multigraded posets \\
$\GPoslk$ & multigraded $\vect{k}$-Eulerian posets \\
$\NPl$ &  colored posets \\
$\FQSyml$ & free quasisymmetric functions of level $\lev$ \\
$\NSyml$ &  noncommutative symmetric functions of level $\lev$ \\
$\QSyml$ & quasisymmetric functions of level $\lev$ \\
$\Syml$ & MacMahon's multi-symmetric functions
\end{tabular}
\end{center}
Generally speaking, the Hopf algebras that we work with are $\NNl$-graded bialgebras $\hopf = \bigoplus_{\vect{n} \in \NNl} \hopf_{\vect{n}}$ that are connected, meaning $\hopf_{\vect{0}}$ is the one-dimensional vector space spanned by the unit element; the counit is always defined to be the projection onto $\hopf_{\vect{0}}$. Such a bialgebra always has a recursively defined antipode (see, e.g., \cite[Lemma~2.1]{Ehrenborg96}), so it automatically becomes a Hopf algebra.

Our discussion of vector compositions and the Hopf algebras $\QSyml$, $\NSyml$, and $\FQSyml$ follows Novelli and Thibon \cite{NT08}, where most of the relevant definitions and results can be found. 

\subsection{The Hopf algebra $\GPosl$}
\label{SS:GPosl}

A finite poset $P$ is said to be \newword{graded} if it has a unique minimum element  $\hat{0}$ and a unique maximum element $\hat{1}$, and it is equipped with a rank function $\mathrm{rk}:P \to \NN$ with the property that if $y$ covers $x$ in $P$ then $\mathrm{rk}(y) = \mathrm{rk}(x) + 1$.

\begin{definition}
A \newword{multigraded poset} is a (finite) graded poset $P$ together with a function $\rk_P:P \to \NNl$ such that $\rk_P(\hat{0}) = \vect{0}$ and if $y$ covers $x$ in $P$ then $\rk_P(y) = \rk_P(x) + \vect{e}_i$ for some coordinate vector $\vect{e}_i \in \NNl$. We call $\rk_P$ the \newword{multirank function} of $P$ and say that the \newword{multirank of $P$} is $\rk_P(\hat{1})$. 
\end{definition}

Each interval $[x,y] = \{ z \in P \mid x \le z \le y\}$ in a multigraded poset $P$ is again a multigraded poset with multirank function $\rk_{[x,y]}(z) = \rk_P(z) - \rk_P(x)$. Two multigraded posets $P$ and $Q$ are said to be isomorphic if there is an isomorphism of posets $\phi:P\to Q$ such that $\rk_Q \circ \phi =  \rk_P$. 

Let $\GPosl$ be the vector space with basis the set of isomorphism classes of multigraded posets. We get a multigrading $\GPosl = \bigoplus_{\vect{n} \in \NNl} \GPosl_{\vect{n}}$ by taking $\GPosl_{\vect{n}}$ to be the linear span of all (isomorphism classes of) posets with multirank $\vect{n}$. 

Multiplication in $\GPosl$ is defined to be the usual Cartesian product $P \times Q$ with $\rk_{P\times Q}(x,y) = \rk_P(x) + \rk_Q(y)$ for $(x,y) \in P\times Q$. The coproduct is defined by 
\[
\Delta(P) = \sum_{\hat{0} \le x \le \hat{1}} [\hat{0}, x] \otimes [x, \hat{1}].
\]
The unit is the one-element poset. When $\lev = 1$, the Hopf algebra $\GPosl$ specializes to Rota's Hopf algebra (see, e.g., \cite[Example~2.2]{ABS06}).

\subsection{The Hopf subalgebra $\GPoslk \subseteq \GPosl$}
\label{SS:GPoslk}

Recall that a graded poset $P$ is said to be \newword{Eulerian} if whenever $x\le_P y$, the M\"obius function satisfies $\mu([x,y]) = (-1)^{\mathrm{rk}(y) - \mathrm{rk}(x)}$.  Generalizing \cite[Definition~4.1]{Ehrenborg01}, we make the following definition.

\begin{definition}
Let $P$ be a multigraded poset and let $\vect{k} \in \NNinfl$. We say that $P$ is \newword{$\vect{k}$-Eulerian} if every interval of rank $\vect{n} \le \vect{k}$ is \newword{Eulerian}. 
\end{definition}

For example, the multigraded poset
\begin{center}
\setlength{\unitlength}{1pt}
\begin{picture}(60,60)
\put(20,0){\circle*{4}} \put(20,0){\line(-1,1){20}} \put(20,0){\line(1,1){20}} 
\put(0,20){\circle*{4}} \put(0,20){\line(1,1){20}}
\put(40,20){\circle*{4}} \put(40,20){\line(-1,1){20}} \put(40,20){\line(1,1){20}}
\put(20,40){\circle*{4}} \put(20,40){\line(1,1){20}}
\put(60,40){\circle*{4}} \put(60,40){\line(-1,1){20}}
\put(40,60){\circle*{4}} 
\put(20,-10){$\scriptstyle{(0,0)}$} \put(5,18){$\scriptstyle{(1,0)}$}  
\put(45,18){$\scriptstyle{(0,1)}$} \put(25,38){$\scriptstyle{(1,1)}$}
\put(65,38){$\scriptstyle{(0,2)}$}
\put(45,58){$\scriptstyle{(1,2)}$}
\end{picture}
\end{center}
is $(1,1)$-Eulerian but not $(0,2)$-Eulerian. 

Let $\GPoslk$ be the subspace of $\GPosl$ spanned by all $\vect{k}$-Eulerian posets. Using the fact that the M\"obius function is multiplicative, i.e., $\mu(P\times Q) = \mu(P) \cdot \mu(Q)$, it is easy to show that the Cartesian product of two $\vect{k}$-Eulerian posets is a $\vect{k}$-Eulerian poset. It is also clear that every interval in a $\vect{k}$-Eulerian poset is again $\vect{k}$-Eulerian. Therefore $\GPoslk$ is a Hopf subalgebra of $\GPosl$.

\subsection{The Hopf algebra $\NPl$}
\label{SS:ColoredPosets}

Let $\vect{A} =  \PP \times [0, \lev-1]$ be the set of $\lev$-colored positive integers. If $x = (i, j)\in \vect{A}$ then we call $|x|= i$ the \newword{absolute value} of $x$ and $\clr(x)=j$  the \newword{color} of $x$. 

\begin{definition}
A finite subset $P\subseteq \vect{A}$ together with a partial order $<_P$ will be called a \newword{colored poset} if $\{ |x| \mid x\in P\} = \{1, 2, \ldots, |P|\}$.
\end{definition}

Let $\NPl$ denote the vector space of formal $\field$-linear combinations of colored posets. If we define the multidegree of a colored poset $P$ to be $(n_0, \ldots, n_{\lev-1})^T\in \NNl$, where $n_i$ is the number of occurrences of color $i$ in the multiset $\{ \clr(x) \mid x\in P\}$, and denote by $\NPl_{\vect{n}}$ the subspace of colored posets of multidegree $\vect{n}$, then $\NPl$ becomes a multigraded vector space $\NPl = \bigoplus_{\vect{n} \in \NNl} \NPl_{\vect{n}}$. 
 
Given two colored posets $P$ and $Q$, define a new colored poset $P \sqcup Q$ to be the disjoint union of $P$ and $Q$ with the absolute values of $Q$ shifted up by $n = |P|$. Thus as a set $P \, \sqcup \, Q = \{ (i, j) \in \vect{A} \mid (i,j) \in P \text{ or } (i-n, j) \in Q\}$, and the new partial order satisfies $(i, j) <_{P\sqcup Q} (i', j')$ if and only if $(i, j) <_P (i', j')$, or $(i-n, j) <_Q (i'-n, j')$. For example,
\begin{center}
\setlength{\unitlength}{1pt}
\begin{picture}(30,40)(0,-10)
 \put(0,0){\line(0,1){20}} \put(0,20){\circle*{4}} \put(0,0){\circle*{4}}
 \put(5,-10){$\scriptstyle (1,0)$} \put(5,20){$\scriptstyle (2,2)$} 
\end{picture}
\begin{picture}(40,40)
\put(10,15){$\sqcup$}
\end{picture}
\begin{picture}(50,40)(0,-10)
\put(0,0){\line(1,1){20}} \put(20,20){\line(1,-1){20}}
\put(0,0){\circle*{4}} \put(20,20){\circle*{4}} \put(40,0){\circle*{4}}
\put(0,-12){$\scriptstyle (1,2)$} \put(25,20){$\scriptstyle (2,0)$}  \put(40,-12){$\scriptstyle (3,0)$}
\end{picture}
\begin{picture}(40,40)
\put(20,15){$=$}
\end{picture}
\begin{picture}(40,40)(-10,-10)
 \put(0,0){\line(0,1){20}} \put(0,20){\circle*{4}} \put(0,0){\circle*{4}}
 \put(0,-12){$\scriptstyle (1,0)$} \put(5,20){$\scriptstyle (2,2)$} 
\end{picture}
\begin{picture}(50,40)(0,-10)
\put(0,0){\line(1,1){20}}  \put(20,20){\line(1,-1){20}}
\put(0,0){\circle*{4}} \put(20,20){\circle*{4}}  \put(40,0){\circle*{4}}
\put(0,-12){$\scriptstyle (3,2)$} \put(25,20){$\scriptstyle (4,0)$}  \put(40,-12){$\scriptstyle (5,0)$}
\end{picture}
\end{center}
With this product $\NPl$ becomes a noncommutative multigraded algebra whose unit element is the empty colored poset. When $\lev = 1$ we recover the construction discussed in \cite[\S3.8]{DHT02}.

To define the coproduct, first recall that a \newword{(lower) order ideal} of a poset $P$ is a subset $I\subseteq P$ such that if $x\in I$ and $y <_P x$ then  $y\in I$. Every order ideal $I$ and its complement $P-I$ can be thought of as a subposet of $P$. The set of order ideals of $P$ is denoted by $\mathcal{I}(P)$. Now suppose that $P$ is a colored poset and $Q = \{ (i_1, j_1) \ldots (i_m, j_m)\}$ is any subposet (not necessarily a colored poset). The \newword{standardization} of $Q$ is the colored poset $\std(Q)$ obtained by relabeling $(i_r, j_r)$ by $(i'_r, j_r)$ for each $(i_r, j_r) \in Q$, where $i'_1 i'_2 \ldots i'_m$ is the standardization of $i_1 i_2 \ldots i_m$. In other words $i'_1 i'_2 \ldots i'_m$ is the unique permutation of $[m]$ such that $i'_r < i'_s$ if and only if $i_r < i_s$ for every $r, s \in [m]$. Now we can define the coproduct of a colored poset $P$ by
\[ 
\Delta(P) = \sum_{I \in \mathcal{I}(P)} \std(I) \otimes \std (P-I).
\]
For example, 
\[
\begin{aligned}
\Delta\left(
\begin{picture}(45,20)(0,0)
\put(0,0){\line(1,1){15}} \put(15,15){\line(1,-1){15}}
\put(0,0){\circle*{4}} \put(15,15){\circle*{4}} \put(30,0){\circle*{4}}
\put(0,-10){$\scriptstyle{(1,2)}$} \put(18,15){$\scriptstyle{(2,0)}$}  \put(30,-10){$\scriptstyle{(3,0)}$}
\end{picture}
\right)
=
\emptyset \otimes 
\begin{picture}(50,20)(0,0)
\put(0,0){\line(1,1){15}} \put(15,15){\line(1,-1){15}}
\put(0,0){\circle*{4}} \put(15,15){\circle*{4}} \put(30,0){\circle*{4}}
\put(0,-10){$\scriptstyle{(1,2)}$} \put(18,15){$\scriptstyle{(2,0)}$}  \put(30,-10){$\scriptstyle{(3,0)}$}
\end{picture}
+ &
\begin{picture}(20,20)(0,0) 
\put(10,0){\circle*{4}} \put(5,-10){$\scriptstyle{(1,2)}$}
\end{picture}
\otimes
\begin{picture}(30,20)(0,0) 
\put(10,0){\circle*{4}} \put(10,15){\circle*{4}} \put(10,0){\line(0,1){15}}
\put(12,-10){$\scriptstyle{(2,0)}$} \put(12,15){$\scriptstyle{(1,0)}$} 
\end{picture}
+
\begin{picture}(20,20)(0,0) 
\put(10,0){\circle*{4}} \put(5,-10){$\scriptstyle{(1,0)}$}
\end{picture}
\otimes
\begin{picture}(30,20)(0,0) 
\put(10,0){\circle*{4}} \put(10,15){\circle*{4}} \put(10,0){\line(0,1){15}}
\put(12,-10){$\scriptstyle{(1,2)}$} \put(12,15){$\scriptstyle{(2,0)}$} 
\end{picture} \\
& +
\begin{picture}(40,20)(-10,0) 
\put(0,0){\circle*{4}} \put(20,0){\circle*{4}} 
\put(-5,-10){$\scriptstyle{(1,2)}$} \put(15,-10){$\scriptstyle{(2,0)}$}
\end{picture}
\otimes
\begin{picture}(20,20)(0,0) 
\put(10,0){\circle*{4}} \put(5,-10){$\scriptstyle{(1,0)}$}
\end{picture}
+
\begin{picture}(50,20)(-10,0)
\put(0,0){\line(1,1){15}} \put(15,15){\line(1,-1){15}}
\put(0,0){\circle*{4}} \put(15,15){\circle*{4}} \put(30,0){\circle*{4}}
\put(0,-10){$\scriptstyle{(1,2)}$} \put(18,15){$\scriptstyle{(2,0)}$}  \put(30,-10){$\scriptstyle{(3,0)}$}
\end{picture}
\otimes
\emptyset.
\end{aligned}
\]
With this coproduct $\NPl$ becomes a multigraded bialgebra.

Note that our definitions here differ slightly from the ones in \cite{HP09}. However it is still possible to see the connections. For instance the commutative Hopf algebra of colored posets $\mathcal{P}^{(\lev)}$ studied in \cite{HP09} can be thought of as $\NPl$ modulo isomorphism of colored posets.

\subsection{Vector compositions} 
\label{SS:lpartite}

We now give some definitions related to $\lev$-partite numbers and vector compositions which will be used in the remaining examples.

If $\vect{i}_1, \ldots, \vect{i}_m$ are $\lev$-partite numbers of nonzero weight, then the $\lev \times m$ matrix $\vect{I} = (\vect{i}_1, \ldots, \vect{i}_m)$ is called a \newword{vector composition of $\vect{i}_1 + \cdots + \vect{i}_m$} of \newword{length} $\len(\vect{I}) = m$ and \newword{weight} $|\vect{I}| = |\vect{i}_1| + \cdots + |\vect{i}_\lev|.$ The empty composition $\emptycomp$ is said to be the unique vector composition of $\vect{0}$. Let $\Comp(\vect{n})$ denote the set of vector compositions of $\vect{n}$. We often write $\vect{I} \comp \vect{n}$ to mean $\vect{I}\in \Comp(\vect{n})$. Conversely we will write $\Sigma\vect{I}$ to denote the $\lev$-partite number $\vect{n}$ obtained by summing the columns of $\vect{I}$. An example of a vector composition of a $4$-partite number is
\begin{equation} \label{E:lcomp-example}
\vect{I} = \left(\begin{array}{ccccc}1 & 2 & 0 & 0 & 0 \\0 & 0 & 0 & 1 & 1 \\ 0 & 0 & 0 & 0 & 0 \\1 & 0 & 3 & 0 & 4 \end{array}\right) \comp \left(\begin{array}{c}3 \\2 \\ 0 \\8\end{array}\right). 
\end{equation}
This vector composition has length $\len(\vect{I}) = 5$, weight $|\vect{I}| = 13$ and $\Sigma\vect{I}=(3,2,0, 8)^T$.

Next we consider three different partial orders on $\Comp(\vect{n})$. Let $\vect{I} = (\vect{i}_1, \ldots, \vect{i}_m) \comp \vect{n}$ and $\vect{J} \comp \vect{n}$. Write 
\[
\vect{I} \tleq \vect{J}
\] 
if $\vect{J}$ is a \newword{refinement} of $\vect{I}$, meaning there are vector compositions $\vect{J}_1,\ldots, \vect{J}_m$ whose concatenation is $\vect{J} = \vect{J}_1 \cdots \vect{J}_m$ and such that $\vect{J}_k \comp \vect{i}_k$ for each $k$. If additionally, every $\vect{J}_k$ has the property that every element in the support of a column is less than or equal to every element in the support of every column further to the right, then we write
\[
\vect{I} \cleq \vect{J}.
\] 
If instead we require elements in the support of each column to be {\em strictly less than} elements in the support of columns to the right, then we write
\[ 
\vect{I} \cleq_s \vect{J}.
\]
By construction, $\vect{I} \cleq_s \vect{J} \implies \vect{I}\cleq \vect{J} \implies \vect{I}\tleq \vect{J}$. For example, inside $\Comp( \left( \begin{smallmatrix}
3 \\ 5 \\ 7 
\end{smallmatrix} \right) )$ we have
\[
\begin{array}{cccc}
\vect{I} &  & \vect{J}_1 & \vect{J}_2  \\
\left(
\begin{array}{cc}
2 & 1 \\
0 & 5 \\
3 & 4
\end{array}
\right)
&
\begin{array}{c}
\tleq \\
\not\cleq
\end{array}
&
\left( 
 \begin{array}{cc}
 1 & 1 \\
 0 & 0 \\
 3 & 0 
 \end{array}
\right. 
&
\left.
\begin{array}{ccc}
1 & 0 & 0  \\
0 & 2 & 3  \\
0 & 0 & 4 
\end{array}
\right)
\end{array}
\quad
\quad
\begin{array}{cccccc}
\vect{I} &  & \vect{J}_1 & \vect{J}_2 & & \\
\left(
\begin{array}{cc}
2 & 1 \\
0 & 5 \\
3 & 4
\end{array}
\right)
&
\begin{array}{c}
\cleq \\
\not\cleq_s
\end{array}
&
\left( 
 \begin{array}{cc}
 1 & 1 \\
 0 & 0 \\
 0 & 3 
 \end{array}
\right. 
&
\left.
\begin{array}{ccc}
1 & 0 & 0 \\
0 & 2 & 3 \\
0 & 0 & 4
\end{array}
\right)
\end{array}
\]
\[
\begin{array}{cccccc}
\vect{I} &  & \vect{J}_1 & \vect{J}_2 & & \\
\left(
\begin{array}{cc}
2 & 1 \\
0 & 5 \\
3 & 4
\end{array}
\right)
&
\cleq_s
&
\left( 
 \begin{array}{cc}
 2 & 0 \\
 0 & 0 \\
 0 & 3 
 \end{array}
\right. 
&
\left.
\begin{array}{cc}
1 & 0  \\
0 & 5 \\
0 & 4 
\end{array}
\right)
\end{array}
\]
Notice that if $\vect{I} \tleq \vect{J}$ and if $\vect{K}$ is obtained by adding together any two adjacent columns of some $\vect{J}_i$, then clearly $\vect{I} \tleq \vect{K} \tleq \vect{J}$. The same is true of $\cleq$ and $\cleq_s$. Therefore, each interval of the poset $\Comp(\vect{n})$ relative to any of the partial orders $\tleq$, $\cleq$, and $\cleq_s$, is isomorphic to an interval in a poset of (ordinary) compositions ordered by refinement; the isomorphism takes a vector composition $(\vect{i}_1, \ldots, \vect{i}_m)$ to the sequence of weights $(|\vect{i}_1|,\ldots,|\vect{i}_m|)$.  It is well-known that the poset of compositions under refinement is a boolean lattice. Thus the M\"obius function of every interval $[\vect{I},\vect{J}]$ in $\Comp(\vect{n})$ with respect to any one of the three partial orders is given by $\mu([\vect{I}, \vect{J}]) = (-1)^{\len(\vect{I}) - \len(\vect{J})}$.

Novelli and Thibon \cite{NT04} introduced a way to encode a vector composition $\vect{I} = (\vect{i}_1, \ldots, \vect{i}_m)$ via two associated statistics, namely a set $\dofI(\vect{I})\subseteq [\, |\vect{I}| - 1\,]$ and a word $\cofI(\vect{I})$, where
\[
 \dofI(\vect{I}) = \{ |\vect{i}_1|, |\vect{i}_1| + |\vect{i}_2|, \ldots, |\vect{i}_1| + \cdots + |\vect{i}_{m-1}| \},
\]
and $\cofI(\vect{I})$ is formed by recording $\vect{I}_{i,j}$ copies of the color $i$ as one reads the entries of the matrix $\vect{I}$ sequentially down every column, starting from the left-most column. We will refer to $\cofI(\vect{I})$ as the \newword{coloring word} of $\vect{I}$. For example, with $\vect{I}$ as in \eqref{E:lcomp-example}, we have $\dofI(\vect{I}) = \{ 2, 4, 7, 8\}$ and $\cofI(\vect{I}) = 0300333113333$. 

If $w = w_1 \cdots w_n$ is a word in an ordered alphabet then the \newword{descent set of $w$} is
\[
\Des(w) = \{ i\in [n-1] \mid w_i > w_{i+1}\}.
\]
It is always true that $\Des(\cofI(\vect{I})) \subseteq \dofI(\vect{I})$. Conversely, given a positive integer $n$, a subset $S \subseteq [n-1]$, and a color word $w$ of length $n$ (i.e., a word $w = w_1 w_2 \cdots w_{n}$ in the alphabet $[0,\lev-1]$) such that $\Des(w) \subseteq S$, there is a unique vector composition $\vect{I}$ such that $\dofI(\vect{I}) = S$ and $\cofI(\vect{I}) = w$. Note that
\[
\vect{I} \cleq \vect{J} \quad \text{ if and only if } \quad \cofI(\vect{I}) = \cofI(\vect{J}) \text{ and } 
\dofI(\vect{I}) \subseteq \dofI(\vect{J}).
\]

In the following, most notably in \S\S\ref{SS:Syml}, besides the notion of vector compositions, we will need the analogous notion of a {\em vector partition}. For us a vector partition will be a multiset $\svect{\lambda}$ of $\lev$-partite numbers $\svect{\lambda} = \{\svect{\lambda}_1, \ldots, \svect{\lambda}_m\}$. Note for example that the multiset $\mathrm{cols}(\vect{I})$ of the columns of a given vector composition $\vect{I}$ is in fact a vector partition. 

We conclude this subsection with two more definitions. If $u=u_1 u_2 \ldots u_n \in [0, \lev-1]^n$ is a color word and $n_i$ is the number of occurrences of color $i$ for each $i \in [0,\lev-1]$, then we define the \newword{multidegree} of $u$ to be 
\begin{equation}\label{E:mdeg}
\mdeg(u) = \begin{pmatrix} n_0 \\ \vdots \\ n_{\lev-1} \end{pmatrix}.
\end{equation}
We also associate a vector composition $\vect{E}_u$ to each such color word $u$:
\[ \vect{E}_u = (\vect{e}_{u_1}, \cdots, \vect{e}_{u_n}).\]
A vector composition that is either empty or whose columns consist of coordinate vectors (i.e., standard basis vectors) will be called a \newword{coordinate vector composition}.

\subsection{The Hopf algebra $\FQSyml$} 
\label{SS:FQSyml}

This algebra is due to Novelli and Thibon \cite{NT04, NT08}. It can be thought of as a level $\lev$ generalization of the Malvenuto-Reutenauer Hopf algebra of permutations \cite{MR95}.
 
Let $\vect{A} = \PP \times [0, \lev-1]$ be the alphabet of colored positive integers as in \S\ref{SS:ColoredPosets}. A typical word in this alphabet will be written as $(J, u)$ where $J = j_1 \ldots j_n$ is a word of positive integers and $u = u_1 \ldots u_n$ is a word of colors. For each $n\ge 0$ we will call elements of the set $\perm_n \times [0, \lev-1]^n$ \newword{colored permutations of $n$}. The empty colored permutation of $0$ is $(\emptyset, \emptyset)$. 

If $(\sigma, u)$ is a colored permutation of $n$, then we define the noncommutative power series $\vect{F}_{\sigma, u} \in \field\langle \vect{A} \rangle$ by
\begin{equation} \label{E:FFdef}
\vect{F}_{\sigma, u} = 
\sumsub{j_1 \le j_2 \le \cdots \le j_n \\ r \in \Des(\sigma) \implies j_r < j_{r+1}}
(j_{\sigma^{-1}(1)} \, j_{\sigma^{-1}(2)} \ldots j_{\sigma^{-1}(n)}, \;
u_{\sigma^{-1}(1)}  \,  u_{\sigma^{-1}(2)} \ldots u_{\sigma^{-1}(n)}),
\end{equation}
with the convention $\vect{F}_{\emptyset, \emptyset} = 1$. For example, $\Des(251463) = \{2, 5\}$ and
\begin{equation*} 
\vect{F}_{251463, 120101} = \sum_{j_1 \le j_2 < j_3 \le j_4 \le j_5 < j_6} 
(j_3 \, j_1 \, j_6 \, j_4 \, j_2 \, j_5, 011120).
\end{equation*}
This definition is equivalent to the one given in \cite{NT08}, though here it is stated in a different way to emphasize the analogy with the upcoming defining equation~\eqref{E:Fdef} for the quasi-ribbon functions $F_{\vect{I}}$ (cf.\ \cite[Equation~(1.9)]{AS05}). 

Let $\FQSyml$ denote the vector space spanned by the $\vect{F}_{\sigma, u}$. We get a multigrading $\FQSyml = \bigoplus_{\vect{n} \in \NNl} \FQSyml_{\vect{n}}$ by defining $\FQSyml_{\vect{n}}$ to be the span of all $\vect{F}_{\sigma, u}$ with $\mdeg(u) = \vect{n}$ (in the sense of \eqref{E:mdeg}).  Novelli and Thibon proved that $\FQSyml$ is a subalgebra of $\field\langle \vect{A} \rangle$, and moreover that the product of two basis elements $\vect{F}_{\sigma, u} \, \vect{F}_{\tau, v}$ is given by \newword{shifted shuffling}; i.e., to get the result, we simply shift the letters of $\tau$ up by the length of $\sigma$, then shuffle $\sigma$ and $\tau$ together, all the while keeping track of their corresponding colors. For example,
\[
\vect{F}_{21, 02} \, \vect{F}_{12, 10} = \vect{F}_{2134, 0210} + \vect{F}_{2314,0120} + \vect{F}_{2341,0102}
+ \vect{F}_{3241, 1002} + \vect{F}_{3214, 1020} + \vect{F}_{3421, 1002}.
\]

The coproduct on $\FQSyml$ is defined by breaking up $\sigma$ into a concatenation of two (possibly empty) subwords, $\sigma = \tau \, \tau'$ and taking the standardization of each subword $\tau$ and $\tau'$. For example,
\[
\Delta(\vect{F}_{1423, 0021}) = 1 \otimes \vect{F}_{1423, 0021} + \vect{F}_{1, 0} \otimes \vect{F}_{312, 021} 
+ \vect{F}_{12, 00} \otimes \vect{F}_{12, 21} + \vect{F}_{132, 002} \otimes \vect{F}_{1, 1}
+ \vect{F}_{1423, 0021} \otimes 1.
\]
We refer the reader to \cite{NT08} for further details and precise definitions.

\subsection{The Hopf algebra $\NSyml$} 
\label{SS:NSyml}

This algebra is again due to Novelli and Thibon \cite{NT04, NT08}. It is a level $\lev$ version of the Hopf algebra of noncommutative symmetric functions, $\NSym$, introduced by Gelfand et al.\ \cite{GKLLRT95}.

For each $\vect{n} \in \NNl$, define the \newword{complete homogeneous noncommutative symmetric function} $S_{\vect{n}} \in \FQSyml$ by
\[
S_{\vect{n}} = \sum_{\mdeg(u) = \vect{n}} \vect{F}_{12\ldots n, u},
\]
where $\vect{F}_{12\ldots n, u}$ is defined as in Equation \eqref{E:FFdef}.
In particular $S_{\vect{0}} = 1$. For each vector composition $\vect{I} = \vect{i}_1 \ldots \vect{i}_m$, let 
\[
S^{\vect{I}} = S_{\vect{i}_1} \cdots S_{\vect{i}_m}.
\]
Note that $S^{\vect{I}}$ has multidegree $\vect{i}_1 + \cdots +\vect{i}_n$. 

Let $\NSyml$ be the subalgebra of $\FQSyml$ generated by the $S_{\vect{n}}$. Novelli and Thibon in \cite{NT08} proved the following:
\begin{prop}
The complete functions $S_{\vect{n}}$ are algebraically independent, so the $S^{\vect{I}}$ form a linear basis for $\NSyml$. Moreover $\NSyml$ is a multigraded Hopf subalgebra of $\FQSyml$ with coproduct satisfying
\begin{equation*}
 \Delta(S_{\vect{n}} ) = \sum_{\vect{n} = \vect{i} + \vect{j}} S_{\vect{i}} \otimes S_{\vect{j}} .
\end{equation*}
\end{prop}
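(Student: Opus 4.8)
The plan is to deduce everything from the structure of $\FQSyml$ recalled above, exploiting the shifted-shuffle product and the deconcatenation coproduct. First I would establish algebraic independence of the $S_{\vect{n}}$. The key is the leading term: expand $S^{\vect{I}}$ for $\vect{I} = (\vect{i}_1, \ldots, \vect{i}_m)$ using the shifted-shuffle formula for products of $\vect{F}$'s. Among all colored permutations appearing, the term $\vect{F}_{12\ldots n, u}$ with $u = u^{(1)} u^{(2)} \cdots u^{(m)}$ (where each $u^{(k)}$ is a weakly increasing color word with $\mdeg(u^{(k)}) = \vect{i}_k$) occurs — this is the ``identity shuffle'' contribution, and the underlying permutation stays $12\ldots n$ precisely for this choice. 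Different vector compositions $\vect{I} \ne \vect{I}'$ (even of the same multidegree) then yield disjoint sets of such leading $\vect{F}_{12\ldots n, u}$ terms, because $\vect{I}$ is recoverable from the pair $(\dofI(\vect{I}), \cofI(\vect{I}))$ as noted in \S\ref{SS:lpartite}, and for the leading terms $\cofI(\vect{I})$ is read off from $u$ while the descent positions of $\vect{I}$ are visible. Hence the $S^{\vect{I}}$ are linearly independent in $\FQSyml$; since they clearly span the subalgebra generated by the $S_{\vect{n}}$, they form a basis, and algebraic independence of the $S_{\vect{n}}$ follows.

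Next I would compute the coproduct of $S_{\vect{n}}$. Apply the deconcatenation coproduct of $\FQSyml$ to each $\vect{F}_{12\ldots n, u}$: breaking $12\ldots n$ at position $p$ gives $\std(12\ldots p) \otimes \std((p{+}1)\ldots n) = \vect{F}_{12\ldots p} \otimes \vect{F}_{12\ldots (n-p)}$, with the color word $u = u_1 \ldots u_n$ split as $u_1 \ldots u_p$ and $u_{p+1} \ldots u_n$. Summing over all $u$ with $\mdeg(u) = \vect{n}$ and over all split points $p$, and regrouping by the multidegrees $\vect{i} = \mdeg(u_1 \ldots u_p)$ and $\vect{j} = \mdeg(u_{p+1}\ldots u_n)$ of the two halves, the inner sums reassemble into $S_{\vect{i}} \otimes S_{\vect{j}}$, giving $\Delta(S_{\vect{n}}) = \sum_{\vect{n} = \vect{i}+\vect{j}} S_{\vect{i}}\otimes S_{\vect{j}}$ as claimed. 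This formula shows $\Delta$ maps $\NSyml$ into $\NSyml \otimes \NSyml$: since $\Delta$ is an algebra map on $\FQSyml$ and $\NSyml$ is generated as an algebra by the $S_{\vect{n}}$, it suffices that $\Delta(S_{\vect{n}}) \in \NSyml \otimes \NSyml$, which we have just checked. Compatibility of counit and the existence of the antipode are automatic from connectedness (as recalled in the discussion before \S\ref{SS:GPosl}), so $\NSyml$ is a multigraded Hopf subalgebra of $\FQSyml$.

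The step I expect to be the main obstacle is the leading-term analysis underpinning linear independence: one must argue carefully that for distinct $\vect{I}$ the ``identity-permutation'' terms $\vect{F}_{12\ldots n, u}$ in $S^{\vect{I}}$ are genuinely distinct basis elements of $\FQSyml$ and are not cancelled or reproduced by lower terms coming from other summands with nontrivial shuffles. Concretely, one wants a partial order or statistic on colored permutations $(\sigma, u)$ — for instance ordering by $\Des(\sigma)$ together with $u$, or by a suitable length function — in which the term $\vect{F}_{12\ldots n, u}$ with $u$ adapted to $\vect{I}$ is maximal (or minimal) within $S^{\vect{I}}$ and such that these extremal terms for different $\vect{I}$ never coincide. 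Once such a triangularity statement is in place, linear independence is immediate; the rest of the proposition is bookkeeping with the explicit product and coproduct formulas. Alternatively, and perhaps more cleanly, one could invoke the corresponding result for ordinary $\NSym$ (the $\lev = 1$ case, due to \cite{GKLLRT95}) together with the colour-grading: projecting onto fixed coloring words reduces the level-$\lev$ statement to a collection of level-$1$ statements. I would likely present whichever of these two routes \cite{NT08} uses, citing it, since the proposition is attributed there.
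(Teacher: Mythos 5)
A preliminary remark: the paper offers no proof of this proposition (it is quoted from Novelli--Thibon), so your argument is judged on its own terms. The second half of your proposal is sound: the deconcatenation computation of $\Delta(S_{\vect{n}})$ is correct as written, and since $\Delta$ is an algebra morphism on $\FQSyml$ and $\NSyml$ is generated by the $S_{\vect{n}}$, the coproduct formula does show that $\NSyml$ is a subbialgebra, hence (by connectedness) a Hopf subalgebra.

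The gap is in the linear-independence step, and it is fatal to the argument as written rather than merely delicate. The ``identity-shuffle'' terms $\vect{F}_{12\ldots n,u}$ occurring in $S^{\vect{I}}$ are \emph{not} disjoint for distinct $\vect{I}$ of the same multidegree: already at level $1$, both $S_1S_1$ and $S_2$ contain $\vect{F}_{12}$ with coefficient $1$, and at level $2$ both $S^{\colvec{1&0\\0&1}}$ and $S_{\colvec{1\\1}}$ contain $\vect{F}_{12,\,01}$. The point is that $\dofI(\vect{I})$ is \emph{not} visible in an identity-permutation term: the identity has no descents, and since the blocks $u^{(k)}$ range over \emph{all} color words of multidegree $\vect{i}_k$ (not just weakly increasing ones, as you wrote), the concatenation $u^{(1)}\cdots u^{(m)}$ does not remember where the cuts were. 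For the same reason your proposed repair --- an order in which $\vect{F}_{12\ldots n,u}$ is extremal within $S^{\vect{I}}$ --- cannot succeed. The triangularity that works goes the other way: a term $\vect{F}_{\sigma,w}$ of $S^{\vect{I}}$ is a shuffle of shifted increasing blocks, which is equivalent to $\Des(\sigma^{-1})\subseteq\dofI(\vect{I})$ (recoils, not descents --- compare $S_2S_1=\vect{F}_{123}+\vect{F}_{132}+\vect{F}_{312}$, where $\vect{F}_{312}$ has a descent at $1\notin\{2\}$ but $\Des(312^{-1})=\{2\}$), and each such $\sigma$ arises from exactly one shuffle. The terms for which $\Des(\sigma^{-1})$ equals $\dofI(\vect{I})$ \emph{exactly} do determine $\vect{I}$ (the block boundaries are then visible, and the color multidegrees of the blocks recover the columns), and every remaining term has strictly coarser recoil composition; grouping by exact recoil-and-color data therefore expresses $S^{\vect{I}}$ unitriangularly, with respect to $\tleq$, in terms of sums of $\vect{F}_{\sigma,w}$ over pairwise disjoint sets of colored permutations (the level-$\lev$ ribbons), and independence follows. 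Your alternative route of fixing a coloring word and reducing to level $1$ also does not decouple, since each $S^{\vect{I}}$ spreads over many coloring words and each coloring word receives contributions from many $\vect{I}$.
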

We illustrate this with an example: 
\[
\Delta(S_{\colvec{2 \\ 1}}) = 1 \otimes S_{\colvec{2 \\ 1}} + S_{\colvec{1 \\ 0}} \otimes S_{\colvec{1 \\ 1}} + S_{\colvec{2 \\ 0}} \otimes S_{\colvec{0 \\ 1}} + S_{\colvec{0 \\ 1}} \otimes S_{\colvec{ 2 \\ 0}}+ S_{\colvec{1 \\ 1}} \otimes S_{\colvec{1 \\ 0}} + S_{\colvec{2 \\ 1}} \otimes 1.  
\]

The basis $S^{\vect{I}}$ is the level $\lev$ analog of the basis $S^I$ defined in \cite{GKLLRT95}. For us it will be useful to consider a level $\lev$ analog of a different basis from \cite{GKLLRT95}, the basis of noncommutative power sum symmetric functions of the second kind. We introduce these functions next. 

Let $t_0, \ldots, t_{\lev-1}$ be commutative variables. For each $\lev$-partite number $\vect{n} = (n_0,\ldots, n_{\lev-1})^T$, let $t^{\vect{n}} = t_0^{n_0} \cdots t_{\lev-1}^{n_{\lev-1}}$, and for each vector composition $\vect{I} = (\vect{i}_1, \dots, \vect{i}_m)$, let $t^{\vect{I}} = t^{\vect{i}_1} \cdots t^{\vect{i}_m}.$ In analogy with \cite[Equation~(26)]{GKLLRT95}, we define the \newword{level $\lev$ noncommutative power sum symmetric functions of the second kind} $\Phi_{\vect{n}}$ implicitly by 
\begin{equation}\label{E:Phidef}
\sum_{\vect{n} \in \NNl \setminus \{ \vect{0} \}} \frac{\Phi_{\vect{n}}}{|\vect{n}|} \; t^{\vect{n}} = \log \bigl(1 + \sum_{\vect{n} \in \NNl \setminus \{ \vect{0} \} } S_{\vect{n}} \; t^{\vect{n}} \bigr) 
\end{equation}
and $\Phi_{\vect{0}} = 1$. Here the logarithm is to be interpreted as a formal operation that satisfies $\log(1+T) = T - T^2/2 + T^3/3 \pm \cdots$. For each vector composition $\vect{I} = (\vect{i}_1, \ldots, \vect{i}_m)$, let
\begin{equation*}
\Phi^{\vect{I}} = \Phi_{\vect{i}_1} \cdots \Phi_{\vect{i}_m}.
\end{equation*}

The $\Phi^{\vect{I}}$ and $S^{\vect{I}}$ are related by triangular matrices. To describe this relationship, we adapt the notation from \cite[\S4.3]{GKLLRT95}.  For any vector composition $\vect{I} = (\vect{i}_1, \ldots, \vect{i}_m)$, let $\pi(\vect{I}) = \prod_{k=1}^m |\vect{i}_k|$ and $\spp(\vect{I}) = \len(\vect{I})! \; \pi(\vect{I})$. Now suppose that $\vect{I} \tleq \vect{J} = \vect{J}_1 \cdots \vect{J}_m$, where $\vect{J}_k \comp \vect{i}_k$ for each $k$. Then define
\[ 
\len(\vect{J}, \vect{I}) = \prod_{k=1}^m \len(\vect{J}_k) \quad \text{ and } \quad \spp(\vect{J}, \vect{I}) = \prod_{k = 1}^m \spp(\vect{J}_k).
\]
The bases $\Phi^{\vect{I}}$ and $S^{\vect{I}}$ are related as follows. 

\begin{prop}
For every vector composition $\vect{I}$, we have
   \begin{equation}\label{E:PhiS}
      \Phi^{\vect{I}} = \sum_{\vect{I}~\tleq~\vect{J}} (-1)^{\len(\vect{J}) - \len(\vect{I})} \; \frac{\pi(\vect{I})}{\len(\vect{J}, \vect{I})} \; S^{\vect{J}}
   \end{equation}
and
   \begin{equation}\label{E:SPhi}
     S^{\vect{I}} = \sum_{\vect{I}~\tleq~\vect{J}} \frac{1}{\spp(\vect{J},\vect{I})} \; \Phi^{\vect{J}}.
   \end{equation}
   Consequently, the $\Phi^{\vect{I}}$ form a basis of $\NSyml$. 
\end{prop}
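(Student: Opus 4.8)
The plan is to derive both transition formulas directly from the defining relation \eqref{E:Phidef} by expanding the formal logarithm and its inverse the exponential in the power series ring $\NSyml[[t_0, \dots, t_{\lev-1}]]$, and then to read off the basis assertion from \eqref{E:PhiS} by a triangularity argument. This is the level $\lev$ analogue of the computation in \cite[\S4.3]{GKLLRT95}; passing from level $1$ to level $\lev$ costs only extra bookkeeping, since the commutative variables $t_i$ record the multidegree while all noncommutativity stays in the coefficients.

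First I would treat the single-part case. Writing $T = \sum_{\vect{m} \neq \vect{0}} S_{\vect{m}} t^{\vect{m}}$, which has zero constant term, $\log(1+T) = \sum_{j \geq 1} \frac{(-1)^{j-1}}{j} T^j$ is a well-defined element of $\NSyml[[t_0,\dots,t_{\lev-1}]]$ (each monomial $t^{\vect{n}}$ receiving contributions from only finitely many $T^j$). Expanding $T^j$ over ordered tuples of nonzero $\lev$-partite numbers and extracting the coefficient of $t^{\vect{n}}$, the definition \eqref{E:Phidef} gives
\[
\Phi_{\vect{n}} \;=\; |\vect{n}| \sum_{\vect{M} \comp \vect{n}} \frac{(-1)^{\len(\vect{M}) - 1}}{\len(\vect{M})}\, S^{\vect{M}},
\]
which is \eqref{E:PhiS} in the case $\vect{I} = (\vect{n})$. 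For a general $\vect{I} = (\vect{i}_1, \dots, \vect{i}_m)$ one multiplies the single-part expansions of $\Phi_{\vect{i}_1}, \dots, \Phi_{\vect{i}_m}$: the map $(\vect{J}_1, \dots, \vect{J}_m) \mapsto \vect{J}_1 \cdots \vect{J}_m$ with $\vect{J}_k \comp \vect{i}_k$ is a bijection onto the set of refinements $\vect{J}$ of $\vect{I}$ together with their unique block decomposition, $S^{\vect{J}_1}\cdots S^{\vect{J}_m} = S^{\vect{J}}$, the signs multiply as $\prod_k (-1)^{\len(\vect{J}_k)-1} = (-1)^{\len(\vect{J}) - \len(\vect{I})}$, and the coefficients as $\prod_k |\vect{i}_k|/\len(\vect{J}_k) = \pi(\vect{I})/\len(\vect{J}, \vect{I})$, yielding \eqref{E:PhiS}. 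Relation \eqref{E:SPhi} then follows from the identical argument with $\exp$ in place of $\log$: since $\exp$ and $\log$ are mutually inverse on power series with constant term $1$, \eqref{E:Phidef} is equivalent to $1 + \sum_{\vect{n} \neq \vect{0}} S_{\vect{n}} t^{\vect{n}} = \exp\bigl(\sum_{\vect{n} \neq \vect{0}} \frac{\Phi_{\vect{n}}}{|\vect{n}|} t^{\vect{n}}\bigr)$; expanding the exponential gives $S_{\vect{n}} = \sum_{\vect{M} \comp \vect{n}} \frac{1}{\spp(\vect{M})}\Phi^{\vect{M}}$ in the single-part case, and the same concatenation bookkeeping (now with $\spp(\vect{J}, \vect{I}) = \prod_k \spp(\vect{J}_k)$) produces \eqref{E:SPhi}. (Alternatively, \eqref{E:SPhi} can be recovered from \eqref{E:PhiS} by inverting the transition matrix, as in \cite[\S4.3]{GKLLRT95}.)

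For the basis assertion, note that within any fixed multidegree the index set $\Comp(\vect{n})$ is finite, and $\vect{I} \tleq \vect{J}$ forces $\len(\vect{I}) \leq \len(\vect{J})$ with equality only when $\vect{I} = \vect{J}$. Hence, ordering vector compositions by length (refined arbitrarily to a total order), \eqref{E:PhiS} exhibits the $\Phi^{\vect{I}}$ in terms of the $S^{\vect{I}}$ via a triangular matrix whose diagonal entries are the positive integers $\pi(\vect{I})$, so it is invertible over $\field$. Since the $S^{\vect{I}}$ form a basis of $\NSyml$ by the preceding proposition, so do the $\Phi^{\vect{I}}$.

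I do not expect a genuine obstacle: the substance is the classical noncommutative identity $\exp\bigl(\sum \Phi_{\vect{n}} t^{\vect{n}}/|\vect{n}|\bigr) = 1 + \sum S_{\vect{n}} t^{\vect{n}}$, and the only points requiring care are the legitimacy of $\log$ and $\exp$ as mutually inverse formal operations over the noncommutative coefficient ring $\NSyml$, and the matching of the combinatorial data — the signs and the products $\pi(\vect{I})$, $\len(\vect{J}, \vect{I})$, $\spp(\vect{J}, \vect{I})$ — across the concatenation bijection; both are routine.
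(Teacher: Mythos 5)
Your proposal is correct and follows essentially the same route as the paper: extract the single-part identities $\Phi_{\vect{n}} = \sum_{\vect{M}\comp\vect{n}} (-1)^{\len(\vect{M})-1}\frac{|\vect{n}|}{\len(\vect{M})} S^{\vect{M}}$ and $S_{\vect{n}} = \sum_{\vect{M}\comp\vect{n}} \frac{1}{\spp(\vect{M})}\Phi^{\vect{M}}$ from \eqref{E:Phidef} by expanding the logarithm and exponential, then pass to general $\vect{I}$ by multiplicativity via the concatenation bijection. You merely spell out the bookkeeping (and the triangularity argument for the basis claim) that the paper leaves implicit.
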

\begin{proof}
From \eqref{E:Phidef} we compute
\begin{equation*} 
\Phi_{\vect{n}} = \sum_{\vect{I} \comp \vect{n}}  (-1)^{\len(\vect{I}) - 1} \frac{|\vect{n}|}{\len(\vect{I})} \; S^{\vect{I}},
\end{equation*}
from which \eqref{E:PhiS} follows by multiplicativity of the $\Phi^{\vect{I}}$. By exponentiating \eqref{E:Phidef}, we obtain
\begin{equation*}
S_{\vect{n}} = \sum_{\vect{I} \comp \vect{n}} \frac{1}{\spp(\vect{I})} \; \Phi^{\vect{I}},
\end{equation*}
and then \eqref{E:SPhi} follows by multiplicativity of the $S^{\vect{I}}$.  
\end{proof}

We now gather some additional facts about the $\Phi^{\vect{I}}$.

\begin{prop}\label{P:PhiPrimitive} 
   Every $\Phi_{\vect{n}}$ is primitive; that is,
   \begin{equation} \label{E:Phi-coprod}
      \Delta(\Phi_{\vect{n}}) = 1 \otimes \Phi_{\vect{n}} + \Phi_{\vect{n}} \otimes 1.
   \end{equation}
\end{prop}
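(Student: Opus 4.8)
The plan is to derive primitivity of $\Phi_{\vect{n}}$ directly from the defining generating-function identity \eqref{E:Phidef} together with the coproduct formula $\Delta(S_{\vect{n}}) = \sum_{\vect{n} = \vect{i}+\vect{j}} S_{\vect{i}}\otimes S_{\vect{j}}$ of the previous proposition. Introduce the commuting indeterminates $t_0,\ldots,t_{\lev-1}$ used in \eqref{E:Phidef}, write $t = (t_0,\ldots,t_{\lev-1})$, and work in the formal power series rings $\NSyml[[t]]$ and $(\NSyml\otimes\NSyml)[[t]]$, extending $\Delta$ to a $\field[[t]]$-linear algebra homomorphism between them. Put $\sigma = \sum_{\vect{n}\in\NNl} S_{\vect{n}}\, t^{\vect{n}}$, so that $\sigma - 1 = \sum_{\vect{n}\neq\vect{0}} S_{\vect{n}}\, t^{\vect{n}}$ has no constant term and $\log\sigma$ is a well-defined element of $\NSyml[[t]]$; by \eqref{E:Phidef} it equals $\sum_{\vect{n}\neq\vect{0}} \frac{\Phi_{\vect{n}}}{|\vect{n}|}\, t^{\vect{n}}$. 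The goal is then simply to compute $\Delta(\log\sigma)$ in two ways and compare coefficients of $t^{\vect{n}}$.

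The first key step is the observation that $\Delta(\sigma) = \sigma_1\,\sigma_2$, where $\sigma_1 = \sum_{\vect{i}} (S_{\vect{i}}\otimes 1)\, t^{\vect{i}}$ and $\sigma_2 = \sum_{\vect{j}} (1\otimes S_{\vect{j}})\, t^{\vect{j}}$: indeed $\Delta(\sigma) = \sum_{\vect{n}} \Delta(S_{\vect{n}})\, t^{\vect{n}} = \sum_{\vect{i},\vect{j}} (S_{\vect{i}}\otimes S_{\vect{j}})\, t^{\vect{i}+\vect{j}}$, which is precisely the product $\sigma_1\sigma_2$, and $\sigma_1,\sigma_2$ commute since $S_{\vect{i}}\otimes 1$ commutes with $1\otimes S_{\vect{j}}$ in $\NSyml\otimes\NSyml$. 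The second key step applies the formal logarithm: because $\Delta$ is a (degree-preserving) algebra homomorphism and $\sigma - 1$ lies in the augmentation ideal, $\Delta$ commutes with $\log$, giving $\Delta(\log\sigma) = \log(\sigma_1\sigma_2) = \log\sigma_1 + \log\sigma_2$, the last equality because $\sigma_1$ and $\sigma_2$ commute. Applying $\cdot\otimes 1$ and $1\otimes\cdot$ to \eqref{E:Phidef} identifies $\log\sigma_1 = \sum_{\vect{n}\neq\vect{0}} \frac{\Phi_{\vect{n}}\otimes 1}{|\vect{n}|}\, t^{\vect{n}}$ and $\log\sigma_2 = \sum_{\vect{n}\neq\vect{0}} \frac{1\otimes\Phi_{\vect{n}}}{|\vect{n}|}\, t^{\vect{n}}$, so that
\[
\sum_{\vect{n}\neq\vect{0}} \frac{\Delta(\Phi_{\vect{n}})}{|\vect{n}|}\, t^{\vect{n}} = \Delta(\log\sigma) = \sum_{\vect{n}\neq\vect{0}} \frac{\Phi_{\vect{n}}\otimes 1 + 1\otimes\Phi_{\vect{n}}}{|\vect{n}|}\, t^{\vect{n}},
\]
and equating coefficients of $t^{\vect{n}}$ gives \eqref{E:Phi-coprod}.

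The only point that requires care — and hence the main obstacle — is justifying that $\Delta$ commutes with the formal logarithm and that $\log(\sigma_1\sigma_2) = \log\sigma_1 + \log\sigma_2$ for commuting series. This is handled by the connected $\NNl$-graded structure: since $\NSyml$ and $\NSyml\otimes\NSyml$ are connected multigraded algebras and $\sigma,\sigma_1,\sigma_2$ all have constant term $1$ with the remainder of positive degree, $\log$ is computed degree-by-degree by a sum that is finite in each multidegree, and both identities follow formally from $\Delta$ being an algebra map. All remaining steps are routine bookkeeping with the grading, so I expect the write-up to be short. (One could instead expand $\Phi_{\vect{n}}$ in the basis $S^{\vect{I}}$ and apply $\Delta$ termwise using the coproduct of $S_{\vect{n}}$, but that computation is considerably messier than the generating-function argument above.)
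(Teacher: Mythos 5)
Your proof is correct. The paper omits the argument entirely, deferring to Proposition~3.10 of \cite{GKLLRT95}, and your generating-function computation --- $\Delta(\sigma)=\sigma_1\sigma_2$ with $\sigma_1,\sigma_2$ commuting, then applying the formal logarithm --- is precisely the standard argument used there, transplanted to the $\NNl$-graded setting, so this is essentially the same approach as the (omitted) proof the paper has in mind.
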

\begin{proof}
The proof is a straightforward extension of the proof of Proposition~3.10 in \cite{GKLLRT95} and will be omitted. 
\end{proof}

\begin{prop}
   The antipode $\apode$ on $\NSyml$ satisfies
   \begin{equation} \label{E:Phi-antipode}
      \apode(\Phi^{\vect{I}}) = (-1)^{\len(\vect{I})} \, \Phi^{\rev{\vect{I}}}
   \end{equation}
and
   \begin{equation} \label{E:S-antipode}
      \apode(S^{\vect{I}}) = \sum_{\vect{I} \tleq \rev{\vect{J}} } (-1)^{\len(J)} \, S^{\vect{J}} ,
   \end{equation}
where $\overleftarrow{\vect{I}}$ is the vector composition obtained by reading the columns of $\vect{I}$ in reverse order.
\end{prop}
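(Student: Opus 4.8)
The plan is to establish the antipode formulas by leveraging the multiplicativity of the antipode and the primitivity of the $\Phi_{\vect{n}}$ already recorded in Proposition~\ref{P:PhiPrimitive}. First I would dispense with \eqref{E:Phi-antipode}. Since $\apode$ is an algebra anti-homomorphism on any Hopf algebra and $\Phi^{\vect{I}} = \Phi_{\vect{i}_1}\cdots\Phi_{\vect{i}_m}$, we get $\apode(\Phi^{\vect{I}}) = \apode(\Phi_{\vect{i}_m})\cdots\apode(\Phi_{\vect{i}_1})$, so it suffices to compute $\apode$ on each generator $\Phi_{\vect{n}}$. But a primitive element $x$ in a connected Hopf algebra always satisfies $\apode(x) = -x$: this follows immediately from the antipode axiom $\apode(x_{(1)})x_{(2)} = \eta\epsilon(x)$ applied to $\Delta(x) = 1\otimes x + x\otimes 1$, which gives $\apode(x) + x = 0$. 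Hence $\apode(\Phi_{\vect{n}}) = -\Phi_{\vect{n}}$, and therefore $\apode(\Phi^{\vect{I}}) = (-1)^m\,\Phi_{\vect{i}_m}\cdots\Phi_{\vect{i}_1} = (-1)^{\len(\vect{I})}\,\Phi^{\rev{\vect{I}}}$, which is \eqref{E:Phi-antipode}.

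Next I would derive \eqref{E:S-antipode} by transporting \eqref{E:Phi-antipode} through the change-of-basis formulas \eqref{E:PhiS} and \eqref{E:SPhi}. Starting from $S^{\vect{I}} = \sum_{\vect{I}\tleq\vect{J}} \frac{1}{\spp(\vect{J},\vect{I})}\,\Phi^{\vect{J}}$, apply $\apode$ and use \eqref{E:Phi-antipode} to obtain $\apode(S^{\vect{I}}) = \sum_{\vect{I}\tleq\vect{J}} \frac{(-1)^{\len(\vect{J})}}{\spp(\vect{J},\vect{I})}\,\Phi^{\rev{\vect{J}}}$. Now re-expand each $\Phi^{\rev{\vect{J}}}$ in the $S$-basis via \eqref{E:PhiS} and collect terms; the claim is that everything collapses to $\sum_{\vect{I}\tleq\rev{\vect{K}}} (-1)^{\len(\vect{K})}\,S^{\vect{K}}$. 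The bookkeeping here is cleaner if one first proves the scalar identity that makes the collapse work, namely the analogue of the classical fact (Gelfand et al., \cite[\S4.3]{GKLLRT95}) that the two triangular matrices relating the $S$ and $\Phi$ bases are mutually inverse; since every interval of $\Comp(\vect{n})$ under $\tleq$ is isomorphic to an interval in the boolean lattice of ordinary compositions under refinement (as noted in \S\ref{SS:lpartite}), this reduces to the level-one statement already in \cite{GKLLRT95}, applied columnwise and multiplied together using the factorization of $\spp(\vect{J},\vect{I})$ and $\len(\vect{J},\vect{I})$ as products over the blocks $\vect{J}_k\comp\vect{i}_k$.

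Alternatively, and perhaps more transparently, I would prove \eqref{E:S-antipode} directly by induction using the recursive definition of the antipode on a connected bialgebra: from $\Delta(S_{\vect{n}}) = \sum_{\vect{n}=\vect{i}+\vect{j}} S_{\vect{i}}\otimes S_{\vect{j}}$ one gets $\apode(S_{\vect{n}}) = -\sum_{\vect{i}+\vect{j}=\vect{n},\ \vect{j}\neq\vect{0}} \apode(S_{\vect{i}})\,S_{\vect{j}}$, which is formally identical to the level-one recursion for $\NSym$; solving it yields $\apode(S_{\vect{n}}) = \sum_{\vect{J}\comp\vect{n}} (-1)^{\len(\vect{J})} S^{\vect{J}}$, and then \eqref{E:S-antipode} follows from the anti-homomorphism property applied to $S^{\vect{I}} = S_{\vect{i}_1}\cdots S_{\vect{i}_m}$, noting that a refinement $\vect{K}$ of $\rev{\vect{I}}$ corresponds to choosing a vector composition of each $\vect{i}_k$, reversing the order of the blocks, and concatenating.

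The main obstacle is purely combinatorial: verifying that the reindexing in the second approach (or the scalar cancellation in the first) is exactly right — that is, matching the condition "$\vect{I}\tleq\rev{\vect{J}}$" with "$\vect{J}$ arises as the reversed concatenation of vector compositions of the columns of $\vect{I}$" and confirming the sign $(-1)^{\len(\vect{J})}$ survives intact. This is entirely parallel to the proof of the corresponding formula in \cite{GKLLRT95} and involves no new ideas beyond careful tracking of how reversal interacts with refinement of vector compositions; I expect it to go through with only notational adjustments, which is presumably why the authors relegate it to a short proof or cite \cite{GKLLRT95}.
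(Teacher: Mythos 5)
Your proof is correct, and its skeleton matches the paper's: for \eqref{E:Phi-antipode} the paper argues exactly as you do, deducing $\apode(\Phi_{\vect{n}})=-\Phi_{\vect{n}}$ from primitivity and then invoking the anti-homomorphism property. For \eqref{E:S-antipode}, both the paper and your preferred (``alternatively'') route reduce to the single-column identity $\apode(S_{\vect{n}})=\sum_{\vect{I}\comp\vect{n}}(-1)^{\len(\vect{I})}S^{\vect{I}}$ and then finish identically by applying the anti-homomorphism to $S^{\vect{I}}=S_{\vect{i}_1}\cdots S_{\vect{i}_m}$ and carrying out the reindexing you describe (a refinement of each column, blocks reversed and concatenated, is precisely a $\vect{J}$ with $\vect{I}\tleq\rev{\vect{J}}$). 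The only divergence is how that single-column identity is obtained: the paper follows Malvenuto--Reutenauer and inverts the generating series, writing $\sum_{\vect{n}}\apode(S_{\vect{n}})\,t^{\vect{n}}=\apode\bigl(\exp\bigl(\sum\Phi_{\vect{n}}t^{\vect{n}}\bigr)\bigr)=\exp\bigl(-\sum\Phi_{\vect{n}}t^{\vect{n}}\bigr)=\bigl(\sum S_{\vect{n}}t^{\vect{n}}\bigr)^{-1}$, which reuses $\apode(\Phi_{\vect{n}})=-\Phi_{\vect{n}}$; you instead solve the antipode recursion $\apode(S_{\vect{n}})=-\sum_{\vect{i}+\vect{j}=\vect{n},\ \vect{j}\ne\vect{0}}\apode(S_{\vect{i}})S_{\vect{j}}$ coming from the coproduct of $S_{\vect{n}}$, which is marginally more elementary and does not pass through the $\Phi$'s at all. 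Your first suggested route (pushing \eqref{E:Phi-antipode} through both change-of-basis formulas \eqref{E:PhiS} and \eqref{E:SPhi} and cancelling the two triangular matrices) would also work but is the most laborious of the three and is not what the paper does; you were right to set it aside.
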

\begin{proof}
It follows from \eqref{E:Phi-coprod} that $\apode(\Phi_{\vect{n}}) = -\Phi_{\vect{n}}$. Since $\apode$ is an anti-homomorphism of algebras, \eqref{E:Phi-antipode} follows. Next, following the same argument leading up to Equation~(2.14) in \cite{MR95}, we have
\[ 
\begin{aligned}
\sum_{\vect{n} \ge \vect{0}} \apode(S_{\vect{n}}) \, t^{\vect{n}} = & 
\apode\left( \sum_{\vect{n} \ge \vect{0}} S_{\vect{n}} \, t^{\vect{n}}\right) = 
\apode\left(\exp\left(\sum_{\vect{n} \ne \vect{0}} \Phi_{\vect{n}} \, t^{\vect{n}} \right)\right) \\
& = \exp\left( \sum_{\vect{n} \ne \vect{0}} \apode(\Phi_{\vect{n}}) \, t^{\vect{n}} \right) 
 = \exp\left(\sum_{\vect{n} \ne \vect{0}} -\Phi_{\vect{n}} \, t^{\vect{n}}\right) = 
\left(\sum_{\vect{n} \ge \vect{0}} S_{\vect{n}} \, t^{\vect{n}} \right)^{-1}
\end{aligned}
\]
which implies
\[
\apode(S_{\vect{n}}) = \sum_{\vect{I} \comp \vect{n}} (-1)^{\len(\vect{I})} S^{\vect{I}}.
\]
Again since $\apode$ is an anti-homomorphism, \eqref{E:S-antipode} follows.
\end{proof}

\subsection{The Hopf algebra $\Syml$} 
\label{SS:Syml}

This algebra was introduced by MacMahon \cite{MacMahon}; our presentation here is adapted from Gessel \cite{Gessel87}. 

Let $\XX = X^0 \sqcup X^1 \sqcup \cdots \sqcup X^{\lev-1}$ be a set of independent colored commutative variables, with $X^i = \{ x_j^{(i)} \mid j \in \PP \}$ being the variables of \newword{color} $i$. For $\vect{n} = (n_0, \ldots, n_{\lev-1})^T \in \NNl$ define 
\[ 
\vect{x}_j^{\vect{n}} = (x_j^{(0)})^{n_0} \cdots (x_j^{(\lev-1)})^{n_{\lev-1}}. 
\]
A formal power series in $\XX$ of finite degree is called a \newword{multi-symmetric function} if the coefficients of the monomials $\vect{x}_{1}^{\boldsymbol{\lambda}_1} \vect{x}_{2}^{\boldsymbol{\lambda}_2} \cdots \vect{x}_{m}^{\boldsymbol{\lambda}_m}$ and $\vect{x}_{j_1}^{\boldsymbol{\lambda}_1} \vect{x}_{j_2}^{\boldsymbol{\lambda}_2} \cdots \vect{x}_{j_m}^{\boldsymbol{\lambda}_m}$ are equal whenever $\boldsymbol{\lambda}_1, \ldots, \boldsymbol{\lambda}_m$ are $\lev$-partite numbers and $j_1, \ldots, j_m$ are  distinct. The set of multi-symmetric functions forms a multigraded algebra denoted by $\Syml = \bigoplus_{\vect{n} \in \NNl} \Syml_{\vect{n}}$, where the \newword{multidegree} of a monomial $\vect{x}^{\boldsymbol{\lambda}_1}_{j_1} \cdots \vect{x}^{\boldsymbol{\lambda}_m}_{j_m}$ is defined to be the vector sum  $\boldsymbol{\lambda}_1 + \cdots + \boldsymbol{\lambda}_m$, and the $\vect{n}$th multigraded component $\Syml_{\vect{n}}$ consists of all multi-symmetric functions of multidegree $\vect{n}$. 

We will give three bases for $\Syml$ that extend well-known bases of symmetric functions. These bases are indexed by \newword{vector partitions} in the sense of \S\S\ref{SS:lpartite}, i.e., multisets of $\lev$-partite numbers $\svect{\lambda} = \{\svect{\lambda}_1, \ldots, \svect{\lambda}_m\}$. 

First, the \newword{monomial function} $m_{\svect{\lambda}}$ is defined to be the sum of all monomials
\[  
\vect{x}^{\svect{\lambda}_1}_{j_1} \cdots \vect{x}^{\svect{\lambda}_m}_{j_m}
\]
where $j_1, \ldots, j_m$ are distinct. For example, using the shorthand $x_i = x_i^{(0)}$, $\bar{x}_i = x_i^{(1)}$ and $\bar{\bar{x}}_i = x_i^{(2)}$, we have
\[ 
m_{\colvec{1 \\ 2 \\ 4} \colvec{3 \\ 0 \\ 1}} = \sum_{i\ne j} x_i^1 \, \bar{x}_i^2 \, \bar{\bar{x}}_i^4 \, x_j^3 \, \bar{\bar{x}}_j \quad \text{and} \quad
m_{\colvec{2 \\ 1} \colvec{3 \\ 3} \colvec{2 \\ 1} } = \sum_{i < k \text{ and } j\notin \{i,k\} } x_i^2 \, \bar{x}_i \, x_j^{3} \, \bar{x}_j^3 \, x_k^2 \, \bar{x}_k.
\]

Recall that for a color word $u=u_1 u_2 \ldots u_n \in [0, \lev-1]^n$, we defined the \newword{multidegree} of $u$ to be the vector of multiplicities of colors appearing in $u$ (see \eqref{E:mdeg}). Now for each $\vect{n} \in \NNl$ with weight $n = |\vect{n}|$, we can define the \newword{complete function} $h_{\vect{n}}$ by
\[
h_{\vect{n}} = \sumsub{u=u_1u_2\ldots u_n \in [0,\lev-1]^n \\ \mdeg(u) = \vect{n}} \sum_{j_1 \le j_2 \le \cdots \le j_n} 
x_{j_1}^{(u_1)} \, x_{j_2}^{(u_2)} \cdots x_{j_n}^{(u_n)}.
\]
For example, there are two color words $u = 01$ and $u=10$ of multidegree $\colvec{1\\1}$, so
\[ 
h_{\colvec{1 \\ 1}} = \sum_{i \le j} x_i \bar{x}_j + \sum_{i \le j} \bar{x}_i x_j = 2 m_{\colvec{1 \\ 1}} + m_{\colvec{1 \\ 0} \colvec{0 \\ 1}}.
\]
The basis $h_{\svect{\lambda}}$ is defined multiplicatively, by $h_{\svect{\lambda}} = h_{\svect{\lambda}_1} h_{\svect{\lambda}_1} \cdots $.

Lastly, we define the \newword{power sum function} $p_{\vect{n}}$ by
\[
p_{\vect{n}} = \sum_{j = 1}^\infty \vect{x}_j^{\vect{n}} = m_{\vect{n}}.
\]
and set $p_{\svect{\lambda}} = p_{\svect{\lambda}_1} p_{\svect{\lambda}_2} \cdots$. Again the multi-symmetric functions $p_{\svect{\lambda}}$ form a basis for $\Syml$.

We can define a coproduct in $\Syml$ by generalizing the coproduct of ordinary symmetric functions:
\[ 
\Delta(p_{\vect{n}}) = 1 \otimes p_{\vect{n}} + p_{\vect{n}} \otimes 1.
\]
Alternatively, the same coproduct can be defined by the usual method of introducing a duplicate set of variables $\YY$ and letting $\Delta(f(\XX)) =  f(\XX+\YY)$.

\subsection{The Hopf algebra $\QSyml$} 
\label{SS:QSyml}

This algebra is due to Poirier \cite{Poirier98}; our presentation is adapted from Novelli and Thibon \cite{NT04, NT08}. 

Let $\XX = X^0 \sqcup X^1 \sqcup \cdots \sqcup X^{\lev-1}$ be commutative colored variables as before. A formal power series in $\XX$ of finite degree is called a \newword{quasisymmetric function of level $\lev$} if the coefficients of the monomials $\vect{x}_1^{\vect{i_1}} \cdots \vect{x}_m^{\vect{i}_m}$ and $\vect{x}_{j_1}^{\vect{i_1}} \cdots \vect{x}_{j_m}^{\vect{i}_m}$ are equal whenever $j_1 < \cdots < j_m$ and $\vect{I} = (\vect{i}_1, \ldots, \vect{i}_m)$ is a vector composition. The set of quasisymmetric functions of level $\lev$ forms a multigraded vector space $\QSyml = \bigoplus_{\vect{n} \in \NNl} \QSyml_{\vect{n}}$ with various natural bases indexed by vector compositions. The $\vect{n}$th multigraded component $\QSyml_{\vect{n}}$ is the vector space of  homogeneous quasisymmetric functions of multidegree $\vect{n}$, where once again we define the multidegree of a monomial $\vect{x}_1^{\vect{i_1}} \cdots \vect{x}_m^{\vect{i}_m}$ as the vector sum ${\vect{i_1}} + \cdots + {\vect{i}_m}$.

Here we will consider two bases. First is the quasisymmetric analog of the monomial multi-symmetric functions $m_{\boldsymbol{\lambda}}$. For a nonempty vector composition $\vect{I} = (\vect{i}_1, \ldots, \vect{i}_m)$, define the \newword{level $\lev$ monomial quasisymmetric function} $M_{\vect{I}}$ by
\[ 
M_{\vect{I}} = \sum_{j_1 < \cdots < j_m} \vect{x}_{j_1}^{\vect{i}_1} \cdots \vect{x}_{j_m}^{\vect{i}_m} 
\]
and let $M_{\emptycomp} = 1$. For example,
\[  
M_{\left( \begin{smallmatrix} 1 & 3 \\2 & 0 \\ 4 & 1\end{smallmatrix}\right)} = \sum_{ i < j} x_i \,\bar{x}_i^2 \, \bar{\bar{x}}_i^4 \, x_j^3 \, \bar{\bar{x}}_j  
\quad \text{ and } \quad
M_{\left(\begin{smallmatrix} 2 & 3 & 2 \\ 1 & 3 & 1 \end{smallmatrix} \right)} = \sum_{i < j < k} x_i^2 \,\bar{x}_i \, x_j^3 \, \bar{x}_j^3 \, x_k^2 \, \bar{x}_k. 
\]
As in the level $1$ case (see, e.g., \cite[Lemma~3.3]{Ehrenborg96}), and as observed by Aval et al.\ \cite{ABB05} in the level-$2$ case, multiplying two monomial functions $M_{\vect{I}} M_{\vect{J}}$ can be described in terms of quasi-shuffling. A \newword{quasi-shuffle} of two vector compositions $\vect{I}$ and $\vect{J}$ is a shuffling of the columns of $\vect{I}$ with the columns of $\vect{J}$ in which two columns may be added together as they are shuffled past each other. An example should make this clear: 
\[ 
\begin{aligned}
  M_{\colvec{1 & 3 \\ 0 & 2}} M_{\colvec{2 & 1 \\ 5 & 0}} = & 
  M_{\colvec{1 & 3 & 2 & 1 \\ 0 & 2 & 5 & 0}} + M_{\colvec{1 & 5 & 1 \\ 0 & 7 & 0}} 
    + M_{\colvec{1 & 2 & 3 & 1 \\ 0 & 5 & 2 & 0}} + M_{\colvec{1 & 2 & 4 \\ 0 & 5 & 2}} 
    + M_{\colvec{1 & 2 & 1 & 3 \\0 & 5 & 0 & 2}} + M_{\colvec{3 & 3 & 1 \\ 5 & 2 & 0}} \\
  & + M_{\colvec{3 & 4 \\5 & 2}} + M_{\colvec{3 & 1 & 3 \\ 5 & 0 & 2}} 
    + M_{\colvec{ 2 & 1 & 3 & 1 \\ 5 & 0 & 2 & 0}} + M_{\colvec{2 & 1 & 4 \\ 5 & 0 & 2}} 
    + M_{\colvec{ 2 & 2 & 3 \\ 5 & 0 & 2}} + 2 M_{\colvec{2 & 1 & 1 & 0 \\ 5 & 0 & 0 & 2}}.
\end{aligned}
\]
Thus $\QSyml$ is a quasi-shuffle algebra. General properties of such algebras are discussed in \cite{Hoffman00, Loday07}. 

The coproduct is given on the monomial basis by
\[ 
\Delta(M_{\vect{I}}) = \sum_{\vect{I} = \vect{J} \vect{K}} M_{\vect{J}} \otimes M_{\vect{K}} ,
\]
the sum being over all ways of writing $\vect{I}$ as a concatenation of two (possibly empty) vector compositions $\vect{J}$ and $\vect{K}$. For example,
\[ 
\Delta \bigl(M_{\left( \begin{smallmatrix} 1 & 0 & 2 \\4 & 3 & 0 \end{smallmatrix} \right)} \bigr) = 
1 \otimes M_{\left( \begin{smallmatrix} 1 & 0 & 2 \\4 & 3 & 0 \end{smallmatrix} \right)} + 
M_{ \left( \begin{smallmatrix} 1 \\4 \end{smallmatrix} \right)} \otimes
M_{ \left( \begin{smallmatrix}  0 & 2 \\ 3 & 0 \end{smallmatrix} \right) } + 
M_{ \left( \begin{smallmatrix} 1 & 0 \\4 & 3  \end{smallmatrix} \right) } \otimes
M_{ \left( \begin{smallmatrix}  2 \\ 0 \end{smallmatrix} \right) } + 
M_{ \left( \begin{smallmatrix} 1 & 0 & 2 \\4 & 3 & 0 \end{smallmatrix} \right) } \otimes
1 .
\]
This coproduct can be equivalently defined by the usual method of introducing a duplicate set of variables $\YY$ and letting $\Delta(f(\XX)) =  f(\XX+\YY)$. 

It is clear from the definitions that $\Syml$ is a Hopf subalgebra of $\QSyml$, and that the monomial bases of these two algebras are related by
\[ 
m_{\svect{\lambda}} = \sum_{\mathrm{cols}(\vect{I}) = \svect{\lambda}} M_{\vect{I}}
\]
where $\mathrm{cols}(\vect{I})$ denotes the multiset of columns of $\vect{I}$.

Next we consider the basis of \newword{level $\lev$ quasi-ribbon functions} $F_{\vect{I}}$, defined by
\begin{equation} 
\label{E:Fdef}
F_{\vect{I}} 
= \sumsub{j_1 \le j_2 \le \cdots \le j_n \\ r \in \dofI(\vect{I}) \implies j_r < j_{r+1}}
 x_{j_1}^{(u_1)} \cdots x_{j_n}^{(u_n)}
 = \sum_{\vect{I} \cleq \vect{J}} M_{\vect{J}}
\end{equation}
where $\vect{I}$ is a nonempty vector composition of weight $n$, and $u_1 \cdots u_n = \cofI(I)$. For example, 
\begin{equation*} 
F_{\colvec{ 2 & 0 \\ 0 & 1 \\ 1 & 0}} 
= \sum_{i \le j \le k < \ell } x_i x_j \bar{\bar{x}}_k \bar{x}_\ell 
=
M_{\colvec{ 2 & 0 \\ 0 & 1 \\ 1 & 0}} + M_{\colvec{ 2 & 0 & 0 \\ 0 & 0 & 1 \\ 0 & 1 & 0}}
+ M_{\colvec{ 1 & 1 & 0 \\ 0 & 0 & 1 \\ 0 & 1 & 0}} + M_{\colvec{ 1 & 1 & 0 & 0 \\ 0 & 0 & 0 & 1 \\ 0 & 0 & 1 & 0}}.
\end{equation*}
This basis specializes to Gessel's fundamental basis \cite{Gessel83} when $\lev = 1$. Unlike the level $1$ case, however, the product $F_{\vect{I}} \, F_{\vect{J}}$ is not always $F$-positive. For example, 
\[
F_{\colvec{1 & 0 \\ 0 & 0 \\ 0 & 1}} F_{\colvec{0 \\ 1 \\ 0}} = 
F_{\colvec{0 & 1 & 0 \\ 1 & 0 & 0 \\ 0 & 0 & 1}} + F_{\colvec{1 & 0 \\ 1 & 0 \\ 0 & 1}} + 
F_{\colvec{1 & 0 \\ 0 & 1 \\ 0 & 1}} - F_{\colvec{1 & 0 & 0 \\ 0 & 1 & 0 \\ 0 & 0 & 1}} + 
F_{\colvec{1 & 0 & 0 \\ 0 & 0 & 1 \\ 0 & 1 & 0}}.
\]

\subsection{Duality between $\QSyml$ and $\NSyml$} 
\label{SS:Duality}
As Novelli and Thibon observed \cite{NT04, NT08}, the multigraded dual Hopf algebra of $\NSyml$ may be identified with $\QSyml$ by making $S^{\vect{I}}$ the dual basis of $M_{\vect{J}}$. More precisely, for each $\vect{n}$ we have $(\QSyml_{\vect{n}})^* = \NSyml_{\vect{n}}$, for any two vector compositions $\vect{I}$ and $\vect{J}$ we have $S^{\vect{I}}(M_{\vect{J}}) = \delta_{\vect{I}, \vect{J}}$, and for any $T, U \in \NSyml$ and $G, H \in \QSyml$, we have
\[ TU(G) = \sum T(G_1) U(G_2) \quad\text{and}\quad T(GH) = \sum T_1(G) T_2(H), \]
where $\Delta(G) = \sum G_1 \otimes G_2$ and $\Delta(T) = \sum T_1 \otimes T_2$ in Sweedler notation.

We can use this duality in several ways. As an example, if we
let $\apodeQ$ denote the antipode on $\QSyml$, then by taking the dual of \eqref{E:S-antipode}, we get
\begin{equation} \label{E:M-antipode}
\apodeQ(M_{\vect{I}}) = (-1)^{\len(\vect{I})} \sum_{\vect{J} \tleq \rev{\vect{I}}} M_{\vect{J}}.
\end{equation}
As before, $\rev{\vect{I}}$ is the vector composition obtained by reversing the order of the columns of $\vect{I}$.

There is yet another interesting pair of dual bases for these two Hopf algebras. Let us define $P_{\vect{I}}$ to be the basis of $\QSyml$ that is dual to the power sum basis $\Phi^{\vect{I}}$ of $\NSyml$. Thus, 
\[ 
\Phi^{\vect{I}}(P_{\vect{J}})  = \delta_{\vect{I}, \vect{J}}. 
\]
Then by \eqref{E:SPhi} we get
\begin{equation*}
P_{\vect{I}} = \sumsub{\vect{J} \tleq \vect{I}} \frac{1}{\spp(\vect{I}, \vect{J})} \, M_{\vect{J}}.
\end{equation*}
Since the $\Phi_{\vect{n}}$ are primitive, the $P_{\vect{I}}$ make up a shuffle basis for $\QSyml$. It is well-known that a shuffle algebra is freely generated by its Lyndon words; see, for example, \cite[Theorem~6.1]{Reutenauer93}. We will come back to these ideas in \S\S\ref{SS:koddQSym}.


\section{The category of multigraded combinatorial Hopf algebras}
\label{S:TheoryMCHA}



In this section we define the category of multigraded combinatorial Hopf algebras and derive a universal property satisfied by $\QSyml$ similar to, and inspired by, \cite[Theorem~4.1]{ABS06}. This leads to a systematic way to investigate morphisms $\hopf \to \QSyml$. The closely related category of colored combinatorial Hopf algebras was described by Bergeron and Hohlweg \cite{BerHoh06} and studied further in \cite{HP09}. We discuss how to relate our work to these earlier versions in the second half of this section. 

We will work over the rationals $\field$, but results in this section are valid over any field.

\subsection{Definitions}
\label{SS:DefMCHA} 

Let $\hopf = \bigoplus_{\vect{n} \in \NNl} \hopf_{\vect{n}}$ be a multigraded connected Hopf algebra over $\field$. Let $\phi : \hopf \to \field$ be a multiplicative invertible linear functional, or a {\em character}, on $\hopf$ (see \S\S\ref{SS:Invertible} for more on (convolution) invertible linear functionals). Then we say that the ordered pair $(\hopf, \phi)$ (or simply $\hopf$ if $\phi$ is unambiguous from the context) is a \newword{multigraded combinatorial Hopf algebra}. Morphisms in the category of multigraded combinatorial Hopf algebras are homomorphisms $\Psi: \hopf \to \hopf'$ of $\NNl$-graded Hopf algebras for the multigraded combinatorial Hopf algebras $(\hopf,\phi)$ and $(\hopf',\phi^{\prime})$ such that $\phi = \phi' \circ \Psi$.

When $\lev=1$ we recover the original construction of a combinatorial Hopf algebra in \cite{ABS06}.

Each of the algebras discussed in \S\ref{S:background} can be naturally made into a multigraded combinatorial Hopf algebra by pairing it with a character. In the context of this paper, the most important example is the Hopf algebra $\QSyml$ paired with the \newword{universal character $\zetaQ : \QSyml \to \field$}, defined by setting each of the variables $x_1^{(0)}, x_1^{(1)}, \ldots, x_1^{(\lev-1)}$ to $1$ and all other variables $x_j^{(i)}$, $j\ne 1$, to $0$. Alternatively, $\zetaQ$ can be defined by
\begin{equation} \label{E:zetaQSyml}
\zetaQ(M_{\vect{I}}) = \zetaQ(F_{\vect{I}}) = \begin{cases}
1 & \text{ if $\vect{I} = \emptycomp$ or $\len(\vect{I}) = 1$} \\
0 & \text{ otherwise.}
\end{cases}
\end{equation}
Clearly $\zetaQ$ is a character since it is an evaluation map. The significance of this character will be discussed next.

\subsection{Universality of the Hopf algebra $\QSyml$}
\label{SS:QSymUniversal}

The pair $(\QSyml, \zetaQ)$ is terminal in the category of multigraded combinatorial Hopf algebras; this generalizes the $\lev = 1$ case \cite[Theorem~4.1]{ABS06}. More specifically we have:

\begin{theorem}\label{T:QSymlUniversal}
For every multigraded combinatorial Hopf algebra $(\hopf, \zeta)$, there exists a unique morphism $\Psi:(\hopf, \zeta) \to (\QSyml, \zetaQ)$. Explicitly, if $\vect{n} \in \NNl$ and $h\in \hopf_{\vect{n}}$ then
\begin{equation}\label{E:InducedMap}
\Psi(h) = \sum_{\vect{I} \comp \vect{n}} \zeta_{\vect{I}}(h) M_{\vect{I}}
\end{equation}
where if $\vect{I} = (\vect{i}_1, \ldots, \vect{i}_p)$ then $\zeta_{\vect{I}}$ is the composite map 
\[
\hopf \xrightarrow{\Delta^{(p-1)}} \hopf^{\otimes p} \xrightarrow{\text{projection}} \hopf_{\vect{i}_1} \otimes \cdots \otimes \hopf_{\vect{i}_p} \xrightarrow{\zeta^{\otimes p}} \field^{\otimes p} \xrightarrow{\text{multiplication}} \field.
\]
\end{theorem}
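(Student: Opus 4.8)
The plan is to mirror the proof of \cite[Theorem~4.1]{ABS06}, carried out in the $\NNl$-graded setting. The argument splits into three parts: (i) verify that the map $\Psi$ defined by \eqref{E:InducedMap} is a morphism of multigraded combinatorial Hopf algebras; (ii) show it is the unique such morphism; (iii) along the way, establish the few auxiliary facts about $\zeta_{\vect{I}}$ that make (i) and (ii) work.

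First I would record the basic properties of the functionals $\zeta_{\vect{I}}$. Writing $\zeta_{\vect{i}_1,\ldots,\vect{i}_p}$ for $\zeta_{\vect{I}}$, coassociativity of $\Delta$ gives the ``merging'' identity: summing $\zeta_{\ldots,\vect{i}_k,\vect{i}_{k+1},\ldots}$ over all $\vect{i}_k+\vect{i}_{k+1}=\vect{m}$ with $\vect{i}_k,\vect{i}_{k+1}\ge\vect{0}$ recovers $\zeta_{\ldots,\vect{m},\ldots}$; conversely, $\Delta$-compatibility of $\zeta_{\vect{I}}$ with respect to splitting $\vect{I}$ into a concatenation $\vect{J}\vect{K}$ reads $\sum \zeta_{\vect{I}}(h_1)\otimes(\text{stuff}) = \sum_{\vect{I}=\vect{J}\vect{K}} \zeta_{\vect{J}}\otimes\zeta_{\vect{K}}$ applied appropriately — this is exactly what is needed to match the coproduct formula $\Delta(M_{\vect{I}})=\sum_{\vect{I}=\vect{J}\vect{K}} M_{\vect{J}}\otimes M_{\vect{K}}$. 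For multiplicativity I would use that $\zeta$ is a character together with the fact that $\Delta$ is an algebra map, so that $\zeta_{\vect{I}}$ evaluated on a product $hh'$ expands, via the quasi-shuffle description of $M_{\vect{I}}M_{\vect{J}}$ in $\QSyml$, into precisely the sum of $\zeta_{\vect{K}}(h)\zeta_{\vect{L}}(h')$ over quasi-shuffles $\vect{K},\vect{L}$ of the index. The grading is automatic since $\zeta_{\vect{I}}$ kills everything outside $\hopf_{\Sigma\vect{I}}$, so $\Psi(\hopf_{\vect{n}})\subseteq\QSyml_{\vect{n}}$. That $\zetaQ\circ\Psi=\zeta$ follows from \eqref{E:zetaQSyml}: only the length-one and empty compositions $\vect{I}$ survive, and for those $\zeta_{\vect{I}}=\zeta$ (resp.\ the counit).

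For uniqueness, suppose $\Psi'$ is another morphism. Since $\Psi'$ is a graded Hopf map, for $h\in\hopf_{\vect{n}}$ we can write $\Psi'(h)=\sum_{\vect{I}\comp\vect{n}} c_{\vect{I}}(h)M_{\vect{I}}$ and must identify the coefficients $c_{\vect{I}}$. Iterating the coproduct and using that $\Psi'$ is a coalgebra map, $c_{\vect{I}}$ is computed by applying $(\Psi')^{\otimes p}$ after $\Delta^{(p-1)}$, projecting to the component $\hopf_{\vect{i}_1}\otimes\cdots\otimes\hopf_{\vect{i}_p}$, and reading off the coefficient of $M_{(\vect{i}_1)}\otimes\cdots\otimes M_{(\vect{i}_p)}$; on each single factor $\hopf_{\vect{i}_k}$ the coefficient of $M_{(\vect{i}_k)}$ in $\Psi'$ is just $\zetaQ$ composed with $\Psi'$ in degree $\vect{i}_k$, which equals $\zeta$ by the morphism condition. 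Hence $c_{\vect{I}}=\zeta_{\vect{I}}$ and $\Psi'=\Psi$.

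The main obstacle I anticipate is the multiplicativity check in part (i): one has to match, term by term, the quasi-shuffle expansion of $M_{\vect{I}}M_{\vect{J}}$ in $\QSyml$ against the expansion of $\zeta_{\vect{K}\cup\vect{L}}$-type terms coming from $\Delta$ being an algebra homomorphism and $\zeta(hh')=\zeta(h)\zeta(h')$. In the $\lev=1$ setting this is the standard ``$\zeta$ respects the shuffle/quasi-shuffle'' computation; here the bookkeeping is heavier because columns carry colors and two columns may be added when shuffled past each other, so I would phrase it using the $(\dofI,\cofI)$ encoding of vector compositions to keep the combinatorics tractable and reduce it to the known level-$1$ statement applied to weights, with colors carried along passively. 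Everything else — the coproduct compatibility, the grading, the compatibility with characters, and uniqueness — is a routine diagram chase once the properties of $\zeta_{\vect{I}}$ are in hand.
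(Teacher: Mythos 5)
Your argument is correct in substance but takes a genuinely different route from the paper's. The paper does not prove Theorem~\ref{T:QSymlUniversal} directly: it is deduced as the $\vect{k}=\vect{0}$ case of Theorem~\ref{T:kOddInducedMap}, whose proof runs through duality. There one defines an algebra map $\Phi:\NSyml\to\hopf^*$ by sending the free generators $S_{\vect{n}}$ to the restrictions $\zeta_{\vect{n}}$ and sets $\Psi=\Phi^*$; freeness of $\NSyml$ makes existence and uniqueness of $\Phi$ (hence of $\Psi$ as a graded coalgebra map) immediate, and formula \eqref{E:InducedMap} drops out because $\Phi(S^{\vect{I}})=\zeta_{\vect{I}}$. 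Crucially, multiplicativity of $\Psi$ is then obtained with no quasi-shuffle computation at all: $\Psi\circ m_{\hopf}$ and $m_{\QSyml}\circ\Psi^{\otimes 2}$ are both graded coalgebra maps $\hopf^{\otimes 2}\to\QSyml$ lifting the same functional (this is where the character property of $\zeta$ enters), so they coincide by the uniqueness already established. Your plan instead verifies everything directly on \eqref{E:InducedMap}; your coproduct check and your uniqueness argument (pair with $S^{\vect{I}}=S_{\vect{i}_1}\cdots S_{\vect{i}_p}$ and use $\zetaQ\circ\Psi'=\zeta$ in each tensor factor) are sound and are really the same duality in different clothing. What this costs you is that the multiplicativity check must actually be carried out: the identity $\zeta_{\vect{I}}(hh')=\sum\zeta_{\vect{J}}(h)\,\zeta_{\vect{K}}(h')$ over quasi-shuffle data does hold --- it follows from $\Delta^{(p-1)}(hh')=\Delta^{(p-1)}(h)\,\Delta^{(p-1)}(h')$ together with $\zeta(xy)=\zeta(x)\zeta(y)$ applied columnwise, each column splitting as a vector sum $\vect{i}_k=\vect{j}_k+\vect{k}_k$ --- but your proposed shortcut of reducing to the level-$1$ statement ``applied to weights with colors carried along passively'' does not quite work, because a merged column is a genuine sum of $\lev$-partite vectors and is not determined by the weights of its summands. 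Either do that columnwise computation honestly, or adopt the paper's two-diagram trick and sidestep it entirely.
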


This theorem is actually just the $\vect{k} = \vect{0}$ case of Theorem~\ref{T:kOddInducedMap} (and also of Theorem~\ref{T:kEvenInducedMap}), so we will defer the proof until \S\ref{S:defkoddkeven} and focus on examples for now.


\begin{example} \label{Ex:GradedPosets}

Recall that we introduced the Hopf algebra $\GPosl$ of multigraded posets in \S\S\ref{SS:GPosl}. Now we define $\zeta: \GPosl \to \field$ to be 
\[
\zeta(P) = 1 \text{ for every multigraded poset $P$.}
\]
Clearly $\zeta$ is a character, so $(\GPosl, \zeta)$ is a multigraded combinatorial Hopf algebra. Theorem \ref{T:QSymlUniversal} then implies that we have a morphism $\mathcal{F}: \GPosl \to \QSyml$ such that $\zetaQ \circ \mathcal{F} = \zeta$. As observed in \cite[Example~4.4]{ABS06}, the $\lev = 1$ version of this morphism is the $F$-homomorphism introduced by Ehrenborg \cite{Ehrenborg96}.

By \eqref{E:InducedMap}, for every multigraded poset $P$ of rank $\vect{n}$, we have
\begin{equation}\label{E:FPM}
\mathcal{F}(P) = \sum_{\vect{I} \comp \vect{n}} f_{\vect{I}}(P) \, M_{\vect{I}},
\end{equation} 
where
\[
\begin{aligned}
f_{\vect{i}_1 \vect{i}_2 \ldots \vect{i}_m}(P) = & \text{ number of chains } \hat{0} = t_0 < t_1 < \cdots < t_{m+1} = \hat{1} \text{ in $P$} \\
& \text{ such that } \rk(t_{j+1}) - \rk(t_{j}) = \vect{i}_j \text{ for $j=1,\ldots, m$}.
\end{aligned}
\]
Thus $(f_{\vect{I}}(P) \mid \vect{I} \comp \vect{n})$ is a refinement of the usual flag $f$-vector of a graded poset.

For example, if
\[
\begin{picture}(30,90)(0,-40)
$P =$ 
\end{picture}
\setlength{\unitlength}{1pt}
\begin{picture}(60,90)
\put(30,0){\circle*{4}} \put(30,0){\line(-1,1){30}} \put(30,0){\line(0,1){30}} \put(30,0){\line(1,1){30}} \put(0,30){\circle*{4}} \put(0,30){\line(0,1){30}}
\put(30,30){\circle*{4}} \put(30,30){\line(-1,1){30}} \put(30,30){\line(1,1){30}}
\put(60,30){\circle*{4}} \put(60,30){\line(0,1){30}}
\put(0,60){\circle*{4}} \put(0,60){\line(1,1){30}}
\put(60,60){\circle*{4}} \put(60,60){\line(-1,1){30}}
\put(30,90){\circle*{4}}
\put(30,-10){$\scriptstyle{(0,0)}$} \put(5,28){$\scriptstyle{(1,0)}$}  
\put(35,28){$\scriptstyle{(0,1)}$} \put(65,28){$\scriptstyle{(0,1)}$}
\put(5,58){$\scriptstyle{(1,1)}$} \put(65,58){$\scriptstyle{(0,2)}$}
\put(35,88){$\scriptstyle{(1,2)}$}
\end{picture}
\]
where the labels represent multiranks, then
\[
\mathcal{F}(P) = 
M_{\colvec{1 \\ 2}} +  M_{\colvec{1 & 0 \\ 0 & 2}} + 2 M_{\colvec{0 & 1 \\ 1 & 1}} + M_{\colvec{1 & 0 \\ 1 & 1}} + M_{\colvec{0 & 1 \\ 2 & 0}} + M_{\colvec{1 & 0 & 0 \\ 0 & 1 & 1}} + M_{\colvec{0 & 1 & 0 \\ 1 & 0 & 1}} + 2 M_{\colvec{0 & 0 & 1 \\ 1 & 1 & 0}}.
\] 

Another way to express \eqref{E:FPM}, analogous to \cite[Equation~(1)]{Stanley96}, is
\[
\mathcal{F}(P) = 
\sum_{\hat{0} = t_0 \le t_1 \le \cdots \le t_{k-1} < t_k = \hat{1}} \vect{x}_1^{\rk(t_1) - \rk(t_0)} 
\vect{x}_2^{\rk(t_2) - \rk(t_1)} \cdots \vect{x}_k^{\rk(t_k) - \rk(t_{k-1})}
\]
where the sum is over all multichains in $P$ from $\hat{0}$ to $\hat{1}$ in which $\hat{1}$ occurs exactly once.
\end{example}


\begin{example}\label{Ex:EulerianPoset}
Recall that we defined the Hopf subalgebra $\GPoslk$ of $\GPosl$ in \S\S\ref{SS:GPoslk}. 
Now we consider the restriction of the character $\zeta$ from the above example to $\GPoslk$. For simplicity we also denote the restricted character by $\zeta$, so 
\[
\zeta: \GPoslk \to \field
\]
is given by $\zeta(P) = 1$ for every $\vect{k}$-Eulerian poset $P$.  Note that $\zeta^{-1}$ is just the M\"obius function $\mu$. Thus if $P$ is $\vect{k}$-Eulerian of multirank $\vect{n} \le \vect{k}$ (hence $P$ is Eulerian), then 
$\zeta^{-1}(P) = \mu(P) = (-1)^{|\vect{n}|} = \bar{\zeta}(P).$ We can therefore assert that
\begin{equation} \label{E:kEulerianCharacter}
\bar{\zeta}(h) = \zeta^{-1}(h) \quad \text{ for every $h \in \bigoplus_{\vect{n} \le \vect{k}} \GPoslk_{\vect{n}}$}.
\end{equation}
If every entry of $\vect{k}$ is $\infty$ (hence there are no restrictions on $h$), then \eqref{E:kEulerianCharacter} amounts to saying that $\zeta$ is an \newword{odd character}, in the sense of \cite{ABS06}. Thus \eqref{E:kEulerianCharacter} suggests a way to refine the definition of odd character. This is the motivation for our definition of a $\vect{k}$-odd character in \S\ref{SS:kOddFunctional}.
\end{example}


\begin{example} \label{Ex:ColoredPosets}
Recall that we defined the Hopf algebra $\NPl$ of colored posets in \S\S\ref{SS:ColoredPosets}. We will say that a colored poset $P$ is \newword{naturally labeled} if it has a linear extension of the form $(12\cdots n, u)$. Define $\zeta: \NPl \to \field$ by
\[
\zeta(P) = \begin{cases}
1 & \text{ if $P = \emptyset$ or $P$ is naturally labeled} \\
0 & \text{ otherwise.}
\end{cases}
\]
Clearly $P$ and $Q$ are naturally labeled if and only if $P \sqcup Q$ is. Thus $\zeta$ is a character, and $(\NPl, \zeta)$ is a multigraded combinatorial Hopf algebra. Let $\Gamma: \NPl \to \QSyml$ be the unique morphism of multigraded combinatorial Hopf algebras satisfying $\zetaQ \circ \Gamma = \zeta$. We will describe this map $\Gamma$ in a bit more detail shortly, but first we need a new notation. Recall that colored permutations were defined at the beginning of \S\S\ref{SS:FQSyml}. If $(\sigma, u)$ is a colored permutation of $n$ and $\Des(\sigma) = \{s_1 < s_2 < \cdots < s_p\}$, then set $s_0 = 0$ and $s_{p+1} = n$, and define $\wDes(\sigma,u)$ to be the vector composition of length $p+1$ whose $i$th column is the sum of columns $s_{i-1} + 1$ through $s_{i}$ in the vector composition $\vect{E}_{u} = (\vect{e}_{u_1}, \cdots, \vect{e}_{u_n})$. For example,
\[
\vect{E}_{120101} = \begin{pmatrix}
0 & 0 & 1 & 0 & 1 & 0 \\
1 & 0 & 0 & 1 & 0 & 1 \\
0 & 1 & 0 & 0 & 0 & 0 
\end{pmatrix}
\quad
\text{ and }
\quad
\wDes(251463, 120101) =
\begin{pmatrix}
0 & 2 & 0 \\
1 & 1 & 1 \\
1 & 0 & 0 
\end{pmatrix}.
\]
In short, $\wDes(\sigma, u)$ keeps track of the descent set of $\sigma$ along with the multiset of colors appearing in each run of ascents.

\begin{prop}
Let $P$ be a colored poset, and let $L(P)$ be the set of linear extensions of $P$. We have
\begin{equation} \label{E:GammaPM}
\Gamma(P) = \sum_{(\sigma, u) \in L(P)} \sum_{\wDes(\sigma, u)  \tleq  \vect{I} \tleq \vect{E}_u} M_{\vect{I}}.
\end{equation}
\end{prop}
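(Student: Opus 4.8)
The plan is to apply Theorem~\ref{T:QSymlUniversal} directly, computing $\zeta_{\vect{I}}(P)$ for the character $\zeta$ defined on $\NPl$ and showing that the resulting expression collapses to the right-hand side of \eqref{E:GammaPM}. First I would recall that by \eqref{E:InducedMap}, $\Gamma(P) = \sum_{\vect{I}\comp\vect{n}} \zeta_{\vect{I}}(P)\, M_{\vect{I}}$, where $\vect{n}$ is the multidegree of $P$ and $\zeta_{\vect{I}}$ is obtained by iterating the coproduct of $\NPl$, projecting onto the appropriate multigraded pieces, and applying $\zeta$ in each tensor slot. Using the explicit coproduct $\Delta(P) = \sum_{I\in\mathcal{I}(P)} \std(I)\otimes\std(P-I)$, iterating gives that $\zeta_{(\vect{i}_1,\ldots,\vect{i}_p)}(P)$ counts chains of order ideals $\emptyset = I_0 \subseteq I_1 \subseteq \cdots \subseteq I_p = P$ such that each successive ``layer'' $\std(I_j - I_{j-1})$ has multidegree $\vect{i}_j$ and is naturally labeled (i.e., admits a linear extension of the form $(12\cdots m, u)$).

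The key combinatorial step is to reorganize this sum over chains of order ideals into a sum over linear extensions. A chain of order ideals $\emptyset = I_0 \subseteq \cdots \subseteq I_p = P$ in which every layer is naturally labeled corresponds exactly to a pair consisting of a linear extension $(\sigma, u)\in L(P)$ together with a way of breaking the word $12\cdots n$ into $p$ consecutive blocks so that within each block the corresponding subword of $\sigma$ is increasing — equivalently, so that no descent of $\sigma$ lies strictly inside a block. This is the standard ``$P$-partition'' style bijection (cf.\ the $\lev=1$ case in \cite{Ehrenborg96}): reading off the labels of $I_1 - I_0$, then $I_2 - I_1$, etc., in increasing order of absolute value produces the linear extension, and the block boundaries record where one passes from one layer to the next. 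Under this bijection, a block decomposition of $(\sigma,u)$ is admissible precisely when $\Des(\sigma)$ is contained in the set of block-boundary positions, and the multidegrees of the layers are exactly the column sums of $\vect{E}_u$ over each block. Translating ``$\Des(\sigma)$ is contained in the boundary set'' into the language of vector compositions, the vector composition $\vect{I}$ recording the layer multidegrees is one that refines $\wDes(\sigma,u)$ (the coarsest admissible one, cutting only at descents) and is refined by $\vect{E}_u$ (the finest, cutting everywhere); that is, $\wDes(\sigma,u) \tleq \vect{I} \tleq \vect{E}_u$. Conversely every $\vect{I}$ in this interval arises from a unique admissible block decomposition.

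Putting these together, $\zeta_{\vect{I}}(P)$ equals the number of $(\sigma,u)\in L(P)$ for which $\wDes(\sigma,u) \tleq \vect{I} \tleq \vect{E}_u$, and substituting into \eqref{E:InducedMap} and interchanging the order of summation yields exactly \eqref{E:GammaPM}. The main obstacle I anticipate is getting the $\tleq$ conditions precisely right: one must check carefully that the coarsest admissible vector composition is genuinely $\wDes(\sigma,u)$ as defined (i.e., that merging the $\vect{E}_u$-columns between consecutive descents of $\sigma$ is always admissible and is the only constraint), and that the colors are tracked correctly through the standardization maps so that the layer multidegrees really are the claimed column sums. Everything else — the iterated-coproduct computation and the order-ideal-to-linear-extension bijection — is routine once the $\lev=1$ picture from \cite{Ehrenborg96} is in hand, with colors carried along passively.
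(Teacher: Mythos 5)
Your proposal is correct and follows essentially the same route as the paper's proof: apply \eqref{E:InducedMap}, observe that $\zeta$ kills all chains of order ideals except those whose layers are naturally labeled, and then biject such chains with pairs consisting of a linear extension $(\sigma,u)$ and a vector composition $\vect{I}$ with $\wDes(\sigma,u)\tleq\vect{I}\tleq\vect{E}_u$. The one point you flag as a possible obstacle (that the coarsest admissible decomposition is exactly $\wDes(\sigma,u)$) is handled in the paper at the same level of detail you give, so nothing further is needed.
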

\begin{proof}
For each subset $I\subseteq P$, let $\mdeg(I) = (n_0, \ldots, n_{\lev-1})^T \in \NNl$ where $n_i$ is the number of elements in $I$ with color $i$. By \eqref{E:InducedMap} we have
\begin{multline*}
\Gamma(P) = \sum_{\emptyset = I_0 \subsetneq I_1 \subsetneq I_2 \subsetneq \cdots \subsetneq I_m = P}
\zeta(\std(I_1\setminus I_0)) \zeta(\std(I_2\setminus I_1)) \cdots \zeta(\std(I_m \setminus I_{m-1})) 
\\ 
\cdot M_{(\mdeg(I_1\setminus I_0), \mdeg(I_2 \setminus I_1), \ldots, \mdeg(I_m \setminus I_{m-1}))}  
\end{multline*}
where the sum is over all chains of order ideals in $P$. By the definition of $\zeta$ we only need to sum over chains $\emptyset = I_0 \subsetneq \cdots \subsetneq I_m = P$ such that $\std(I_i \setminus I_{i-1})$ is naturally labeled for each $i$. For every such chain $C$, let $[I_i\setminus I_{i-1}]$ denote the word obtained by reading the elements of $I_i\setminus I_{i-1}$ in order of increasing absolute value, and let $\pi(C) = [I_1 \setminus I_0]\cdots [I_m\setminus I_{m-1}]$. Thus $\pi(C)$ is a linear extension of $P$, written as a concatenation of subwords with no descents. This linear extension (and its decomposition into subwords) contributes a term of the form $M_{\vect{I}}$, $\wDes(\pi(C)) \tleq \vect{I} \tleq \vect{E}_u$ to our expression for $\Gamma(P)$, where $u = \cofI(\pi(C))$. Conversely, for any linear extension $(\sigma, u) \in L(P)$ and any $\vect{I} = (\vect{i}_1, \ldots, \vect{i}_m)$ such that $\wDes(\sigma, u) \tleq \vect{I} \tleq \vect{E}_u$, we can find a unique chain $C = \{\emptyset = I_0 \subsetneq I_1 \subsetneq \cdots \subsetneq I_m = P\}$ with $\pi(C) = (\sigma, u)$ and $\mdeg(I_j \setminus I_{j-1}) = \vect{i}_j$ for every $j$  by letting $I_1$ be the first $|\vect{i}_1|$ elements of $(\sigma, u)$, $I_2 \setminus I_1$ be the next $|\vect{i}_2|$ elements of $(\sigma, u)$, and so forth.
\end{proof}

Another way to get from $\NPl$ to $\QSyml$ is through the Hopf algebra $\GPosl$ of multigraded posets. If $P$ is a colored poset, let $J(P)$ as usual denote the poset (distributive lattice) of order ideals of $P$ ordered by inclusion. We make $J(P)$ into a multigraded poset by defining the multirank of an order ideal $I \in J(P)$ to be the multidegree of $I$ as a colored poset (i.e., the vector of multiplicities of the colors appearing in $I$). This gives us a map 
\[J: \NPl \to \GPosl\] 
which is easily shown to be a morphism of Hopf algebras using elementary properties of order ideals. Notice that $J(P)$ depends only on the colors of the elements of $P$ and not on their absolute values. When $\lev = 1$ this is essentially the map considered in \cite[Example~2.4]{ABS06}. 
\end{example}


\begin{example} \label{Ex:FQSyml}

Recall that we defined the Hopf algebra $\FQSyml$ in \S\S\ref{SS:FQSyml}. Now we define $\zeta: \FQSyml \to \field$ by 
\[
\zeta(\vect{F}_{\sigma, u}) = \begin{cases}
1 & \text{ if $\sigma$ is the identity permutation in $\perm_n$ for some $n$}\\
0 & \text{ otherwise.}
\end{cases}
\]
It is easy to see that $\zeta$ is a character, so by Theorem \ref{T:QSymlUniversal}, there is a unique morphism of multigraded combinatorial Hopf algebras $\mathcal{D} : \FQSyml \to \QSyml$ such that $\zetaQ \circ \mathcal{D} = \zeta$.

Now using \eqref{E:InducedMap}, we can see that
\begin{equation}\label{E:DFM}
\mathcal{D}(\vect{F}_{\sigma, u}) 
= \sum_{\wDes(\sigma, u) \; \tleq \; \vect{I} \; \tleq \; (\vect{e}_{u_1}, \ldots, \vect{e}_{u_n})} M_{\vect{I}} 
= \sumsub{j_1 \le j_2 \le \cdots \le j_n \\ r \in \Des(\sigma) \implies j_r < j_{r+1}}
x_{j_1}^{(u_1)} \, x_{j_2}^{(u_2)} \cdots x_{j_n}^{(u_n)}.
\end{equation}
As a special case of \eqref{E:DFM}, if $\Des(u) \subseteq \Des(\sigma)$ then 
\begin{equation}\label{E:DofF}
\mathcal{D}(\vect{F}_{\sigma, u}) = F_{\wDes(\sigma, u)}.
\end{equation}
It also follows from \eqref{E:DFM} that for every $\vect{n} \in \NNl$,
\[ 
\mathcal{D}(\vect{S}_{\vect{n}}) = h_{\vect{n}}. 
\]
Thus $\Syml$ is the commutative image of $\NSyml$ under $\mathcal{D}$.

Novelli and Thibon \cite{NT08} noted that each $F_{\vect{I}}$ arises as the commutative image of certain $\vect{F}_{\sigma, u}$ under the abelianization map $\mathrm{ab}: \field \langle\vect{A}\rangle \onto \field [\XX]$,  $(j, i) \mapsto x_j^{(i)}$. Comparing \eqref{E:FFdef} with \eqref{E:DFM}, we see that the morphism $\mathcal{D}$ is in fact just the restriction of $\mathrm{ab}$. To summarize, the diagram
\[
\xymatrix{
\NSyml \ar@{^{(}->}[r] \ar@{->>}[d]_{\mathcal{D}} &
\FQSyml \ar@{^{(}->}[r] \ar@{->>}[d]_{\mathcal{D}} & 
\field\langle\langle \vect{A} \rangle\rangle \ar@{->>}[d]_{\mathrm{ab}} \\
\Syml \ar@{^{(}->}[r] & \QSyml \ar@{^{(}->}[r] & \field [[ \XX ]]
}
\]
is commutative  (cf.\ \cite[\S1.3]{AS05}). 
\end{example}


To see the more general picture, we consider once again the setup from
\S\S\ref{Ex:ColoredPosets}.
The map $\Gamma: \NPl \to \QSyml$ defined explicitly in Equation \eqref{E:GammaPM} factors through $\FQSyml$ via the morphism $\widehat{\Gamma}: \NPl \onto \FQSyml$ given by 
\[
\widehat{\Gamma}(P) = \sum_{(\sigma, u) \in L(P)} \vect{F}_{\sigma, u}.
\]
When $\lev = 1$, $\widehat{\Gamma}(P)$ is the \newword{free quasisymmetric generating function of $P$} introduced by Duchamp et al.\ \cite[Definition~3.15]{DHT02}. Combining \eqref{E:DFM} and \eqref{E:GammaPM}, we see that $\Gamma = \mathcal{D} \circ \widehat{\Gamma}$. 

We can summarize the relationship between the various examples discussed so far in the following commutative diagram:
\[
\xymatrix{
& & \NPl \ar@{->>}[dl]_{\textstyle \widehat{\Gamma}} \ar@{->}[dd]^{\textstyle \Gamma}   & \NPl \ar@{->}[d]^{\textstyle J} \\
 & \FQSyml \ar@{->>}[dr]_{\textstyle \mathcal{D}} & & \GPosl \ar@{->}[dl]^{\textstyle \mathcal{F}} \\
   \NSyml \ar@{^{(}->}[ur] \ar@{->>}[dr]_{\textstyle \mathcal{D}} & & \QSyml &  \\
 & \Syml \ar@{^{(}->}[ur] & &
}
\]
Note that $\Gamma \ne \mathcal{F} \circ J$ because $\Gamma(P)$ depends on the absolute values of the elements of $P$ while $J(P)$ does not.

\subsection{The Bergeron-Hohlweg theory of colored combinatorial Hopf algebras}
\label{SS:EarlyColoredHAs}

We now briefly review the theory of $m$-colored combinatorial Hopf algebras \cite{BerHoh06}. Our presentation follows \cite{HP09}. Then we describe how to relate this theory to our work here. 

An $\lev$-colored combinatorial Hopf algebra is a pair $(\hopf, \dot{\phi})$ where $\hopf= \bigoplus_{n \in \NN} \hopf_n$ is a graded connected Hopf algebra and $\dot{\phi} = (\phi^{(0)}, \phi^{(1)}, \cdots, \phi^{(\lev-1)})$ is an $\lev$-tuple of characters ${\phi^{(i)} : \hopf \rightarrow \Q}$, $i=0, 1, \cdots, \lev-1$. Morphisms in the category of $\lev$-colored combinatorial Hopf algebras are maps $\Psi:\hopf \to \hopf'$ such that ${\phi^{(i)}}'\circ \Psi = {\phi^{(i)}}$ for $i=0,1,\ldots, \lev-1$.

\begin{remark}
To see explicitly how the $\lev$-colored combinatorial Hopf algebras $(\hopf, \phi)$ of \cite{BerHoh06} correspond to the $\lev$-colored combinatorial Hopf algebras $(\hopf, \dot{\phi})$ of \cite{HP09}, recall first that for \cite{BerHoh06}, an $\lev$-colored combinatorial Hopf algebra is a pair $(\hopf, \phi)$ where $\hopf= \bigoplus_{n \in \NN} \hopf_n$ and $\phi: \hopf \rightarrow \Q[\mathcal{C}_{\lev}]$. Here $\mathcal{C}_{\lev}$ is the cyclic group generated by $w$, a primitive $\lev$th root of unity. Elements of $\Q[\mathcal{C}_{\lev}]$ are polynomial expressions of the form $q_0+q_1w +q_2w^2 + \cdots + q_{\lev-1}w^{\lev-1}$ where $q_i \in \Q$ for each $i=0, 1, \cdots, \lev-1$. To make these two definitions of $\lev$-colored combinatorial Hopf algebras compatible, the multiplication in $\Q[\mathcal{C}_{\lev}]$ should be defined on the color components via: 
\[ w^i w^j = \begin{cases}
w^i & \textmd{ if } i=j;\\
0 & \textmd{ otherwise}
\end{cases} \]
and extended linearly \cite{HP09}. Then we can identify any particular map $\phi : \hopf \rightarrow \Q[\mathcal{C}_{\lev}]$ with an $\lev$-tuple $\dot{\phi} = (\phi^{(0)}, \phi^{(1)}, \cdots, \phi^{(\lev-1)})$ by first defining the functions $q_i : \hopf \rightarrow \Q$ such that for any $h \in \hopf$ we have:
\[ 
\phi(h) = q_0(h) +q_1(h) w +q_2(h) w^2 + \cdots + q_{\lev-1}(h) w^{\lev-1} . 
\]
Then we simply set $\phi^{(i)} = q_i$ for $i=0,1,\cdots,\lev-1$. We will have:
\[ 
\phi(h) = \phi^{(0)}(h) + \phi^{(1)}(h)w + \phi^{(2)}(h)w^2 + \cdots + \phi^{(\lev-1)}(h)w^{\lev-1} \textmd{ for all } h \in \hopf. 
\]
\end{remark}

The terminal object in this category is the Hopf algebra $\QSymll$ of $\lev$-colored quasisymmetric functions, first studied by Poirier \cite{Poirier98}. The bases of $\QSymll$ are typically indexed by \newword{$\lev$-colored compositions} $\alpha = ((\alpha_1, u_1), \ldots, (\alpha_m, u_m))$, where $(\alpha_1, \ldots, \alpha_m)$ is an integer composition (vector of positive integers) and $u_1 \ldots u_m \in [0,\lev-1]^m$ is a color vector. In the following we will write $\alpha \comp_l \, n$ when $\alpha$ is an $\lev$-colored composition of $n$.
A natural basis for $\QSymll$ is the set of \newword{colored monomial quasisymmetric functions} $M^{(\lev)}_{\alpha}$, defined by
\[
M^{(\lev)}_\alpha = \sum_{(j_1, u_1) <_{\text{lex}} \cdots <_{\text{lex}} (j_m, u_m)} (x_{j_1}^{(u_1)})^{\alpha_1} (x_{j_2}^{(u_2)})^{\alpha_2} \cdots (x_{j_m}^{(u_m)})^{\alpha_m}
\]  
where $<_{\text{lex}}$ refers to the lexicographic order on $\PP \times [0, \lev-1]$. 

Baumann and Hohlweg \cite{BauHoh08} proved that $\QSymll$ is a Hopf subalgebra of $\QSyml$. A different but isomorphic realization of $\QSymll$ was introduced by Novelli and Thibon \cite{NT04}. To see that $\QSymll \subseteq \QSyml$, note that the monomial functions $M^{(\lev)}_\alpha$ can be written in the monomial basis $M_{\vect{I}}$ as follows (recall that $\vect{e}_i$ denotes the $i$th coordinate vector in $\NNl$ and that the partial order $\cleq_s$ is defined in \S\ref{SS:lpartite}):
\begin{equation}\label{E:monomialColor}
M^{(\lev)}_\alpha = \sum_{\vect{I} \cleq_s (\alpha_1 \cdot \vect{e}_{u_1}, \ldots, \alpha_m \cdot \vect{e}_{u_m})} M_{\vect{I}}
\end{equation}
with $\alpha = ((\alpha_1, u_1), \ldots, (\alpha_m, u_m))$. 
For example,
\begin{eqnarray*}
M_{((2,1),(1,0),(1,2),(3,0))}^{(3)} &=& \sum_{i_1 < i_2 < i_3 < i_4}
(x_{i_1}^{(1)})^{2}(x_{i_2}^{(0)})(x_{i_3}^{(2)})(x_{i_4}^{(0)})^{3} + \sum_{i_1 < i_2  < i_3}
(x_{i_1}^{(1)})^{2}(x_{i_2}^{(0)})(x_{i_2}^{(2)})(x_{i_3}^{(0)})^{3} \\
&=& 
M_{\left (\begin{smallmatrix}
0&1&0&3\\
2&0&0&0\\
0&0&1&0\end{smallmatrix}\right )} +
M_{\left (\begin{smallmatrix}
0&1&3\\
2&0&0\\
0&1&0
\end{smallmatrix}\right )}.
\end{eqnarray*}
Details of the Hopf algebra structure of $\QSymll$ can be found in \cite{BerHoh06, HP09, NT04, NT08}. 

The universal character $\dot{\psi} = (\psi^{(0)}, \ldots, \psi^{(\lev-1)})$ of $\QSymll$ is defined as a tuple of evaluation characters: each $\psi^{(i)}$ takes the variable $x^{(i)}_1$ to $1$ and all other variables to $0$. Equivalently,
\[
\psi^{(i)}(M^{(\lev)}_\alpha) = 
\begin{cases}
1 & \text{ if $\alpha = \emptycomp$ or $\alpha = ((a, i))$} \\
0 & \text{ otherwise.}
\end{cases}
\]
In \cite[Theorem 13]{HP09} it is shown that for any colored combinatorial Hopf algebra $(\hopf, \dot{\phi})$, there is a unique morphism $\Psi_c : (\hopf, \dot{\phi}) \rightarrow (\QSymll, \dot{\psi})$ of colored combinatorial Hopf algebras given explicitly by 
\begin{equation} \label{E:InducedMapColor}
\Psi_c(h) = \sum_{\alpha \comp_l n} \phi_{\alpha}(h) M_{\alpha}^{(l)},
\end{equation}
for any $h \in \hopf_n$, where for $\alpha = (\omega^{j_1}\alpha_1,\dots, \omega^{j_k}\alpha_k)$, $\phi_{\alpha}$ is the composite map:
\[ \hopf \xrightarrow{\Delta^{(k-1)}} \hopf^{\otimes k} \xrightarrow{\text{projection}}  \hopf_{\alpha_1} \otimes \cdots \otimes \hopf_{\alpha_k} 
\xrightarrow{\phi^{(j_1)} \otimes \cdots \phi^{(j_k)}} \field^{\otimes k} \xrightarrow{m} \field. \]

We will now see how $\QSyml$ fits into this picture.
Let $\zeta^{(i)}$, $i = 0, 1, \ldots, \lev-1$ be the character on $\QSyml$ that takes $x_1^{(i)}$ to $1$ and all other variables to $0$. Let $\dot{\zeta} = (\zeta^{(0)}, \ldots, \zeta^{(\lev-1)})$, so $(\QSyml, \dot{\zeta})$ becomes an $\lev$-colored combinatorial Hopf algebra. Note that we are ignoring the multigrading on $\QSyml$ and thinking of it as an ordinary graded Hopf algebra $\QSyml = \bigoplus_{n=0}^\infty \QSyml_n$, where $\QSyml_n = \bigoplus_{|\vect{n}| = n} \QSyml_{\vect{n}}$. 

Since $\psi^{(i)}$ defined above is just the restriction of $\zeta^{(i)}$ to the subalgebra $\QSymll$, it follows that $(\QSymll, \dot{\psi})$ is a combinatorial Hopf subalgebra of $(\QSyml, \dot{\zeta})$.

\begin{prop} 
In the category of colored combinatorial Hopf algebras, the morphism
\[
(\QSymll, \dot{\psi}) \hookrightarrow (\QSyml, \dot{\zeta})
\]
is the inclusion map.
\end{prop}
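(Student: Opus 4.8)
The plan is to identify this morphism concretely as the Baumann--Hohlweg embedding $\iota\colon \QSymll \hookrightarrow \QSyml$ and then to check, in turn, that $\iota$ respects the Hopf structure and that it intertwines the two tuples of characters. That $\iota$ is a morphism of graded Hopf algebras is exactly the assertion of \cite{BauHoh08} that $\QSymll$ is a Hopf \emph{sub}algebra of $\QSyml$ (not merely a subalgebra); alternatively, one can verify this directly on the colored monomial basis, using the expansion \eqref{E:monomialColor} to see that the product and coproduct of $\QSyml$ preserve the linear span of the $M_\alpha^{(\lev)}$.

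For the character condition, the key point is that for each $i \in [0,\lev-1]$ one has $\zeta^{(i)} \circ \iota = \psi^{(i)}$, which is precisely the observation recorded just before the statement: $\psi^{(i)}$ is the restriction to $\QSymll$ of the evaluation character $\zeta^{(i)}$. Hence $\iota$ satisfies the defining condition for a morphism in the category of $\lev$-colored combinatorial Hopf algebras, so $\iota$ \emph{is} such a morphism $(\QSymll,\dot\psi)\to(\QSyml,\dot\zeta)$; since as a map of underlying vector spaces $\iota$ is literally the set-theoretic inclusion, this establishes the statement.

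Finally, to record the compatibility with the universal construction of \cite{HP09}, I would note that since $(\QSymll,\dot\psi)$ is terminal there is a unique morphism $\Psi_c\colon(\QSyml,\dot\zeta)\to(\QSymll,\dot\psi)$, and the composite $\Psi_c\circ\iota$ is a morphism of colored combinatorial Hopf algebras from the terminal object to itself, hence equal to $\mathrm{id}_{\QSymll}$; so $\iota$ is a section of $\Psi_c$. There is no serious obstacle here. The only subtlety is bookkeeping: terminality of $\QSymll$ controls morphisms \emph{into} $\QSymll$, not out of it, so the proposition is an identification of a specific morphism (the Hopf-algebra inclusion) rather than a uniqueness claim, and the actual work is the two-step verification that this inclusion is both a morphism of Hopf algebras and character-preserving.
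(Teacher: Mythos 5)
Your proposal is correct and matches the paper's approach: the paper likewise justifies the proposition by citing Baumann--Hohlweg for the fact that $\QSymll$ is a Hopf subalgebra of $\QSyml$ and by the observation, recorded immediately before the statement, that each $\psi^{(i)}$ is the restriction of $\zeta^{(i)}$, so the inclusion is character-preserving. Your closing remarks on terminality and on $\iota$ being a section of $\Psi_c$ are accurate but go beyond what the paper records.
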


Conversely, the result quoted above \cite[Theorem~13]{HP09} implies that there is a morphism going in the other direction, $(\QSyml, \dot{\zetaQ}) \to (\QSymll,\dot{\psi})$. We now describe this map more explicitly. A vector composition $\vect{I}$ will be called \newword{monochromatic} if it is of the form $\vect{I} = (\alpha_1 \cdot \vect{e}_{u_1}, \alpha_2 \cdot \vect{e}_{u_2}, \ldots, \alpha_m \cdot \vect{e}_{u_m})$ for some $u_1, \ldots, u_m \in [0,\lev-1]$. In this case we define $w(\vect{I})$ to be the colored composition $w(\vect{I}) = ( (\alpha_1, u_1), \ldots, (\alpha_m, u_m))$. The following can be proved by a straightforward application of \eqref{E:InducedMapColor}.

\begin{prop} \label{P:QSymltoQSymll}
Let $\Psi_c:\QSyml \to \QSym^{[\lev]}$ be the unique morphism of graded Hopf algebras satisfying $\psi^{(i)}\circ \Psi_c = \zetaQ^{(i)}$ for all $i$. For every vector composition $\vect{I}$, we have
\[
\Psi_c(M_{\vect{I}}) = \begin{cases}
M_{w(\vect{I})} & \text{if $\vect{I}$ is monochromatic} \\
0 & \text{otherwise.}
\end{cases}
\]
\end{prop}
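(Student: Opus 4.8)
The plan is to apply the explicit formula \eqref{E:InducedMapColor} from \cite[Theorem~13]{HP09} directly to the combinatorial Hopf algebra $(\QSyml, \dot{\zetaQ})$ and unravel what it says on a monomial basis element $M_{\vect{I}}$. First I would fix a vector composition $\vect{I} = (\vect{i}_1, \ldots, \vect{i}_p)$ of weight $n$ and apply the formula: since $\Psi_c(M_{\vect{I}}) = \sum_{\alpha \comp_l n} (\zetaQ)_\alpha(M_{\vect{I}}) M_\alpha^{(l)}$, the task reduces to computing each coefficient $(\zetaQ)_\alpha(M_{\vect{I}})$, which is defined as a composite $\hopf \xrightarrow{\Delta^{(k-1)}} \hopf^{\otimes k} \xrightarrow{\mathrm{proj}} \hopf_{\alpha_1} \otimes \cdots \otimes \hopf_{\alpha_k} \xrightarrow{\zetaQ^{(j_1)} \otimes \cdots \otimes \zetaQ^{(j_k)}} \field$, where $\alpha = ((\alpha_1, j_1), \ldots, (\alpha_k, j_k))$.

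Next I would compute $\Delta^{(k-1)}(M_{\vect{I}})$ using the iterated coproduct formula for $\QSyml$: the deconcatenation coproduct $\Delta(M_{\vect{I}}) = \sum_{\vect{I} = \vect{J}\vect{K}} M_{\vect{J}}\otimes M_{\vect{K}}$ iterates to $\Delta^{(k-1)}(M_{\vect{I}}) = \sum M_{\vect{I}^{(1)}} \otimes \cdots \otimes M_{\vect{I}^{(k)}}$ over all ways of breaking the columns of $\vect{I}$ into $k$ consecutive blocks $\vect{I} = \vect{I}^{(1)} \cdots \vect{I}^{(k)}$ (with blocks allowed to be empty). Projecting onto the graded piece of ordinary degree $\alpha_r$ forces $|\vect{I}^{(r)}| = \alpha_r$. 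Then I would use the key observation that $\zetaQ^{(j)}(M_{\vect{J}})$ is nonzero (and equal to $1$) if and only if $\vect{J}$ is empty or consists of a single column of the form $\alpha \cdot \vect{e}_j$: this is because $\zetaQ^{(j)}$ sets $x_1^{(j)} = 1$ and kills all other variables, so the only surviving monomial in $M_{\vect{J}}$ is $(x_1^{(j)})^{\alpha}$, which appears precisely when $\vect{J} = (\alpha\cdot\vect{e}_j)$. (The empty case contributes only in degree $0$, i.e.\ when $\alpha_r = 0$, but positive-degree colored compositions have all $\alpha_r \ge 1$.)

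Putting these together: for $(\zetaQ)_\alpha(M_{\vect{I}})$ to be nonzero, we need a deconcatenation of $\vect{I}$ into $k$ consecutive blocks $\vect{I}^{(1)}, \ldots, \vect{I}^{(k)}$, each a single column $\vect{I}^{(r)} = \beta_r \cdot \vect{e}_{c_r}$ for some color $c_r$, with $\beta_r = \alpha_r$ and $c_r = j_r$. Such a decomposition exists if and only if $\vect{I}$ itself is monochromatic — i.e., each column of $\vect{I}$ is a scalar multiple of a coordinate vector — in which case the decomposition is unique ($k = \len(\vect{I})$ and the $r$th block is the $r$th column), and forces $\alpha = w(\vect{I})$. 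Hence $(\zetaQ)_\alpha(M_{\vect{I}}) = \delta_{\alpha, w(\vect{I})}$ when $\vect{I}$ is monochromatic, and all coefficients vanish otherwise. This yields exactly $\Psi_c(M_{\vect{I}}) = M_{w(\vect{I})}^{(l)}$ if $\vect{I}$ is monochromatic and $0$ otherwise. I would also remark on the matching of conventions: $\zetaQ^{(i)}$ is indeed the restriction-free version whose restriction to $\QSymll$ is the $\psi^{(i)}$ appearing in the statement of the universal property, so \cite[Theorem~13]{HP09} applies verbatim. The only mild obstacle is bookkeeping — being careful that empty blocks in the iterated coproduct are correctly excluded in positive degree, and that the bijection between monochromatic deconcatenations and single columns is stated cleanly — but there is no real difficulty here; it is a direct computation once \eqref{E:InducedMapColor} and the shape of $\zetaQ^{(i)}$ on the monomial basis are in hand.
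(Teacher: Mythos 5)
Your proof is correct and follows exactly the route the paper intends: the paper offers no details beyond asserting that the proposition ``can be proved by a straightforward application of \eqref{E:InducedMapColor},'' and your argument is precisely that computation, carried out correctly (deconcatenation coproduct, projection forcing the block weights, and the observation that $\zetaQ^{(j)}$ kills $M_{\vect{J}}$ unless $\vect{J}$ is empty or a single column $a\cdot\vect{e}_j$). No gaps; the handling of empty blocks and the identification of $\alpha$ with $w(\vect{I})$ are both right.
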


A consequence of Proposition~\ref{P:QSymltoQSymll} is that $\QSymll$ can be identified with $\QSyml$ modulo the relations $x_i^{(p)} x_i^{(q)}$ for $p\ne q$ and $i=1,2,\ldots$. This realization of $\QSymll$  was first given by Novelli and Thibon \cite{NT04}.

Lastly, one can view $\QSymll$ as a multigraded combinatorial Hopf algebra by pairing it with the character $\psi = \psi^{(0)} \psi^{(1)} \cdots \psi^{(\lev-1)}$. Then Theorem \ref{T:QSymlUniversal} implies the existence of a morphism $\Psi : \QSymll \rightarrow \QSyml$ of multigraded combinatorial Hopf algebras satisfying $\zetaQ \circ \Psi = \psi$. It is clear from the definitions that
\[
\zetaQ(M_{\alpha}^{(\lev)}) = \psi(M_{\alpha}^{(\lev)}) =
\begin{cases}
1 & \text{ if $\alpha = \emptycomp$ or $\alpha = ((\alpha_1, u_1), \ldots, (\alpha_m, u_m))$, $u_1 < u_2 < \cdots < u_m$} \\
0 & \text{ otherwise.}
\end{cases}
\]
Therefore $\Psi$ is the inclusion map. In other words, the following holds:

\begin{prop}
In the category of multigraded combinatorial Hopf algebras, the morphism
\[
(\QSymll, \psi) \hookrightarrow (\QSyml, \zetaQ)
\]
is the inclusion map.
\end{prop}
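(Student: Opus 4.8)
The plan is to verify that the inclusion map $\iota : \QSymll \hookrightarrow \QSyml$ is in fact a morphism of multigraded combinatorial Hopf algebras, and then invoke the uniqueness half of Theorem~\ref{T:QSymlUniversal} to conclude that $\iota$ must coincide with the canonical morphism $\Psi$. So there are really two things to check. First, $\iota$ must respect the $\NNl$-grading and the Hopf structure: this is exactly the content of the Baumann--Hohlweg result \cite{BauHoh08} that $\QSymll$ is a Hopf subalgebra of $\QSyml$, together with the observation (made in \S\S\ref{SS:EarlyColoredHAs} via Equation~\eqref{E:monomialColor}) that $M^{(\lev)}_\alpha$ lies in the homogeneous component $\QSyml_{\vect{n}}$ where $\vect{n} = \sum_j \alpha_j \vect{e}_{u_j}$, so that the multigrading on $\QSymll$ is simply the restriction of the one on $\QSyml$. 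Second, and this is the crux, I must check the character condition $\zetaQ \circ \iota = \psi$, i.e.\ that $\psi$ is nothing but the restriction of $\zetaQ$ to $\QSymll$.

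First I would recall that $\psi = \psi^{(0)} \psi^{(1)} \cdots \psi^{(\lev-1)}$ is the convolution product of the tuple of evaluation characters, and compute $\psi(M^{(\lev)}_\alpha)$ directly from the coproduct of $\QSymll$ (which is the deconcatenation coproduct on colored compositions). Expanding $\psi = \psi^{(0)} * \cdots * \psi^{(\lev-1)}$ against $\Delta^{(\lev-1)}(M^{(\lev)}_\alpha)$, one gets a sum over ways of splitting $\alpha$ into $\lev$ consecutive (possibly empty) pieces $\alpha = \beta^{(0)} \beta^{(1)} \cdots \beta^{(\lev-1)}$; each factor $\psi^{(i)}(\beta^{(i)})$ is nonzero (and then equal to $1$) exactly when $\beta^{(i)}$ is empty or a single part of color $i$. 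Hence the whole product is nonzero precisely when $\alpha$ itself is a colored composition whose parts have strictly increasing colors $u_1 < u_2 < \cdots < u_m$ (each color used at most once, in order), and in that case it equals $1$. This is exactly the displayed formula for $\zetaQ(M^{(\lev)}_\alpha)$ in the excerpt, so $\psi = \zetaQ|_{\QSymll}$.

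With both points in hand the argument closes quickly: $\iota$ is a morphism in the category of multigraded combinatorial Hopf algebras from $(\QSymll, \psi)$ to $(\QSyml, \zetaQ)$, and by the uniqueness clause of Theorem~\ref{T:QSymlUniversal} applied to $(\hopf,\zeta) = (\QSymll,\psi)$ there is only one such morphism, namely the canonical $\Psi$ of Equation~\eqref{E:InducedMap}. Therefore $\Psi = \iota$, which is the assertion of the proposition.

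The main obstacle I anticipate is the character computation: one has to be careful that the $\lev$-fold convolution $\psi^{(0)} * \cdots * \psi^{(\lev-1)}$ really does force the colors to appear in the correct strictly increasing order, rather than, say, allowing repeated colors or out-of-order colors. The key point is that in the iterated coproduct the pieces $\beta^{(i)}$ occur in the fixed order $0, 1, \ldots, \lev-1$, and $\psi^{(i)}$ only "sees" a single part of its own color $i$; once this bookkeeping is set up correctly the rest is immediate. Everything else — the grading compatibility and the appeal to uniqueness — is routine given the results already available in the excerpt.
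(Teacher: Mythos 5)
Your proof is correct and follows essentially the same route as the paper: pair $\QSymll$ with $\psi = \psi^{(0)}\cdots\psi^{(\lev-1)}$, check that $\zetaQ$ and $\psi$ agree on the $M^{(\lev)}_\alpha$ (both equal to $1$ exactly when $\alpha$ is empty or has strictly increasing colors $u_1 < \cdots < u_m$, and $0$ otherwise), and invoke the uniqueness clause of Theorem~\ref{T:QSymlUniversal}. The only difference is that you spell out the convolution computation of $\psi(M^{(\lev)}_\alpha)$ against the deconcatenation coproduct, which the paper dismisses as ``clear from the definitions''; your bookkeeping of the $\lev$ consecutive pieces $\beta^{(0)},\ldots,\beta^{(\lev-1)}$ is accurate.
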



\section{Definitions and basic properties of $\vect{k}$-odd and $\vect{k}$-even Hopf algebras}
\label{S:defkoddkeven}


\def\field{\mathbb{Q}}

In this section we develop the notions of $\vect{k}$-odd and $\vect{k}$-even subalgebras of multigraded combinatorial Hopf algebras. Our constructions and results directly generalize the definitions and basic properties of odd and even Hopf subalgebras developed by Aguiar et al.\ \cite{ABS06}.

\subsection{Basic constructions}

Let $\hopf = \bigoplus_{\vect{n} \in \NNl} \hopf_{\vect{n}}$ be a multigraded connected Hopf algebra. For a linear functional $\phi : \hopf \to \field$, let $\phi_{\vect{n}}$ denote the restriction of $\phi$ to $\hopf_{\vect{n}}$. This is an element of degree $\vect{n}$ of the (multi)graded dual $\hopf^*$. We also define $\overline{\phi}$ to be the functional given by $\overline{\phi}(h) = (-1)^{|\vect{n}|} \phi(h)$ for $h \in \hopf_{\vect{n}}$. 

Let $\phi, \psi : \hopf \to \field$ be characters. The canonical Hopf subalgebra $\Ssubalg(\phi,\psi)$ and its orthogonal Hopf ideal $\ideal(\phi,\psi)$ are defined in \cite[Section 5]{ABS06} for $\NN$-graded Hopf algebras. Generalizing to the $\NNl$-graded case, we define $\Ssubalg(\phi,\psi)$ to be the largest graded subcoalgebra of $\hopf$ such that 
\[ \forall h \in \Ssubalg(\phi,\psi), \quad \phi(h) = \psi(h),\]
\noindent
and $\ideal(\phi,\psi)$ to be the ideal of $\hopf^*$ generated by $\phi_{\vect{n}} - \psi_{\vect{n}}$ for each $\vect{n} \in \NNl$.

We now define $\vect{k}$-analogs of these. 

Let $\lev$ be a positive integer, to be fixed for the rest of this section. It will be convenient to work with the ``extended" $\lev$-partite numbers $\NNinfl$, where the symbol $\infty$ is understood to be larger than every natural number. 

For $\vect{k} \in \NNinfl$, let $\Ssubalg_{\vect{k}}(\phi,\psi)$ denote the largest graded subcoalgebra of $\hopf$ with the property that 
\begin{equation}
\label{SamDef}
\phi(h) = \psi(h) \textmd{ for all } h \in \Ssubalg_{\vect{k}}(\phi,\psi) \cap \bigoplus_{\vect{n} \le \vect{k}} \hopf_{\vect{n}}.
\end{equation}
(When forming direct sums it should be understood that our indices lie in $\NNl$.) Equation \eqref{SamDef} defines $\Ssubalg_{\vect{k}}(\phi,\psi)$ as the largest graded subcoalgebra of $\hopf$ whose graded pieces up to degree $\vect{k}$ are all contained in $\ker{(\phi-\psi)}$. In other words, $\Ssubalg_{\vect{k}}(\phi,\psi)$ is the largest graded subcoalgebra of $\hopf$ whose intersection with $\bigoplus_{\vect{n} \le \vect{k}} \hopf_{\vect{n}}$, the $\vect{k}$-initial graded piece of $\hopf$, lies in the kernel of $\phi-\psi$. 

Note that this last statement is equivalent to asserting that the graded pieces up to degree $\vect{k}$ of $\Ssubalg_{\vect{k}}(\phi,\psi)$ lie in  $\bigoplus_{\vect{n} \le \vect{k}} (\ker{\phi_{\vect{n}} - \psi_{\vect{n}}})$. Thus, we can alternatively define $\Ssubalg_{\vect{k}}(\phi,\psi)$ as the largest graded subcoalgebra of $\hopf$ with the property that 
\begin{equation}
\label{AltDef}
 \forall h \in \Ssubalg_{\vect{k}}(\phi,\psi), \quad \phi_{\vect{n}}(h) = \psi_{\vect{n}}(h) \textmd{ for all } \vect{n} \le \vect{k}.
\end{equation}
\noindent

Next we let $\ideal^{\vect{k}}(\phi,\psi)$ denote the ideal of the graded dual $\hopf^*$ generated by $\phi_{\vect{n}} - \psi_{\vect{n}}$ for each $\vect{n} \le \vect{k}$.  Each $\ideal^{\vect{k}}(\phi,\psi)$ is generated by homogeneous elements and so is a (multi)graded ideal of $\hopf^*$.

Recall that $\vect{0} \in \NNl$ denotes the zero vector. Let $\vectinf \in \NNinfl$ denote the vector whose entries are all $\infty$. For any $\vect{k} \in \NNinfl$, 
\[ \Ssubalg_{\vect{0}}(\phi,\psi) \supset \Ssubalg_{\vect{k}}(\phi,\psi) \supset \Ssubalg_{\vectinf}(\phi,\psi) = \Ssubalg(\phi,\psi)\]
\noindent
and
\[ \ideal^{\vect{0}}(\phi,\psi) \subset \ideal^{\vect{k}}(\phi,\psi) \subset \ideal^{\vectinf}(\phi,\psi) = \ideal(\phi,\psi).\]

The properties of $\Ssubalg(\phi,\psi)$ and $\ideal(\phi,\psi)$ stated in \cite[Theorem 5.3]{ABS06}, along with their proofs, extend without difficulty to their $\vect{k}$-analogs:

\begin{theorem}
\label{theoremgeneralproperties}
Let $\hopf = \bigoplus_{\vect{n} \in \NNl} \hopf_{\vect{n}}$ be a multigraded connected Hopf algebra and let ${\phi, \psi : \hopf \to \field}$ be characters on $\hopf$. Define $\Ssubalg_{\vect{k}}(\phi,\psi)$ and $\ideal^{\vect{k}}(\phi,\psi)$ as above. 
For $\vect{k} \in \NNinfl$ the following properties hold:
\begin{enumerate}
\item[(a)] $\Ssubalg_{\vect{k}}(\phi,\psi) = (\ideal^{\vect{k}}(\phi,\psi))^{\perp}$;
\item[(b)] $\ideal^{\vect{k}}(\phi,\psi)$ is a graded Hopf ideal of $\hopf^*$;
\item[(c)] $\Ssubalg_{\vect{k}}(\phi,\psi)$ is a graded Hopf subalgebra of $\hopf$.
\end{enumerate}
\noindent
Here, $\ideal^{\vect{k}}(\phi,\psi)^{\perp}$ is the set $\{ h \in \hopf : f(h) = 0 \textmd{ for all } f \in \ideal^{\vect{k}}(\phi,\psi)\}$.
\end{theorem}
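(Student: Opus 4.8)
The plan is to follow the proof of \cite[Theorem~5.3]{ABS06} essentially verbatim, checking that nothing breaks when the grading group $\NN$ is replaced by $\NNl$ and when we truncate at a fixed $\vect{k}\in\NNinfl$. Throughout, write $f = \phi - \psi$, so that $f_{\vect{n}} = \phi_{\vect{n}} - \psi_{\vect{n}}$ are the degree-$\vect{n}$ components in $\hopf^*$, and let $\ideal^{\vect{k}} = \ideal^{\vect{k}}(\phi,\psi)$ be the ideal of $\hopf^*$ generated by $\{f_{\vect{n}} \mid \vect{n}\le\vect{k}\}$. Because $\hopf$ is $\NNl$-graded, connected, and each graded piece is finite-dimensional (as is implicit for these combinatorial Hopf algebras), the graded dual $\hopf^*$ is again an $\NNl$-graded connected Hopf algebra with $(\hopf^*)_{\vect{n}} = (\hopf_{\vect{n}})^*$, and the perp operation sets up an inclusion-reversing bijection between graded subspaces of $\hopf$ and graded subspaces of $\hopf^*$ with $(W^\perp)^\perp = W$; I would state this at the outset since it underlies everything.

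For part (a): $\Ssubalg_{\vect{k}}(\phi,\psi)$ was \emph{defined} as the largest graded subcoalgebra of $\hopf$ satisfying \eqref{AltDef}, i.e.\ killed by every $f_{\vect{n}}$ with $\vect{n}\le\vect{k}$; equivalently by every element of the subspace $V$ of $\hopf^*$ spanned by those $f_{\vect{n}}$. So $\Ssubalg_{\vect{k}}$ is the largest graded subcoalgebra contained in $V^\perp$. On the other side, $(\ideal^{\vect{k}})^\perp$ is a graded subspace of $\hopf$; the key fact is that the perp of a (right, or two-sided) ideal of $\hopf^*$ is a subcoalgebra of $\hopf$ (dualizing: the annihilator of an ideal under the product on $\hopf^*$ is a coideal-complement, and for the counit-augmented connected setting it is a genuine subcoalgebra). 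Granting that, $(\ideal^{\vect{k}})^\perp$ is a graded subcoalgebra contained in $V^\perp$ (since $V\subseteq\ideal^{\vect{k}}$), hence $(\ideal^{\vect{k}})^\perp \subseteq \Ssubalg_{\vect{k}}$. Conversely, $\Ssubalg_{\vect{k}}$ is a subcoalgebra annihilated by $V$; I would show that the set of functionals annihilating a subcoalgebra is an ideal of $\hopf^*$, so $\Ssubalg_{\vect{k}}^\perp$ is an ideal containing $V$, hence containing $\ideal^{\vect{k}}$, so $\Ssubalg_{\vect{k}} = (\Ssubalg_{\vect{k}}^\perp)^\perp \subseteq (\ideal^{\vect{k}})^\perp$. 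This gives equality. The inputs here — perp of ideal is subcoalgebra, perp of subcoalgebra is ideal — are standard finite-type graded-duality facts; I would cite them (e.g.\ from Montgomery or the relevant lemma in \cite{ABS06}) rather than reprove them.

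For parts (b) and (c): since $\phi$ and $\psi$ are characters (algebra maps $\hopf\to\field$), $f=\phi-\psi$ is a $(\phi,\psi)$-derivation-type functional; the crucial computation, identical to \cite{ABS06}, is that in $\hopf^*$ the comultiplication of $f_{\vect{n}}$ lies in $\sum_{\vect{i}+\vect{j}=\vect{n}} f_{\vect{i}}\otimes(\text{stuff})_{\vect{j}} + (\text{stuff})_{\vect{i}}\otimes f_{\vect{j}}$, more precisely $\Delta_{\hopf^*}(f) = f\otimes\psi + \phi\otimes f$ (dual to multiplicativity of $\phi,\psi$), and likewise the antipode sends $f$ into the ideal generated by the $f_{\vect{n}}$'s. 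One then checks that $\ideal^{\vect{k}}$, being generated by the $f_{\vect{n}}$ with $\vect{n}\le\vect{k}$, is stable under $\Delta_{\hopf^*}$ and the antipode. The only point needing a moment's care beyond the $\lev=1$ argument is the grading bookkeeping: when $\vect{n}\le\vect{k}$ and $\vect{n}=\vect{i}+\vect{j}$ componentwise, then automatically $\vect{i}\le\vect{k}$ and $\vect{j}\le\vect{k}$, so every $f$-term appearing in $\Delta(f_{\vect{n}})$ is again among the generators of $\ideal^{\vect{k}}$ — this is exactly why the truncation at $\vect{k}$ is compatible with the Hopf structure, and it is the one place where using the partial order $\le$ on $\NNl$ (rather than just a scalar cutoff) matters. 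Once $\ideal^{\vect{k}}$ is a graded Hopf ideal, (c) is formal: the perp of a Hopf ideal is a Hopf subalgebra, and by (a) this perp is $\Ssubalg_{\vect{k}}(\phi,\psi)$. I expect the main obstacle to be purely expository — making the dualities precise in the multigraded finite-type setting — rather than any genuinely new difficulty; accordingly I would keep the write-up short, emphasizing that ``the proof of \cite[Theorem~5.3]{ABS06} applies mutatis mutandis, with the observation that $\vect{n}\le\vect{k}$ and $\vect{i}+\vect{j}=\vect{n}$ force $\vect{i},\vect{j}\le\vect{k}$.''
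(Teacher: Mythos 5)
Your proposal is correct, and part (a) is essentially the paper's argument: the paper also shows $(\ideal^{\vect{k}}(\phi,\psi))^{\perp}\subseteq \Ssubalg_{\vect{k}}(\phi,\psi)$ by noting the perp of a graded ideal is a graded subcoalgebra killed by the generators, and the reverse inclusion by evaluating elements of the ideal directly on a qualifying subcoalgebra $C$ (using $\Delta(C)\subseteq C\otimes C$), which avoids the double-perp step $(\Ssubalg^{\perp})^{\perp}=\Ssubalg$ that your route needs and that silently requires finite-dimensional graded pieces. Where you genuinely diverge is in the order of (b) and (c): you prove (b) first, on the dual side, by checking that $\Delta_{\hopf^*}(\phi_{\vect{n}}-\psi_{\vect{n}})=\sum_{\vect{i}+\vect{j}=\vect{n}}\bigl((\phi-\psi)_{\vect{i}}\otimes\phi_{\vect{j}}+\psi_{\vect{i}}\otimes(\phi-\psi)_{\vect{j}}\bigr)$ lands in $\ideal^{\vect{k}}\otimes\hopf^*+\hopf^*\otimes\ideal^{\vect{k}}$, and then get (c) by duality; the paper instead proves (c) first, entirely inside $\hopf$, using that a product of graded subcoalgebras is a graded subcoalgebra together with the identity $(\phi-\psi)(xy)=(\phi-\psi)(x)\phi(y)+\psi(x)(\phi-\psi)(y)$ and the maximality defining $\Ssubalg_{\vect{k}}$, and then deduces (b) from (a) and (c). These are dual incarnations of the same computation, and both pivot on exactly the observation you single out, that $\vect{i}+\vect{j}=\vect{n}\le\vect{k}$ forces $\vect{i},\vect{j}\le\vect{k}$ (in the paper this appears as: $xy\in\bigoplus_{\vect{n}\le\vect{k}}\hopf_{\vect{n}}$ implies $x,y\in\bigoplus_{\vect{n}\le\vect{k}}\hopf_{\vect{n}}$ for homogeneous $x,y$). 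The paper's order has the small advantage of never writing $\phi\otimes f+f\otimes\psi$ as an element of $\hopf^*\otimes\hopf^*$ — which, as you should note if you keep your order, must be read degreewise since $\phi$ and $\psi$ are not themselves elements of the graded dual; your order has the advantage of making the Hopf-ideal property of $\ideal^{\vect{k}}$ self-contained rather than a corollary.
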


The proof of this result follows the main lines of the proof of \cite[Theorem 5.3]{ABS06}. We include it here in order to demonstrate the general feel of the proofs of some of the more straightforward extensions of the results of \cite{ABS06} to their $\vect{k}$-analogs:

\begin{proof}
We begin with part (a). 
Since $\ideal^{\vect{k}}(\phi,\psi)$ is a (multi)graded ideal of $\hopf^*$, $\ideal^{\vect{k}}(\phi,\psi)^{\perp}$ is a (multi)graded subcoalgebra of $\hopf$. Let $h = \sum_{\vect{i}\in \NNl} h_{\vect{i}} \in \ideal^{\vect{k}}(\phi,\psi)^{\perp}$, where all but finitely many of the $h_{\vect{i}}$ are zero and hence the sum is finite. Because $\phi_{\vect{i}} - \psi_{\vect{i}} \in \ideal^{\vect{k}}(\phi,\psi)$ for all $\vect{i} \le \vect{k}$, we have $\phi_\vect{i}(h) = \phi_{\vect{i}}(h_{\vect{i}}) = \psi_{\vect{i}}(h_{\vect{i}}) = \psi_{\vect{i}}(h)$ for all $\vect{i} \le \vect{k}$. Then it follows from the definition of $\Ssubalg_{\vect{k}}(\phi,\psi)$ (as the greatest subcoalgebra of $\hopf$ satisfying Equation \eqref{AltDef}) that $h \in \Ssubalg_{\vect{k}}(\phi,\psi)$. 

Next let $C$ be a graded subcoalgebra of $\hopf$ such that for all $\vect{i} \le \vect{k}$, $\phi_{\vect{i}}(h) = \psi_{\vect{i}}(h)$ for all $h = \sum_{\vect{i}\in \NNl} h_{\vect{i}} \in C$; here once again we assume that all but finitely many of the terms $h_{\vect{i}}$ are zero. Since $C$ is a coalgebra and $\ideal^{\vect{k}}(\phi,\psi)$ is the ideal generated by $\phi_{\vect{i}} - \psi_{\vect{i}}$ for $\vect{i} \le \vect{k}$, it follows that $f(C) = 0$ for all $f \in \ideal^{\vect{k}}(\phi,\psi)$. This shows that $\Ssubalg_{\vect{k}}(\phi,\psi) \subset \ideal^{\vect{k}}(\phi,\psi)^{\perp}$ and concludes the proof of part (a).

To prove parts (b) and (c), we begin by noting that the product $C \cdot D$ of two graded subcoalgebras of $\hopf$ is again a graded subcoalgebra. This makes $\Ssubalg_{\vect{k}}(\phi,\psi) \cdot \Ssubalg_{\vect{k}}(\phi,\psi)$ a graded subcoalgebra of $\hopf$. By the multiplicativity of 
$\phi$ and $\psi$ we have:
\begin{equation}
\label{ResMultChar} 
(\phi-\psi)(xy) = (\phi-\psi)(x)\phi(y) + \psi(x)(\phi-\psi)(y)åÊ
\end{equation}
\noindent
Now if $x, y \in \Ssubalg_{\vect{k}}(\phi,\psi)$ such that $xy \in \bigoplus_{\vect{n} \le \vect{k}} \hopf_{\vect{n}}$, we also have $x,y \in \bigoplus_{\vect{n} \le \vect{k}} \hopf_{\vect{n}}$ and so by Equation \eqref{SamDef}, we have $(\phi-\psi)(x) = (\phi-\psi)(y)=0$. 
Equation \eqref{ResMultChar} then gives us $(\phi-\psi)(xy)=0$. This proves that $\Ssubalg_{\vect{k}}(\phi,\psi) \cdot \Ssubalg_{\vect{k}}(\phi,\psi) \subset \Ssubalg_{\vect{k}}(\phi,\psi)$.

Finally we note that $\hopf_{\vect{0}} = \field \cdot 1$ is a graded subcoalgebra of $\hopf$
and $\phi(1) = \psi(1) = 1$ so we can conclude that $\hopf_{\vect{0}}$ is included in $\Ssubalg_{\vect{k}}(\phi,\psi)$. This proves part (c), or in other words, that $\Ssubalg_{\vect{k}}(\phi,\psi)$ is indeed a Hopf subalgebra of $\hopf$. Together with part (a), this implies that $\ideal^{\vect{k}}(\phi,\psi)$ is a coideal of $\hopf^*$, and thus we are also done with part (b). 
\end{proof}

\begin{remark}
\label{RemDeltaSquared}
A natural $\vect{k}$-analog of part (d) of \cite[Thm.5.3]{ABS06} is also valid: More specifically one can easily show that a homogeneous element $h \in \hopf$ belongs to $\Ssubalg_{\vect{k}}(\phi,\psi)$ if and only if 
\[ (\textmd{id} \otimes (\phi_{\vect{n}}-\psi_{\vect{n}})\otimes \textmd{id} ) \circ \Delta^{(2)} (h) = 0\]
for all $\vect{n} \in \NNl$ such that $\vect{n} \le \vect{k}$.
\end{remark}

In the next proposition we list a few properties of $\Ssubalg_{\vect{k}}(\phi,\psi)$ and the associated ideals $\ideal^{\vect{k}}(\phi,\psi)$. 
\begin{prop}
Let $\hopf = \oplus_{\vect{n} \in \NNl} \hopf_{\vect{n}}$ be a multigraded connected Hopf algebra and let $\phi,\phi^{\prime},\psi,\psi^{\prime}$ be characters on $\hopf$. The following hold for all $\vect{k} \in \NNinfl$:
\begin{enumerate}
\item[(a)] There is an isomorphism of graded Hopf algebras 
\[ \Ssubalg_{\vect{k}}(\phi,\psi) \cong \hopf^* / \ideal^{\vect{k}}(\phi,\psi). \] 
\item[(b)] Suppose that
\[ \psi^{-1}\phi = (\psi^{\prime})^{-1}\phi^{\prime} \textmd{ or }  \phi \psi^{-1} = \phi^{\prime} (\psi^{\prime})^{-1}.\]
Then 
\[ \Ssubalg_{\vect{k}}(\phi,\psi) = \Ssubalg_{\vect{k}}(\phi^{\prime}, \psi^{\prime}) \textmd{ and } \ideal^{\vect{k}}(\phi,\psi) = \ideal^{\vect{k}}(\phi^{\prime}, \psi^{\prime}). \]
\item[(c)] $\Ssubalg_{\vect{k}}(\phi,\psi) = \Ssubalg_{\vect{k}}(\psi, \phi)$ and $\ideal^{\vect{k}}(\phi,\psi) = \ideal^{\vect{k}}(\psi, \phi)$. 
\end{enumerate}
\end{prop}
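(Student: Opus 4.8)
The plan is to prove the three parts in the order (b), then (c), then (a), deriving (c) as a corollary of (b) and reserving the most technical argument for (a).

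For part (b), the key observation is that $\Ssubalg_{\vect{k}}(\phi,\psi)$ and $\ideal^{\vect{k}}(\phi,\psi)$ should really be thought of as invariants of the ratio $\zeta := \psi^{-1}\phi$ (or of $\phi\psi^{-1}$), not of the pair $(\phi,\psi)$ separately. First I would record the convolution identity that for any characters, $h \in \Ssubalg_{\vect{k}}(\phi,\psi)$ iff the restriction to $\bigoplus_{\vect{n}\le\vect{k}}\hopf_{\vect{n}}$ of the corresponding graded pieces of every element of the subcoalgebra generated by $h$ lies in $\ker(\phi-\psi)$. The cleaner route is via characters: the condition ``$\phi_{\vect{n}}(h)=\psi_{\vect{n}}(h)$ for all $\vect{n}\le\vect{k}$, for all $h$ in a subcoalgebra $C$'' is equivalent to ``$(\psi^{-1}\phi)_{\vect{n}}(h) = \epsilon_{\vect{n}}(h)$ for all $\vect{n}\le\vect{k}$, $h\in C$'' where $\epsilon$ is the counit — this uses that on a subcoalgebra, applying $\psi^{-1}$ via convolution only mixes graded components, so $\phi|_C = \psi|_C$ (in the relevant degrees) iff $\psi^{-1}\phi|_C = \epsilon|_C$ (in the relevant degrees). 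Hence $\Ssubalg_{\vect{k}}(\phi,\psi) = \Ssubalg_{\vect{k}}(\psi^{-1}\phi, \epsilon)$, and similarly $\Ssubalg_{\vect{k}}(\phi,\psi) = \Ssubalg_{\vect{k}}(\phi\psi^{-1},\epsilon)$ using left convolution; the analogous statements for the ideals follow by taking perpendiculars via Theorem~\ref{theoremgeneralproperties}(a). Then $\psi^{-1}\phi = (\psi')^{-1}\phi'$ forces $\Ssubalg_{\vect{k}}(\phi,\psi) = \Ssubalg_{\vect{k}}((\psi')^{-1}\phi',\epsilon) = \Ssubalg_{\vect{k}}(\phi',\psi')$, and likewise for the other hypothesis. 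I expect the main obstacle here to be verifying carefully that convolving with $\psi^{-1}$ preserves the subcoalgebra condition degree-by-degree — this is where one must use that $\psi^{-1}$ is itself graded (each $(\psi^{-1})_{\vect{m}}$ has degree $\vect{m}$), so that the $\vect{n}$-graded piece of $\psi^{-1}\ast\phi$ evaluated on $h$ only involves pieces of $\phi$ and $\psi^{-1}$ of degrees $\le \vect{n} \le \vect{k}$, keeping everything within the controlled range.

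Part (c) is then immediate from (b): take $\phi' = \psi$, $\psi' = \phi$; then $\psi^{-1}\phi$ and $(\psi')^{-1}\phi' = \phi^{-1}\psi$ are mutual inverses under convolution, but more directly one checks the hypothesis $\phi\psi^{-1} = \phi'(\psi')^{-1}$ does \emph{not} hold in general, so instead I would argue (c) directly from the defining property \eqref{AltDef}, which is visibly symmetric in $\phi$ and $\psi$: the condition $\phi_{\vect{n}}(h) = \psi_{\vect{n}}(h)$ is the same as $\psi_{\vect{n}}(h) = \phi_{\vect{n}}(h)$, so the largest subcoalgebras satisfying them coincide, giving $\Ssubalg_{\vect{k}}(\phi,\psi) = \Ssubalg_{\vect{k}}(\psi,\phi)$; the ideal statement follows since $\ideal^{\vect{k}}(\phi,\psi)$ is generated by $\phi_{\vect{n}} - \psi_{\vect{n}} = -(\psi_{\vect{n}} - \phi_{\vect{n}})$, the same elements up to sign, hence the same ideal. (This is genuinely trivial and deserves only one sentence.)

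For part (a), the isomorphism $\Ssubalg_{\vect{k}}(\phi,\psi) \cong \hopf^*/\ideal^{\vect{k}}(\phi,\psi)$ of graded Hopf algebras, I would invoke the standard finite-dimensionality-in-each-degree setup: since $\hopf$ is connected and multigraded with each $\hopf_{\vect{n}}$ finite-dimensional (as is implicit for combinatorial Hopf algebras here), the graded dual $\hopf^*$ is again a connected multigraded Hopf algebra with $(\hopf^*)^* = \hopf$, and perpendicular is a duality between graded Hopf subalgebras of $\hopf$ and graded Hopf ideals of $\hopf^*$ under which $C \mapsto C^\perp$ with $\hopf^*/C^\perp \cong C^*$ as graded Hopf algebras. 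Applying this with $C = \Ssubalg_{\vect{k}}(\phi,\psi)$ and using Theorem~\ref{theoremgeneralproperties}(a), which gives $C^\perp = \ideal^{\vect{k}}(\phi,\psi)$, and Theorem~\ref{theoremgeneralproperties}(b,c) which guarantee these are genuinely a Hopf ideal and a Hopf subalgebra, yields $\hopf^*/\ideal^{\vect{k}}(\phi,\psi) \cong \Ssubalg_{\vect{k}}(\phi,\psi)^*$. The last step is to identify $\Ssubalg_{\vect{k}}(\phi,\psi)^*$ with $\Ssubalg_{\vect{k}}(\phi,\psi)$ itself — this is \emph{not} true in general, so I suspect the intended statement (matching \cite[Theorem~5.3(a)]{ABS06}, which in \cite{ABS06} reads $\mathcal{I}(\phi,\psi) = \hopf^*/\Ssubalg(\phi,\psi)^\perp$ or the self-dual formulation valid when $\hopf$ is self-dual) is really the assertion $\hopf^*/\ideal^{\vect{k}}(\phi,\psi) \cong \Ssubalg_{\vect{k}}(\phi,\psi)^*$, i.e.\ that quotienting $\hopf^*$ by the ideal is dual to passing to the subalgebra; the main obstacle is pinning down exactly which duality the authors intend and stating (a) in the dualized form that is actually correct, after which the proof is a direct transcription of the $\lev=1$ argument from \cite{ABS06} using Theorem~\ref{theoremgeneralproperties}.
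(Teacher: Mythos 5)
Your proposal is correct and matches the paper's (very terse) proof, which simply defers parts (a) and (b) to Corollary 5.4 and Proposition 5.5 of \cite{ABS06} and notes that (c) is immediate from the definitions; your reduction of (b) to $\Ssubalg_{\vect{k}}(\psi^{-1}\phi,\epsilon)$ via degree-by-degree convolution (using that each $(\psi^{-1})_{\vect{m}}$ only mixes components of degree $\le \vect{n} \le \vect{k}$) and your direct symmetry argument for (c) are exactly the intended arguments. Your reading of (a) is also the right one: as in \cite{ABS06}, the quotient $\hopf^{*}/\ideal^{\vect{k}}(\phi,\psi)$ is canonically the \emph{graded dual} $\Ssubalg_{\vect{k}}(\phi,\psi)^{*}$ (via $\Ssubalg_{\vect{k}}(\phi,\psi)=(\ideal^{\vect{k}}(\phi,\psi))^{\perp}$ and biorthogonality, which requires the standing assumption that each $\hopf_{\vect{n}}$ is finite-dimensional), and the statement as printed suppresses that dual.
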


\begin{proof}
Parts (a) and (b) are the $\vect{k}$-analogs of Corollary 5.4 and Proposition 5.5 of \cite{ABS06}, respectively, and their proofs follow the proofs of those results in a straightforward manner. Part (c) follows directly from the definitions.  
\end{proof}

\subsection{$\vect{k}$-analogs of odd and even subalgebras}

Theorem \ref{theoremgeneralproperties} justifies the following generalization of the definition of the {\it odd subalgebra $\Ssubalg_-(\hopf,\phi) = \Ssubalg(\overline{\phi}, \phi^{-1})$} of a combinatorial Hopf algebra $(\hopf,\phi)$ \cite[Def.5.7]{ABS06}:
\begin{definition}
Given a multigraded combinatorial Hopf algebra $(\hopf,\phi)$ and $\vect{k} \in \NNinfl$, we call $\Ssubalg_{\vect{k}}(\overline{\phi}, \phi^{-1})$ the \emph{$\vect{k}$-odd Hopf subalgebra} of $(\hopf,\phi)$ and denote it by $\kodd^{\vect{k}}(\hopf,\phi)$, or simply by $\kodd^{\vect{k}}(\hopf)$ when $\phi$ is obvious from context.
\end{definition}

Similarly we can make the following definition (cf.\ \cite[Def.5.7]{ABS06}):
\begin{definition}
Given a multigraded combinatorial Hopf algebra $(\hopf,\phi)$ and $\vect{k} \in \NNinfl$, we call $\Ssubalg_{\vect{k}}(\overline{\phi}, \phi)$ the \emph{$\vect{k}$-even Hopf subalgebra} of $(\hopf,\phi)$ and denote it by $\keven^{\vect{k}}(\hopf, \phi)$, or simply by $\keven^{\vect{k}}(\hopf)$ when $\phi$ is obvious from context.
\end{definition}

We introduce a special notation for the associated ideals:
\begin{definition}
Given a multigraded combinatorial Hopf algebra $(\hopf,\phi)$ and $\vect{k} \in \NNinfl$, we call $\ideal^{\vect{k}}(\overline{\phi}, \phi^{-1})$ the \emph{$\vect{k}$-odd Hopf ideal} of $(\hopf,\phi)$ and denote it by $\ideal\kodd^{\vect{k}}(\hopf, \phi)$, or simply by $\ideal\kodd^{\vect{k}}(\hopf)$ when $\phi$ is obvious from context.
Similarly, we call $\ideal^{\vect{k}}(\overline{\phi}, \phi)$ the \emph{$\vect{k}$-even ideal} of $(\hopf,\phi)$ and denote it by $\ideal\keven^{\vect{k}}(\hopf, \phi)$, or simply by $\ideal\keven^{\vect{k}}(\hopf)$ when $\phi$ is obvious from context.
\end{definition}

For future reference, we collect together a few basic properties of $\vect{k}$-odd and $\vect{k}$-even subalgebras in the next proposition:
\begin{prop}
Let $(\hopf,\phi)$ be a multigraded combinatorial Hopf algebra. 
\begin{enumerate}
\item[(a)] $\kodd^{\vect{k}}(\hopf)$ and $\keven^{\vect{k}}(\hopf)$ are multigraded Hopf subalgebras of $\hopf$, for each $\vect{k} \in \NNinfl$.
\item[(b)] $\ideal\kodd^{\vect{k}}(\hopf)$ and $\ideal\keven^{\vect{k}}(\hopf)$ are multigraded Hopf ideals of $\hopf^*$, for each $\vect{k} \in \NNinfl$.
\item[(c)] There are isomorphisms of multigraded Hopf algebras 
\[ \kodd^{\vect{k}}(\hopf) \cong \hopf^* / \ideal\kodd^{\vect{k}}(\hopf) \textmd{ and } 
\keven^{\vect{k}}(\hopf) \cong \hopf^* / \ideal\keven^{\vect{k}}(\hopf) \]
for each $\vect{k} \in \NN$.
\item[(d)] Let $\vect{k} \in \NNinfl$. $\kodd^{\vect{k}}(\hopf)$ is the largest subcoalgebra of $\hopf$ with the property that 
\[ \phi^{-1}(h) = (-1)^{|\vect{n}|}\phi(h) \]
for every $h \in \kodd^{\vect{k}}(\hopf)$ of degree $\vect{n} \le \vect{k}$. Similarly $\keven^{\vect{k}}(\hopf)$ is the largest subcoalgebra of $\hopf$ with the property that 
\[ \phi(h) = (-1)^{|\vect{n}|}\phi(h) \]
for every $h \in \kodd^{\vect{k}}(\hopf)$ of degree $\vect{n} \le \vect{k}$.
\end{enumerate}
\end{prop}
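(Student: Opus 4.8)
The plan is to derive this proposition directly from the results already established, rather than re-running any coalgebra arguments. Parts (a) and (b) are immediate: by definition $\kodd^{\vect{k}}(\hopf) = \Ssubalg_{\vect{k}}(\overline{\phi},\phi^{-1})$ and $\keven^{\vect{k}}(\hopf) = \Ssubalg_{\vect{k}}(\overline{\phi},\phi)$, so they are multigraded Hopf subalgebras of $\hopf$ by Theorem~\ref{theoremgeneralproperties}(c); likewise $\ideal\kodd^{\vect{k}}(\hopf) = \ideal^{\vect{k}}(\overline{\phi},\phi^{-1})$ and $\ideal\keven^{\vect{k}}(\hopf) = \ideal^{\vect{k}}(\overline{\phi},\phi)$ are multigraded Hopf ideals of $\hopf^*$ by Theorem~\ref{theoremgeneralproperties}(b). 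The only thing to check here is that $\overline{\phi}$, $\phi$, and $\phi^{-1}$ are indeed characters, so that the previous theorem applies: $\phi$ is a character by hypothesis, $\phi^{-1}$ is a character since the convolution inverse of a multiplicative functional is multiplicative (this is part of the standard facts on invertible functionals referenced in \S\ref{SS:Invertible}), and $\overline{\phi}(h) = (-1)^{|\vect{n}|}\phi(h)$ is multiplicative because the sign $(-1)^{|\cdot|}$ is itself a character on the multigraded bialgebra (it is the composite of the grading with the sign character on $\NN$), so $\overline{\phi}$ is a product of two characters.

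For part (c), I would invoke part (a) of the preceding proposition (the isomorphism $\Ssubalg_{\vect{k}}(\phi,\psi) \cong \hopf^*/\ideal^{\vect{k}}(\phi,\psi)$) specialized to the pairs $(\overline{\phi},\phi^{-1})$ and $(\overline{\phi},\phi)$. This directly yields $\kodd^{\vect{k}}(\hopf) \cong \hopf^*/\ideal\kodd^{\vect{k}}(\hopf)$ and $\keven^{\vect{k}}(\hopf) \cong \hopf^*/\ideal\keven^{\vect{k}}(\hopf)$. (The proposition as stated says ``for each $\vect{k}\in\NN$'' in part (c), but the cited isomorphism holds for all $\vect{k}\in\NNinfl$; in any case it holds at least for the claimed range.)

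For part (d), I would unwind the defining property \eqref{SamDef}/\eqref{AltDef} of $\Ssubalg_{\vect{k}}$ with the specific characters substituted. For the odd case, $\kodd^{\vect{k}}(\hopf) = \Ssubalg_{\vect{k}}(\overline{\phi},\phi^{-1})$ is by definition the largest graded subcoalgebra $C$ of $\hopf$ such that $\overline{\phi}(h) = \phi^{-1}(h)$ for every homogeneous $h\in C$ of degree $\vect{n}\le\vect{k}$; substituting $\overline{\phi}(h) = (-1)^{|\vect{n}|}\phi(h)$ turns this into the condition $\phi^{-1}(h) = (-1)^{|\vect{n}|}\phi(h)$, which is exactly the asserted condition. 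The even case is identical with $\phi^{-1}$ replaced by $\phi$, giving $\phi(h) = (-1)^{|\vect{n}|}\phi(h)$. One small subtlety worth a remark: a priori $\Ssubalg_{\vect{k}}$ was defined as the largest \emph{graded} subcoalgebra with the stated property, whereas part (d) asserts it is the largest subcoalgebra (not a priori graded); this is harmless because any subcoalgebra is contained in a graded one with the same property --- indeed the span of its homogeneous components is a graded subcoalgebra, and the defining condition only constrains homogeneous pieces of degree $\le\vect{k}$ --- so the maximum over graded subcoalgebras equals the maximum over all subcoalgebras. I expect no real obstacle here; the one place to be careful is simply making sure the sign conventions in $\overline{\phi}$ line up correctly when reading off the conditions in (d), and noting (as in part (a)/(b)) that all the functionals in play are genuine characters so that the machinery of Theorem~\ref{theoremgeneralproperties} is applicable.
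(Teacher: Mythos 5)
Your proof is correct and follows essentially the same route as the paper's, which likewise disposes of (a)--(c) as direct specializations of Theorem~\ref{theoremgeneralproperties} and the subsequent proposition, and obtains (d) by unwinding Equation~\eqref{AltDef} with the characters $\overline{\phi}$, $\phi^{-1}$, and $\phi$ substituted. Your added checks (that $\overline{\phi}$ and $\phi^{-1}$ are genuine characters, and the graded-versus-ungraded subcoalgebra point in (d)) are sensible details the paper leaves implicit.
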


\begin{proof}
Parts (a), (b) and (c) are simple specializations of earlier results. Part (d) follows from Equation \eqref{AltDef}, which allows us to describe $\kodd^{\vect{k}}(\hopf)$ as the largest subcoalgebra of $\hopf$ with the property that 
\[ (\phi^{-1})_{\vect{n}}(h) = (-1)^{|\,\vect{i}\,|}\phi_{\vect{n}}(h) \textmd{ for all } \vect{n} \le \vect{k}\]
for every $h \in \kodd^{\vect{k}}(\hopf)$ of degree $\vect{i}$, and $\keven^{\vect{k}}(\hopf)$ as the largest subcoalgebra of $\hopf$ with the property that 
\[ \phi_{\vect{n}}(h) = (-1)^{|\,\vect{i}\, |}\phi_{\vect{n}}(h) \textmd{ for all } \vect{n} \le \vect{k}\]
for every $h \in \kodd^{\vect{k}}(\hopf)$ of degree $\vect{i}$.
\end{proof}

\subsection{Invertible linear functionals}
\label{SS:Invertible}

Let $\hopf$ be a multigraded connected Hopf algebra over $\field$. Recall that the convolution product of two linear functionals $\phi, \psi : \hopf \to \field$ is given as:
\[ \hopf \to \hopf \otimes \hopf \to \field \otimes \field \to \field\]
\noindent
 where the arrows are, respectively, $\Delta_{\hopf}$, $\phi \otimes \psi$ and $m_{\field}$. In the following, we choose simplicity and write convolution by concatenation; in other words, we denote the convolution of $\phi$ and $\psi$ simply by $\phi \psi$. Moreover we simply say \emph{invertible} when we mean \emph{convolution invertible}.

We know that the set $\mathbb{X}(\hopf)$ of characters of an arbitrary Hopf algebra $\hopf$ is a group under the convolution product, where the unit element is given by the counit $\epsilon_{\hopf}$ of $\hopf$ and the inverse of a given element $\phi$ of $\mathbb{X}(\hopf)$ is $\phi^{-1} = \phi \circ \apode_{\hopf}$. Here $\apode_{\hopf}$ is the antipode of $\hopf$. It is easy to see that $\phi(\phi \circ \apode_{\hopf}) = (\phi \circ \apode_{\hopf}) \phi = \epsilon_{\hopf}$. 

In this paper, invertible linear functionals play an important role. Therefore we now focus on the notion of invertibility and collect together some facts about invertible linear functionals on a combinatorial Hopf algebra. Here is a basic characterization of invertibility, which is noted in \cite{AH04}:

\begin{lemma}\label{L:invertibility}
Let $\phi:\hopf \to \field$ be a linear functional. Then $\phi$ is invertible if and only if $\phi(1) \ne 0$. 
\end{lemma}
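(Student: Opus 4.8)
The plan is to prove both directions of the equivalence directly from the recursive structure of the convolution product on a connected graded bialgebra. Recall that an element $\phi \in \hopf^*$ has a convolution inverse $\psi$ precisely when $\phi\psi = \psi\phi = \epsilon_{\hopf}$, where $\epsilon_{\hopf}$ is the counit, which on the connected multigraded Hopf algebra $\hopf = \bigoplus_{\vect{n}\in\NNl}\hopf_{\vect{n}}$ is the projection onto $\hopf_{\vect{0}} = \field\cdot 1$. I would work with the graded pieces $\phi_{\vect{n}}$ and build $\psi$ degree by degree.

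First I would dispose of the easy implication. If $\phi$ is invertible with inverse $\psi$, then evaluating $\phi\psi = \epsilon_{\hopf}$ at the unit $1\in\hopf_{\vect{0}}$ and using $\Delta(1) = 1\otimes 1$ gives $\phi(1)\psi(1) = \epsilon_{\hopf}(1) = 1$, so in particular $\phi(1)\ne 0$. For the converse, suppose $\phi(1)\ne 0$. I would construct $\psi$ recursively on the grading. Set $\psi(1) = \phi(1)^{-1}$, i.e.\ $\psi_{\vect{0}} = \phi(1)^{-1}\epsilon_{\hopf}$. Now suppose $\vect{n}\ne\vect{0}$ and that $\psi_{\vect{m}}$ has been defined for all $\vect{m}$ with $|\vect{m}| < |\vect{n}|$ in such a way that $(\phi\psi)_{\vect{m}} = \epsilon_{\hopf}|_{\hopf_{\vect{m}}}$ (which is $0$ for $\vect{m}\ne\vect{0}$). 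For $h\in\hopf_{\vect{n}}$, write $\Delta(h) = 1\otimes h + h\otimes 1 + \sum h'\otimes h''$ where the $h', h''$ are homogeneous of nonzero degree summing to $\vect{n}$ (so each has weight strictly between $0$ and $|\vect{n}|$); the requirement $(\phi\psi)(h) = 0$ becomes
\[
\phi(1)\psi(h) + \phi(h)\psi(1) + \sum \phi(h')\psi(h'') = 0,
\]
which, since $\phi(1)\ne 0$, can be solved for $\psi(h)$:
\[
\psi(h) = -\phi(1)^{-1}\Bigl(\phi(h)\psi(1) + \sum \phi(h')\psi(h'')\Bigr).
\]
This defines $\psi_{\vect{n}}$, and by construction $\phi\psi = \epsilon_{\hopf}$.

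It remains to check that this right inverse is also a left inverse, so that $\psi$ is a genuine two-sided convolution inverse. The cleanest route is the standard monoid argument: in any monoid, if an element has a left inverse and a right inverse then they coincide and the element is invertible; but here I would instead observe directly that the same recursion, carried out on the other side, produces a left inverse $\psi'$ with $\psi'\phi = \epsilon_{\hopf}$ (the solvability condition is again exactly $\phi(1)\ne 0$), and then associativity of convolution gives $\psi' = \psi'(\phi\psi) = (\psi'\phi)\psi = \psi$. Alternatively, one can simply cite that $\hopf^*$ with convolution is an associative algebra with unit $\epsilon_{\hopf}$ and invoke the connectedness to see that any $\phi$ with invertible "constant term" $\phi(1)$ is a unit, exactly as for formal power series.

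I do not anticipate a serious obstacle here — the result is a routine adaptation of the $\NN$-graded case, with the only mild care being that the induction is on the \emph{weight} $|\vect{n}|$ rather than on $\vect{n}$ itself (so that the "middle terms" in the coproduct genuinely have strictly smaller weight and the recursion is well-founded), and that one should note the sum over intermediate terms in $\Delta^{(2)}$-type expansions is finite because $\hopf$ is connected and each graded piece, while possibly infinite-dimensional is irrelevant — we only ever evaluate at a fixed $h$, so finiteness of the Sweedler sum for that particular $h$ is automatic. The potential subtlety worth a sentence is the passage from one-sided to two-sided inverse, which I would handle by the associativity argument above.
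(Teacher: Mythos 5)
Your proof is correct. The paper itself gives no proof of this lemma (it simply cites the reference [AH04]), but your degree-by-degree construction is exactly the standard argument, and the recursion you derive, namely $\psi_{\vect{n}} = -\phi(1)^{-1}\sum \phi(h')\psi(h'')$ over terms of strictly smaller weight, is precisely the identity $(\phi^{-1})_{\vect{n}} = -\sum_{\vect{0}\ne\vect{i}\le\vect{n}}\phi_{\vect{i}}(\phi^{-1})_{\vect{n}-\vect{i}}$ that the paper relies on in the proofs of the two lemmas immediately following. Your attention to inducting on the weight $|\vect{n}|$ and to upgrading the one-sided inverse to a two-sided one via associativity is exactly the right amount of care.
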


Because of this lemma, we will make the reasonable assumption that all of our linear functionals satisfy $\phi(1) \ne 0.$ In fact, all the linear functionals we will be considering will satisfy $\phi(1) = 1$. 

Here are  two more simple observations about invertible linear functionals.

\begin{lemma}\label{L:inverse-equal}
Let $\phi$ and $\rho$ be invertible linear functionals on $\hopf$. Assume that $\phi(1) = \rho(1) = 1$. Let $\vect{k} \ge \vect{0}$. If $\phi_{\vect{n}} = \rho_{\vect{n}}$ for all $\vect{n} \le \vect{k}$, then $(\phi^{-1})_{\vect{n}} = (\rho^{-1})_{\vect{n}}$ for all $\vect{n} \le \vect{k}$.
\end{lemma}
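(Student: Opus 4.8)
The plan is to prove this by induction on the weight $|\vect{n}|$ of the multidegrees $\vect{n} \le \vect{k}$, using the recursive formula for the convolution inverse. Recall that for a multigraded connected Hopf algebra, the inverse $\phi^{-1}$ of an invertible functional $\phi$ with $\phi(1)=1$ is determined recursively: writing $\Delta(h) = h \otimes 1 + 1 \otimes h + \sum h' \otimes h''$ for $h$ homogeneous of degree $\vect{n} \ne \vect{0}$ (where the $h', h''$ have degrees strictly between $\vect{0}$ and $\vect{n}$ in the $\le$ order, in particular of strictly smaller weight), the identity $\phi^{-1}\phi = \epsilon_{\hopf}$ forces
\[
(\phi^{-1})_{\vect{n}}(h) = -\phi_{\vect{n}}(h) - \sum (\phi^{-1})(h')\,\phi(h''),
\]
and on the right-hand side every $h'$ that appears has degree $\le \vect{n} \le \vect{k}$ (componentwise) and strictly smaller weight.

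First I would fix $\vect{k} \ge \vect{0}$ and assume $\phi_{\vect{n}} = \rho_{\vect{n}}$ for all $\vect{n} \le \vect{k}$. The base case is $\vect{n} = \vect{0}$: here $\hopf_{\vect{0}} = \field\cdot 1$ and $(\phi^{-1})(1) = \phi(1)^{-1} = 1 = \rho(1)^{-1} = (\rho^{-1})(1)$. For the inductive step, fix $\vect{n} \le \vect{k}$ with $\vect{n}\ne\vect{0}$ and assume $(\phi^{-1})_{\vect{m}} = (\rho^{-1})_{\vect{m}}$ for every $\vect{m} \le \vect{k}$ with $|\vect{m}| < |\vect{n}|$. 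Apply the recursive formula above to both $\phi$ and $\rho$ for an arbitrary homogeneous $h \in \hopf_{\vect{n}}$. Since $\vect{n} \le \vect{k}$, we have $\phi_{\vect{n}}(h) = \rho_{\vect{n}}(h)$ by hypothesis. In the sum, each $h''$ has some degree $\vect{m}'' \le \vect{n} \le \vect{k}$, so $\phi(h'') = \rho(h'')$; and each $h'$ has degree $\vect{m}' \le \vect{n} \le \vect{k}$ with $|\vect{m}'| < |\vect{n}|$, so by the induction hypothesis $(\phi^{-1})(h') = (\rho^{-1})(h')$. Hence every term on the right-hand sides agrees, giving $(\phi^{-1})_{\vect{n}}(h) = (\rho^{-1})_{\vect{n}}(h)$.

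I do not anticipate a serious obstacle here; the one point requiring a little care is making sure the components $h'$, $h''$ appearing in the reduced coproduct genuinely have degrees $\le \vect{n}$ in the componentwise partial order on $\NNl$ (so that being $\le \vect{k}$ is inherited), which is immediate from the fact that $\Delta$ is a morphism of $\NNl$-graded coalgebras and hence $\Delta(\hopf_{\vect{n}}) \subseteq \bigoplus_{\vect{i}+\vect{j}=\vect{n}} \hopf_{\vect{i}}\otimes\hopf_{\vect{j}}$. One should also note that the inductive structure is sound: strictly-smaller-weight multidegrees that are $\le \vect{n}$ are automatically $\le \vect{k}$, so the hypothesis of the lemma supplies exactly what is needed at each stage. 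The whole argument is a routine multigraded adaptation of the standard $\NN$-graded recursion for convolution inverses, so I would keep the write-up brief and perhaps even defer it, as the paper does for several analogous extensions of results from \cite{ABS06}.
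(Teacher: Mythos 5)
Your proof is correct and follows essentially the same route as the paper's: induction (the paper inducts on $\vect{n}$ in the componentwise order, you on the weight $|\vect{n}|$, which amounts to the same thing) combined with the standard recursion for the convolution inverse coming from $\phi^{-1}\phi = \epsilon$ and connectedness. The extra care you take to check that the degrees appearing in the reduced coproduct remain $\le \vect{k}$ is exactly the point implicit in the paper's sum over $\vect{0}\ne\vect{i}\le\vect{n}$.
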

\begin{proof}
We prove this by induction on $\vect{n}$. For the base case we have $(\phi^{-1})_{\vect{0}}=(\rho^{-1})_{\vect{0}} = \epsilon$. Let $\vect{n}$ be such that $\vect{0} < \vect{n} \le \vect{k}$. Then by induction we have
\[ (\phi^{-1})_{\vect{n}} = - \sum_{\vect{0}~\ne~\vect{i}~\le~\vect{n} } \phi_{\vect{i}} (\phi^{-1})_{\vect{n} - \vect{i}} = - \sum_{\vect{0}~\ne~\vect{i}~\le~\vect{n} } \rho_{\vect{i}} (\rho^{-1})_{\vect{n} - \vect{i}} = (\rho^{-1})_{\vect{n}}.\qedhere\]
\end{proof}

\begin{lemma}
Let $\phi$ be an invertible linear functional on $\hopf$ such that $\phi(1) = 1$. Then $\overline{\phi}^{-1} = \overline{\phi^{-1}}$.
\end{lemma}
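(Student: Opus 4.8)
The plan is to show that $\overline{\phi^{-1}}$ is a two-sided convolution inverse of $\overline{\phi}$; since convolution inverses are unique (when they exist, which is guaranteed here because $\overline{\phi}(1) = \phi(1) = 1 \ne 0$ by Lemma~\ref{L:invertibility}), this will give $\overline{\phi}^{-1} = \overline{\phi^{-1}}$.

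First I would record the key compatibility of the bar operation with convolution: for any two linear functionals $\alpha, \beta$ on $\hopf$, I claim $\overline{\alpha\beta} = \overline{\alpha}\,\overline{\beta}$. To see this, take a homogeneous $h \in \hopf_{\vect{n}}$ and write $\Delta(h) = \sum h_{(1)} \otimes h_{(2)}$ in Sweedler notation; since $\hopf$ is $\NNl$-graded, each term may be taken homogeneous with $\deg h_{(1)} = \vect{i}$, $\deg h_{(2)} = \vect{n} - \vect{i}$ for various $\vect{i} \le \vect{n}$, so that $|\vect{i}| + |\vect{n} - \vect{i}| = |\vect{n}|$. Then
\[
\overline{\alpha}\,\overline{\beta}(h) = \sum \overline{\alpha}(h_{(1)})\,\overline{\beta}(h_{(2)}) = \sum (-1)^{|\vect{i}|}\alpha(h_{(1)})\,(-1)^{|\vect{n}-\vect{i}|}\beta(h_{(2)}) = (-1)^{|\vect{n}|}\sum \alpha(h_{(1)})\beta(h_{(2)}) = \overline{\alpha\beta}(h).
\]
I would also note that $\overline{\epsilon} = \epsilon$, since $\epsilon$ is concentrated in degree $\vect{0}$ where the sign $(-1)^{|\vect{0}|} = 1$.

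Applying these two facts with $\alpha = \phi$, $\beta = \phi^{-1}$ (and then with the factors reversed) gives
\[
\overline{\phi}\,\overline{\phi^{-1}} = \overline{\phi\,\phi^{-1}} = \overline{\epsilon} = \epsilon \qquad\text{and}\qquad \overline{\phi^{-1}}\,\overline{\phi} = \overline{\phi^{-1}\,\phi} = \overline{\epsilon} = \epsilon,
\]
so $\overline{\phi^{-1}}$ is the convolution inverse of $\overline{\phi}$, i.e. $\overline{\phi}^{-1} = \overline{\phi^{-1}}$.

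The only mild subtlety — and the closest thing to an obstacle — is the bookkeeping in the first claim: one must be careful that the grading is used correctly so that the exponents of $-1$ add up, i.e. that $\Delta$ respects the $\NNl$-grading and hence each Sweedler term splits the degree additively. This is immediate from the definition of a multigraded bialgebra, so the proof is short; the whole argument is essentially the observation that $\alpha \mapsto \overline{\alpha}$ is an automorphism of the convolution monoid $\hopf^*$.
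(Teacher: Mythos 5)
Your proof is correct, but it takes a different route from the paper. The paper proves the identity degreewise by induction, using the standard recursion for the convolution inverse in a connected multigraded bialgebra: $(\overline{\phi}^{-1})_{\vect{n}} = -\sum_{\vect{0}\ne\vect{i}\le\vect{n}} \overline{\phi}_{\vect{i}}\,(\overline{\phi}^{-1})_{\vect{n}-\vect{i}}$, then pulls the bar through the sum term by term and matches it against the same recursion for $(\overline{\phi^{-1}})_{\vect{n}}$. You instead prove the stronger structural fact that $\alpha \mapsto \overline{\alpha}$ is a homomorphism of the convolution monoid fixing $\epsilon$, and deduce that it preserves inverses; your degree bookkeeping ($|\vect{i}| + |\vect{n}-\vect{i}| = |\vect{n}|$, valid because $\Delta$ preserves the $\NNl$-grading) is exactly the point that makes this work, and it is the same additivity that powers the paper's inductive step. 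Your argument is more conceptual and reusable (it immediately gives $\overline{\alpha\beta} = \overline{\alpha}\,\overline{\beta}$ for arbitrary functionals, which the paper uses implicitly elsewhere), while the paper's induction keeps the proof stylistically parallel to the preceding lemma on inverses agreeing up to degree $\vect{k}$, where a genuine degreewise induction is unavoidable. One small remark: since you exhibit a two-sided convolution inverse explicitly, the appeal to Lemma~\ref{L:invertibility} for the existence of $\overline{\phi}^{-1}$ is not strictly needed, though it does no harm.
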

\begin{proof}
It is easy to verify that $(\overline{\phi}^{-1})_{\vect{0}} = (\overline{\phi^{-1}})_{\vect{0}} = \epsilon$. For $\vect{n} > \vect{0}$, by induction we have
\begin{eqnarray*}
(\overline{\phi}^{-1})_{\vect{n}} &=& -\sum_{\vect{0}~\ne~\vect{i}~\le~\vect{n} } \overline{\phi}_{\vect{i}} (\overline{\phi}^{-1})_{\vect{n-i}}  = -\sum_{\vect{0}~\ne~\vect{i}~\le~\vect{n} } \overline{\phi_{\vect{i}}} (\overline{\phi^{-1}})_{\vect{n-i}} \\ &=& - \sum_{\vect{0}~\ne~\vect{i}~\le~\vect{n} } \overline{\phi_{\vect{i}} (\phi^{-1})_{\vect{n-i}}} = \overline{(\phi^{-1})_{\vect{n}}} = (\overline{\phi^{-1}})_{\vect{n}}. \qedhere 
\end{eqnarray*}
\end{proof}

\subsection{$\vect{k}$-odd linear functionals} \label{SS:kOddFunctional}

We next recall the definition of an odd character from \cite{ABS06}: A character $\phi$ of an $\NN$-graded Hopf algebra $\hopf$ is \textit{odd} if $\overline{\phi} = \phi^{-1}$. Here the bar denotes the involution $\phi \mapsto \overline{\phi}$ on the characters of $\hopf$ defined by $\overline{\phi}(h) = (-1)^n \phi(h)$ for $h \in \hopf_n$. Recall also that at the beginning of this section, we introduced the analogous involution for the multigraded case: $\overline{\phi}(h) = (-1)^{|\vect{n}|} \phi(h)$ for $h \in \hopf_{\vect{n}}$. 

In order to define the $\vect{k}$-analogs of odd characters, we once again focus on invertibility first. We begin with the following:
\begin{definition}
A (convolution) invertible linear functional $\phi$ is called {\em $\vect{k}$-odd} if $(\overline{\phi})_{\vect{n}} = (\phi^{-1})_{\vect{n}}$ for all $\vect{n} \le \vect{k}.$
\end{definition}

Note that for $l =1$, $\vect{k}$ is simply a natural number $k$, and 
an odd character in the sense of \cite{ABS06} is $k$-odd for all $k \in \NN$. Thus the following is a most natural notion to introduce:
\begin{definition}
A (convolution) invertible linear functional $\phi$ on $\hopf$ is called \emph{odd} if it is $\vect{k}$-odd for all $\vect{k} \in \NNl$ (equivalently, for all $\vect{k} \in \NNinfl$).
\end{definition}

In the rest of this section, we will use the notation  $\kodd^{\vect{k}}$ for $\kodd^{\vect{k}}(\QSyml)$, the $\vect{k}$-odd subalgebra $\Ssubalg_{\vect{k}}(\overline{\zetaQ}, (\zetaQ)^{-1})$ of $\QSyml$. Similarly we will use the notation $\keven^{\vect{k}}$ for $\keven^{\vect{k}}(\QSyml)$, the $\vect{k}$-even subalgebra $\Ssubalg_{\vect{k}}(\overline{\zetaQ}, \zetaQ)$ of $\QSyml$.

Here is the main result of this subsection:

\begin{theorem}
\label{T:kOddInducedMap}
Let $\hopf$ be a multigraded Hopf algebra $\hopf$ and let $\vect{k} \in \NNinfl$.
\begin{enumerate}
\item If $\phi : \hopf \to \field$ is a $\vect{k}$-odd linear functional on $\hopf$, then there exists a unique morphism $\Psi : \hopf \to \QSyml$ of $\NNl$-graded coalgebras such that $\zetaQ \circ \Psi = \phi$. The image of $\Psi$ lies in $\kodd^{\vect{k}}$. 
\item Explicitly,  if $\vect{n} \in \NNl$ and $h\in \hopf_{\vect{n}}$ then
\begin{equation}\label{E:OddInducedMap}
\Psi(h) = \sum_{\vect{I} \comp \vect{n}} \phi^{\vect{I}}(h)M_{\vect{I}}, 
\end{equation}
where if $\vect{I} = (\vect{i}_1, \ldots, \vect{i}_m) \comp \vect{n}$ then $\phi^{\vect{I}}$ is the composite map:
\[ 
\hopf \xrightarrow{\Delta^{(m-1)}} \hopf^{\otimes m} \xrightarrow{\text{projection}}  \hopf_{\vect{i}_1} \otimes \hopf_{\vect{i}_2} \otimes \cdots \otimes \hopf_{\vect{i}_m} \xrightarrow{\phi^{\otimes m}} \field^{\otimes m} \xrightarrow{\text{multiplication}} \field.
\]
\item If $\phi$ is a character then $\Psi$ is a homomorphism of multigraded (combinatorial) Hopf algebras. In other words, $(\kodd^{\vect{k}}, \zetaQ)$ is the terminal object of the category of multigraded (combinatorial) Hopf algebras with $\vect{k}$-odd characters.
\end{enumerate}
\end{theorem}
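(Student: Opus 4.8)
The plan is to follow the template of \cite[Theorem~4.1]{ABS06}, adapted to the multigraded setting and refined to track the degree bound $\vect{k}$. First I would establish uniqueness of $\Psi$: if $\Psi:\hopf\to\QSyml$ is a morphism of $\NNl$-graded coalgebras with $\zetaQ\circ\Psi=\phi$, then writing $\Psi(h)=\sum_{\vect{I}\comp\vect{n}}c_{\vect{I}}(h)M_{\vect{I}}$ for $h\in\hopf_{\vect{n}}$, one recovers each coefficient $c_{\vect{I}}(h)$ by applying the appropriate iterated coproduct on both sides and using that $\zetaQ$ detects exactly the one-column part (Equation~\eqref{E:zetaQSyml}). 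Concretely, $c_{\vect{I}} = \zetaQ^{\otimes m}\circ(\text{projection})\circ\Delta_{\QSyml}^{(m-1)}\circ\Psi = \phi^{\otimes m}\circ(\text{projection})\circ\Delta_{\hopf}^{(m-1)} = \phi^{\vect{I}}$, using that $\Psi$ commutes with coproducts and respects the multigrading. This simultaneously forces the explicit formula~\eqref{E:OddInducedMap} and gives part~(2).

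Next I would verify existence: \emph{define} $\Psi$ by~\eqref{E:OddInducedMap} and check it is a morphism of $\NNl$-graded coalgebras with $\zetaQ\circ\Psi=\phi$. That $\zetaQ\circ\Psi=\phi$ is immediate from~\eqref{E:zetaQSyml}, since only the terms with $\len(\vect{I})\le 1$ survive and $\phi^{(\vect{n})}=\phi_{\vect{n}}$. Respecting the multigrading is built into the definition. Coassociativity/compatibility with $\Delta$ reduces, via the coproduct formula $\Delta(M_{\vect{I}})=\sum_{\vect{I}=\vect{J}\vect{K}}M_{\vect{J}}\otimes M_{\vect{K}}$, to the multiplicativity identity $\phi^{\vect{J}\vect{K}}=(\phi^{\vect{J}}\otimes\phi^{\vect{K}})\circ\Delta_{\hopf}$, which follows from coassociativity of $\Delta_{\hopf}$; compatibility with the counit is the $\len(\vect{I})=0$ term. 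For part~(3), if $\phi$ is a character (multiplicative), one shows $\Psi$ is also an algebra map by checking $\Psi(hh')=\Psi(h)\Psi(h')$ on the quasi-shuffle product of $\QSyml$; this is where multiplicativity of $\phi$ enters, and the argument is the multigraded transcription of \cite[Theorem~4.1]{ABS06}, the key combinatorial input being that the coproduct-then-pair structure converts the quasi-shuffle product of monomial functions into the product in $\field$ of the scalars $\phi^{\vect{I}}(h)$ and $\phi^{\vect{J}}(h')$.

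The genuinely new point, and the step I expect to be the main obstacle, is showing that the image of $\Psi$ lies in $\kodd^{\vect{k}}=\Ssubalg_{\vect{k}}(\overline{\zetaQ},(\zetaQ)^{-1})$. By the characterization in Remark~\ref{RemDeltaSquared} (the $\vect{k}$-analog of part~(d) of \cite[Thm.~5.3]{ABS06}) applied to $\QSyml$ with the pair $(\overline{\zetaQ},(\zetaQ)^{-1})$, a homogeneous $g\in\QSyml$ lies in $\kodd^{\vect{k}}$ iff $(\mathrm{id}\otimes(\overline{\zetaQ}_{\vect{n}}-(\zetaQ^{-1})_{\vect{n}})\otimes\mathrm{id})\circ\Delta^{(2)}(g)=0$ for all $\vect{n}\le\vect{k}$. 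Pulling this back along $\Psi$ (which is a coalgebra map, so intertwines $\Delta^{(2)}$), the condition on $\Psi(h)$ becomes a statement about $\hopf$: it suffices that $(\overline{\zetaQ}_{\vect{n}}-(\zetaQ^{-1})_{\vect{n}})\circ\Psi|_{\hopf_{\vect{n}}}=(\overline{\phi})_{\vect{n}}-(\phi^{-1})_{\vect{n}}$ vanishes for $\vect{n}\le\vect{k}$. Here $\zetaQ\circ\Psi=\phi$ gives $\overline{\zetaQ}\circ\Psi=\overline{\phi}$, and since $\Psi$ is a coalgebra map one has $\zetaQ^{-1}\circ\Psi=(\zetaQ\circ\Psi)^{-1}=\phi^{-1}$ (convolution inverse is natural with respect to coalgebra maps). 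Thus $(\overline{\zetaQ}_{\vect{n}}-(\zetaQ^{-1})_{\vect{n}})\circ\Psi=(\overline{\phi})_{\vect{n}}-(\phi^{-1})_{\vect{n}}$, which is exactly $0$ for $\vect{n}\le\vect{k}$ by the hypothesis that $\phi$ is $\vect{k}$-odd. Feeding this back through $\Delta^{(2)}$ gives $\Psi(h)\in\kodd^{\vect{k}}$ for all $h$, so $\Im\Psi\subseteq\kodd^{\vect{k}}$. Finally, since $(\QSyml,\zetaQ)$ is terminal (Theorem~\ref{T:QSymlUniversal}) and $\zetaQ$ restricts to a $\vect{k}$-odd character on $\kodd^{\vect{k}}$ by Equation~\eqref{AltDef}, the factorization $\hopf\to\kodd^{\vect{k}}\hookrightarrow\QSyml$ together with the uniqueness already proved shows $(\kodd^{\vect{k}},\zetaQ)$ is terminal in the category of multigraded combinatorial Hopf algebras with $\vect{k}$-odd characters, proving part~(3). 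The one subtlety to handle carefully is the interplay of ``coalgebra morphism'' versus ``Hopf morphism'': parts~(1)--(2) only produce a coalgebra map, and one must be sure the convolution-inverse naturality $\zetaQ^{-1}\circ\Psi=\phi^{-1}$ holds at the level of coalgebra maps — it does, because convolution inverse is characterized by a coalgebra identity — so the $\vect{k}$-odd image argument goes through even before $\phi$ is assumed multiplicative.
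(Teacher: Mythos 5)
Your proposal is correct, and it reaches the same three milestones as the paper's proof (uniqueness and the explicit formula, existence as a coalgebra map, the image landing in $\kodd^{\vect{k}}$, and multiplicativity when $\phi$ is a character), but the technical route differs in two places worth noting. For existence and uniqueness the paper follows \cite[Theorem~4.1]{ABS06} literally: it first builds the algebra map $\Phi:\NSyml\to\hopf^*$ on the free generators $S_{\vect{n}}\mapsto\phi_{\vect{n}}$ and then sets $\Psi=\Phi^*$, so that ``$\Psi$ is a graded coalgebra map'' and uniqueness both come for free from duality and from the freeness of $\NSyml$; you instead define $\Psi$ directly by \eqref{E:OddInducedMap} and verify compatibility with $\Delta$ by hand via the concatenation coproduct of the $M_{\vect{I}}$ and coassociativity, and you prove uniqueness by extracting coefficients with iterated coproducts. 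Your version is more self-contained but re-proves what duality gives automatically; the paper's version silently relies on the identification $\NSyml{}^*=\QSyml$. For the containment $\Im\Psi\subseteq\kodd^{\vect{k}}$, the paper observes that $\vect{k}$-oddness of $\phi$ means $\Ssubalg_{\vect{k}}(\overline{\phi},\phi^{-1})=\hopf$ and then cites Lemma~\ref{OddsMap2OddsLemma} (the $\vect{k}$-analog of \cite[Prop.~5.6(a)]{ABS06}), whose proof it does not write out; your argument via the criterion of Remark~\ref{RemDeltaSquared} and the identity $(\overline{\zetaQ}_{\vect{n}}-(\zetaQ^{-1})_{\vect{n}})\circ\Psi=(\overline{\phi})_{\vect{n}}-(\phi^{-1})_{\vect{n}}$ is in effect an inline proof of that lemma in the special case needed, and you are right to flag the one genuinely delicate point, namely that $\zetaQ^{-1}\circ\Psi=\phi^{-1}$ already holds for $\Psi$ a mere coalgebra morphism because convolution inverses are computed purely from the coalgebra structure; the paper leaves this implicit. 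Part~(3) is handled identically in both (the two glued commutative diagrams versus your quasi-shuffle check amount to the same appeal to the already-established couniversal property).
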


\begin{remark}
For $\vect{k} = \vect{0}$, $\kodd^{\vect{k}}$ is equal to $\QSyml$, and the above theorem reduces to Theorem~\ref{T:QSymlUniversal}.
\end{remark}

\begin{proof}
For a linear functional $\phi  : \hopf \to \field$, Theorem~4.1 of \cite{ABS06} provides us with a unique graded coalgebra map $\Psi$ between $\hopf$ and $\QSyml$ satisfying $\zetaQ \circ \Psi = \phi$ provided $\lev = 1$. Here we are interested in general $\lev$. Moreover, the statement we are making applies to a certain class of linear functionals on $\NNl$-graded connected Hopf algebras, the $\vect{k}$-odd ones. In this case, our theorem asserts that the image of the relevant morphism lies in $\kodd^{\vect{k}}$. Below we follow the construction in the proof of Theorem~4.1 of \cite{ABS06} carefully and modify as necessary to make sure that we get what we want. 

We first construct a map $\Phi : \NSyml \to \hopf^*$. Recall from \S\S\ref{SS:NSyml} that $\NSyml$ is freely generated as an algebra by $\{S_{\vect{n}} \mid \vect{n} \in \NNl\}.$ We let $\Phi$ be the algebra homomorphism that maps $S_{\vect{n}}$ to $\phi_{\vect{n}}$, the restriction  of $\phi$ to $\hopf_{\vect{n}}$. Clearly $\phi_{\vect{n}}$ is in $(\hopf_{\vect{n}})^* = (\hopf^*)_{\vect{n}}$, so $\Phi$ preserves the $\NNl$-grading. 

We next set $\Psi = \Phi^*$ be the dual map from $\hopf$ into ${\NSyml}^* = \QSyml$. In particular these two maps ought to satisfy
\[ 
\Phi(S_{\vect{n}})(h) = (S_{\vect{n}} \circ \Psi)(h) \textmd{ or equivalently, } (\phi_{\vect{n}}(h) =S_{\vect{n}} \circ \Psi)(h).
\]
Then $\Psi$ is a graded coalgebra map. 

Now since as an element of ${\QSyml}^* = \NSyml$, the $\vect{n}$th graded piece of $\zetaQ$ is $S_{\vect{n}}$, we have:
\[ \left . \zetaQ \circ \Psi \right \vert_{{\hopf}_{\vect{n}}} = \left . \zetaQ \right \vert_{\QSyml_{\vect{n}}} \circ \left . \Psi \right \vert_{{\hopf}_{\vect{n}}} = S_{\vect{n}} \circ \left . \Psi \right \vert_{\hopf_{\vect{n}}}  =  \Phi(S_{\vect{n}}) = \phi_{\vect{n}},\] 
and therefore $\zetaQ \circ \Psi = \phi$. This shows that $\Psi : \hopf \to \QSyml$ is a morphism of combinatorial coalgebras.

Next for any vector composition $\vect{I} = (\vect{i}_1, \ldots, \vect{i}_m)\comp \vect{n}$ define $\phi^{\vect{I}}$ to be the composition: 
\[ \hopf \to \hopf^{\otimes m} \to \hopf_{\vect{i}_1} \otimes \hopf_{\vect{i}_2} \otimes \cdots \otimes \hopf_{\vect{i}_m} \to \field^{\otimes m} \xrightarrow{\text{multiplication}} \field\]
where the unlabeled arrows stand for $\Delta^{(m-1)}$, the tensor product of the canonical projections onto the appropriate homogeneous components, and $\phi^{\otimes m}$, respectively. Since $S^{\vect{I}} = S_{\vect{i}_1} \cdots S_{\vect{i}_m}$, we can see that $\Phi(S^{\vect{I}}) = \phi^{\vect{I}}$, and so $\Psi$ is given by:
\[ \Psi(h) = \sum_{\vect{I} \comp \vect{n}} \phi^{\vect{I}}(h)M_{\vect{I}}, \]
where $h \in \hopf_{\vect{n}}$.
Uniqueness of $\Psi$ follows from the uniqueness of $\Phi$ by duality. 

Finally we need to show that the image of $\Psi$ lies in $\kodd^{\vect{k}}$.
Now, if $\phi$ is $\vect{k}$-odd, then by definition, we have: $(\overline{\phi})_{\vect{n}} = (\phi^{-1})_{\vect{n}}$ for all $\vect{n}\le \vect{k}$. But then, since $\Ssubalg_{\vect{k}}(\overline{\phi}, \phi^{-1})$ is the largest subcoalgebra of $\hopf$ satisfying 
\[ \forall h \in \Ssubalg_{\vect{k}}(\overline{\phi},\phi^{-1}), \quad \overline{\phi}_{\vect{n}}(h) = (\phi^{-1})_{\vect{n}}(h) \textmd{ for all } \vect{n} \le \vect{k}, \]
(cf. Equation \eqref{AltDef}), we can easily see that
\[ 
\Ssubalg_{\vect{k}}(\overline{\phi}, (\phi)^{-1}) = \hopf.
\]
But since we have $\kodd^{\vect{k}} = \Ssubalg_{\vect{k}}(\overline{\zetaQ}, (\zetaQ)^{-1})$, our statement reduces to:
\[ \Psi(\Ssubalg_{\vect{k}}(\overline{\phi}, (\phi)^{-1})) \subset \Ssubalg_{\vect{k}}(\overline{\zetaQ}, (\zetaQ)^{-1}).\]
This will follow readily from a modification of Prop.5.6(a) of \cite{ABS06} (also see \cite[Prop.5.8(e)]{ABS06}):
\begin{lemma}
\label{OddsMap2OddsLemma}
Let $\phi$ and $\psi$ be linear functionals on the multigraded coalgebra $\hopf$, and let $\phi^{\prime}$ and $\psi^{\prime}$ be linear functionals on the multigraded coalgebra $\hopf^{\prime}$. Let $\Psi : \hopf\to \hopf^{\prime}$ be a morphism of multigraded coalgebras with $\phi = \phi^{\prime} \circ \Psi$ and $\psi = \psi^{\prime} \circ \Psi$. Then $\Psi(\Ssubalg_{\vect{k}}({\phi}, \psi)) \subset \Ssubalg_{\vect{k}}({\phi^{\prime}}, \psi^{\prime}))$ for each $\vect{k} \in \NNl$.
\end{lemma}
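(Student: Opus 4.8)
The plan is to reduce Lemma~\ref{OddsMap2OddsLemma} to the characterization of $\Ssubalg_{\vect{k}}$ as a largest subcoalgebra satisfying a pointwise vanishing condition, exactly as in the $\lev=1$ case \cite[Prop.~5.6(a)]{ABS06}. First I would recall (or establish) that $\Ssubalg_{\vect{k}}(\phi',\psi')$ is the \emph{largest} graded subcoalgebra $C'$ of $\hopf'$ with the property that $(\phi'-\psi')_{\vect{n}}(c')=0$ for all $c'\in C'$ and all $\vect{n}\le\vect{k}$ (this is Equation~\eqref{AltDef}). So it suffices to show that the image $\Psi\bigl(\Ssubalg_{\vect{k}}(\phi,\psi)\bigr)$ is a graded subcoalgebra of $\hopf'$ on which $(\phi'-\psi')_{\vect{n}}$ vanishes for every $\vect{n}\le\vect{k}$; maximality of $\Ssubalg_{\vect{k}}(\phi',\psi')$ then gives the desired inclusion.

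The two things to check are therefore: (i) $\Psi\bigl(\Ssubalg_{\vect{k}}(\phi,\psi)\bigr)$ is a graded subcoalgebra of $\hopf'$, and (ii) $(\phi'-\psi')_{\vect{n}}$ annihilates it for $\vect{n}\le\vect{k}$. For (i), $\Psi$ is by hypothesis a morphism of $\NNl$-graded coalgebras, so the image of any graded subcoalgebra is again a graded subcoalgebra; here one uses that $\Delta'\circ\Psi=(\Psi\otimes\Psi)\circ\Delta$ and that $\Psi$ respects the grading, so $\Psi(\Ssubalg_{\vect{k}}(\phi,\psi))$ is closed under $\Delta'$ and splits according to the $\NNl$-grading. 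For (ii), take $h\in\Ssubalg_{\vect{k}}(\phi,\psi)$ and $\vect{n}\le\vect{k}$; writing $h=\sum_{\vect{i}}h_{\vect{i}}$ with $h_{\vect{i}}\in\hopf_{\vect{i}}$, we have $\Psi(h)=\sum_{\vect{i}}\Psi(h_{\vect{i}})$ with $\Psi(h_{\vect{i}})\in\hopf'_{\vect{i}}$. Then
\[
(\phi'-\psi')_{\vect{n}}\bigl(\Psi(h)\bigr)=(\phi'-\psi')_{\vect{n}}\bigl(\Psi(h_{\vect{n}})\bigr)=(\phi'-\psi')\bigl(\Psi(h_{\vect{n}})\bigr)=\bigl((\phi'-\psi')\circ\Psi\bigr)(h_{\vect{n}})=(\phi-\psi)(h_{\vect{n}}),
\]
using $\phi=\phi'\circ\Psi$ and $\psi=\psi'\circ\Psi$ and that $\Psi(h_{\vect{n}})$ is homogeneous of degree $\vect{n}$. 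Since $h\in\Ssubalg_{\vect{k}}(\phi,\psi)$ and $\vect{n}\le\vect{k}$, Equation~\eqref{AltDef} gives $(\phi-\psi)_{\vect{n}}(h_{\vect{n}})=(\phi-\psi)(h_{\vect{n}})=0$, so $(\phi'-\psi')_{\vect{n}}(\Psi(h))=0$, as needed.

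Having verified (i) and (ii), the maximality defining $\Ssubalg_{\vect{k}}(\phi',\psi')$ yields $\Psi\bigl(\Ssubalg_{\vect{k}}(\phi,\psi)\bigr)\subseteq\Ssubalg_{\vect{k}}(\phi',\psi')$, which is the assertion of the lemma. Applying this with $\hopf'=\QSyml$, $\phi'=\overline{\zetaQ}$, $\psi'=\zetaQ^{-1}$, $\phi=\overline{\phi}$, $\psi=\phi^{-1}$ — noting that $\overline{\zetaQ}\circ\Psi=\overline{\zetaQ\circ\Psi}=\overline{\phi}$ since $\Psi$ is graded, and $\zetaQ^{-1}\circ\Psi=(\zetaQ\circ\Psi)^{-1}=\phi^{-1}$ since $\Psi$ is a coalgebra map (convolution inverse is preserved by pulling back along coalgebra maps) — and combining with the earlier observation that $\Ssubalg_{\vect{k}}(\overline{\phi},\phi^{-1})=\hopf$ when $\phi$ is $\vect{k}$-odd, we conclude $\Psi(\hopf)\subseteq\kodd^{\vect{k}}$, completing the proof of Theorem~\ref{T:kOddInducedMap}. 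The only mildly delicate point is making sure $\zetaQ^{-1}\circ\Psi=\phi^{-1}$ and $\overline{\zetaQ}\circ\Psi=\overline{\phi}$ hold at the level of functionals; both follow formally from $\Psi$ being a graded coalgebra map, but they deserve an explicit line. I do not expect any genuine obstacle here: the argument is a direct transcription of \cite[Prop.~5.6(a)]{ABS06} with $\NN$ replaced by $\NNl$ and the single degree $n\le k$ replaced by the coordinatewise condition $\vect{n}\le\vect{k}$.
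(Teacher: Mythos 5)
Your proof is correct and is exactly the argument the paper has in mind: the paper omits the proof of this lemma entirely, remarking only that it is a simple modification of the relevant arguments in \cite{ABS06} (Prop.~5.6(a)), and your write-up supplies precisely that modification --- maximality of $\Ssubalg_{\vect{k}}(\phi',\psi')$ applied to the graded subcoalgebra $\Psi\bigl(\Ssubalg_{\vect{k}}(\phi,\psi)\bigr)$, on which each $(\phi'-\psi')_{\vect{n}}$ with $\vect{n}\le\vect{k}$ vanishes by homogeneity and the hypotheses $\phi=\phi'\circ\Psi$, $\psi=\psi'\circ\Psi$. Your closing verification that $\overline{\zetaQ}\circ\Psi=\overline{\phi}$ (from gradedness) and $\zetaQ^{-1}\circ\Psi=\phi^{-1}$ (pullback along a coalgebra map is a convolution-algebra map) is also correct and fills a detail the paper glosses over when applying the lemma.
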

\noindent
Modulo the proof of this proposition (which can be obtained by a simple modification of the relevant arguments in \cite{ABS06}), we are done with the proof of part (a). 

Part (b) follows from the observation that $\zetaQ \circ \Psi = \phi$ now implies that $\Psi$ is in fact a morphism of multigraded combinatorial Hopf algebras. The argument follows the same route as that in the proof of Theorem 4.1 in \cite{ABS06}. In particular we consider the two commutative diagrams:
\[
\xymatrix{
 \hopf^{\otimes2} \ar@{->}[r]^{m} \ar@{->}[dr]_{\phi^{\otimes 2}} & \hopf \ar@{->}[r] \ar@{->}[d]_{\phi} & \QSyml \ar@{->}[dl]^{\zetaQ} \\
 & \field & 
}
\qquad \text{and} \qquad 
\xymatrix{
 \hopf^{\otimes2} \ar@{->}[r] \ar@{->}[dr]_{\phi^{\otimes 2}} & (\QSyml)^{\otimes 2} \ar@{->}[r]^{m} \ar@{->}[d]_{\zetaQ^{\otimes 2}} & \QSyml \ar@{->}[dl]^{\zetaQ} \\
 & \field & 
}
\]
where the unlabeled arrows represent $\Psi$ and $\Psi^{\otimes 2}$ respectively, and $m$ stands for the multiplication in the appropriate space.  
The fact that all the arrows in both diagrams are graded coalgebra maps, together with the universal property of $\QSyml$ as a combinatorial coalgebra which has already been established, implies that the two diagrams can be glued together to obtain $\Psi \circ m = m \circ \Psi^{\otimes 2}$. From this we can conclude that $\Psi$ indeed is a morphism of (combinatorial Hopf) algebras. 

Note that the above implies that $\Psi$ is multiplicative if $\phi$ is. This gives us the following formula which is not obvious from basic definitions: Given $h_1 \in \hopf_{\vect{n}_1}$, $h_2 \in \hopf_{\vect{n}_2}$, (and so $h_1h_2 \in \hopf_{\vect{n}_1 +\vect{n}_2}$), we have:
\[ \Psi(h_1h_2) = \sum_{\vect{I} \comp \vect{n}_1+\vect{n}_2} \phi^{\vect{I}}(h_1h_2)M_{\vect{I}} = \sum_{\vect{I}_1 \comp \vect{n}_1} \phi^{\vect{I}_1}(h_1)M_{\vect{I}_1}  \sum_{\vect{I}_2 \comp \vect{n}_2} \phi^{\vect{I}_2}(h_2)M_{\vect{I}_2} = \Psi(h_1)\Psi(h_2). \qedhere  \]
\end{proof}

\begin{remark}
A $\vect{k}$-analog of part (b) of Proposition 5.6 from \cite{ABS06} can also be proved, but we will not need it in this paper.
\end{remark}

\begin{example} \label{Ex:GeneralizedDH}
Recall that we defined the Hopf algebra $\GPoslk$ of $\vect{k}$-Eulerian posets in \S\S\ref{SS:GPoslk} and described a suitable character $\zeta$ on it in Example~\ref{Ex:EulerianPoset}; also see Equation~\eqref{E:FPM}. 
Now we can see from Equation~\eqref{E:kEulerianCharacter}  that this $\zeta$ is indeed a $\vect{k}$-odd character. Thus if $P$ is $\vect{k}$-Eulerian, then $\mathcal{F}(P) \in \kodd^{\vect{k}}$. This in turn implies that the flag numbers $f_{\vect{I}}(P)$ must satisfy certain linear relations. To understand what these relations are, first notice that
\[
S^{\vect{I}}(\mathcal{F}(P)) = f_{\vect{I}}(P)
\]
for every vector composition $\vect{I}$ and every multigraded poset $P$ of multirank $\vect{n} = \Sigma\vect{I}$. Therefore, given scalars $a_{\vect{I}} \in \field$, $\vect{I} \comp \vect{n}$, we have
\[
\sum_{\vect{I} \comp \vect{n}} a_{\vect{I}} \, S^{\vect{I}} \in \ideal\kodd^{\vect{k}}(\QSyml) \implies
\sum_{\vect{I} \comp \vect{n}} a_{\vect{I}} \, f_{\vect{I}}(P) = 0 
\text{ for all $P$ such that } \mathcal{F}(P) \in \kodd^{\vect{k}}.
\]
The ideal $\ideal\kodd^{\vect{k}}(\QSyml)$ is described explicitly in \S\ref{S:koddkevenQSym}; see in particular Theorems~\ref{T:koddQSym}, \ref{T:kEulerIdeal} and \ref{T:koddQSym-eta} and Corollary~\ref{C:EulerCharacter}. We continue with this example in Remark~\ref{Re:GeneralizedDH} where we explicitly describe the linear equations the flag numbers $f_{\vect{I}}(P)$ must satisfy. More specifically we show there that the natural $\vect{k}$-analogues of the generalized Dehn-Sommerville equations hold for all $\vect{k}$-Eulerian posets.
\end{example}

Here is a characterization of $\vect{k}$-odd characters in terms of the $\vect{k}$-odd subalgebra of a multigraded combinatorial Hopf algebra which follows easily from definitions:
\begin{prop}
Let $(\hopf, \phi)$ be a multigraded combinatorial Hopf algebra. Then $\phi$ is $\vect{k}$-odd if and only if $\kodd^{\vect{k}}(\hopf) = \hopf$.
\end{prop}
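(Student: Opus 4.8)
The plan is to unwind the two definitions and observe that, via the characterization \eqref{AltDef} of $\Ssubalg_{\vect{k}}$, the two conditions are essentially restatements of one another. Recall that by definition $\kodd^{\vect{k}}(\hopf) = \Ssubalg_{\vect{k}}(\overline{\phi},\phi^{-1})$, which by \eqref{AltDef} is the largest graded subcoalgebra $C \subseteq \hopf$ with the property that $(\overline{\phi})_{\vect{n}}(h) = (\phi^{-1})_{\vect{n}}(h)$ for every $h \in C$ and every $\vect{n} \le \vect{k}$; and recall that $\phi$ is $\vect{k}$-odd precisely when $(\overline{\phi})_{\vect{n}} = (\phi^{-1})_{\vect{n}}$ as elements of $\hopf^*$ for all $\vect{n} \le \vect{k}$.

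First I would prove the forward implication. Assume $\phi$ is $\vect{k}$-odd. Then $(\overline{\phi})_{\vect{n}}(h) = (\phi^{-1})_{\vect{n}}(h)$ holds for every $h \in \hopf$ and every $\vect{n} \le \vect{k}$, so $\hopf$ itself is a graded subcoalgebra of $\hopf$ satisfying the defining property of $\Ssubalg_{\vect{k}}(\overline{\phi},\phi^{-1})$. Since the latter is by definition the largest such subcoalgebra, we conclude $\Ssubalg_{\vect{k}}(\overline{\phi},\phi^{-1}) = \hopf$, i.e., $\kodd^{\vect{k}}(\hopf) = \hopf$. (This is precisely the observation already invoked in the proof of Theorem~\ref{T:kOddInducedMap}.)

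Conversely, suppose $\kodd^{\vect{k}}(\hopf) = \hopf$. Applying \eqref{AltDef} to $\Ssubalg_{\vect{k}}(\overline{\phi},\phi^{-1}) = \hopf$ tells us that for every $h \in \hopf$ and every $\vect{n} \le \vect{k}$ we have $(\overline{\phi})_{\vect{n}}(h) = (\phi^{-1})_{\vect{n}}(h)$. Since this holds for all $h$, the functionals $(\overline{\phi})_{\vect{n}}$ and $(\phi^{-1})_{\vect{n}}$ coincide for each $\vect{n} \le \vect{k}$, which is exactly the statement that $\phi$ is $\vect{k}$-odd.

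Since the argument in both directions is a direct translation through \eqref{AltDef}, I do not expect any real obstacle here; the only point requiring a moment's care is that the quantifier ``for all $h \in C$'' in \eqref{AltDef}, when $C = \hopf$, yields equality of $(\overline{\phi})_{\vect{n}}$ and $(\phi^{-1})_{\vect{n}}$ as linear functionals on all of $\hopf$ (and restricting to $\hopf_{\vect{n}}$ already suffices, since both functionals vanish on the other homogeneous components).
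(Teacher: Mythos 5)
Your proof is correct and is exactly the definition-unwinding argument the paper has in mind (the paper states the proposition "follows easily from definitions" and gives no separate proof; the forward direction is even spelled out inside the proof of Theorem~\ref{T:kOddInducedMap}, as you note). Both directions are handled cleanly, including the quantifier point that equality of the functionals on all of $\hopf$ for each $\vect{n}\le\vect{k}$ is precisely the $\vect{k}$-oddness condition.
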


One can also relate $\vect{k}$-odd characters on a combinatorial Hopf algebra to $\vect{k}$-odd subalgebras of the dual Hopf algebra:
\begin{prop}
Let $\hopf$ be a multigraded connected Hopf algebra with a character $\phi: \hopf \to \field$ and let $\eta : \hopf^* \to \field$ be any character on the dual Hopf algebra $\hopf^*$. If $\phi$ is $\vect{k}$-odd, then for $\vect{n} \le \vect{k}$, the homogeneous component $\phi_{\vect{n}}$ belongs to $\kodd^{\vect{k}}(\hopf^*, \eta)$.
\end{prop}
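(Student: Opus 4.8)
The plan is to exhibit a single graded subcoalgebra of $\hopf^*$ that contains every homogeneous component $\phi_{\vect{n}}$ with $\vect{n}\le\vect{k}$ and satisfies the defining condition \eqref{AltDef} for $\kodd^{\vect{k}}(\hopf^*,\eta)=\Ssubalg_{\vect{k}}(\overline{\eta},\eta^{-1})$; maximality of that subcoalgebra then yields the claim. I would take $D=\vspan\{\,\phi_{\vect{m}}\mid \vect{m}\le\vect{k}\,\}\subseteq\hopf^*$. First I would check that $D$ is a graded subcoalgebra: dualizing the multiplicativity of the character $\phi$ (i.e.\ $\phi(xy)=\phi(x)\phi(y)$ on homogeneous $x,y$) shows that the coproduct of $\hopf^*$ is given on homogeneous components by $\Delta_{\hopf^*}(\phi_{\vect{n}})=\sum_{\vect{i}+\vect{j}=\vect{n}}\phi_{\vect{i}}\otimes\phi_{\vect{j}}$, and since $\vect{i}+\vect{j}=\vect{n}\le\vect{k}$ forces $\vect{i},\vect{j}\le\vect{k}$, we get $\Delta_{\hopf^*}(D)\subseteq D\otimes D$.

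Next I would verify that every $g\in D$ satisfies $(\overline{\eta})_{\vect{m}}(g)=(\eta^{-1})_{\vect{m}}(g)$ for all $\vect{m}\le\vect{k}$. Since $D$ is spanned by the homogeneous elements $\phi_{\vect{p}}$, $\vect{p}\le\vect{k}$, and both $(\overline{\eta})_{\vect{m}}$ and $(\eta^{-1})_{\vect{m}}$ annihilate components of degree $\ne\vect{m}$, the only nontrivial instances are $\vect{m}=\vect{p}$, where the condition reduces to the single identity $\eta^{-1}(\phi_{\vect{p}})=(-1)^{|\vect{p}|}\,\eta(\phi_{\vect{p}})$ for $\vect{p}\le\vect{k}$; this is the heart of the argument. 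To prove it I would use that $\apode_{\hopf^*}$ is the transpose of $\apode_{\hopf}$ and that $\apode_{\hopf}$ is a graded map: combined with $\phi^{-1}=\phi\circ\apode_{\hopf}$ this gives $\apode_{\hopf^*}(\phi_{\vect{p}})=(\phi\circ\apode_{\hopf})_{\vect{p}}=(\phi^{-1})_{\vect{p}}$, and since $\eta^{-1}=\eta\circ\apode_{\hopf^*}$ we obtain $\eta^{-1}(\phi_{\vect{p}})=\eta\bigl((\phi^{-1})_{\vect{p}}\bigr)$. Now invoke the hypothesis that $\phi$ is $\vect{k}$-odd: for $\vect{p}\le\vect{k}$ we have $(\phi^{-1})_{\vect{p}}=(\overline{\phi})_{\vect{p}}=(-1)^{|\vect{p}|}\phi_{\vect{p}}$, and applying $\eta$ produces the desired identity.

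Having checked both requirements, maximality of $\Ssubalg_{\vect{k}}(\overline{\eta},\eta^{-1})$ among graded subcoalgebras obeying \eqref{AltDef} forces $D\subseteq\kodd^{\vect{k}}(\hopf^*,\eta)$, and in particular $\phi_{\vect{n}}\in\kodd^{\vect{k}}(\hopf^*,\eta)$ for every $\vect{n}\le\vect{k}$, completing the proof. I expect the main obstacle to be the step identifying $\apode_{\hopf^*}(\phi_{\vect{p}})$ with $(\phi^{-1})_{\vect{p}}$ — the only point where genuine Hopf-algebra input (the compatibility of the antipodes of $\hopf$ and its graded dual) enters. Should one prefer to avoid it, the same identity $\eta^{-1}(\phi_{\vect{p}})=\eta\bigl((\phi^{-1})_{\vect{p}}\bigr)$ can be obtained by induction on $\vect{p}$, comparing the convolution-inverse recursion for $\eta^{-1}$ against the coproduct formula $\Delta_{\hopf^*}(\phi_{\vect{p}})=\sum_{\vect{i}+\vect{j}=\vect{p}}\phi_{\vect{i}}\otimes\phi_{\vect{j}}$, or by applying the $\hopf^*$-analogue of Remark~\ref{RemDeltaSquared} directly to $\phi_{\vect{n}}$.
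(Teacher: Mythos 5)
Your proof is correct and is essentially the argument the paper intends: the paper's own proof simply defers to Proposition~5.9 of \cite{ABS06} and Remark~\ref{RemDeltaSquared}, and both routes come down to exactly the two facts you establish, namely that $\Delta_{\hopf^*}(\phi_{\vect{n}})=\sum_{\vect{i}+\vect{j}=\vect{n}}\phi_{\vect{i}}\otimes\phi_{\vect{j}}$ (so $\vspan\{\phi_{\vect{m}}\mid\vect{m}\le\vect{k}\}$ is a graded subcoalgebra) and that $\eta^{-1}(\phi_{\vect{p}})=\eta\bigl((\phi^{-1})_{\vect{p}}\bigr)=(-1)^{|\vect{p}|}\eta(\phi_{\vect{p}})$ for $\vect{p}\le\vect{k}$ by $\vect{k}$-oddness of $\phi$. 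Packaging the conclusion via maximality of $\Ssubalg_{\vect{k}}(\overline{\eta},\eta^{-1})$ rather than via the $\Delta^{(2)}$ criterion of the Remark is an immaterial difference.
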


\begin{proof}
This is a straightforward $\vect{k}$-analog of Proposition 5.9 of \cite{ABS06}, and the proof follows similarly. Also see Remark \ref{RemDeltaSquared}.
\end{proof}

We will 
study a most fundamental example of $\vect{k}$-odd functionals in Section \ref{S:ktheta}.

\subsection{$\vect{k}$-even linear functionals} 

We will now briefly ponder the question of what we can say about the $\vect{k}$-analogs of even characters.
Recall from  \cite{ABS06} that a character $\phi$ of a graded Hopf algebra $\hopf$ is \textit{even} if $\overline{\phi} = \phi$.
Therefore, we will define $\vect{k}$-even functionals as follows:  

\begin{definition}
A (convolution) invertible linear functional $\phi$ is called {\em $\vect{k}$-even} if $(\overline{\phi})_{\vect{n}} = (\phi)_{\vect{n}}$ for all $\vect{n} \le \vect{k}.$
\end{definition}

Note that when $l =1$, $\vect{k}$ is simply a natural number $k$, and 
an even character in the sense of \cite{ABS06} is $k$-even for all $k \in \NN$. Thus the following is a most natural notion to introduce:
\begin{definition}
A (convolution) invertible linear functional $\phi$ on $\hopf$ is called \textbf{even} if it is $\vect{k}$-even for all $\vect{k} \in \NNl$ (equivalently for all $\vect{k} \in \NNinfl$).
\end{definition}

Recall that we use the notation $\keven^{\vect{k}}$ for $\keven^{\vect{k}}(\QSym)$, the $\vect{k}$-even subalgebra $\Ssubalg_{\vect{k}}(\overline{\zetaQ}, \zetaQ)$ of $\QSym$. 
With these definitions one can prove the following result analogous to Theorem~\ref{T:kOddInducedMap}:
\begin{theorem}
\label{T:kEvenInducedMap}
Let $\hopf$ be a multigraded Hopf algebra $\hopf$ and let $\vect{k} \in \NNinfl$.
\begin{enumerate}
\item If $\phi : \hopf \to \field$ is a $\vect{k}$-even linear functional on $\hopf$, then there exists a unique morphism $\Psi : \hopf \to \QSyml$ of $\NNl$-graded coalgebras such that $\zetaQ \circ \Psi = \phi$. The image of $\Psi$ lies in $\keven^{\vect{k}}$.
\item  Explicitly,  if $\vect{n} \in \NNl$ and $h\in \hopf_{\vect{n}}$ then
\begin{equation}\label{E:EvenInducedMap}
\Psi(h) = \sum_{\vect{I} \comp \vect{n}} \phi^{\vect{I}}(h)M_{\vect{I}}, 
\end{equation}
where if $\vect{I} = (\vect{i}_1, \ldots, \vect{i}_m) \comp \vect{n}$ then $\phi^{\vect{I}}$ is the composite map:
\[ \hopf \xrightarrow{\Delta^{(m-1)}} \hopf^{\otimes m} \xrightarrow{\text{projection}}  \hopf_{\vect{i}_1} \otimes \hopf_{\vect{i}_2} \otimes \cdots \otimes \hopf_{\vect{i}_m} \xrightarrow{\phi^{\otimes m}} \field^{\otimes m} \xrightarrow{\text{multiplication}} \field.\]
\item If $\phi$ is a character then $\Psi$ is a homomorphism of multigraded (combinatorial) Hopf algebras. In other words, $(\keven^{\vect{k}}, \zetaQ)$ is the terminal object of the category of multigraded (combinatorial) Hopf algebras with $\vect{k}$-even characters.
\end{enumerate}
\end{theorem}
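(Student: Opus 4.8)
The plan is to mirror the proof of Theorem~\ref{T:kOddInducedMap} almost verbatim, changing only the places where $\phi^{-1}$ is replaced by $\phi$. First I would construct the algebra homomorphism $\Phi : \NSyml \to \hopf^*$ determined by $S_{\vect{n}} \mapsto \phi_{\vect{n}}$, using the fact (from \S\S\ref{SS:NSyml}) that $\NSyml$ is freely generated by the $S_{\vect{n}}$; this is $\NNl$-graded since $\phi_{\vect{n}} \in (\hopf_{\vect{n}})^* = (\hopf^*)_{\vect{n}}$. Dualizing, I set $\Psi = \Phi^* : \hopf \to {\NSyml}^* = \QSyml$, which is automatically a morphism of $\NNl$-graded coalgebras, and since the $\vect{n}$th graded piece of $\zetaQ$ (viewed as an element of $\NSyml$) is $S_{\vect{n}}$, one gets $\left.\zetaQ \circ \Psi\right|_{\hopf_{\vect{n}}} = \left.S_{\vect{n}} \circ \Psi\right|_{\hopf_{\vect{n}}} = \Phi(S_{\vect{n}}) = \phi_{\vect{n}}$, hence $\zetaQ \circ \Psi = \phi$. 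Because $S^{\vect{I}} = S_{\vect{i}_1}\cdots S_{\vect{i}_m}$ and $\Phi$ is an algebra map, $\Phi(S^{\vect{I}}) = \phi^{\vect{I}}$, which yields formula~\eqref{E:EvenInducedMap}; uniqueness of $\Psi$ follows from the uniqueness of $\Phi$ by duality. This establishes part~(2) and the coalgebra-map assertion of part~(1) with no use of the $\vect{k}$-even hypothesis.

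Next I would show that the image of $\Psi$ lies in $\keven^{\vect{k}}$. The only input needed is Lemma~\ref{OddsMap2OddsLemma}, which says that a morphism of multigraded coalgebras $\Psi : \hopf \to \hopf'$ carrying a pair of functionals $(\phi,\psi)$ on $\hopf$ to a pair $(\phi',\psi')$ on $\hopf'$ (i.e.\ $\phi = \phi'\circ\Psi$ and $\psi = \psi'\circ\Psi$) satisfies $\Psi(\Ssubalg_{\vect{k}}(\phi,\psi)) \subseteq \Ssubalg_{\vect{k}}(\phi',\psi')$. I would apply this with $\hopf' = \QSyml$, $\phi' = \overline{\zetaQ}$, and $\psi' = \zetaQ$. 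The required identities hold: $\zetaQ\circ\Psi = \phi$ was just shown, and for $h \in \hopf_{\vect{n}}$ one has $(\overline{\zetaQ}\circ\Psi)(h) = (-1)^{|\vect{n}|}\zetaQ(\Psi(h)) = (-1)^{|\vect{n}|}\phi(h) = \overline{\phi}(h)$, so $\overline{\zetaQ}\circ\Psi = \overline{\phi}$. Hence $\Psi(\Ssubalg_{\vect{k}}(\overline{\phi},\phi)) \subseteq \Ssubalg_{\vect{k}}(\overline{\zetaQ},\zetaQ) = \keven^{\vect{k}}$. Finally, since $\phi$ is $\vect{k}$-even, $(\overline{\phi})_{\vect{n}} = \phi_{\vect{n}}$ for all $\vect{n} \le \vect{k}$, so the defining condition~\eqref{AltDef} for $\Ssubalg_{\vect{k}}(\overline{\phi},\phi)$ is satisfied by \emph{all} of $\hopf$; thus $\Ssubalg_{\vect{k}}(\overline{\phi},\phi) = \hopf$ and the image of $\Psi$ is contained in $\keven^{\vect{k}}$, completing part~(1).

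For part~(3), if $\phi$ is a character I would argue exactly as in the proof of Theorem~\ref{T:kOddInducedMap} (following \cite[Theorem~4.1]{ABS06}): the two commutative diagrams relating $m$, $\phi^{\otimes 2}$, $\Psi$, $\Psi^{\otimes 2}$, $\zetaQ$ and $\zetaQ^{\otimes 2}$ consist entirely of graded coalgebra maps, and the universal property of $\QSyml$ as a combinatorial coalgebra (already established) forces $\Psi\circ m = m\circ\Psi^{\otimes 2}$, so $\Psi$ is an algebra homomorphism and therefore a morphism of multigraded combinatorial Hopf algebras. Terminality of $(\keven^{\vect{k}},\zetaQ)$ in the category of multigraded combinatorial Hopf algebras with $\vect{k}$-even characters is then immediate from parts~(1)--(2) together with uniqueness. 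The only point requiring any care — and really the sole place where ``even'' rather than ``odd'' matters — is the bookkeeping in the previous paragraph: verifying $\overline{\zetaQ}\circ\Psi = \overline{\phi}$ and that $\Ssubalg_{\vect{k}}(\overline{\phi},\phi) = \hopf$. There is no genuine new obstacle; the content is entirely parallel to the $\vect{k}$-odd case.
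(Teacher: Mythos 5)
Your proposal is correct and is precisely the argument the paper intends: the paper's own proof of Theorem~\ref{T:kEvenInducedMap} is simply the remark that it parallels Theorem~\ref{T:kOddInducedMap}, and you have carried out that parallel faithfully, including the one point where the even case is actually easier (you need only $\overline{\zetaQ}\circ\Psi=\overline{\phi}$ and $\zetaQ\circ\Psi=\phi$, both immediate from gradedness, rather than any statement about convolution inverses). No gaps.
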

The proof is similar to that of Theorem~\ref{T:kOddInducedMap} and will be skipped.

\begin{remark}
For $\vect{k} = \vect{0}$, $\keven^{\vect{k}}$ is equal to $\QSyml$, and the above theorem reduces to Theorem~\ref{T:QSymlUniversal}.
\end{remark}

Here is a characterization of $\vect{k}$-even characters in terms of the $\vect{k}$-even subalgebra of a multigraded combinatorial Hopf algebra which follows easily from definitions:
\begin{prop}
Let $(\hopf, \phi)$ be a multigraded combinatorial Hopf algebra. $\phi$ is $\vect{k}$-even if and only if $\keven^{\vect{k}}(\hopf) = \hopf$.
\end{prop}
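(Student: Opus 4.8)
The plan is to prove both directions of the biconditional directly from the defining properties. First I would recall from the previous subsection that $\keven^{\vect{k}}(\hopf) = \Ssubalg_{\vect{k}}(\overline{\phi}, \phi)$, and that by Equation~\eqref{AltDef} this is the largest graded subcoalgebra $C$ of $\hopf$ with the property that $\overline{\phi}_{\vect{n}}(h) = \phi_{\vect{n}}(h)$ for all $h \in C$ and all $\vect{n} \le \vect{k}$.

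For the forward direction, suppose $\phi$ is $\vect{k}$-even, i.e., $(\overline{\phi})_{\vect{n}} = (\phi)_{\vect{n}}$ for all $\vect{n} \le \vect{k}$. Then the defining condition $\overline{\phi}_{\vect{n}}(h) = \phi_{\vect{n}}(h)$ for $\vect{n} \le \vect{k}$ is satisfied by \emph{every} $h \in \hopf$, so the largest graded subcoalgebra with this property is all of $\hopf$; that is, $\keven^{\vect{k}}(\hopf) = \hopf$. For the reverse direction, suppose $\keven^{\vect{k}}(\hopf) = \hopf$. Then in particular every homogeneous $h \in \hopf_{\vect{n}}$ with $\vect{n} \le \vect{k}$ lies in $\keven^{\vect{k}}(\hopf)$, so it satisfies $\overline{\phi}_{\vect{n}}(h) = \phi_{\vect{n}}(h)$. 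Since this holds for all such $h$, we conclude $(\overline{\phi})_{\vect{n}} = (\phi)_{\vect{n}}$ for all $\vect{n} \le \vect{k}$, which is exactly the statement that $\phi$ is $\vect{k}$-even.

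There is no real obstacle here — the statement is essentially a tautology once one unwinds the definition of $\Ssubalg_{\vect{k}}$. The only point requiring a word of care is the observation that when $\phi$ is $\vect{k}$-even, the defining condition of $\Ssubalg_{\vect{k}}(\overline{\phi},\phi)$ is automatically satisfied by the whole Hopf algebra $\hopf$ (which is itself a graded subcoalgebra of $\hopf$), hence the \emph{largest} such subcoalgebra must be $\hopf$ itself. This mirrors the argument in the proof of part~(a) of Theorem~\ref{T:kOddInducedMap}, where the analogous identity $\Ssubalg_{\vect{k}}(\overline{\phi}, \phi^{-1}) = \hopf$ is established for $\vect{k}$-odd $\phi$. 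Accordingly, I would simply state that the proof is a direct consequence of the definitions and the characterization of $\keven^{\vect{k}}(\hopf)$ via Equation~\eqref{AltDef}, paralleling the discussion preceding Theorem~\ref{T:kOddInducedMap}.
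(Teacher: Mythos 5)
Your proof is correct and matches the paper's intent: the paper states this proposition "follows easily from definitions" and omits any written proof, and your argument is precisely the expected unwinding of Equation~\eqref{AltDef} applied to $\Ssubalg_{\vect{k}}(\overline{\phi},\phi)$, in both directions. The parallel you draw with the identity $\Ssubalg_{\vect{k}}(\overline{\phi},\phi^{-1})=\hopf$ established inside the proof of Theorem~\ref{T:kOddInducedMap} is apt.
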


One can also relate $\vect{k}$-even characters on a combinatorial Hopf algebra to $\vect{k}$-even subalgebras of the dual Hopf algebra:
\begin{prop}
Let $\hopf$ be a multigraded connected Hopf algebra with a character $\phi: \hopf \to \field$ and let $\eta : \hopf^* \to \field$ be any character on the dual Hopf algebra $\hopf^*$. If $\phi$ is $\vect{k}$-even, then for $\vect{n} \le \vect{k}$, the homogeneous component $\phi_{\vect{n}}$ belongs to $\keven^{\vect{k}}(\hopf^*, \eta)$.
\end{prop}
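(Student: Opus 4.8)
The plan is to follow the proof of the preceding $\vect{k}$-odd proposition (hence of Proposition~5.9 of \cite{ABS06}) almost verbatim, replacing $\eta^{-1}$ by $\eta$ throughout; the $\vect{k}$-even case is in fact slightly easier, since it never needs to pass through the antipode of $\hopf^*$. Fix $\vect{n} \le \vect{k}$. Since $\phi_{\vect{n}}$ is a homogeneous element of degree $\vect{n}$ in the multigraded connected Hopf algebra $\hopf^*$, and since $\keven^{\vect{k}}(\hopf^*, \eta) = \Ssubalg_{\vect{k}}(\overline{\eta}, \eta)$ with $\overline{\eta}$ again a character, the $\Delta^{(2)}$-criterion of Remark~\ref{RemDeltaSquared} (applied in $\hopf^*$ with the two characters $\overline{\eta}$ and $\eta$) reduces the claim to checking
\[
\bigl(\mathrm{id} \otimes (\overline{\eta}_{\vect{m}} - \eta_{\vect{m}}) \otimes \mathrm{id}\bigr) \circ \Delta^{(2)}(\phi_{\vect{n}}) = 0 \qquad \text{for every } \vect{m} \le \vect{k}.
\]

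First I would compute the iterated coproduct of $\phi_{\vect{n}}$ in $\hopf^*$. Dualizing the product of $\hopf$ and using that $\phi$ is a \emph{character} — so that $\phi(abc) = \phi(a)\phi(b)\phi(c)$ on homogeneous arguments — one obtains $\Delta(\phi_{\vect{n}}) = \sum_{\vect{i} + \vect{j} = \vect{n}} \phi_{\vect{i}} \otimes \phi_{\vect{j}}$ and, iterating, $\Delta^{(2)}(\phi_{\vect{n}}) = \sum_{\vect{i} + \vect{m} + \vect{j} = \vect{n}} \phi_{\vect{i}} \otimes \phi_{\vect{m}} \otimes \phi_{\vect{j}}$. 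Applying the degree-$\vect{m}$ functional in the middle annihilates every summand except the one whose middle tensorand lies in degree $\vect{m}$, so the left-hand side above equals $(\overline{\eta} - \eta)(\phi_{\vect{m}}) \cdot \sum_{\vect{i} + \vect{j} = \vect{n} - \vect{m}} \phi_{\vect{i}} \otimes \phi_{\vect{j}}$. Only $\vect{m}$ with $\vect{0} \le \vect{m} \le \vect{n}$ can contribute a nonempty sum, and any such $\vect{m}$ is automatically $\le \vect{k}$; thus it suffices to prove $(\overline{\eta} - \eta)(\phi_{\vect{m}}) = 0$ for $\vect{0} \le \vect{m} \le \vect{n}$.

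This last step is the only place the hypothesis enters, and it is immediate: for such $\vect{m}$ we have $\vect{m} \le \vect{k}$, so $\vect{k}$-evenness of $\phi$ gives $(\overline{\phi})_{\vect{m}} = \phi_{\vect{m}}$, i.e.\ $(-1)^{|\vect{m}|}\phi_{\vect{m}} = \phi_{\vect{m}}$ in $(\hopf^*)_{\vect{m}}$; hence $\overline{\eta}(\phi_{\vect{m}}) = (-1)^{|\vect{m}|}\eta(\phi_{\vect{m}}) = \eta\bigl((-1)^{|\vect{m}|}\phi_{\vect{m}}\bigr) = \eta(\phi_{\vect{m}})$, which is what we wanted. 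A cleaner packaging, closer in spirit to \cite{ABS06}, is to observe directly that the $\field$-span $C$ of $\{\phi_{\vect{m}} \mid \vect{m} \in \NNl\}$ is a graded subcoalgebra of $\hopf^*$ with $C_{\vect{m}} = \field\,\phi_{\vect{m}}$ (again because $\phi$ is a character), and that the identity just proved shows $C_{\vect{m}} \subseteq \ker(\overline{\eta} - \eta)$ for every $\vect{m} \le \vect{k}$; therefore $C \subseteq \Ssubalg_{\vect{k}}(\overline{\eta}, \eta) = \keven^{\vect{k}}(\hopf^*, \eta)$, and in particular $\phi_{\vect{n}} \in \keven^{\vect{k}}(\hopf^*, \eta)$.

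I do not anticipate a genuine obstacle here: the statement is the exact even-analogue of the $\vect{k}$-odd proposition immediately preceding it, and the argument is entirely parallel (and a touch shorter, as noted). The only points demanding a moment's care are the bookkeeping in Remark~\ref{RemDeltaSquared} — one must quantify over all $\vect{m} \le \vect{k}$, but $\Delta^{(2)}(\phi_{\vect{n}})$ is concentrated in total degree $\vect{n}$, so only $\vect{m} \le \vect{n}$ are relevant — and the observation that it is $\phi$ being a \emph{character}, not merely an invertible linear functional, that makes $\Delta(\phi_{\vect{n}})$ split as above and makes the span of the $\phi_{\vect{m}}$ a subcoalgebra. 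I would write this up in a few lines, cross-referencing the preceding proposition and Proposition~5.9 of \cite{ABS06}.
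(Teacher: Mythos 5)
Your argument is correct and is exactly the route the paper intends: the paper's proof consists only of the remark that this is the $\vect{k}$-analogue of Proposition~5.9 of \cite{ABS06} via Remark~\ref{RemDeltaSquared}, and what you have written out — that the span of the $\phi_{\vect{m}}$ is a graded subcoalgebra of $\hopf^*$ because $\phi$ is a character, together with $(\overline{\eta}-\eta)(\phi_{\vect{m}})=0$ for $\vect{m}\le\vect{k}$ from $\vect{k}$-evenness — is precisely that argument made explicit. No gaps; your "cleaner packaging" paragraph in particular is a complete self-contained proof.
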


\begin{proof}
This is a straightforward $\vect{k}$-analog of Proposition 5.9 of \cite{ABS06}, and the proof follows similarly. Also see Remark \ref{RemDeltaSquared}.
\end{proof}

\section{The $\vect{k}$-odd and $\vect{k}$-even Hopf subalgebras of $\QSyml$} 
\label{S:koddkevenQSym}

Throughout this section, fix $\lev > 0$ and $\vect{k} = (k_0, \ldots, k_{\lev-1})^T \in \NNinfl$, and let $\zetaQ$ denote the universal character on $\QSyml$ as defined in \S\S\ref{SS:DefMCHA}. In this section we give two bases for the $\vect{k}$-odd Hopf algebra
\[ 
\kodd^{\vect{k}} = \kodd^{\vect{k}}(\QSyml, \zetaQ)
\]
and compute its Hilbert series. We also describe explicitly the ideal
\[
  \ideal\kodd^{\vect{k}} =   \ideal\kodd^{\vect{k}}(\QSyml, \zetaQ).
  \]
At the end of the section, we discuss very briefly the $\vect{k}$-even Hopf algebra
\[ 
\keven^{\vect{k}} = \keven^{\vect{k}}(\QSyml, \zetaQ).
\]

\subsection{The $\vect{k}$-odd Hopf subalgebra of $\QSyml$} 
\label{SS:koddQSym}

Recall that the level $\lev$ noncommutative power sum symmetric functions $\Phi^{\vect{I}}$ of $\NSyml$ were introduced in \S\S\ref{SS:NSyml}, and in \S\S\ref{SS:Duality} we defined the functions $P_{\vect{I}}$ which form the dual basis in $\QSyml$.

\begin{theorem}\label{T:koddQSym}
The canonical $\vect{k}$-odd Hopf ideal $\ideal\kodd^{\vect{k}} \subseteq \NSyml$ is given by 
\begin{equation*} 
  \ideal\kodd^{\vect{k}} 
  = \left< \, \Phi_{\vect{n}} \mid 
  \vect{0} < \vect{n} \le \vect{k} \text{ and } |\vect{n}| \text{ even} \, \right>.
\end{equation*}
On the dual side, the canonical $\vect{k}$-odd Hopf algebra $\kodd^{\vect{k}} \subseteq \QSyml$ is given by
\begin{equation} \label{E:OddAlgBasis}
  \kodd^{\vect{k}} 
  = \mathrm{span} \{\, P_{(\vect{i}_1, \ldots, \vect{i}_m)} \mid 
  \vect{i}_r \le \vect{k} \implies |\vect{i}_r| \text{ odd} \, \}.
\end{equation} 
\end{theorem}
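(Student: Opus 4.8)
The plan is to prove the ideal description first and then deduce the basis of $\kodd^{\vect{k}}$ by taking orthogonal complements. Throughout I use the duality of \S\S\ref{SS:Duality}, under which convolution of functionals on $\QSyml$ is multiplication in (the graded completion of) $\NSyml$, the $\vect{n}$th graded piece of $\zetaQ$ is $S_{\vect{n}}$, and $\{P_{\vect{I}}\}$ is the basis dual to $\{\Phi^{\vect{I}}\}$. Thus $\zetaQ$ corresponds to $\sigma := \sum_{\vect{n}} S_{\vect{n}}$, $\overline{\zetaQ}$ to $\sum_{\vect{n}} (-1)^{|\vect{n}|} S_{\vect{n}}$, and $\zetaQ^{-1}$ to $\sigma^{-1}$. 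Introducing bookkeeping variables $t_0,\ldots,t_{\lev-1}$ and setting $\psi(t) = \sum_{\vect{m}\neq\vect{0}} \frac{\Phi_{\vect{m}}}{|\vect{m}|}\, t^{\vect{m}} = \psi_{\mathrm{odd}}(t) + \psi_{\mathrm{even}}(t)$ (split by the parity of $|\vect{m}|$), Equation~\eqref{E:Phidef} reads $\sigma(t) = \exp\psi(t)$. Writing $A = -\psi(t)$ and $B = 2\psi_{\mathrm{even}}(t)$ — so that $B$ involves only the $\Phi_{\vect{m}}$ with $|\vect{m}|$ even — we get $\zetaQ^{-1} \leftrightarrow \exp(A)$ and $\overline{\zetaQ} \leftrightarrow \exp(A+B)$, since $A+B = -\psi_{\mathrm{odd}}(t)+\psi_{\mathrm{even}}(t) = \psi(-t)$.

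Put $J = \langle\, \Phi_{\vect{n}} \mid \vect{0}<\vect{n}\le\vect{k},\ |\vect{n}|\text{ even} \,\rangle \subseteq \NSyml$ and $g_{\vect{n}} := \overline{\zetaQ}_{\vect{n}} - (\zetaQ^{-1})_{\vect{n}} = \bigl[\exp(A+B)-\exp(A)\bigr]_{\vect{n}}$ (coefficient of $t^{\vect{n}}$), so that $\ideal\kodd^{\vect{k}} = \langle\, g_{\vect{n}} \mid \vect{n}\le\vect{k} \,\rangle$. For $\ideal\kodd^{\vect{k}}\subseteq J$: every monomial of $\exp(A+B)-\exp(A)$ uses at least one factor from $B$, hence contains some $\Phi_{\vect{m}}$ with $|\vect{m}|$ even; in degree $\vect{n}\le\vect{k}$ that column satisfies $\vect{m}\le\vect{n}\le\vect{k}$, so $\Phi_{\vect{m}}\in J$ and $g_{\vect{n}}\in J$. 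For the reverse inclusion I would induct on $|\vect{n}|$, over even weights with $\vect{n}\le\vect{k}$. The key point is that a $\Phi$-monomial $\Phi^{\vect{M}}$ all of whose columns have odd weight can arise in the expansion of $\exp(A+B)$ only from the $A$-part (a $B$-factor forces an even-weight column), so it has the same coefficient in $[\exp(A+B)]_{\vect{n}}$ and $[\exp(A)]_{\vect{n}}$ and cancels in $g_{\vect{n}}$. Hence $g_{\vect{n}}$ is a $\field$-combination of monomials $\Phi^{\vect{M}}$ with $\Sigma\vect{M}=\vect{n}$ having some column of even weight: if $\len(\vect{M})\ge2$, that column has weight $<|\vect{n}|$ and is $\le\vect{k}$, so the monomial lies in $\ideal\kodd^{\vect{k}}$ by the inductive hypothesis; the only remaining term is the length-one monomial $\Phi_{\vect{n}}$, whose coefficient in $g_{\vect{n}}$ is $\tfrac{1}{|\vect{n}|}-(-\tfrac{1}{|\vect{n}|}) = \tfrac{2}{|\vect{n}|}\neq0$. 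Since $g_{\vect{n}}\in\ideal\kodd^{\vect{k}}$, this forces $\Phi_{\vect{n}}\in\ideal\kodd^{\vect{k}}$, completing the induction and giving $\ideal\kodd^{\vect{k}}=J$.

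Finally, $\NSyml$ is free as an algebra on $\{\Phi_{\vect{m}}\mid\vect{m}\neq\vect{0}\}$ (the $\Phi^{\vect{I}}$ form a basis), so $J$ has linear basis $\{\Phi^{\vect{I}}\mid \vect{I}\text{ has a column }\vect{i}_r\text{ with }\vect{i}_r\le\vect{k}\text{ and }|\vect{i}_r|\text{ even}\}$. By Theorem~\ref{theoremgeneralproperties}(a), $\kodd^{\vect{k}} = (\ideal\kodd^{\vect{k}})^{\perp}$; dualizing the above basis against $\{P_{\vect{I}}\}$ yields
\[
\kodd^{\vect{k}} = \mathrm{span}\{\, P_{(\vect{i}_1,\ldots,\vect{i}_m)} \mid \vect{i}_r\le\vect{k}\implies|\vect{i}_r|\text{ odd} \,\},
\]
which is \eqref{E:OddAlgBasis}. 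The main obstacle is the reverse inclusion $J\subseteq\ideal\kodd^{\vect{k}}$: one must handle the noncommutativity of the $\Phi_{\vect{n}}$, and the argument works precisely because the generating functions of $\overline{\zetaQ}$ and $\zetaQ^{-1}$ have exponents differing only in their even-weight part, so the troublesome all-odd monomials cancel and what is left is either the leading term $\Phi_{\vect{n}}$ or a lower-degree monomial covered by induction.
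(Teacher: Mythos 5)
Your proof is correct and follows essentially the same route as the paper's: both establish $\ideal\kodd^{\vect{k}}\subseteq J$ by observing that every $\Phi$-monomial occurring in $\overline{\zetaQ}_{\vect{n}}-(\zetaQ^{-1})_{\vect{n}}$ has an even-weight column, and both obtain the reverse inclusion by isolating the leading term $\tfrac{2}{|\vect{n}|}\Phi_{\vect{n}}$ and inducting on weight. The only difference is cosmetic: you package the computation as $\exp(A+B)-\exp(A)$ with a cancellation argument for the all-odd monomials, whereas the paper derives the explicit coefficient formula $\sum \tfrac{2(-1)^{|\vect{n}|}}{\spp(\vect{I})}\Phi^{\vect{I}}$ over compositions with $\len(\vect{I})\not\equiv|\vect{n}|\pmod 2$.
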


\begin{proof}
Recall that $\ideal\kodd^{\vect{k}} = \ideal^{\vect{k}}(\bar\zetaQ,\zetaQ^{-1})$ is the ideal generated by $\bar{(\zetaQ)}_{\vect{n}} - (\zetaQ^{-1})_{\vect{n}}$ such that $\vect{n} \le \vect{k}$. Let $\vect{n} \in \NNl$ and assume $\vect{n} \le \vect{k}$. Since as an element of ${\QSyml}^* = \NSyml$, the $\vect{n}$th graded piece of $\zetaQ$ is $S_{\vect{n}}$, we get $\bar{(\zetaQ)}_{\vect{n}} = (-1)^{|\vect{n}|}S_{\vect{n}}$ and $(\zetaQ^{-1})_{\vect{n}} = \apode(S_{\vect{n}})$, so by \eqref{E:SPhi} and \eqref{E:S-antipode},
\begin{equation}\label{E:zeta-Phi}
\bar{(\zetaQ)}_{\vect{n}} - (\zetaQ^{-1})_{\vect{n}} = \sum_{\vect{I} \comp \vect{n}} \left[ \frac{(-1)^{|\vect{n}|}}{\spp(\vect{I})} - \frac{(-1)^{\len(\vect{I})}}{\spp(\vect{I})} \right] \, \Phi^{\vect{I}} = 
\sum_{ \begin{subarray}{c} \vect{I}\comp \vect{n} \\ \len(\vect{I}) \, \not\equiv \, |\vect{n}| \, (\mathrm{mod} 2) \end{subarray}} \frac{2 \, (-1)^{|\vect{n}|}}{\spp(\vect{I})} \; \Phi^{\vect{I}}. \end{equation}
The condition $\len(\vect{I}) \not\equiv |\vect{n}| \pmod{2}$ holds if and only if $\vect{I}$ has an odd number of columns of even weight. If $\vect{I} = (\vect{i}_1, \ldots, \vect{i}_m)$ is such a vector composition, then some column $\vect{i}_r$ has even weight, and moreover $\vect{i}_r \le \vect{k}$. Therefore $\Phi^{\vect{I}} = \Phi_{\vect{i}_1} \cdots \Phi_{\vect{i}_m}$ is in the ideal generated by those $\Phi_{\vect{i}}$ such that $\vect{i} \le \vect{k}$ and $|\vect{i}|$ is even. Consequently $\bar{(\zetaQ)}_{\vect{n}} - (\zetaQ^{-1})_{\vect{n}}$ is also in this ideal, and therefore $\ideal\kodd^{\vect{k}}$ is a subset of this ideal. 

To prove the reverse inclusion, first note that every $\lev$-partite number of weight $2$ is of the form $\vect{e}_i + \vect{e}_j$ for some $i, j \in [0, \lev-1]$. 
By \eqref{E:zeta-Phi} we have
\[
\Phi_{\vect{e}_i + \vect{e}_j} =  (\bar{(\zetaQ)})_{\vect{e}_i + \vect{e}_j} - (\zetaQ^{-1})_{\vect{e}_i + \vect{e}_j}.
\]
Now suppose that $\vect{n} \in \NNl$ has even weight and $|\vect{n}| > 2$. By  \eqref{E:zeta-Phi}, 
\begin{equation} \label{E:Phin-zeta} 
\frac{2}{|\vect{n}|} \, \Phi_{\vect{n}} =  \bar{(\zetaQ)}_{\vect{n}} - (\zetaQ^{-1})_{\vect{n}} - \sumsub{\vect{I} \comp \vect{n} \\ \vect{I} \ne \vect{n} \\ \len(\vect{I}) \, \not\equiv \, |\vect{n}| \, (\mathrm{mod} 2) }  \frac{2}{\spp(\vect{I})} \,\Phi^{\vect{I}}. 
\end{equation}
As we already observed, if a vector composition $\vect{I} = (\vect{i}_1,\ldots, \vect{i}_m)$ appears in the previous sum, then one of its columns, say $\vect{i}_r$, has even weight, and moreover $|\vect{i}_r| < |\vect{n}|$. By induction $\Phi_{\vect{i}_r}$ is in the ideal $\left< ({\zetabarQ)}_{\vect{i}} - (\zetaQ^{-1})_{\vect{i}} \mid \vect{i} \le \vect{i}_r \right>$, so $\Phi^{\vect{I}}$ is also in this ideal. In light of \eqref{E:Phin-zeta}, we conclude that $\Phi_{\vect{n}} \in \left< ({\zetabarQ)}_{\vect{i}} - (\zetaQ^{-1})_{\vect{i}} \mid \vect{i} \le \vect{n} \right>\subseteq \ideal\kodd^{\vect{k}}$ provided $\vect{n} \le \vect{k}$.
\end{proof}

\begin{example}
With $\lev = 3$ we have
\[ 
\ideal\kodd^{\colvec{4 \\ 0 \\ 3}} =  
\bigl< \Phi_{\colvec{4 \\ 0 \\ 2}}, \Phi_{\colvec{4 \\ 0 \\ 0}}, \Phi_{\colvec{3 \\ 0 \\ 3}}, \Phi_{\colvec{3 \\ 0 \\ 1}}, \Phi_{\colvec{2 \\ 0 \\ 2}}, \Phi_{\colvec{2 \\ 0 \\ 0}}, \Phi_{\colvec{1 \\ 0 \\ 1}} 
\bigr>
\]
and 
\[
\kodd^{\colvec{4 \\ 0 \\ 3}} =
\mathrm{span}
\left\{ P_{(\vect{i}_1, \ldots, \vect{i}_m)} \mid \vect{i}_r \notin \left\{{\colvec{4 \\ 0 \\ 2}}, {\colvec{4 \\ 0 \\ 0}}, {\colvec{3 \\ 0 \\ 3}}, {\colvec{3 \\ 0 \\ 1}}, {\colvec{2 \\ 0 \\ 2}}, {\colvec{2 \\ 0 \\ 0}}, {\colvec{1 \\ 0 \\ 1}} 
\right\} 
\right\}.
\]
\end{example}

\begin{example}
If $\lev = 1$ and $k \in \NN$ then
\[ 
\ideal\kodd^{k} = \left< \, \Phi_2, \Phi_4, \ldots, \Phi_{2 \lfloor k/2 \rfloor} \, \right> 
\]
and
\[ 
\kodd^{k} = \mathrm{span} \{ P_{(i_1, \ldots, i_m)} \mid i_r \notin \{2, 4, \ldots, 2 \lfloor k/2 \rfloor \}\text{ for every } r  \, \}.
\]
In the limit case, $\ideal\kodd^{\infty}$ is generated by the set of all even-degree power sums $\Phi_2, \Phi_4, \ldots$, and the odd subalgebra of $\QSym$ is spanned by the quasisymmetric functions $P_I$ where $I$ ranges over all (ordinary) compositions with only odd parts. 
\end{example}

\begin{remark}
Aguiar et al.\ \cite[Proposition~6.5]{ABS06} showed that the odd subalgebra of $\QSym$ is Stembridge's peak Hopf algebra \cite{Stembridge97}, which is the image of the descents-to-peaks homomorphism $\thetamap:\QSym \to \QSym$ of Stembridge \cite{Stembridge97}. On the dual side, it is implicit in the work of Krob et al.\ \cite{KLT97} (as explained in \cite{BHT04}; see also and \cite[Prop.~7.6]{Schocker05}) that the noncommutative power sums $\Phi_2, \Phi_4, \ldots \in \NSym$ of even degree generate the kernel of the dual map $\Theta^*:\NSym \to \NSym$. This kernel, being orthogonal to the peak Hopf algebra, coincides with the canonical ideal $\ideal\kodd^\infty$. Thus the special case of Theorem~\ref{T:koddQSym} in which $\lev = 1$ and $k=\infty$ follows from these results. However, our proof of Theorem~\ref{T:koddQSym} is self-contained and in particular does not use any properties of the peak algebra or the descents-to-peaks map. 
\end{remark}

We now discuss some corollaries of Theorem~\ref{T:koddQSym}. First is an immediate consequence.

\begin{corollary}
If all coordinates of $\vect{k}$ are even, then $\ideal\kodd^{\vect{k}} = \ideal\kodd^{\vect{k} + \vect{e}_i}$ for each $i\in [0, \lev-1]$.
\end{corollary}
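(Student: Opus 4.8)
The plan is to reduce the statement to the explicit description of the canonical $\vect{k}$-odd ideal given in Theorem~\ref{T:koddQSym}. By that theorem, viewed as ideals of $\NSyml$, the ideal $\ideal\kodd^{\vect{k}}$ is generated by the level $\lev$ power sums $\Phi_{\vect{n}}$ with $\vect{0} < \vect{n} \le \vect{k}$ and $|\vect{n}|$ even, and likewise $\ideal\kodd^{\vect{k}+\vect{e}_i}$ is generated by the $\Phi_{\vect{n}}$ with $\vect{0} < \vect{n} \le \vect{k}+\vect{e}_i$ and $|\vect{n}|$ even. Since $\vect{k} \le \vect{k}+\vect{e}_i$, the first generating set is a subset of the second, so the inclusion $\ideal\kodd^{\vect{k}} \subseteq \ideal\kodd^{\vect{k}+\vect{e}_i}$ is automatic. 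The content of the corollary is the reverse inclusion, and the cleanest way to get it is to show that the two generating sets actually coincide: that every $\lev$-partite number $\vect{n}$ with $\vect{0} < \vect{n} \le \vect{k}+\vect{e}_i$ and $|\vect{n}|$ even already satisfies $\vect{n} \le \vect{k}$, hence contributes no new generator.

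To prove this last claim I would argue by contradiction. Suppose $\vect{n} \le \vect{k}+\vect{e}_i$ but $\vect{n} \not\le \vect{k}$. Comparing coordinates, $n_j \le k_j$ for every $j \ne i$ while $k_i < n_i \le k_i + 1$, which forces $n_i = k_i + 1$; since $k_i$ is even, $n_i$ is odd. The step I expect to be the real obstacle is converting this into the statement that $|\vect{n}|$ is odd, contradicting the assumption: one needs $n_i$ odd to force $|\vect{n}| = n_i + \sum_{j \ne i} n_j$ odd. For $\lev = 1$ this is immediate because $|\vect{n}| = n_i$; for general $\lev$ it is exactly here that the hypothesis ``all coordinates of $\vect{k}$ are even'' must be invoked, in order to control the parity of $\sum_{j \ne i} n_j$, and this parity bookkeeping --- rather than any Hopf-algebraic subtlety --- is the only nontrivial point. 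Once it is settled, the two generating sets agree and the corollary follows. (The same observation, applied instead to the basis in \eqref{E:OddAlgBasis}, yields the corresponding equality $\kodd^{\vect{k}} = \kodd^{\vect{k}+\vect{e}_i}$ of Hopf subalgebras directly.)
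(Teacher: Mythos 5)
Your reduction to Theorem~\ref{T:koddQSym} is certainly the intended route (the paper offers nothing beyond calling the corollary an ``immediate consequence'' of that theorem), and the inclusion $\ideal\kodd^{\vect{k}} \subseteq \ideal\kodd^{\vect{k}+\vect{e}_i}$ is fine. But the step you flag as ``the real obstacle'' is a genuine gap, and for $\lev \ge 2$ it cannot be closed: knowing that $n_i = k_i+1$ is odd tells you nothing about the parity of $\sum_{j\ne i} n_j$, since each $n_j$ with $j\ne i$ ranges freely over $[0,k_j]$ and the evenness of $k_j$ does not constrain its parity. Concretely, take $\lev = 2$, $\vect{k} = \colvec{2\\2}$, $i=0$, so $\vect{k}+\vect{e}_0 = \colvec{3\\2}$, and let $\vect{n} = \colvec{3\\1}$. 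Then $\vect{n} \le \vect{k}+\vect{e}_0$, $\vect{n}\not\le\vect{k}$, and $|\vect{n}| = 4$ is even, so $\Phi_{\colvec{3\\1}}$ is a generator of $\ideal\kodd^{\vect{k}+\vect{e}_0}$ that is absent from the generating set of $\ideal\kodd^{\vect{k}}$. This is not merely a failure of the generating sets to match: since the $\Phi_{\vect{n}}$ freely generate $\NSyml$, the ideal $\langle \Phi_{\vect{n}} \mid \vect{n}\in A\rangle$ is spanned by those $\Phi^{\vect{I}}$ having at least one column in $A$, so $\Phi_{\colvec{3\\1}} \notin \ideal\kodd^{\colvec{2\\2}}$ and the two ideals genuinely differ (dually, $P_{(\colvec{3\\1})}$ lies in $\kodd^{\colvec{2\\2}}$ but not in $\kodd^{\colvec{3\\2}}$).

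Your argument does go through verbatim when $\lev = 1$: there $|\vect{n}| = n_i = k_i+1$ is odd, contradicting the evenness of $|\vect{n}|$, so the generating sets coincide and $\ideal\kodd^{k} = \ideal\kodd^{k+1}$ for even $k$. It also works when $\vect{k}=\vect{0}$, since no positive $\vect{n}\le\vect{e}_i$ has even weight. These are exactly the instances invoked in the example following the corollary. For general $\lev\ge 2$, however, the parity bookkeeping you correctly isolated as the crux is not a gap in your writeup so much as a flaw in the statement itself: the corollary requires an additional hypothesis (e.g.\ $\lev=1$, or $\vect{k}=\vect{0}$, or directly assuming that every even-weight $\vect{n}\le\vect{k}+\vect{e}_i$ satisfies $\vect{n}\le\vect{k}$, which is precisely what your argument establishes whenever it is available).
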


\begin{example} 
Given any $\lev$ we have
\[
\kodd^{\vect{0}} = \kodd^{\vect{e}_i} = \QSyml 
\]
for every coordinate vector $\vect{e}_i\in \NNl$. If $\lev = 1$ then
\[ 
\kodd^{0} = \kodd^1 \supsetneq \kodd^{2} = \kodd^3 \supsetneq \kodd^{4} = \kodd^5 \supsetneq \cdots \supsetneq \kodd^{\infty} .
\]
\end{example}

For the next result, first observe that since $\Phi^{\vect{I}}$ and $P_{\vect{I}}$ are dual bases (\S\S\ref{SS:Duality}) and the $\Phi_{\vect{n}}$ are primitive (Proposition~\ref{P:PhiPrimitive}), multiplying two $P$-basis elements $P_{\vect{I}} \cdot P_{\vect{J}}$ is given by shuffling the columns of $\vect{I}$ with the columns of $\vect{J}$. For example, 
\[
P_{\left( \begin{smallmatrix} a & b \\ c & d \end{smallmatrix} \right)} \cdot P_{\left( \begin{smallmatrix} e & f \\ g & h \end{smallmatrix} \right)}
= 
P_{\left( \begin{smallmatrix} a & b & e & f \\ c & d & g & h \end{smallmatrix} \right)} + 
P_{\left( \begin{smallmatrix} a & e & b & f \\ c & g & d & h \end{smallmatrix} \right)} + 
P_{\left( \begin{smallmatrix} a & e & f & b \\ c & g & h & d \end{smallmatrix} \right)} + 
P_{\left( \begin{smallmatrix} e & a & b & g \\ f & c & d & h \end{smallmatrix} \right)} +
P_{\left( \begin{smallmatrix} e & a & f & b \\ g & c & h & d \end{smallmatrix} \right)} + 
P_{\left( \begin{smallmatrix} e & f & a & b \\ g & h & c & d \end{smallmatrix} \right)}.
\]
Thus from \eqref{E:OddAlgBasis} it is clear that {\it $\kodd^{\vect{k}}$ is a shuffle algebra} in the sense of \cite{Reutenauer93}. 

Now let us endow the set of nonzero $\lev$-partite numbers $\NNl \setminus \{\vect{0}\}$ with an arbitrary linear order. A vector composition (thought of as a word in the alphabet $\NNl \setminus \{\vect{0}\}$) that is lexicographically smaller than all of its nontrivial cyclic permutations is called a \newword{Lyndon vector composition}. For a subset $A\subseteq \NNl \setminus \{\vect{0}\}$, let $\lyndon(A)$ denote the set of Lyndon vector compositions in $A$.  For example, if we order the $2$-partite numbers lexicographically by $\colvec{ 0 \\ 1 } <_{lex} \colvec{ 0 \\ 2 } <_{lex} \cdots <_{lex} \colvec{ 1 \\ 0 } <_{lex} \colvec{ 1 \\ 1 } <_{lex} \cdots$, then the Lyndon vector compositions of weight $3$ are $\colvec{ 0 \\ 3 }$, $\colvec{ 1 \\ 2 }$, $\colvec{ 2 \\ 1 }$, $\colvec{ 3 \\ 0 }$, $\colvec{ 0 & 0 \\ 1 & 2 }$, $\colvec{ 0 & 1 \\ 1 & 1 }$, $\colvec{ 0 & 2 \\ 1 & 0 }$, $\colvec{ 0 & 1 \\ 2 & 0 }$, $\colvec{ 0 & 0 & 1 \\ 1 & 1 & 0 }$, and $\colvec{ 0 & 1 & 1 \\ 1 & 0 & 0 }$. 
It is well-known that a shuffle algebra is freely generated by its Lyndon words. We refer to \cite{Reutenauer93} for further details.

In this context the following is now obvious:
\begin{corollary} \label{C:kodd-shuffle}
As an algebra $\kodd^{\vect{k}}$ is the  polynomial algebra 
\[
\kodd^{\vect{k}} = \field[P_{\vect{I}} \mid \vect{I} \in \lyndon(\{\, \vect{n} \in \NNl \setminus \{\vect{0}\} \mid \vect{n} \le \vect{k} \implies |\vect{n}| \text{ odd} \, \})].
\]
\end{corollary}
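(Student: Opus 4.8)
The proof of Corollary~\ref{C:kodd-shuffle} combines two facts that are already available: the explicit spanning set for $\kodd^{\vect{k}}$ from \eqref{E:OddAlgBasis}, and the structure theory of shuffle algebras. The plan is to first observe that, by the remark preceding the corollary (the product $P_{\vect{I}} \cdot P_{\vect{J}}$ is the shuffle of the column-words of $\vect{I}$ and $\vect{J}$, a consequence of duality with the primitive $\Phi_{\vect{n}}$), the ambient algebra $\QSyml$ is the shuffle algebra on the alphabet $\NNl \setminus \{\vect{0}\}$, with the $P_{\vect{I}}$ playing the role of words. Then I would identify $\kodd^{\vect{k}}$ inside this shuffle algebra.

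Set $A = \{\, \vect{n} \in \NNl \setminus \{\vect{0}\} \mid \vect{n} \le \vect{k} \implies |\vect{n}| \text{ odd} \, \}$. By \eqref{E:OddAlgBasis}, $\kodd^{\vect{k}}$ is the linear span of the $P_{\vect{I}}$ where $\vect{I}$ ranges over all words in the alphabet $A$ (a column $\vect{i}_r$ of $\vect{I}$ is allowed precisely when $\vect{i}_r \in A$, since the condition ``$\vect{i}_r \le \vect{k} \implies |\vect{i}_r|$ odd'' is exactly membership in $A$). Since the shuffle of two words over $A$ is again a $\ZZ$-linear combination of words over $A$, the subspace spanned by $\{P_{\vect{I}} \mid \vect{I} \text{ a word over } A\}$ is a subalgebra, namely the sub-shuffle-algebra generated by the one-letter words $\{P_{\vect{n}} \mid \vect{n} \in A\}$; this recovers that $\kodd^{\vect{k}}$ is a subalgebra (consistent with Theorem~\ref{theoremgeneralproperties}). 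Now invoke the classical Radford/Reutenauer theorem \cite[Theorem~6.1]{Reutenauer93}: a shuffle algebra over $\field$ (characteristic zero) on an alphabet is the polynomial algebra on the $P_w$ indexed by Lyndon words $w$ in that alphabet. Applying this to the sub-shuffle-algebra on the alphabet $A$ — which is itself a shuffle algebra — yields $\kodd^{\vect{k}} = \field[P_{\vect{I}} \mid \vect{I} \in \lyndon(A)]$, which is the assertion.

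One small point to address carefully is that $\lyndon(A)$, as defined just before the corollary, uses the fixed linear order on $\NNl \setminus \{\vect{0}\}$ restricted to $A$, and a word over $A$ is Lyndon relative to this restricted order if and only if it is Lyndon as a word over the full alphabet (being Lyndon is an intrinsic property of the word and the order it inherits). So $\lyndon(A)$ is unambiguously the set of Lyndon words appearing among the allowed $\vect{I}$'s, and the freeness statement of \cite{Reutenauer93} applies verbatim.

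I expect the only real subtlety — hence the "main obstacle," though it is mild — to be the bookkeeping that translates \eqref{E:OddAlgBasis} into the statement ``$\kodd^{\vect{k}}$ is spanned by $P_{\vect{I}}$ for $\vect{I}$ a word over $A$,'' together with confirming that this spanning set is closed under the shuffle product so that the sub-shuffle-algebra it generates is exactly $\kodd^{\vect{k}}$ and not something larger. Once that is in place, the result is an immediate application of the freeness of shuffle algebras on Lyndon words over characteristic zero, and indeed the corollary is stated as "obvious" for precisely this reason.
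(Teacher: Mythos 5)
Your proposal is correct and follows essentially the same route as the paper: the paper derives the corollary as an immediate consequence of the shuffle-product formula for $P_{\vect{I}}\cdot P_{\vect{J}}$ (coming from duality with the primitive $\Phi_{\vect{n}}$), the spanning set \eqref{E:OddAlgBasis}, and the standard fact \cite[Theorem~6.1]{Reutenauer93} that a shuffle algebra in characteristic zero is polynomial on its Lyndon words. Your extra care about the spanning set being closed under shuffles and about Lyndon-ness being intrinsic to the restricted alphabet $A$ is exactly the bookkeeping the paper leaves implicit when it calls the result ``obvious.''
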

\noindent
In the case $\lev=1$ and $k=\infty$, this 
follows from results of Schocker \cite[\S7]{Schocker05}. 

A further consequence of Theorem~\ref{T:koddQSym} is the following description of the multi-symmetric part $\kodd^{\vect{k}} \cap \Syml$ of $\kodd^{\vect{k}}$, extending results of Malvenuto and Reutenauer \cite[Corollary~2.2]{MR95} (the $\lev = 1$, $\vect{k} = 0$ case) and Schocker \cite[\S7]{Schocker05} (the $\lev = 1$, $\vect{k} = \infty$ case). Recall that we introduced the Hopf algebra $\Syml$ of MacMahon's multi-symmetric functions in \S\S\ref{SS:Syml}.

\begin{corollary}\label{C:koddSym}
The multi-symmetric part of $\kodd^{\vect{k}}$ is freely generated as an algebra by a subset of the $\lev$-partite power sum symmetric functions $p_{\vect{n}} = \sum_{j  = 1}^\infty \vect{x}_j^{\vect{n}}$. More explicitly we have 
\[
\kodd^{\vect{k}} \cap \Syml = \field[ \, p_{\vect{n}} \mid \vect{n} \le \vect{k} \implies |\vect{n}| \text{ odd} \, ].
\]
Consequently, $\kodd^{\vect{k}}$ is free as a module over $\kodd^{\vect{k}} \cap \Syml.$ 
\end{corollary}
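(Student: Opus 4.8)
The plan is to transfer the shuffle-algebra description of $\kodd^{\vect{k}}$ (Corollary~\ref{C:kodd-shuffle}, via Theorem~\ref{T:koddQSym}) to the commutative setting by applying the abelianization map that realizes $\Syml$ inside $\QSyml$. First I would recall from \S\S\ref{SS:Duality} and \S\S\ref{SS:koddQSym} that the dual basis $P_{\vect{I}}$ of the primitive power sums $\Phi^{\vect{I}}$ satisfies $P_{\vect{I}} = \sum_{\vect{J}\tleq\vect{I}} \tfrac{1}{\spp(\vect{I},\vect{J})} M_{\vect{J}}$, and that multiplication of $P$-basis elements is by shuffling columns. The key computation is to determine the image of $P_{\vect{I}}$ under the natural projection onto the multi-symmetric part; concretely, since $m_{\svect{\lambda}} = \sum_{\mathrm{cols}(\vect{I})=\svect{\lambda}} M_{\vect{I}}$, the symmetrization of $P_{\vect{n}}$ (a single column) is exactly $p_{\vect{n}} = m_{\vect{n}}$, while for $\len(\vect{I})\ge 2$ the symmetrization of $P_{\vect{I}}$ expresses as a product/combination of $p_{\vect{n}}$'s for the columns $\vect{n}$ of $\vect{I}$. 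The cleanest route is to observe that the $p_{\vect{n}}$ are themselves primitive in $\Syml$ (as stated in \S\S\ref{SS:Syml}, $\Delta(p_{\vect{n}}) = 1\otimes p_{\vect{n}} + p_{\vect{n}}\otimes 1$), so $\Syml$ is also a shuffle (indeed polynomial) algebra on the $p_{\vect{n}}$, and the inclusion $\Syml\hookrightarrow\QSyml$ carries $p_{\vect{n}}$ to the primitive $P_{\vect{n}}$ up to a nonzero scalar.

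Then I would argue as follows. The Hopf subalgebra $\kodd^{\vect{k}}$ is, by \eqref{E:OddAlgBasis}, spanned by those $P_{\vect{I}}$ all of whose columns $\vect{i}_r\le\vect{k}$ have odd weight. Intersecting with $\Syml$: an element of $\Syml$ lying in $\kodd^{\vect{k}}$ is a linear combination of $P_{\vect{I}}$ subject to this column condition, and being symmetric means it is invariant under permuting the columns — hence it lies in the subalgebra generated by the primitives $P_{\vect{n}}$ with $\vect{n}\le\vect{k}\implies|\vect{n}|$ odd, which under the identification with $\Syml$ is exactly $\field[p_{\vect{n}}\mid \vect{n}\le\vect{k}\implies|\vect{n}|\text{ odd}]$. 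Conversely each such $p_{\vect{n}}$ clearly lies in both $\Syml$ and (since $P_{\vect{n}}$ has a single column) in $\kodd^{\vect{k}}$, so this polynomial algebra is contained in the intersection. The $p_{\vect{n}}$ are algebraically independent in $\Syml$ (they form one of the stated bases), giving the ``freely generated'' assertion.

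The last sentence — that $\kodd^{\vect{k}}$ is free as a module over $\kodd^{\vect{k}}\cap\Syml$ — follows from the polynomial-algebra structure: by Corollary~\ref{C:kodd-shuffle}, $\kodd^{\vect{k}}=\field[P_{\vect{I}}\mid \vect{I}\in\lyndon(S)]$ where $S = \{\vect{n}\mid\vect{n}\le\vect{k}\implies|\vect{n}|\text{ odd}\}$, and the length-one Lyndon vector compositions are precisely the $\vect{n}\in S$ themselves, whose associated generators $P_{\vect{n}}$ symmetrize to the $p_{\vect{n}}$ generating $\kodd^{\vect{k}}\cap\Syml$. A polynomial ring is free as a module over any polynomial subring generated by a subset of the variables, with basis the monomials in the remaining variables; applying this with the remaining variables being the $P_{\vect{I}}$ for Lyndon $\vect{I}$ of length $\ge 2$ gives freeness. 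The main obstacle I anticipate is making the identification ``symmetrization of $P_{\vect{I}}$ = symmetric function in the column power sums'' fully precise — i.e. checking that the projection $\QSyml\to$ (symmetric part) is an algebra map compatible with the shuffle products and sends the Lyndon generators correctly — but this is routine given that both $p_{\vect{n}}$ and $P_{\vect{n}}$ are primitive and the shuffle product of primitives is just their free (commutative, after symmetrization) product.
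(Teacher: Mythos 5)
Your proposal is correct and follows essentially the same route as the paper's proof: expand $p_{\svect{\lambda}}$ (equivalently, symmetrize) in the shuffle basis $P_{\vect{I}}$ using $p_{\vect{n}}=|\vect{n}|\,P_{\vect{n}}$ and the fact that symmetric elements have $P$-coefficients depending only on the multiset of columns, compare with the span description of $\kodd^{\vect{k}}$ from Theorem~\ref{T:koddQSym} to get both inclusions, and deduce freeness from the containment of the free generating set $\{p_{\vect{n}}\}$ inside the Lyndon generating set of Corollary~\ref{C:kodd-shuffle}. The final compatibility check you flag as a potential obstacle is not needed: one works directly with the inclusion $\Syml\subseteq\QSyml$ and the explicit $P$-expansion of $p_{\svect{\lambda}}$, exactly as you suggest in your ``cleanest route.''
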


\begin{proof}
Let $\svect{\lambda}$ be an $\lev$-partition in the sense of \S\S\ref{SS:lpartite}. 
For each nonzero $\vect{i} \in \NNl$, let $m_\vect{i}$ be the multiplicity of $\vect{i}$ in $\svect{\lambda}$. Using the shuffling interpretation of multiplication of the $P_{\vect{I}}$ that we mentioned just before Corollary~\ref{C:kodd-shuffle}, we deduce that
\begin{equation}\label{E:p-P}
 p_{\svect{\lambda}} = 
 \prod_{\vect{i} \in \NNl -\{\vect{0}\} } P_{\vect{i}}^{m_{\vect{i}}} = 
 z_{\svect{\lambda}}^{-1} \,
 \sum_{\mathrm{cols}(\vect{I}) = \lambda} P_{\vect{I}} ,
 \end{equation}
where $z_{\svect{\lambda}} = \prod_{\vect{i} \in \NNl - \{\vect{0} \}} \, m_{\vect{i}}! \; |\vect{i}|^{m_{\vect{i}}}$ and $\mathrm{cols}(\vect{I})$ denotes the multiset of columns of $\vect{I}$. It follows from Theorem~\ref{T:koddQSym} that if every element of $\svect{\lambda}$ belongs to the set 
\[
A = \{\, \vect{n} \in \NNl \setminus \{\vect{0}\} \mid \vect{n} \le \vect{k} \implies |\vect{n}| \text{ odd} \, \},
\] 
then $p_{\svect{\lambda}} \in \kodd^{\vect{k}} \cap \Syml$. This shows that $\kodd^{\vect{k}} \cap \Syml \supseteq \field[ \, p_{\vect{n}} \mid \vect{n} \in A \, ].$ Conversely, given $\sum_{\svect{\lambda}} a_{\svect{\lambda}} \, p_{\svect{\lambda}} \in  \kodd^{\vect{k}} \cap \Syml$ where each $a_{\svect{\lambda}}$ is a scalar, Theorem~\ref{T:koddQSym} together with \eqref{E:p-P} ensure that $a_{\svect{\lambda}}$ vanishes if the elements of $\svect{\lambda}$ do not all belong to $A$.

Finally, since $A \subseteq \lyndon(A)$ and $p_{\vect{i}} = \frac{1}{|\vect{i}|} \, P_{\vect{i}}$ for every nonzero $\vect{i} \in \NNl$, we have a free generating set $\{ p_{\vect{i}} \mid \vect{i} \in A \}$ for $\kodd^{\vect{k}} \cap \Syml$ contained in a free generating set $\{ \frac{1}{|\vect{I}|}P_{\vect{I}} \mid \vect{I} \in \lyndon(A)\}$ for $\QSyml$. It follows that $\kodd^{\vect{k}}$ is a free module over $\kodd^{\vect{k}} \cap \QSyml$.
\end{proof}


\begin{example}
With $\lev = 2$ we have
\[ 
\kodd^{\colvec{ 3 \\ 2}} \cap \Sym^{(2)} = \field[ \, p_{\vect{n}} \mid \vect{n} \in \NNl \setminus \{ \colvec{0 \\ 0}, \colvec{ 1 \\ 1}, \colvec{2 \\ 0}, \colvec{0 \\2}, \colvec{2 \\ 2}, \colvec{3 \\ 1} \} \,].
\]
\end{example}
\begin{example}
If $\lev = 1$ and $k \in \NN$ then
\[
\kodd^{2k+1} \cap \Sym  = \field [ \, p_i \mid i \in \NN \setminus \{0, 2, 4, \ldots, 2k\} \,].
\]
\end{example}

\subsection{The Euler character and generators for $\ideal\kodd^{\vect{k}}$}

In this subsection we give another description of $\ideal\kodd^{\vect{k}}$ in terms of the \newword{Euler character} $\chi$ on $\QSyml$, defined as in \cite{ABS06} by
\[ 
\chi = \zetabarQ \zetaQ. 
\]
For any $\lev$-partite number $\vect{n} \in \NNl$, we have
\begin{equation} \label{E:chin}
\chi_{\vect{n}} = \sum_{\vect{j} \le \vect{n}} (-1)^{|\vect{j}|} S^{(\vect{j},\, \vect{n} - \vect{j})}.
\end{equation}
When $\lev = 1$ these are essentially the $\chi_n$ of \cite{BL00} and \cite{Ehrenborg01}. Billera and Liu proved that only the even Euler forms $\chi_2, \chi_4, \ldots$ are needed to generate the ideal $\left< \chi_1, \chi_2, \chi_3, \ldots \right>$. This is generalized in the following result.

\begin{theorem} \label{T:kEulerIdeal}
We have
\[
\ideal\kodd^{\vect{k}} = \left< \chi_{\vect{n}} \mid \vect{0} < \vect{n} \le \vect{k} \right>
= \left< \chi_{\vect{n}} \mid \vect{0} < \vect{n} \le \vect{k}, \, |\vect{n}| \text{ even}\right>.
\]
\end{theorem}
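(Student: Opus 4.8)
The plan is to argue entirely inside $\NSyml = (\QSyml)^{*}$, where the product is convolution of functionals and the degree-$\vect{n}$ graded pieces of $\zetaQ$, $\zetabarQ$, $\zetaQ^{-1}$, and $\chi = \zetabarQ\zetaQ$ are, respectively, $S_{\vect{n}}$, $(-1)^{|\vect{n}|}S_{\vect{n}}$, $\apode(S_{\vect{n}})$, and the element $\chi_{\vect{n}}$ of \eqref{E:chin}; recall that by definition $\ideal\kodd^{\vect{k}}$ is the ideal of $\NSyml$ generated by the elements $(\zetabarQ)_{\vect{n}} - (\zetaQ^{-1})_{\vect{n}}$ with $\vect{n} \le \vect{k}$.

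For the first equality: from $\chi\,\zetaQ^{-1} = \zetabarQ$ (a consequence of $\chi = \zetabarQ\zetaQ$) and $\epsilon\,\zetaQ^{-1} = \zetaQ^{-1}$, where $\epsilon$ is the counit, i.e.\ the unit of $\NSyml$, we get $\zetabarQ - \zetaQ^{-1} = (\chi - \epsilon)\,\zetaQ^{-1}$, and from $\chi = \zetabarQ\zetaQ$ and $\epsilon = \zetaQ^{-1}\zetaQ$ we get $\chi - \epsilon = (\zetabarQ - \zetaQ^{-1})\,\zetaQ$. As $(\chi - \epsilon)_{\vect{0}} = (\zetabarQ - \zetaQ^{-1})_{\vect{0}} = 0$, comparing degree-$\vect{n}$ components in the first relation writes $(\zetabarQ)_{\vect{n}} - (\zetaQ^{-1})_{\vect{n}}$ as a sum of terms $\chi_{\vect{j}}\,(\zetaQ^{-1})_{\vect{n}-\vect{j}}$ with $\vect{0} < \vect{j} \le \vect{n}$, and in the second relation writes $\chi_{\vect{n}}$ as a sum of terms $\bigl((\zetabarQ)_{\vect{j}} - (\zetaQ^{-1})_{\vect{j}}\bigr)\,S_{\vect{n}-\vect{j}}$ with $\vect{0} < \vect{j} \le \vect{n}$. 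Every index $\vect{j}$ appearing here satisfies $\vect{j} \le \vect{n} \le \vect{k}$, so the first relation yields $\ideal\kodd^{\vect{k}} \subseteq \langle \chi_{\vect{n}} \mid \vect{0} < \vect{n} \le \vect{k} \rangle$ and the second yields the reverse inclusion; hence $\ideal\kodd^{\vect{k}} = \langle \chi_{\vect{n}} \mid \vect{0} < \vect{n} \le \vect{k} \rangle$.

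For the second equality only the inclusion $\langle \chi_{\vect{n}} \mid \vect{0} < \vect{n} \le \vect{k} \rangle \subseteq \langle \chi_{\vect{n}} \mid \vect{0} < \vect{n} \le \vect{k},\ |\vect{n}| \text{ even} \rangle$ needs proof, and for this it suffices to show that $\chi_{\vect{n}} \in \langle \chi_{\vect{m}} \mid \vect{0} < \vect{m} \le \vect{n},\ |\vect{m}| \text{ even} \rangle$ whenever $|\vect{n}|$ is odd (then restrict to $\vect{n} \le \vect{k}$). The engine is the conjugation identity $\overline{\chi} = \zetaQ\,\chi\,\zetaQ^{-1}$ in $\NSyml$: using $\chi = \zetabarQ\zetaQ$ together with $\overline{\phi\psi} = \overline{\phi}\,\overline{\psi}$ (immediate from $|h_1| + |h_2| = |h|$ on the homogeneous terms of $\Delta h = \sum h_1 \otimes h_2$) and $\overline{\overline{\phi}} = \phi$, one gets $\overline{\chi} = \zetaQ\,\zetabarQ = \zetaQ\,\chi\,\zetaQ^{-1}$, hence $\overline{\chi} - \chi = [\zetaQ,\chi]\,\zetaQ^{-1}$. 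Since $S_{\vect{0}} = \chi_{\vect{0}} = 1$, the degree-$\vect{n}$ component of $[\zetaQ,\chi]$ equals $\sum [S_{\vect{a}},\chi_{\vect{b}}]$ over $\vect{a} + \vect{b} = \vect{n}$ with $\vect{a},\vect{b} > \vect{0}$, while for $|\vect{n}|$ odd the degree-$\vect{n}$ component of $\overline{\chi} - \chi$ equals $-2\chi_{\vect{n}}$; comparing components gives
\[
-2\,\chi_{\vect{n}} \;=\; \sum_{\substack{\vect{a}+\vect{b}+\vect{c} = \vect{n} \\ \vect{a} > \vect{0},\ \vect{b} > \vect{0}}} [S_{\vect{a}},\chi_{\vect{b}}]\,(\zetaQ^{-1})_{\vect{c}}.
\]
Induct on $|\vect{n}|$: if $|\vect{n}| = 1$ the sum is empty (the condition $\vect{a},\vect{b} > \vect{0}$ forces $|\vect{a}| + |\vect{b}| \ge 2$), so $\chi_{\vect{n}} = 0$; if $|\vect{n}| \ge 3$ is odd, consider a summand $[S_{\vect{a}},\chi_{\vect{b}}]\,(\zetaQ^{-1})_{\vect{c}}$. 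If $|\vect{b}|$ is even then $\chi_{\vect{b}}$ is one of the generators of $\langle \chi_{\vect{m}} \mid \vect{0} < \vect{m} \le \vect{n},\ |\vect{m}| \text{ even} \rangle$ (because $\vect{b} \le \vect{n}$); if $|\vect{b}|$ is odd then $|\vect{b}| < |\vect{n}|$ (because $\vect{a} > \vect{0}$), so by the inductive hypothesis $\chi_{\vect{b}} \in \langle \chi_{\vect{m}} \mid \vect{0} < \vect{m} \le \vect{b},\ |\vect{m}| \text{ even} \rangle \subseteq \langle \chi_{\vect{m}} \mid \vect{0} < \vect{m} \le \vect{n},\ |\vect{m}| \text{ even} \rangle$. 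Either way the summand lies in $\langle \chi_{\vect{m}} \mid \vect{0} < \vect{m} \le \vect{n},\ |\vect{m}| \text{ even} \rangle$, hence so does $\chi_{\vect{n}}$, completing the induction and the proof.

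The first equality is routine bookkeeping in the dual algebra; the content is the second, which is the level-$\lev$ analogue of Billera and Liu's observation \cite{BL00} that only the even Euler forms are needed. The main obstacle is producing the conjugation identity $\overline{\chi} = \zetaQ\,\chi\,\zetaQ^{-1}$ and then recognizing that, after passing to commutators, its odd-degree part expresses each $\chi_{\vect{n}}$ with $|\vect{n}|$ odd purely in terms of brackets $[S_{\vect{a}},\chi_{\vect{b}}]$ and lower-weight Euler forms; the weight drop $|\vect{b}| < |\vect{n}|$ that powers the induction is precisely the fact that a surviving commutator term forces $\vect{a} > \vect{0}$.
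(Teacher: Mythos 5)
Your proof of the first equality is exactly the paper's: both derive $\zetabarQ - \zetaQ^{-1} = (\chi-\epsilon)\zetaQ^{-1}$ and $\chi - \epsilon = (\zetabarQ - \zetaQ^{-1})\zetaQ$ and compare homogeneous components. For the second equality -- the real content -- you take a genuinely different and, as far as I can tell, correct route. The paper argues by dimension count: it builds the elements $T^{\vect{J}} = S^{\vect{I}_1}\chi_{\vect{n}_1}\cdots\chi_{\vect{n}_{m-1}}S^{\vect{I}_m}$, shows via \eqref{E:chin} that they are triangular in the $S^{\vect{I}}$ basis with respect to $\tleq$ (hence independent), and matches the count of admissible indices against $\dim_{\field}\ideal\kodd^{\vect{k}}$, which was computed earlier from the $\Phi$-basis description in Theorem~\ref{T:koddQSym}. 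You instead prove the inclusion directly: from $\overline{\chi}=\zetaQ\zetabarQ=\zetaQ\chi\zetaQ^{-1}$ you get $\overline{\chi}-\chi=[\zetaQ,\chi]\,\zetaQ^{-1}$, whose degree-$\vect{n}$ component for $|\vect{n}|$ odd reads $-2\chi_{\vect{n}}=\sum[S_{\vect{a}},\chi_{\vect{b}}](\zetaQ^{-1})_{\vect{c}}$ with $\vect{a},\vect{b}>\vect{0}$; the constraint $\vect{a}>\vect{0}$ forces $|\vect{b}|<|\vect{n}|$, so strong induction on weight (with base case $\chi_{\vect{e}_i}=0$) places every odd-weight $\chi_{\vect{n}}$ in the ideal generated by the even-weight ones of smaller or equal index. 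Each step checks out: $\overline{\phi\psi}=\overline{\phi}\,\overline{\psi}$ holds because the coproduct preserves multidegree, the $\vect{a}=\vect{0}$ and $\vect{b}=\vect{0}$ commutator terms vanish, and the summands land in the two-sided ideal as claimed. Your argument is self-contained -- it does not invoke Theorem~\ref{T:koddQSym} or any Hilbert-series computation -- and is closer in spirit to Billera--Liu's original relation-based argument; what it gives up is the explicit triangular spanning set $T^{\vect{J}}$, which the paper's dimension argument produces as a byproduct (a linear basis of $\ideal\kodd^{\vect{k}}$ in terms of the $S^{\vect{I}}$ and the even $\chi_{\vect{n}}$).
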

\begin{proof}
From the definition of $\chi$, we get $\zetabarQ - \zetaQ^{-1} = (\chi - \epsilon)\zetaQ^{-1}$. Equating homogeneous components gives:
\[ (\zetabarQ)_{\vect{n}} - (\zetaQ^{-1})_{\vect{n}} = (\zetabarQ - \zetaQ^{-1})_{\vect{n}} = \sum_{\vect{i}\le \vect{n}} (\chi - \epsilon)_{\vect{i}}(\zetaQ^{-1})_{\vect{n}-\vect{i}} = \sum_{\vect{i} \le \vect{n}} (\chi)_{\vect{i}}(\zetaQ^{-1})_{\vect{n} - \vect{i}} \]
for each $\vect{n} \ne \vect{0}$. (Note that here we use the fact that the restriction of any character, and so in particular $\chi$, to the zeroth graded piece is equal to $\epsilon$).  Thus $(\zetabarQ)_{\vect{i}} - (\zetaQ^{-1})_{\vect{i}} \in \left < \chi_{\vect{n}} \mid \vect{0} < \vect{n} \le \vect{k} \right >$ for each $\vect{i} \le \vect{k}$, and hence $\ideal\kodd^{\vect{k}} \subseteq \left < \chi_{\vect{n}} \mid \vect{0} < \vect{n} \le \vect{k} \right >$.
The opposite inclusion, hence equality, is a consequence of
\[ 
\chi_{\vect{n}} = (\chi - \epsilon)_{\vect{n}} = \sum_{\vect{i} \le \vect{n}} (\zetabarQ - \zetaQ^{-1})_{\vect{i}} (\zetaQ)_{\vect{n}-\vect{i}} = \sum_{\vect{i} \le \vect{n}} ((\zetabarQ)_{\vect{i}} - (\zetaQ^{-1})_{\vect{i}}) (\zetaQ)_{\vect{n}-\vect{i}}
\]
for $\vect{n} \ne \vect{0}$, which is derived from the identity $\chi - \epsilon = (\zetabarQ - \zetaQ^{-1}) \zetaQ$. 

Now we give a dimension argument to see that only the $\chi_{\vect{n}}$ where $|\vect{n}|$ is even are needed to generate the ideal $\ideal\kodd^{\vect{k}}$. Given vector compositions $\vect{I}_1, \ldots, \vect{I}_m$ and $\lev$-partite numbers $\vect{n}_1, \ldots, \vect{n}_{m-1}$ with $\vect{n}_i \le \vect{k}$ and $|\vect{n}_i|$ even for all $i$, let $\vect{J} = (\vect{I}_1, \vect{n}_1, \vect{I}_2, \vect{n}_2, \ldots, \vect{n}_{m-1}, \vect{I}_m)$ and define $T^{\vect{J}} = S^{\vect{I}_1} \chi_{\vect{n}_1} S^{\vect{I}_2} \chi_{\vect{n}_1} \cdots \chi_{\vect{n}_{m-1}}  S^{\vect{I}_m}$, which is an element of $\left< \chi_{\vect{n}} \mid \vect{0} < \vect{n} \le \vect{k}, |\vect{n}| \text{ even} \right>$. By \eqref{E:chin}, $T^{\vect{J}}$ has leading term $2 S^{\vect{J} }$ followed by a linear combination of $S^{\vect{I}}$ where $\vect{I}$ is strictly larger than $\vect{J}$ relative to $\tleq$. This shows that the $T^{\vect{J}}$ are upper triangular in the $S^{\vect{I}}$ relative to $\tleq$, and hence they are linearly independent. The possible indices $\vect{J}$ (after removing columns that are zero or empty) are exactly the vector compositions in which every column $\vect{i}$ such that $\vect{i} \le \vect{k}$ has even weight. From Theorem~\ref{T:koddQSym} we know that the number of such compositions is equal to $\dim_{\field} \ideal\kodd^{\vect{k}}$.
\end{proof}

The next result simply describes the elements $S^{\vect{I}} \chi_{\vect{n}} S^{\vect{J}} \in \ideal\kodd^{\vect{k}}$ more explicitly. 
\begin{corollary} \label{C:EulerCharacter}
Let $(\vect{i}_1, \ldots, \vect{i}_m)$ be a vector composition, and let $r\in \{1,\ldots, m\}$ be such that $\vect{i}_r \le \vect{k}$. We have
\begin{equation} \label{E:GeneralizedDH}
\sum_{ \vect{j} \le \vect{i}_r} (-1)^{|\vect{j}|} S^{(\vect{i}_1, \ldots, \vect{i}_{r-1}, \, \vect{j}, \, \vect{i}_r - \vect{j}, \, \vect{i}_{r+1}, \ldots, \vect{i}_m)} \in \ideal\kodd^{\vect{k}},
\end{equation}
where columns of zero weight are omitted.
\end{corollary}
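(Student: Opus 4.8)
The plan is to exhibit the left-hand side of \eqref{E:GeneralizedDH} as a product $S^{\vect{I}}\,\chi_{\vect{i}_r}\,S^{\vect{J}}$ inside $\NSyml$ and then apply Theorem~\ref{T:kEulerIdeal}. First I would recall that the $S$-basis of $\NSyml$ is multiplicative under concatenation of vector compositions, i.e.\ $S^{\vect{K}}\cdot S^{\vect{L}} = S^{\vect{K}\vect{L}}$, together with the normalization $S_{\vect{0}} = 1$, so that inserting a zero-weight column into an index has no effect.

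Writing $\vect{I} = (\vect{i}_1, \ldots, \vect{i}_{r-1})$ and $\vect{J} = (\vect{i}_{r+1}, \ldots, \vect{i}_m)$ (either of which may be the empty composition), I would then expand $\chi_{\vect{i}_r}$ via \eqref{E:chin} and use the multiplicativity above to compute
\[
S^{\vect{I}}\,\chi_{\vect{i}_r}\,S^{\vect{J}} = \sum_{\vect{j}\le \vect{i}_r} (-1)^{|\vect{j}|}\, S^{(\vect{i}_1,\ldots,\vect{i}_{r-1},\,\vect{j},\,\vect{i}_r-\vect{j},\,\vect{i}_{r+1},\ldots,\vect{i}_m)},
\]
in which the summands $\vect{j}=\vect{0}$ and $\vect{j}=\vect{i}_r$ produce a zero-weight column that is then dropped. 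This is exactly the expression appearing in \eqref{E:GeneralizedDH}.

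It remains only to see that $\chi_{\vect{i}_r}$ is one of the generators of $\ideal\kodd^{\vect{k}}$. Since $(\vect{i}_1,\ldots,\vect{i}_m)$ is a genuine vector composition, every column has nonzero weight, so $\vect{0} < \vect{i}_r$; combined with the hypothesis $\vect{i}_r \le \vect{k}$, Theorem~\ref{T:kEulerIdeal} gives $\chi_{\vect{i}_r} \in \ideal\kodd^{\vect{k}}$. Because $\ideal\kodd^{\vect{k}}$ is an ideal of $\NSyml$ (Theorem~\ref{theoremgeneralproperties}(b)), the product $S^{\vect{I}}\,\chi_{\vect{i}_r}\,S^{\vect{J}}$ lies in $\ideal\kodd^{\vect{k}}$, which is the claim. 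There is no genuine obstacle here; the only step needing a little care is the bookkeeping of columns — namely checking that the $\vect{j}=\vect{0}$ and $\vect{j}=\vect{i}_r$ terms of $\chi_{\vect{i}_r}$, after the zero column is deleted, reproduce exactly the $S^{(\vect{i}_1,\ldots,\vect{i}_m)}$ summands (with signs $1$ and $(-1)^{|\vect{i}_r|}$) occurring in \eqref{E:GeneralizedDH}, so that the displayed identity matches the corollary term by term.
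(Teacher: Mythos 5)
Your proposal is correct and is exactly the argument the paper intends: the sentence preceding the corollary describes the displayed elements as the products $S^{\vect{I}}\chi_{\vect{n}}S^{\vect{J}}$, and your expansion via \eqref{E:chin}, the multiplicativity of the $S$-basis with $S_{\vect{0}}=1$, and the generator description of $\ideal\kodd^{\vect{k}}$ from Theorem~\ref{T:kEulerIdeal} supply precisely the omitted details, including the correct bookkeeping of the $\vect{j}=\vect{0}$ and $\vect{j}=\vect{i}_r$ terms.
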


\begin{remark}\label{Re:GeneralizedDH}
{\bf (Example~\ref{Ex:GeneralizedDH} continued) }
Recall that we studied the Hopf algebra $\GPoslk$ of $\vect{k}$-Eulerian posets in \S\S\ref{SS:GPoslk} and in Example~\ref{Ex:EulerianPoset}. Now we derive the linear equations announced in Example~\ref{Ex:GeneralizedDH}.
We deduce, from Equation \eqref{E:GeneralizedDH}, 
that if $(\vect{i}_1, \ldots, \vect{i}_m) \comp \vect{n}$ and $\vect{i}_r$ is a column such that $\vect{i}_r \le \vect{k}$, then for every $\vect{k}$-Eulerian poset $P$ of multirank $\vect{n}$ we have
\begin{equation} \label{E:GeneralizedDH2}
\sum_{\vect{j} \le \vect{i}_r} (-1)^{|\vect{j}|} f_{(\vect{i}_1, \ldots, \vect{i}_{r-1}, \, \vect{j}, \vect{i}_r - \vect{j}, \vect{i}_{r+1}, \ldots, \vect{i}_m)}(P) = 0
\end{equation}
where columns of zero weight are omitted. When $\lev = 1$ and $k=\infty$, the relations of the form \eqref{E:GeneralizedDH2} are precisely the generalized Dehn-Sommerville relations discovered by Bayer and Billera \cite{BB85}; see also \cite[Example~5.10]{ABS06}. Furthermore Ehrenborg proved that for all $k\in \NN$, the linear combinations of these relations give all possible linear relations satisfied by $k$-Eulerian posets \cite{Ehrenborg01}.
\end{remark}

With the help of Theorem~\ref{T:kEulerIdeal} we can provide the following interpretations of the algebras $\kodd^{k}$ when $\lev = 1$.

\begin{prop} \label{P:kodd-kposet}
When $\lev = 1$, we have
\[
\kodd^{k} = \mathrm{span}\{\mathcal{F}(P) \mid P \text{ is a $k$-Eulerian poset}\}
\]
for every $k\in \NN$.
\end{prop}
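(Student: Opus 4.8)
The plan is to establish the two inclusions separately, one rank at a time, since both $\kodd^{k}$ and the span on the right are graded subspaces of $\QSym = \bigoplus_{n} \QSym_{n}$ (here $\lev = 1$, so $\QSyml = \QSym$ and $\kodd^{k} = \kodd^{k}(\QSym,\zetaQ)$).

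The inclusion ``$\supseteq$'' requires no new work. Specializing Example~\ref{Ex:GeneralizedDH} to $\lev = 1$, Equation~\eqref{E:kEulerianCharacter} shows that the character $\zeta$ on $\GPoslk$ is $k$-odd, so Theorem~\ref{T:kOddInducedMap}(1) forces the induced morphism $\mathcal{F}$ to have image inside $\kodd^{k}$; in particular $\mathcal{F}(P) \in \kodd^{k}$ whenever $P$ is $k$-Eulerian.

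For ``$\subseteq$'', fix $n$ and set $V_{n} = \mathrm{span}\{\mathcal{F}(P) \mid P \text{ is $k$-Eulerian of rank $n$}\} \subseteq \QSym_{n}$. I would compute the annihilator $V_{n}^{\perp} \subseteq \NSym_{n}$ with respect to the perfect pairing $S^{I}(M_{J}) = \delta_{I,J}$ of \S\S\ref{SS:Duality}. Since $S^{I}(\mathcal{F}(P)) = f_{I}(P)$ for every composition $I \comp n$ (Example~\ref{Ex:GeneralizedDH}), an element $\sum_{I \comp n} a_{I} S^{I}$ lies in $V_{n}^{\perp}$ exactly when $\sum_{I} a_{I} f_{I}(P) = 0$ for all $k$-Eulerian $P$ of rank $n$; thus $V_{n}^{\perp}$ is precisely the space of linear relations holding among the flag $f$-vectors of rank-$n$ $k$-Eulerian posets. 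By Ehrenborg's completeness theorem for $k$-Eulerian posets (\cite{Ehrenborg01}; see Remark~\ref{Re:GeneralizedDH}), that space is spanned by the generalized Dehn-Sommerville relations \eqref{E:GeneralizedDH2}. Each such relation is, again through $S^{I}(\mathcal{F}(P)) = f_{I}(P)$, the image under the pairing of an element of the form \eqref{E:GeneralizedDH}, which by Corollary~\ref{C:EulerCharacter} lies in $\ideal\kodd^{k}$. Hence $V_{n}^{\perp} \subseteq (\ideal\kodd^{k})_{n}$, and taking orthogonal complements inside the finite-dimensional space $\QSym_{n}$ yields $V_{n} \supseteq ((\ideal\kodd^{k})_{n})^{\perp} = (\kodd^{k})_{n}$, the last equality being Theorem~\ref{theoremgeneralproperties}(a). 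Summing over $n$ and combining with ``$\supseteq$'' completes the proof.

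The only substantive input is Ehrenborg's theorem that the generalized Dehn-Sommerville relations account for all linear relations among flag $f$-vectors of $k$-Eulerian posets; everything else is the dictionary $S^{I}(\mathcal{F}(P)) = f_{I}(P)$, Corollary~\ref{C:EulerCharacter}, and the duality $\kodd^{k} = (\ideal\kodd^{k})^{\perp}$. The step deserving a moment's care is matching conventions: one should confirm that Ehrenborg's $k$-Eulerian posets are the $\lev = 1$ specialization of our Definition (they are, ours being a direct generalization of \cite[Definition~4.1]{Ehrenborg01}), and that his relations, viewed as a spanning set of the relation space, are exactly the elements \eqref{E:GeneralizedDH2}, so that Corollary~\ref{C:EulerCharacter} applies to each generator.
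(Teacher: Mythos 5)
Your proof is correct and follows essentially the same route as the paper's: both reduce the containment $\kodd^{k} \subseteq \mathrm{span}\{\mathcal{F}(P)\}$ to Ehrenborg's theorem that the generalized Dehn--Sommerville relations \eqref{E:GeneralizedDH2} (equivalently, the ideal $\left<\chi_1,\ldots,\chi_k\right>$) exhaust all linear relations among flag $f$-vectors of $k$-Eulerian posets, and then invoke the duality $\kodd^{k} = (\ideal\kodd^{k})^{\perp}$ together with Theorem~\ref{T:kEulerIdeal}. The paper simply cites \cite[Theorem~4.2]{Ehrenborg01} in the form ``the ideal $\left<\chi_1,\ldots,\chi_k\right>$ is orthogonal to the span of the $\mathcal{F}(P)$,'' which packages your rank-by-rank annihilator computation in one line.
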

\begin{proof}
Ehrenborg proved that the ideal $\left< \chi_1, \ldots, \chi_k \right>$ is orthogonal to the vector space spanned by the $\mathcal{F}(P)$ where $P$ is $k$-Eulerian poset \cite[Theorem~4.2]{Ehrenborg01}. Since this ideal is orthogonal to $\kodd^{k}$, the result follows.
\end{proof}

\begin{remark}
For arbitrary $\lev$ and $\vect{k} \in \NNinfl$, we already observed in Example~\ref{Ex:GeneralizedDH} that $\mathcal{F}(P) \in \kodd^{\vect{k}}$ for every $\vect{k}$-Eulerian poset $P$. It should be possible to generalize Ehrenborg's constructions  in the $\lev =1$ case (i.e., \cite[Lemma~4.3]{Ehrenborg01}) to prove that the span of the $\mathcal{F}(P)$, where $P$ is $\vect{k}$-Eulerian, is all of $\kodd^{\vect{k}}$, but we will not pursue this point here.
\end{remark}

Lastly, observe that when $\lev = 1$, we have
\[ 
\ideal\kodd^{2} = \left< \chi_2 \right> = \left< 2 S^{(2)} - S^{(1,1)} \right>.
\]
It is known from the work of Bergeron et al.\ \cite[Theorem~5.7]{BMSW00} that this ideal is orthogonal to the Hopf subalgebra of $\QSym$ spanned by Billey and Haiman's shifted quasisymmetric functions \cite{BMSW02, BH95}. Thus we have the following:

\begin{prop} \label{P:shifted}
When $\lev=1$, the $2$-odd subalgebra $\kodd^{2}$ of $\QSyml = \QSym$ is the Hopf algebra spanned by the shifted quasisymmetric functions.
\end{prop}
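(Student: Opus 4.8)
The plan is to combine the explicit description of the ideal $\ideal\kodd^{2}$ furnished by the preceding results with the known orthogonality statement of Bergeron, Mykytiuk, Sottile and van Willigenburg. First I would recall that, by Theorem~\ref{theoremgeneralproperties}(a) specialized to $\lev = 1$ and $\vect{k} = 2$, the $2$-odd Hopf subalgebra is precisely the orthogonal complement
\[
\kodd^{2} = \bigl(\ideal\kodd^{2}\bigr)^{\perp} \subseteq \QSym,
\]
the pairing being the standard Malvenuto--Reutenauer duality between $\NSym$ and $\QSym$ recalled (in its level $\lev$ form) in \S\S\ref{SS:Duality}. All graded components here are finite dimensional, so the orthogonal complement behaves as expected and the identity above really pins $\kodd^{2}$ down as soon as the ideal is known.

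Next I would identify the ideal. Taking $\lev = 1$ and $k = 2$ in Theorem~\ref{T:kEulerIdeal} (or, equivalently, in Theorem~\ref{T:koddQSym}), we have $\ideal\kodd^{2} = \left< \chi_2 \right>$, and evaluating \eqref{E:chin} at $\vect{n} = 2$ gives $\chi_2 = 2\,S^{(2)} - S^{(1,1)}$; thus
\[
\ideal\kodd^{2} = \left< 2\,S^{(2)} - S^{(1,1)} \right>,
\]
as already noted in the text just before the statement. Finally I would invoke \cite[Theorem~5.7]{BMSW00}, according to which this ideal of $\NSym$ is orthogonal to the Hopf subalgebra of $\QSym$ spanned by Billey and Haiman's shifted quasisymmetric functions. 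By finite-dimensionality of the graded pieces, "orthogonal to" forces that Hopf subalgebra to coincide with $\left< 2\,S^{(2)} - S^{(1,1)} \right>^{\perp} = \kodd^{2}$, which is the assertion; that $\kodd^{2}$ is a Hopf subalgebra is automatic from Theorem~\ref{theoremgeneralproperties}(c).

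The only delicate point — and the main (admittedly minor) obstacle — is a bookkeeping one: one must check that the bilinear form under which \cite{BMSW00} establishes the orthogonality is the same form that enters the definition of $(\,\cdot\,)^{\perp}$ here, i.e.\ the Novelli--Thibon pairing of \S\S\ref{SS:Duality} at $\lev = 1$, which is exactly the classical $\NSym$--$\QSym$ pairing ($S^I$ dual to $M_J$, equivalently the ribbon basis $R_I$ dual to Gessel's fundamental basis). Once this matching of conventions is confirmed, nothing further is required.
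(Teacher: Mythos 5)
Your proposal is correct and follows essentially the same route as the paper: identify $\ideal\kodd^{2} = \left< \chi_2 \right> = \left< 2S^{(2)} - S^{(1,1)} \right>$ via Theorem~\ref{T:kEulerIdeal} and Equation~\eqref{E:chin}, then invoke \cite[Theorem~5.7]{BMSW00} for the orthogonality with the span of the shifted quasisymmetric functions, concluding via $\kodd^{2} = (\ideal\kodd^{2})^{\perp}$. Your extra remarks on matching the duality conventions and on finite-dimensionality of the graded pieces are sensible bookkeeping that the paper leaves implicit.
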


\begin{remark}
Noting that the peak Hopf algebra is dual to $\NSym/\left< \chi_2, \chi_4, \ldots \right>$ and that the Hopf algebra of shifted quasisymmetric functions is dual to $\NSym/\left< \chi_2 \right>$, Bergeron et al.\ long ago suggested \cite{BMSW-PC} that the ``intermediate" Hopf algebras $\NSym/\left< \chi_2, \chi_4, \ldots, \chi_{2k}\right>$, where $k=1,2,\ldots $, might be of interest. As we have now shown, these Hopf algebras are precisely the graded duals of our canonically defined $(2k)$-odd subalgebras of $\QSym$. 
\end{remark}

We will revisit shifted quasisymmetric functions briefly in \S\ref{S:ktheta}.

\subsection{The $\etabasis$-basis for $\kodd^{\vect{k}}$}
\label{SS:etabasis}

We now describe a basis for $\kodd^{\vect{k}}$ which directly generalizes the $\eta$-basis studied by Aguiar et al.\ \cite[\S6]{ABS06} and is closely related to Ehrenborg's encoding of the flag $f$-vector of a $k$-Eulerian poset via certain noncommutative polynomials.

Let $\vect{I}$ be a vector composition. Define
\begin{equation*}
\etabasis_{\vect{I}} = \sum_{\vect{J}\, \tleq\, \vect{I}} 2^{\len(\vect{J})} M_{\vect{J}}.
\end{equation*}
By M\"obius inversion we get
\begin{equation*} 
M_{\vect{I}} = \frac{1}{2^{\len(\vect{I})}}  \sum_{\vect{J}\, \tleq \, \vect{I}} (-1)^{\len(\vect{I}) - \len(\vect{J})} \; \etabasis_{\vect{J}}.
\end{equation*}

Let $(\Upsilon^{\vect{I}})$ denote the basis for $\NSyml$ that is dual to $(\etabasis_{\vect{I}})$, so $\Upsilon^{\vect{I}}(\etabasis_{\vect{J}}) = \delta_{\vect{I}, \vect{J}}$. By duality, the two equations above 
become
\begin{equation} \label{E:S-Upsilon}
S^{\vect{I}} = 2^{\len(\vect{I})} \sum_{\vect{I} \, \tleq \, \vect{J}} \Upsilon_{\vect{J}}
\end{equation}
and
\begin{equation} \label{E:Upsilon-S}
\Upsilon^{\vect{I}} = \sum_{\vect{I}\, \tleq \, \vect{J}} (-1)^{\len(\vect{J}) - \len(\vect{I})}\frac{S^{\vect{J}}}{2^{\len(\vect{J})}}.
\end{equation}
Formula \eqref{E:Upsilon-S} makes it clear that the basis $(\Upsilon^{\vect{I}})$ is multiplicative, in the sense that 
\[\Upsilon^{(\vect{i}_1, \ldots, \vect{i}_m)} = \Upsilon_{\vect{i}_1} \cdots \Upsilon_{\vect{i}_m}\]
where $\Upsilon_{\vect{i}} = \Upsilon^{\vect{i}}$.

Next we determine the relationship between the bases $(\Upsilon^{\vect{I}})$ and $(\Phi^{\vect{I}})$. Let $E_n$ denote the $n$th Euler number, as defined by $\tan(t) + \sec(t) = \sum_{n \ge 0} \frac{E_n}{n!} \, t^{n}$ (see, e.g., \cite[p.\ 149]{EC1}).

\begin{prop}
Let $\vect{n} \in \NNl$ and $\vect{I}$ be a vector composition of $\vect{n}$. Then
\begin{equation}\label{E:eta-P}
  \etabasis_{\vect{I}} = 
  \sumsub{\vect{I} = \vect{I}_1\cdots \vect{I}_m \\ \forall r, \, \len(\vect{I}_r) \text{ is odd}}
  2^m \, \frac{|\vect{I}_1|}{\len(\vect{I}_1)} \cdots \frac{|\vect{I}_m|}{\len (\vect{I}_m)} \, P_{(\Sigma\vect{I}_1,\ldots, \Sigma\vect{I}_m)}
\end{equation}
and
\begin{equation}\label{E:P-eta}
  P_{\vect{I}} = 
  \sumsub{\vect{I} = \vect{I}_1\cdots \vect{I}_m \\ \forall i, \, \len(\vect{I}_i) \text{ is odd}}
  \frac{(-1)^{(\len(\vect{I})-m)/2}}{2^m} \, \frac{E_{\len(\vect{I}_1)}}{\len(\vect{I}_1)!}
  \cdots 
  \frac{E_{\len(\vect{I}_m)}}{\len(\vect{I}_m)!} \, \etabasis_{(\Sigma\vect{I}_1,\ldots, \Sigma\vect{I}_m)}.
\end{equation}
\end{prop}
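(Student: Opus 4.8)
The plan is to reduce both formulas to generating-function identities in a single formal variable, using the multiplicativity of all three bases involved ($\etabasis$, $P$, and their duals). First I would pass to the dual side, where the statement becomes cleaner: equation~\eqref{E:eta-P} is dual to a formula expressing $\Phi^{\vect{I}}$ in terms of the $\Upsilon^{\vect{J}}$, and \eqref{E:P-eta} is dual to the formula expressing $\Upsilon^{\vect{I}}$ in terms of the $\Phi^{\vect{J}}$. Since $\Phi^{\vect{I}} = \Phi_{\vect{i}_1}\cdots\Phi_{\vect{i}_m}$ and $\Upsilon^{\vect{I}} = \Upsilon_{\vect{i}_1}\cdots\Upsilon_{\vect{i}_m}$ are both multiplicative, and the right-hand sides of \eqref{E:eta-P}, \eqref{E:P-eta} factor over the blocks $\vect{I}_1,\ldots,\vect{I}_m$ of a splitting of $\vect{I}$, it suffices to prove the ``single-block'' case: express each $\Phi_{\vect{n}}$ as a sum over vector compositions $\vect{J}\comp\vect{n}$ of odd length, with coefficient $2\,\frac{|\vect{n}|}{\len(\vect{J})}$ times $\Upsilon^{\vect{J}}$ (up to the appropriate normalization), and conversely express $\Upsilon_{\vect{n}}$ in terms of $\Phi^{\vect{J}}$ with $\vect{J}\comp\vect{n}$ of odd length and coefficient involving $\frac{(-1)^{(\len(\vect{J})-1)/2}}{2}\frac{E_{\len(\vect{J})}}{\len(\vect{J})!}$.

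Next I would set up the relevant generating functions. We already have from \S\S\ref{SS:NSyml} the defining relation $\sum_{\vect{n}\ne\vect{0}}\frac{\Phi_{\vect{n}}}{|\vect{n}|}t^{\vect{n}} = \log(1+\sum_{\vect{n}\ne\vect{0}}S_{\vect{n}}t^{\vect{n}})$, equivalently $\Phi_{\vect{n}} = \sum_{\vect{I}\comp\vect{n}}(-1)^{\len(\vect{I})-1}\frac{|\vect{n}|}{\len(\vect{I})}S^{\vect{I}}$. Combining this with \eqref{E:Upsilon-S}, which writes $S^{\vect{I}}$ as a sum over refinements $\vect{J}\treq\vect{I}$ of $(-1)^{\len(\vect{J})-\len(\vect{I})}2^{-\len(\vect{J})}\Phi^{\vect{J}}$—wait, rather I use \eqref{E:S-Upsilon} going the other way, $S^{\vect{I}} = 2^{\len(\vect{I})}\sum_{\vect{I}\tleq\vect{J}}\Upsilon_{\vect{J}}$, substitute into the expansion of $\Phi_{\vect{n}}$, and collect terms. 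Interchanging the two sums and summing over all ways to coarsen a fixed $\vect{J}$ to some $\vect{I}$ reduces the coefficient computation to a sum over compositions of $\len(\vect{J})$; the identity $\sum_{\text{compositions } (c_1,\ldots,c_r) \text{ of } N}(-1)^{r-1}/r$ evaluates (using $\log(1+T)$ with $T = t/(1-t)$, or directly) in a way that kills the contributions where $\len(\vect{J})$ is even and leaves $2\cdot(\pm\text{ something})$ for $\len(\vect{J})$ odd. The same mechanism, run with $\exp$ in place of $\log$ and with the Euler-number generating function $\tan+\sec$, produces the coefficients in \eqref{E:P-eta}; the appearance of $E_{\len(\vect{J})}$ is exactly the statement that $\sum_{n\ge 0}\frac{E_n}{n!}t^n = \sec t + \tan t$, the exponential generating function of the number we need after the odd-length restriction and the $(-1)^{(n-1)/2}$ sign are extracted. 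Finally I would confirm $\eqref{E:eta-P}$ and $\eqref{E:P-eta}$ are genuinely inverse by noting that they are dual to the mutually inverse change-of-basis matrices between $(\Phi^{\vect{I}})$ and $(\Upsilon^{\vect{I}})$, so proving one direction via the $\log$/$\exp$ pair automatically gives the other.

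The main obstacle will be the bookkeeping in the two nested sums: each of $\etabasis_{\vect{I}}$ and $P_{\vect{I}}$ is indexed by splittings $\vect{I} = \vect{I}_1\cdots\vect{I}_m$ into \emph{contiguous blocks}, while the passage through $S^{\vect{I}}$ involves refinements $\tleq$ of the \emph{individual columns}, so one must carefully verify that the combinatorics of ``refine, then regroup by weight'' matches the block structure on the nose—this is where the reduction to the poset of ordinary compositions (boolean lattice) from \S\S\ref{SS:lpartite} does the real work, since it lets me replace the vector-composition refinement lattice by the refinement order on compositions of $|\vect{n}|$ and thereby reduce every coefficient to a purely numerical sum over compositions of an integer. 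Once that reduction is in place, the numerical identities are the classical ones: $\log$ gives the $\frac{1}{\text{length}}$ with alternating sign (and the parity collapse), and $\exp$ together with $\tan+\sec$ gives the Euler numbers. I would present the proof by first stating the reduction lemma to integer compositions, then computing the two scalar generating functions, and finally invoking multiplicativity to assemble the block-indexed formulas.
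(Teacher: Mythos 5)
Your plan follows the paper's proof essentially verbatim: both pass to the dual side, reduce to the single-generator case by multiplicativity of $\Phi^{\vect{I}}$ and $\Upsilon^{\vect{I}}$, and extract the coefficients from the generating-function identity $\sum_{\vect{n}>\vect{0}} \frac{\Phi_{\vect{n}}}{|\vect{n}|}\,t^{\vect{n}} = \log\bigl(\frac{1+U}{1-U}\bigr) = 2\tanh^{-1}(U)$ with $U = \sum_{\vect{n}>\vect{0}}\Upsilon_{\vect{n}}\,t^{\vect{n}}$, before dualizing back. The only cosmetic difference is that the paper reads the coefficients directly off $\tanh^{-1}$ and its compositional inverse $\tanh$ (whose odd Taylor coefficients are the signed, normalized Euler numbers), rather than composing the two change-of-basis matrices through $S^{\vect{I}}$ and evaluating the resulting numerical sums over integer compositions as you propose.
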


\begin{proof}
It follows from \eqref{E:S-Upsilon} that
\[ 
1 + \sum_{\vect{n} > \vect{0} } S_{\vect{n}} \, t^{\vect{n}} 
  = 1 + 2  \sum_{m\ge 1} 
    \left(\sum_{\vect{n} > \vect{0} } \Upsilon_{\vect{n}} \, t^{\vect{n}} \right)^m 
  = \frac{1 + \sum_{\vect{n} > \vect{0} } \Upsilon_{\vect{n}} \, t^{\vect{n}}}
    {1 - \sum_{n > \vect{0} } \Upsilon_{\vect{n}} \, t^{\vect{n}} }
\]
where $t$ is a new commutative variable. By \eqref{E:Phidef},
\begin{equation} \label{E:Phi-Upsilon-series}
  \sum_{\vect{n} > \vect{0}} \frac{\Phi_{\vect{n}}}{|\vect{n}|} \, t^{\vect{n}} 
    = \log \left(\frac{1 + \sum_{\vect{n} > \vect{0} } \Upsilon_{\vect{n}} \, t^{\vect{n}}}
    {1 - \sum_{n > \vect{0} } \Upsilon_{\vect{n}} \, t^{\vect{n}} }\right)
    = 2 \, \tanh^{-1}\left( 
    \sum_{n > \vect{0} } \Upsilon_{\vect{n}} \, t^{\vect{n}}
    \right)
\end{equation}
where  $\tanh^{-1}(T) = \sum_{m \ge 0} \frac{1}{2m+1} T^{2m+1}$. 
Thus, for any vector composition $\vect{I} = (\vect{i}_1, \ldots, \vect{i}_m)$, 
\begin{equation*} 
\frac{\Phi^{\vect{I}}}{\pi(\vect{I})} = \sumsub{\vect{J} = \vect{J}_1 \cdots \vect{J}_m   \\
\forall r, \, \vect{J}_r \comp {\vect{i}}_r \text{ and } \len(\vect{J}_r) \text{ is odd}}
\frac{2^m}{\len(\vect{J}_1) \cdots \len(\vect{J}_m)} \cdot \Upsilon^{\vect{J}}
\end{equation*}
which is equivalent to \eqref{E:eta-P} by duality.

The formal inverse of the series $\tanh^{-1}(T)$ is $\tanh(T) = \sum_{m \ge 0} (-1)^m \frac{E_{2m+1}}{(2m+1)!} T^{2m+1}$. By \eqref{E:Phi-Upsilon-series},
\begin{equation*}
\Upsilon_{\vect{n}} = \sum_{\vect{I} \comp \vect{n}: \, \len(\vect{J}) \text{ is odd}} 
\frac{(-1)^{(\len(\vect{J})-1)/2}}{2^{\len(\vect{J})}} \, \frac{E_{\len(\vect{J})}}{\spp(\vect{J})} \, \Phi^{\vect{J}}.
\end{equation*}
Thus, for $\vect{I} = (\vect{i}_1, \ldots, \vect{i}_m)$, 
\begin{equation} \label{E:Upsilon-Phi}
\Upsilon^{\vect{I}} = \sumsub{\vect{J} = \vect{J}_1 \cdots \vect{J}_m   \\
\forall r, \, \vect{J}_r \comp {\vect{i}}_r \text{ and } \len(\vect{J}_r) \text{ is odd}}
\frac{(-1)^{(\len(\vect{J})-m)/2}}{2^{\len(\vect{J})}} \frac{E_{\len(\vect{J}_1)}}{\spp(\vect{J}_1)} \cdots 
\frac{E_{\len(\vect{J}_m)}}{\spp(\vect{J}_m)} \, \Phi^{\vect{J}}.
\end{equation}
which is equivalent to \eqref{E:P-eta} by duality.
\end{proof}

The canonical $\vect{k}$-odd ideal and $\vect{k}$-odd Hopf algebra can be expressed in terms of the $\etabasis$-basis in a manner similar to Theorem~\ref{T:koddQSym}.

\begin{theorem}\label{T:koddQSym-eta}
The canonical $\vect{k}$-odd Hopf ideal $\ideal\kodd^{\vect{k}} \subseteq \NSyml$ is given by 
\begin{equation*} 
  \ideal\kodd^{\vect{k}} 
  = \left< \, \Upsilon_{\vect{n}} \mid 
  \vect{0} < \vect{n} \le \vect{k} \text{ and } |\vect{n}| \text{ even} \, \right>.
\end{equation*}
On the dual side, the canonical $\vect{k}$-odd Hopf algebra $\kodd^{\vect{k}} \subseteq \QSyml$ is given by
\begin{equation*} 
  \kodd^{\vect{k}} 
  = \mathrm{span} \{\, \eta_{(\vect{i}_1, \ldots, \vect{i}_m)} \mid 
  \vect{i}_r \le \vect{k} \implies |\vect{i}_r| \text{ odd} \, \}.
\end{equation*} 
\end{theorem}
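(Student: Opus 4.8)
The plan is to derive both claims directly from Theorem~\ref{T:koddQSym} together with the change-of-basis formulas between the $\Upsilon^{\vect{I}}$ and the $\Phi^{\vect{I}}$, so that essentially no new computation is required. The key structural fact is that, because the $\Phi_{\vect{n}}$ are primitive (Proposition~\ref{P:PhiPrimitive}), the passage between the two bases is governed by the \emph{odd} power series $\tanh$ and $\tanh^{-1}$, as recorded in \eqref{E:Phi-Upsilon-series} and \eqref{E:Upsilon-Phi}. Specializing those identities to a single column shows that each of $\Phi_{\vect{n}}$ and $\Upsilon_{\vect{n}}$ expands as a linear combination of $\Upsilon^{\vect{J}}$, respectively $\Phi^{\vect{J}}$, over refinements $\vect{J} = (\vect{j}_1, \ldots, \vect{j}_s) \comp \vect{n}$ of \emph{odd} length $s$.

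First I would establish the ideal identity $\ideal\kodd^{\vect{k}} = \left< \Upsilon_{\vect{n}} \mid \vect{0} < \vect{n} \le \vect{k},\ |\vect{n}| \text{ even} \right>$. Fix $\vect{n}$ with $\vect{0} < \vect{n} \le \vect{k}$ and $|\vect{n}|$ even, and let $\vect{J} = (\vect{j}_1, \ldots, \vect{j}_s) \comp \vect{n}$ with $s$ odd be any refinement occurring in the expansion of $\Upsilon_{\vect{n}}$ in the $\Phi$-basis (via \eqref{E:Upsilon-Phi}) or of $\Phi_{\vect{n}}$ in the $\Upsilon$-basis (via \eqref{E:Phi-Upsilon-series}). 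Since $|\vect{j}_1| + \cdots + |\vect{j}_s| = |\vect{n}|$ is even while $s$ is odd, the weights $|\vect{j}_r|$ cannot all be odd, so some $|\vect{j}_r|$ is even; moreover $\vect{j}_r \le \vect{n} \le \vect{k}$ because $\vect{J}$ refines $(\vect{n})$. Hence $\Phi^{\vect{J}} = \Phi_{\vect{j}_1} \cdots \Phi_{\vect{j}_s}$ lies in $\left< \Phi_{\vect{m}} \mid \vect{0} < \vect{m} \le \vect{k},\ |\vect{m}| \text{ even} \right> = \ideal\kodd^{\vect{k}}$ (Theorem~\ref{T:koddQSym}), and symmetrically $\Upsilon^{\vect{J}} = \Upsilon_{\vect{j}_1} \cdots \Upsilon_{\vect{j}_s} \in \left< \Upsilon_{\vect{m}} \mid \vect{0} < \vect{m} \le \vect{k},\ |\vect{m}| \text{ even} \right>$. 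The first statement gives $\Upsilon_{\vect{n}} \in \ideal\kodd^{\vect{k}}$ for every such $\vect{n}$, hence the inclusion $\supseteq$; the second gives $\Phi_{\vect{n}}$ in the $\Upsilon$-ideal for every such $\vect{n}$, and since those $\Phi_{\vect{n}}$ generate $\ideal\kodd^{\vect{k}}$ by Theorem~\ref{T:koddQSym}, it gives $\subseteq$. So the two ideals coincide.

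For the dual statement I would mirror the step in Theorem~\ref{T:koddQSym} that passes from the $\Phi$-description of the ideal to the $P$-description of $\kodd^{\vect{k}}$. By \eqref{E:Upsilon-S}--\eqref{E:Upsilon-Phi} (the triangularity against $S^{\vect{I}}$ in \eqref{E:Upsilon-S} already suffices), the $\Upsilon_{\vect{n}}$ form a free generating set of the associative algebra $\NSyml$ and the $\Upsilon^{\vect{I}}$ a linear basis; consequently the two-sided ideal $\left< \Upsilon_{\vect{n}} \mid \vect{n} \in B \right>$, with $B = \{\vect{n} \mid \vect{0} < \vect{n} \le \vect{k},\ |\vect{n}| \text{ even}\}$, has as a basis the set of $\Upsilon^{\vect{I}}$ in which some column of $\vect{I}$ lies in $B$. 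Taking orthogonal complements in $\QSyml$ with respect to the dual pairing $\Upsilon^{\vect{I}}(\etabasis_{\vect{J}}) = \delta_{\vect{I}, \vect{J}}$, and using $\kodd^{\vect{k}} = (\ideal\kodd^{\vect{k}})^{\perp}$ from Theorem~\ref{theoremgeneralproperties}(a), one obtains that $\kodd^{\vect{k}}$ is spanned by those $\etabasis_{(\vect{i}_1, \ldots, \vect{i}_m)}$ no column of which lies in $B$, i.e.\ for which $\vect{i}_r \le \vect{k} \implies |\vect{i}_r| \text{ odd}$ for every $r$, which is the asserted $\etabasis$-basis.

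The only genuinely non-formal point is the parity step: recognizing that an odd-length refinement of an even-weight $\lev$-partite number must have a column of even weight (of weight $\le \vect{k}$, automatically), and that this is exactly what is needed to feed into Theorem~\ref{T:koddQSym} in both directions. Everything else is the same free-algebra and orthogonality bookkeeping already used for Theorem~\ref{T:koddQSym}, so I would keep it terse; indeed one could alternatively just repeat the proof of Theorem~\ref{T:koddQSym} verbatim with $\Upsilon$ in place of $\Phi$, using \eqref{E:S-Upsilon} in place of \eqref{E:SPhi}.
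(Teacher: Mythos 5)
Your proof is correct, and its first half and its duality step coincide with the paper's: the paper likewise observes that for $\vect{n}\le\vect{k}$ of even weight, equation \eqref{E:Upsilon-Phi} expands $\Upsilon_{\vect{n}}$ over odd-length refinements $\vect{J}\comp\vect{n}$, forces an even-weight column by parity, and concludes $\Upsilon_{\vect{n}}\in\ideal\kodd^{\vect{k}}$ from Theorem~\ref{T:koddQSym}; it then gets the dual statement ``by duality'' exactly as you do via the pairing $\Upsilon^{\vect{I}}(\etabasis_{\vect{J}})=\delta_{\vect{I},\vect{J}}$. Where you genuinely diverge is the reverse inclusion: the paper disposes of it with a one-line graded dimension count (both ideals are spanned by the multiplicative monomials, in the respective free bases, having a column in the forbidden set, so the inclusion of one in the other forces equality), whereas you rerun the parity argument in the opposite direction, using that $\tanh^{-1}$ in \eqref{E:Phi-Upsilon-series} is an odd series so that $\Phi_{\vect{n}}$ also expands over odd-length $\Upsilon^{\vect{J}}$, and then invoke that the $\Phi_{\vect{n}}$ generate $\ideal\kodd^{\vect{k}}$. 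The two routes cost about the same; your version has the modest advantage of not needing to justify the dimension claim separately (it rests on the same free-generation facts you make explicit in the orthogonality step), while the paper's is shorter on the page. Your closing remark that one could instead repeat the proof of Theorem~\ref{T:koddQSym} verbatim with $\Upsilon$ and \eqref{E:S-Upsilon} in place of $\Phi$ and \eqref{E:SPhi} is also a viable third route.
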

\begin{proof}
Suppose that $\vect{n}$ has even weight and $\vect{n} \le \vect{k}.$ If  $\vect{I} \comp \vect{n}$ and $\len(\vect{I})$ is odd, then $\vect{I}$ must have a column of even weight, hence $\Phi^{\vect{I}} \in \ideal\kodd^{\vect{k}}$ by Theorem~\ref{T:koddQSym}. This implies $\Upsilon_{\vect{n}} \in \ideal\kodd^{\vect{k}}$ in view of \eqref{E:Upsilon-Phi}. Thus $\left< \, \Upsilon_{\vect{n}} \mid 
  \vect{0} < \vect{n} \le \vect{k} \text{ and } |\vect{n}| \text{ even} \, \right> \subseteq \left< \, \Phi_{\vect{n}} \mid 
  \vect{0} < \vect{n} \le \vect{k} \text{ and } |\vect{n}| \text{ even} \, \right>.$ But these two ideals clearly have the same dimension as $\NNl$-graded vector spaces, hence they are equal. The second equation
follows immediately by duality.
\end{proof}

\noindent
Note that specializing Theorem \ref{T:koddQSym-eta} to $\lev=1$ and $\vect{k} = \svect{\infty}$ gives \cite[Proposition~6.5]{ABS06}.

\begin{remark}
We now explain the connection between the $\etabasis$-basis and one of Ehrenborg's constructions \cite{Ehrenborg01}. Recall that the $\aaa\bb$-index of a graded poset $P$ is a certain polynomial $\Psi(P)$ in two noncommutative variables $\aaa$ and $\bb$ that encodes the flag $f$-vector of $P$. There is a well-defined linear map $\tau: \abring \to \QSym$ that takes the $\aaa\bb$-index of a poset to the $F$-quasisymmetric function of that same poset; i.e., $\tau(\Psi(P)) = F(P)$. Ehrenborg showed \cite[Theorem~2.1]{Ehrenborg01} that the linear span of the polynomials $\Psi(P)$ as $P$ ranges over the $(2k+1)$-Eulerian posets is the subring $\field\langle{\cc,\ee^2, \ee^{2k+1}\rangle}$, where $\cc = \aaa + \bb$ and $\ee = \aaa - \bb$. To describe a linear basis for this subring, we introduce the notation $\mon{\vect{x}}{\vect{y}}^I$, where $\vect{x}, \vect{y} \in \abring$ and $I = (i_1, \ldots, i_m)$ is a composition, defined by
\[ 
\mon{{\bf x}}{{\bf y}}^I = \mathbf{x}^{i_1-1} \cdot \mathbf{y} \cdot \mathbf{x}^{i_2-1} \cdot \mathbf{y}
\cdots \mathbf{y} \cdot \mathbf{x}^{i_m-1}.
\]
For instance $\mon{{\bf x}}{{\bf y}}^{(3,4,1,2)} = {\bf x}^2\, {\bf y}\, {\bf x}^3\, {\bf y}^2 \,{\bf x}$, and $\mon{{\bf x}}{{\bf y}}^{(n)} = {\bf x}^{n-1}$. The set $\{\ecmon^{I}\}$ is a basis for $\field\langle{\cc,\ee^2, \ee^{2k+1}\rangle}$ if we let $I$ range over all compositions whose parts are in the set $\NN - \{ 0, 2, 4, \ldots, 2k\}$.  One can show that $\tau(\mon{\ee}{\bb}^I) = M_I$, and hence
\[
\tau(\mon{\ee}{\cc}^I) = \frac{1}{2} \eta_I.
\]
Using this equation and Theorem~\ref{T:koddQSym-eta}, we can recover our earlier observation, Proposition~\ref{P:kodd-kposet}, that $\kodd^{(k)}$ is the linear span of $F(P)$ as $P$ ranges over all $k$-Eulerian posets. 
\end{remark}

\subsection{Hilbert series of $\kodd^{\vect{k}}$}

We next determine the Hilbert series of the $\NNl$-grading of $\kodd^{\vect{k}}$.

\begin{prop}
\label{P:Hilb}
Let $\vect{k} = (k_0, \ldots, k_{\lev-1})^T \in \NNinfl$. We have 
\begin{equation*} 
\sum_{ \vect{n} \in \NNl} \dim( (\kodd^{\vect{k}})_{\vect{n}} ) \; t^{\vect{n}} = 
\frac{\displaystyle  \prod_{i=0}^{\lev-1} (1-t_i^2)}
{\displaystyle  \prod_{i=0}^{\lev-1}(1-t_i^2) -\prod_{i=0}^{\lev-1} (1+t_i)
+ \sumsub{\vect{b} \in \{0,1\}^\lev \\ |\vect{b}| \text{ even}} \prod_{i=0}^{\lev-1} t_i^{b_i} (1 - t_i^{2 \left\lfloor\frac{k_i - b_i}{2} \right\rfloor + 2})}
\end{equation*}
where if $\vect{n} = (n_0, \ldots, n_{\lev-1})^T \in \NNl$ then $t^{\vect{n}} = \prod_{i=0}^{\lev-1} t_i^{n_i}$. Here the term $t_i^{2 \left\lfloor\frac{k_i - b_i}{2} \right\rfloor + 2}$ is taken to be $0$ when $k_i = \infty$.
\end{prop}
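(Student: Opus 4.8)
The plan is to read the Hilbert series directly off the basis of $\kodd^{\vect k}$ supplied by Theorem~\ref{T:koddQSym} and then simplify the resulting rational generating function. By \eqref{E:OddAlgBasis} the functions $P_{(\vect i_1, \ldots, \vect i_m)}$ for which $\vect i_r \le \vect k \implies |\vect i_r|$ is odd form a linear basis of $\kodd^{\vect k}$, and each $P_{\vect I}$ is homogeneous of $\NNl$-degree $\Sigma\vect I$ (the sum of the columns of $\vect I$). Hence $\dim(\kodd^{\vect k})_{\vect n}$ is the number of vector compositions $\vect I \comp \vect n$ every column of which lies in
\[
  A = \{\, \vect i \in \NNl \setminus \{\vect 0\} \mid \vect i \le \vect k \implies |\vect i| \text{ odd} \, \}
  = (\NNl \setminus \{\vect 0\}) \setminus B,
  \qquad
  B = \{\, \vect i \mid \vect 0 < \vect i \le \vect k,\ |\vect i| \text{ even} \, \}.
\]
Since the columns of a vector composition are just nonzero $\lev$-partite numbers listed in some order, such a composition is nothing but a finite sequence of elements of $A$. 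Writing $S_A = \sum_{\vect i \in A} t^{\vect i}$ (a series with zero constant term), the Hilbert series is therefore the well-defined element $\sum_{m\ge 0} S_A^{\,m} = (1-S_A)^{-1}$ of $\ZZ[[t_0,\ldots,t_{\lev-1}]]$.

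Next I would compute $S_A$. Writing $S_B = \sum_{\vect i \in B} t^{\vect i}$, we have $S_A = \bigl(\prod_{i=0}^{\lev-1}(1-t_i)^{-1} - 1\bigr) - S_B$, so it suffices to find $S_B$. Here I would use the standard parity device: decomposing $\vect i = \vect b + 2\vect c$ coordinatewise with $\vect b \in \{0,1\}^\lev$ and $\vect c \in \NNl$, the condition $\vect i \le \vect k$ becomes $0 \le c_i \le \lfloor (k_i-b_i)/2\rfloor$ for all $i$, and the condition that $|\vect i|$ be even becomes $|\vect b|$ even. Summing the geometric series $\sum_{c_i=0}^{\lfloor (k_i-b_i)/2\rfloor} t_i^{2c_i}$ in each coordinate gives
\[
  1 + S_B \;=\; \sum_{\substack{\vect i \le \vect k \\ |\vect i| \text{ even}}} t^{\vect i}
  \;=\; \sum_{\substack{\vect b \in \{0,1\}^\lev \\ |\vect b| \text{ even}}}\ \prod_{i=0}^{\lev-1} t_i^{b_i}\,\frac{1 - t_i^{2\lfloor (k_i - b_i)/2\rfloor + 2}}{1 - t_i^2},
\]
where the ``$1+$'' on the left accounts for $\vect 0 \notin B$, and $t_i^{2\lfloor (k_i-b_i)/2\rfloor+2}$ is read as $0$ when $k_i = \infty$, consistently with $\sum_{c_i\ge 0} t_i^{2c_i} = (1-t_i^2)^{-1}$. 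Multiplying through by $\prod_{i=0}^{\lev-1}(1-t_i^2)$ yields a closed form for $\prod_i(1-t_i^2)(1+S_B)$ equal to the sum over $\vect b$ appearing in the proposition.

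Finally I would clear denominators. Multiplying the numerator and denominator of $(1-S_A)^{-1}$ by $\prod_{i}(1-t_i^2)$ and using $\prod_i(1-t_i^2)\prod_i(1-t_i)^{-1} = \prod_i(1+t_i)$, one obtains
\[
  \frac{1}{1-S_A} \;=\; \frac{\displaystyle\prod_{i=0}^{\lev-1}(1-t_i^2)}{\displaystyle 2\prod_{i=0}^{\lev-1}(1-t_i^2) - \prod_{i=0}^{\lev-1}(1+t_i) + \prod_{i=0}^{\lev-1}(1-t_i^2)\,S_B}.
\]
Since $2\prod_i(1-t_i^2) + \prod_i(1-t_i^2)S_B = \prod_i(1-t_i^2) + \prod_i(1-t_i^2)(1+S_B)$, substituting the closed form for $\prod_i(1-t_i^2)(1+S_B)$ from the previous step converts the denominator into exactly the expression claimed. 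I do not expect a genuine obstacle: the only delicate points are keeping track of the $\vect i = \vect 0$ term (it lies in the parity sum but not in $B$, which is the source of the extra $\prod_i(1-t_i^2)$ in the denominator) and the $k_i = \infty$ convention; beyond those the argument is a routine formal–power–series manipulation once Theorem~\ref{T:koddQSym} is available.
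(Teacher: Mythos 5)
Your proposal is correct and follows essentially the same route as the paper: read the Hilbert series off the $P_{\vect{I}}$-basis of Theorem~\ref{T:koddQSym} as $(1-S_A)^{-1}$, compute the generating function of the forbidden even-weight columns $\vect{n}\le\vect{k}$ by the coordinatewise parity decomposition $\vect{i}=\vect{b}+2\vect{c}$, and clear denominators by $\prod_i(1-t_i^2)$. The only difference is bookkeeping of the $\vect{0}$ term (the paper's $f$ includes it, your $S_B$ excludes it), which you handle correctly.
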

\begin{proof}
Let $f$ be the multivariate generating function for $\lev$-partite numbers $\vect{n}$ of \emph{even} weight such that $\vect{n}\le \vect{k}$; by \eqref{E:OddAlgBasis} these are precisely the $\lev$-partite numbers that are forbidden from appearing as columns in the vector compositions $\vect{I}$ used to index the basis elements $P_{\vect{I}}$ of $\kodd^{\vect{k}}$. Thus, letting $g=\prod_{i=0}^{\lev-1} \frac{1}{1-t_i}$ be the generating function for all $\lev$-partite numbers, we get
\[ 
\sum_{ \vect{n} \in \NNl} \dim( (\kodd^{\vect{k}})_{\vect{n}} )\;  t^{\vect{n}} = \frac{1}{1-(g-f)}. 
\]
To determine $f$, for each subset $S$ of $[0,\lev-1]$ of even size we enumerate $\lev$-partite numbers $\vect{n} = (n_1, \ldots, n_\lev)^T \le \vect{k}$ such that $n_i$ is odd if and only if $i\in S$. This leads to the formula
\[
f  
= \sumsub{ S\subseteq [\lev] \\ |S| \text{ even}} 
\left( \prod_{i\in S} \sum_{j=0}^{\lfloor (k_i-1)/2\rfloor} t_i^{2j+1} \right)
\left( \prod_{i\notin S} \sum_{j=0}^{\lfloor k_i/2\rfloor} t_i^{2j} \right) 
=
\frac{1}{\prod_{i=0}^{\lev-1} (1-t_i^2)} \sumsub{\vect{b} \in \{0,1\}^\lev \\ |\vect{b}| \text{ even}} 
\prod_{i=0}^{\lev-1} t_i^{b_i} (1 - t_i^{2 \lfloor\frac{k_i-b_i}{2}\rfloor + 2}).
\]
Substituting this into $1/(1-(g-f))$ gives the desired result.
\end{proof}

\begin{example}
Let us consider a few specializations of Proposition \ref{P:Hilb}:
\begin{enumerate}
\item 
The Hilbert series of $\kodd^k$ when $\lev=1$ and $k\in\NNinf$ is
\[ 
\sum_{n=0}^\infty \dim((\kodd^{k})_n) \; t^n = 
\frac{1 - t^2}{1 - t - t^2 - t^{2 \lfloor k/2 \rfloor +2}} 
\]
which reduces to the generating function for the Fibonacci numbers when $k=\infty$.
\item
When $\lev=2$ and $\vect{k} = \colvec{ p \\ q} \in \NN^2$, we have
\begin{multline*}
\sum_{\colvec{m \\ n} \in \NN^2} \dim((\kodd^{\colvec{ p \\ q}})_{\colvec{m \\ n}}) x^m y^n \\
= \frac{(1-x^2)(1-y^2)}
{\left[ \begin{aligned}
(1-x^2)(1-y^2)-(1+x)(1+y) + (1- & x^{2 \left\lfloor \frac{p}{2} \right\rfloor + 2}) (1-y^{2 \left \lfloor \frac{q}{2} \right\rfloor + 2}) \\
& - xy (1-x^{2 \left\lfloor \frac{p -1}{ 2} \right\rfloor + 2})(1-y^{2 \left\lfloor \frac{q -1}{ 2} \right\rfloor + 2})
\end{aligned} 
\right]
} 
\end{multline*}

\item For any $\lev\ge 1$ and $S\subseteq [0,\lev-1]$, let $\vect{k}_S = (k_1, \ldots, k_{\lev})^T \in \{0,\infty\}^\lev$ be defined by $k_i = \infty$ if $i\in S$ and $k_i = 0$ otherwise. Then 
\[
  \sum_{\vect{n} \in \NNl} \dim((\kodd^{\vect{k}_S})_{\vect{n}}) \, t^{\vect{n}} = 
  \frac{\displaystyle \prod_{i=0}^{\lev-1}(1-t_i^2)}
  {\displaystyle \prod_{i=0}^{\lev-1}(1-t_i^2) - \prod_{i=0}^{\lev-1} (1+t_i) 
  + \frac{1}{2} \left[\prod_{i\notin S} (1-t_i) + \prod_{i\notin S} (1+t_i) \right]}.
\]
Setting every $t_i$ equal to the same variable $t$ yields the Hilbert series of $\kodd^{\vect{k}_S}$ graded by weight. This series, which depends only on $\lev$ and $r = |S|$, is denoted by
\[
f_{\lev, r}(t) =  \sum_{\vect{n} \in \NNl} \dim( (\kodd^{(\infty^r,0^{\lev-r})})_{\vect{n}}) \; t^{|\vect{n}|}.
\]
Here are examples of $f_{\lev, r}(t)$ for small values of $\lev$ and $r$:
\[
\begin{aligned}
f_{1,1}(t) & =  1+t+t^2+2 t^3+3 t^4+5 t^5+8 t^6+13 t^7+21 t^8+34 t^9+\cdots \\
f_{2,1}(t) & = 1+2 t+6 t^2+20 t^3+64 t^4+206 t^5+662 t^6+2128 t^7+
\cdots \\
f_{2,2}(t) & =  1+2 t+4 t^2+12 t^3+32 t^4+86 t^5+232 t^6 +624 t^7+
\cdots \\
f_{3,3}(t) & = 1+3 t+9 t^2+37 t^3+141 t^4+534 t^5+2035 t^6+7740 t^7+
\cdots 
\end{aligned}
\]
Note that $f_{\lev, \lev}(t)$ is the weight enumerator of vector compositions whose columns have odd weight.
\end{enumerate}
\end{example}

\subsection{The $k$-even subalgebra of $\QSyml$}

We briefly investigate the $\vect{k}$-even Hopf algebra $\keven^{\vect{k}}$ and its orthogonal ideal $\ideal \keven_{\vect{k}}$.

\begin{theorem}\label{T:kevenQSym}
The canonical $\vect{k}$-even Hopf ideal $\ideal \keven_{\vect{k}} \subseteq \NSyml$ is given by 
\begin{equation*} 
  \ideal\keven^{\vect{k}} 
  = \left< \, S_{\vect{n}} \mid 
  \vect{0} < \vect{n} \le \vect{k} \text{ and } |\vect{n}| \text{ odd} \, \right>.
\end{equation*}
On the dual side, the canonical $\vect{k}$-even Hopf algebra $\keven^{\vect{k}} \subseteq \QSyml$ is given by
\begin{equation*} 
  \keven^{\vect{k}} 
  = \mathrm{span} \{\, M_{(\vect{i}_1, \ldots, \vect{i}_m)} \mid 
  \vect{i}_r \le \vect{k} \implies |\vect{i}_r| \text{ even} \, \}.
\end{equation*} 
\end{theorem}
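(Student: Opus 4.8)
The plan is to follow the template of the proof of Theorem~\ref{T:koddQSym}, but here everything is much shorter, since the homogeneous pieces of $\zetabarQ - \zetaQ$ are already scalar multiples of single complete functions and so no triangularity argument is needed.

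First I would identify the homogeneous generators of $\ideal\keven^{\vect{k}} = \ideal^{\vect{k}}(\zetabarQ, \zetaQ)$. By the duality of \S\S\ref{SS:Duality}, regarded as an element of $(\QSyml)^* = \NSyml$ the degree $\vect{n}$ component of $\zetaQ$ is $S_{\vect{n}}$, so $(\zetaQ)_{\vect{n}} = S_{\vect{n}}$ and $(\zetabarQ)_{\vect{n}} = (-1)^{|\vect{n}|} S_{\vect{n}}$. Hence
\[
(\zetabarQ)_{\vect{n}} - (\zetaQ)_{\vect{n}} = \bigl((-1)^{|\vect{n}|} - 1\bigr) S_{\vect{n}},
\]
which is zero when $|\vect{n}|$ is even and equals $-2 S_{\vect{n}}$ when $|\vect{n}|$ is odd. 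This immediately yields
\[
\ideal\keven^{\vect{k}} = \bigl\langle (\zetabarQ)_{\vect{n}} - (\zetaQ)_{\vect{n}} \mid \vect{0} < \vect{n} \le \vect{k} \bigr\rangle = \bigl\langle S_{\vect{n}} \mid \vect{0} < \vect{n} \le \vect{k},\ |\vect{n}| \text{ odd} \bigr\rangle,
\]
which is the first assertion.

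Next I would dualize. Recall from \S\S\ref{SS:NSyml} that $\NSyml$ is free associative on the $S_{\vect{n}}$, with the words $S^{\vect{I}} = S_{\vect{i}_1} \cdots S_{\vect{i}_m}$ forming a linear basis; consequently the two-sided ideal generated by $\{ S_{\vect{n}} \mid \vect{n} \in B \}$, where $B = \{ \vect{n} \mid \vect{0} < \vect{n} \le \vect{k},\ |\vect{n}| \text{ odd} \}$, is spanned by those $S^{\vect{I}}$ having at least one column $\vect{i}_r \in B$. Thus in each multigraded piece $\ideal\keven^{\vect{k}}$ is a coordinate subspace relative to the $S^{\vect{I}}$-basis, so, since $S^{\vect{I}}(M_{\vect{J}}) = \delta_{\vect{I}, \vect{J}}$, its orthogonal complement inside $\QSyml$ is the span of the complementary family, namely $\{ M_{(\vect{j}_1, \ldots, \vect{j}_m)} \mid \vect{j}_r \le \vect{k} \implies |\vect{j}_r| \text{ even} \}$. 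By Theorem~\ref{theoremgeneralproperties}(a) we have $\keven^{\vect{k}} = (\ideal\keven^{\vect{k}})^{\perp}$, and this is exactly the claimed description of $\keven^{\vect{k}}$.

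There is no genuine obstacle in this argument; the only steps requiring any care are the identification of $(\zetaQ)_{\vect{n}}$ with $S_{\vect{n}}$ and the elementary fact that the orthogonal complement of a subspace spanned by some of the elements of one member of a pair of dual bases is spanned by the remaining elements of the other. Specializing to $\lev = 1$ and $\vect{k} = \vectinf$ recovers the description of the even subalgebra of $\QSym$ from \cite{ABS06}; as a consistency check one could also match the general formula against a version of the Hilbert-series computation of Proposition~\ref{P:Hilb} adapted to the even case, but this is not needed.
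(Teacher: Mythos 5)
Your proposal is correct and follows essentially the same route as the paper's own proof: compute $(\zetabarQ)_{\vect{n}} - (\zetaQ)_{\vect{n}} = ((-1)^{|\vect{n}|}-1)S_{\vect{n}}$ to identify the generators, observe that the resulting ideal is spanned by the $S^{\vect{I}}$ containing a column $\vect{i}_r \le \vect{k}$ of odd weight, and dualize against the $M_{\vect{I}}$ via Theorem~\ref{theoremgeneralproperties}(a). Your explicit description of the ideal as the span of words with at least one forbidden letter is in fact stated more carefully than in the paper's version of this step.
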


\begin{proof}
The ideal $\ideal\keven^{\vect{k}} = \ideal^{\vect{k}}(\bar\zetaQ,\zetaQ)$ is generated by elements of the form
\[
\bar{\zetaQ}_{\vect{n}} - (\zetaQ)_{\vect{n}} = (-1)^{|\vect{n}|} S_{\vect{n}} - S_{\vect{n}} = 
\begin{cases}
-2 S_{\vect{n}} & |\vect{n}| \text{ odd} \\
0 & \text{ otherwise}
\end{cases} 
\]
where $\vect{n} \le \vect{k}$. Hence the first result
follows. Equivalently we can write this as $\ideal\keven^{\vect{k}} = \mathrm{span} \{ S^{\vect{i}_1, \ldots, \vect{i}_m} \mid \vect{i}_r \le \vect{k} \implies |\vect{i}_r| \text{ odd}\}.$ Since the $M_{\vect{I}}$ are dual to the $S^{\vect{I}}$, the second result now follows.
\end{proof}

\noindent
Note that specializing Theorem \ref{T:kevenQSym} to $\lev = 1$ and $\vect{k} = \svect{\infty}$ gives \cite[Proposition~6.3]{ABS06}.

\section{A $\vect{k}$-analogue of the descents-to-peaks map} 
\label{S:ktheta}

The {\em canonical odd character} $\nuQ = \zetabarQ^{-1} \zetaQ$ was studied in \cite{ABS06}, where it was shown that the unique morphism of Hopf algebras $\thetamap: \QSym \rightarrow \QSym$ such that $\zetaQ \circ \thetamap = \nuQ$ is the descents-to-peaks map investigated by Stembridge \cite{Stembridge97}. Our main goal in this section is to refine the notion of the {\em canonical odd character} to a $\vect{k}$-odd linear functional and to give explicit formulas to compute this character. We also give formulas for the corresponding induced map $\thetamapk:\QSyml \to \QSyml$ in the case where $\lev$ is arbitrary and $\vect{k} = \svect{\infty}$, and in the case $\lev = 1$ and $\vect{k}$ is arbitrary. In the former case, the images of the $F_{\vect{I}}$ under $\thetamap^{\svect{\infty}}$ give rise to a basis of level $\lev$ peak functions.

\subsection{The $\vect{k}$-odd functional $\nuQk$}
For $\vect{k} \in \NNinfl$, let $\zetaQk$ be the linear functional on $\QSyml$ defined by 
\begin{equation} \label{E:zetaQk}
(\zetaQk)_{\vect{n}} = 
\begin{cases} (\zetabarQ)_{\vect{n}} & \vect{n} \le \vect{k} \\
(\zetaQ)_{\vect{n}} & \text{otherwise.}
\end{cases}
\end{equation}
To obtain a $\vect{k}$-analog of $\nuQ$, we replace $\zetabarQ^{-1}$ with $\zetaQk \circ \apodeQ$ and define the linear functional $\nuQk$ on $\QSyml$ by
\begin{equation*}
\nuQk =  (\zetaQk \circ \apodeQ) \, \zetaQ.
\end{equation*}
According to Theorem~\ref{T:kOddInducedMap}, there exists a unique morphism of multigraded coalgebras 
\[
\thetamapk: \QSyml \to \QSyml
\]
such that $\zetaQ \circ \thetamapk = \nuQk$. Moreover, $\thetamapk(\QSyml) \subseteq \kodd^{\vect{k}}$. This map is the natural $\vect{k}$-analog of the descents-to-peaks map.

Note that $\nuQk$ is not necessarily a character. For example when $\lev = 1$ we have 
\[
\nu^{(4)}(M_{(2)}M_{(3)}) = 2 \ne 0 = \nu^{(4)}(M_{(2)}) \nu^{(4)}(M_{(3)}).
\] 
Hence $\thetamapk$ is not necessarily an algebra homomorphism. However, if $\vect{k} \in \{0,\infty\}^{\lev}$ then clearly $\zetaQk$ is a character, and hence so is $\nuQk$. In particular $\nuQ^{\svect{\infty}}$ is a character and so $\thetamap^{\svect{\infty}}$ is a morphism of Hopf algebras.

\begin{prop}
The linear functional $\nuQk$ is $\vect{k}$-odd for all $\vect{k}\in \NNinfl$.
\end{prop}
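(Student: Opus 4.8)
The plan is to verify the defining equation of $\vect{k}$-oddness directly, namely that $(\overline{\nuQk})_{\vect{n}} = ((\nuQk)^{-1})_{\vect{n}}$ for all $\vect{n} \le \vect{k}$. The key observation is that although $\nuQk$ is defined using the full antipode $\apodeQ$ (not just the $\vect{k}$-truncated data), the relevant low-degree pieces of $\nuQk$ depend only on $\zetaQ$ and the truncated functional $\zetaQk$, which in degrees $\vect{n} \le \vect{k}$ agrees with $\overline{\zetaQ}$. First I would record that for $\vect{n} \le \vect{k}$,
\[
(\zetaQk)_{\vect{n}} = (\overline{\zetaQ})_{\vect{n}},
\]
and hence, by Lemma~\ref{L:inverse-equal} applied to $\zetaQk$ and $\overline{\zetaQ}$, we get $((\zetaQk)^{-1})_{\vect{n}} = ((\overline{\zetaQ})^{-1})_{\vect{n}}$ for all $\vect{n}\le \vect{k}$. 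Since $\zetaQk \circ \apodeQ$ is precisely the convolution inverse $(\zetaQk)^{-1}$ (recall $\phi^{-1} = \phi\circ\apodeQ$ for invertible $\phi$), this means that in all degrees $\vect{n} \le \vect{k}$, the functional $\nuQk = (\zetaQk)^{-1}\,\zetaQ$ agrees with $\overline{\zetaQ}^{-1}\,\zetaQ = \nuQ$, the honest canonical odd character.

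Next I would invoke the fact (from \cite{ABS06}, or directly from the lemma just before \S\S\ref{SS:kOddFunctional}) that $\nuQ = \overline{\zetaQ}^{-1}\zetaQ$ is an odd character, i.e. $\overline{\nuQ} = (\nuQ)^{-1}$. Combining this with the degree-wise agreement above, we obtain, for every $\vect{n}\le\vect{k}$,
\[
(\overline{\nuQk})_{\vect{n}} = (\overline{\nuQ})_{\vect{n}} = ((\nuQ)^{-1})_{\vect{n}}.
\]
It then remains to show $((\nuQ)^{-1})_{\vect{n}} = ((\nuQk)^{-1})_{\vect{n}}$ for $\vect{n}\le\vect{k}$, which is again an instance of Lemma~\ref{L:inverse-equal}: since $(\nuQk)_{\vect{n}} = (\nuQ)_{\vect{n}}$ for all $\vect{n}\le\vect{k}$ (and both take $1$ to $1$), their convolution inverses agree in those degrees. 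Chaining the equalities gives $(\overline{\nuQk})_{\vect{n}} = ((\nuQk)^{-1})_{\vect{n}}$ for all $\vect{n}\le\vect{k}$, which is exactly the assertion that $\nuQk$ is $\vect{k}$-odd.

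The main thing to get right is the bookkeeping of which functionals agree in which degrees, and in particular the repeated application of Lemma~\ref{L:inverse-equal} — the subtlety being that $\nuQk$ is a \emph{convolution product} of two functionals, so one must argue that $(\nuQk)_{\vect{n}}$, obtained from the truncated coproduct sum $\sum_{\vect{i}\le\vect{n}}((\zetaQk)^{-1})_{\vect{i}}\,(\zetaQ)_{\vect{n}-\vect{i}}$, involves only the pieces $((\zetaQk)^{-1})_{\vect{i}}$ with $\vect{i}\le\vect{n}\le\vect{k}$, which is where the agreement with $\nuQ$ enters. I expect no real obstacle here; the only care needed is to confirm that invertibility holds throughout (guaranteed by Lemma~\ref{L:invertibility} since all functionals in play send $1$ to $1$) so that the inductive inverse formula used in Lemma~\ref{L:inverse-equal} is legitimate. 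An alternative, slightly slicker route would be to observe directly that $\nuQk$ and $\nuQ$ have the same homogeneous components in all degrees $\le\vect{k}$ and then quote that $\nuQ$ is odd together with Lemma~\ref{L:inverse-equal}; this avoids ever mentioning $\overline{\zetaQ}^{-1}$ explicitly.
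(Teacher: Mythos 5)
Your overall strategy is the same as the paper's (show that $\nuQk$ agrees with $\nuQ$ in all degrees $\vect{n}\le\vect{k}$, then transfer the oddness of $\nuQ$ via Lemma~\ref{L:inverse-equal}), and the second half of your argument --- deducing $((\nuQk)^{-1})_{\vect{n}}=((\nuQ)^{-1})_{\vect{n}}$ for $\vect{n}\le\vect{k}$ from the agreement of the functionals themselves --- is correct and clean. But there is a genuine gap in the first half. You justify the identification of $\zetaQk\circ\apodeQ$ with the convolution inverse $(\zetaQk)^{-1}$ by quoting ``$\phi^{-1}=\phi\circ\apodeQ$ for invertible $\phi$.'' That identity holds for \emph{characters}, not for arbitrary invertible linear functionals, and $\zetaQk$ is precisely not a character: it is spliced together from $\zetabarQ$ in degrees $\le\vect{k}$ and $\zetaQ$ elsewhere (indeed the paper emphasizes immediately after the definition that $\nuQk$ need not be a character for exactly this reason). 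For a non-multiplicative functional the convolution inverse, computed recursively from $\phi\,\phi^{-1}=\epsilon$, genuinely differs from $\phi\circ\apodeQ$ in general, so as stated your key step rests on a false lemma.

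The conclusion you need is nevertheless true in the range where you use it, and the repair is short. Since $\apodeQ$ preserves the $\NNl$-grading and $(\zetaQk)_{\vect{i}}=(\zetabarQ)_{\vect{i}}$ for $\vect{i}\le\vect{k}$, one has $(\zetaQk\circ\apodeQ)_{\vect{i}}=(\zetabarQ\circ\apodeQ)_{\vect{i}}=(\zetabarQ^{-1})_{\vect{i}}$ for all $\vect{i}\le\vect{k}$, where the last equality is legitimate because $\zetabarQ$ \emph{is} a character. Feeding this into the convolution sum $(\nuQk)_{\vect{n}}=\sum_{\vect{i}\le\vect{n}}(\zetaQk\circ\apodeQ)_{\vect{i}}\,(\zetaQ)_{\vect{n}-\vect{i}}$ (which for $\vect{n}\le\vect{k}$ only sees components with $\vect{i}\le\vect{k}$) gives $(\nuQk)_{\vect{n}}=(\nuQ)_{\vect{n}}$ for $\vect{n}\le\vect{k}$, and the rest of your argument then goes through. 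The paper's proof makes essentially this point by observing that $\zetaQk$ is ``multiplicative up to rank $\vect{k}$,'' so that the character formula for the inverse applies on $\bigoplus_{\vect{n}\le\vect{k}}\hopf_{\vect{n}}$; some such argument must appear explicitly, and your proposed ``slicker route'' at the end does not avoid it, since the agreement $(\nuQk)_{\vect{n}}=(\nuQ)_{\vect{n}}$ is exactly the fact in need of justification.
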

\begin{proof}
It is clear that $\nuQk(1) = 1$, so $\nuQk$ is invertible. Therefore we only need to show that $(\overline{\nuQk})_{\vect{n}} = ((\nuQk)^{-1})_{\vect{n}}$ whenever $\vect{n}\le \vect{k}$. 

Since $\zetabarQ$ is a character, $\zetaQk$ is multiplicative ``up to rank $\vect{k}$," meaning that $\zetaQk(ab) = \zetaQk(a) \zetaQk(b)$ for all $a,b\in\hopf$ such that $ab \in \bigoplus_{\vect{n} \le \vect{k}} \hopf_{\vect{n}}$. Thus the usual formula for the inverse of a character applies up to rank $\vect{k}$; that is, $(\zetaQk)^{-1}(h) = \zetaQk( \apodeQ(h) )$ whenever $h \in \bigoplus_{\vect{n} \le \vect{k}} \hopf_{\vect{n}}$. It follows that 
\[ 
(\nuQk)_{\vect{n}} = ((\zetaQk)^{-1} \zetaQ)_{\vect{n}} = (\nuQ)_{\vect{n}}
\]
for all $\vect{n} \le \vect{k}$. Consequently, $(\overline{\nuQk})_{\vect{n}} = (\nubarQ)_{\vect{n}}$ for all $\vect{n}\le \vect{k}$. Moreover, since $\nuQk$ is multiplicative up to rank $\vect{k}$, we have $((\nuQk)^{-1})_{\vect{n}} = ((\nuQk) \circ\apodeQ)_{\vect{n}} = (\nuQ \circ \apodeQ)_{\vect{n}} = (\nuQ^{-1})_{\vect{n}}$ for all ${\vect{n}}\le {\vect{k}}$. Since $\nuQ$ is odd, $(\nuQ^{-1})_{\vect{n}} = (\nubarQ)_{\vect{n}}$ for all $\vect{n}$, so we can conclude that $(\overline{\nuQk})_{\vect{n}} = (\nubarQ)_{\vect{n}} = (\nuQ^{-1})_{\vect{n}} = ((\nuQk)^{-1})_{\vect{n}}$ whenever $\vect{n}\le \vect{k}$.
\end{proof}

\begin{theorem} \label{T:nuformulas}
Let $\vect{I} = (\vect{i}_1, \ldots, \vect{i}_m)$ be a vector composition. We have
\begin{equation} \label{E:nuQkM}
\nuQk(M_{\vect{I}})  = 
\begin{cases}
1 & \text{if $\vect{I} = \emptycomp$} \\
2 \cdot (-1)^{\len(\vect{I}) + |\vect{I}|} & \text{if $|\vect{i}_m|$ is odd and $\Sigma\vect{I} \le \vect{k}$} \\
2 \cdot (-1)^{\len(\vect{I})} & \text{if $|\vect{I}| - |\vect{i}_m|$ is odd, $\Sigma\vect{I} - \vect{i}_m \le \vect{k}$ and $\Sigma\vect{I} \not\le \vect{k}$} \\
0 & \text{otherwise,}
\end{cases}
\end{equation}
and
\begin{equation} \label{E:nuQkF}
\nuQk(F_{\vect{I}})  = 
\begin{cases}
1 & \text{if $\vect{I} = \emptycomp$} \\
2 & \text{if $\vect{I} = (\vect{E}, \vect{i})$ where $\vect{E}$ is a coordinate vector composition,}\\
  &  \text{ $\vect{i}$ is a column vector, and one of the following holds:} \\
& \quad \text{(1) $\Sigma\vect{I} \le \vect{k}$;} \\
& \quad \text{(2) $|\vect{E}|$ is odd and $\Sigma\vect{E} \le \vect{k}$;} \\
& \quad \text{(3) $|\vect{E}|$ is even, $|\vect{i}| > 1$, and $\Sigma\vect{E} + \vect{e} \le \vect{k}$, where $\vect{e}$ is the unique} \\
& \quad \quad \text{coordinate vector such that $(\vect{e}, \vect{i}-\vect{e}) > \vect{i}$} \\
0 & \text{otherwise.}
\end{cases}
\end{equation}
\end{theorem}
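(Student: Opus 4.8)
The plan is to compute $\nuQk$ on each basis separately, starting from the definition $\nuQk = (\zetaQk \circ \apodeQ)\,\zetaQ$ and the antipode formula \eqref{E:M-antipode}. First I would unwind the convolution: for a vector composition $\vect{I} = (\vect{i}_1,\ldots,\vect{i}_m)$, apply the coproduct $\Delta(M_{\vect{I}}) = \sum_{\vect{I} = \vect{J}\vect{K}} M_{\vect{J}}\otimes M_{\vect{K}}$, so that
\[
\nuQk(M_{\vect{I}}) = \sum_{\vect{I} = \vect{J}\vect{K}} (\zetaQk\circ\apodeQ)(M_{\vect{J}})\cdot \zetaQ(M_{\vect{K}}).
\]
Since $\zetaQ(M_{\vect{K}})$ is nonzero (equal to $1$) only when $\vect{K} = \emptycomp$ or $\len(\vect{K}) = 1$, only two terms survive: $\vect{K} = \emptycomp$ (so $\vect{J} = \vect{I}$) and $\vect{K} = (\vect{i}_m)$ (so $\vect{J} = (\vect{i}_1,\ldots,\vect{i}_{m-1})$). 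For each of these I would plug in \eqref{E:M-antipode}: $(\zetaQk\circ\apodeQ)(M_{\vect{J}}) = (-1)^{\len(\vect{J})}\sum_{\vect{L}\tleq\rev{\vect{J}}} \zetaQk(M_{\vect{L}})$. Then I use the defining property \eqref{E:zetaQk}: $\zetaQk(M_{\vect{L}}) = (\zetabarQ)(M_{\vect{L}})$ if $\Sigma\vect{L}\le\vect{k}$ and $= \zetaQ(M_{\vect{L}})$ otherwise, and recall that $(\zetabarQ)(M_{\vect{L}}) = (-1)^{|\vect{L}|}\zetaQ(M_{\vect{L}})$, while $\zetaQ(M_{\vect{L}})$ vanishes unless $\len(\vect{L})\le 1$. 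So the inner sum over $\vect{L}\tleq\rev{\vect{J}}$ collapses to contributions from $\vect{L} = \emptycomp$ and $\vect{L} = (\Sigma\vect{J})$ (the length-one coarsening of $\vect{J}$), each weighted by $\pm 1$ depending on whether $\Sigma\vect{J}\le\vect{k}$. Carefully collecting the four resulting terms — $\vect{J}\in\{\vect{I},(\vect{i}_1,\ldots,\vect{i}_{m-1})\}$ crossed with $\vect{L}\in\{\emptycomp,(\Sigma\vect{J})\}$ — and tracking the sign bookkeeping $(-1)^{\len(\vect{J})}$, $(-1)^{|\vect{J}|}$ should reproduce the four cases of \eqref{E:nuQkM} exactly; the case splits $\Sigma\vect{I}\le\vect{k}$ versus $\Sigma\vect{I} - \vect{i}_m\le\vect{k}$ but $\Sigma\vect{I}\not\le\vect{k}$ emerge naturally from which of the $\zetaQk$ evaluations uses the "$\le\vect{k}$" branch. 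I should double-check that the length-one coarsening $(\Sigma\vect{J})$ really is the unique element $\vect{L}\tleq\rev{\vect{J}}$ of length one, and handle the degenerate cases $m = 0$ and $m = 1$ (and where a ``$\len = m-1$'' composition becomes empty) by inspection; the empty-composition value $\nuQk(M_{\emptycomp}) = 1$ is immediate since $\nuQk(1) = 1$.

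For \eqref{E:nuQkF}, I would pass through the change of basis $F_{\vect{I}} = \sum_{\vect{I}\cleq\vect{J}} M_{\vect{J}}$ (equation \eqref{E:Fdef}) and apply \eqref{E:nuQkM} termwise:
\[
\nuQk(F_{\vect{I}}) = \sum_{\vect{I}\cleq\vect{J}} \nuQk(M_{\vect{J}}).
\]
The refinements $\vect{J}$ with $\vect{I}\cleq\vect{J}$ all share the coloring word $\cofI(\vect{I})$ and differ only in where the descent-compatible breakpoints of $\dofI(\vect{J})$ sit (using the characterization $\vect{I}\cleq\vect{J}\iff\cofI(\vect{I}) = \cofI(\vect{J}),\ \dofI(\vect{I})\subseteq\dofI(\vect{J})$ from \S\ref{SS:lpartite}). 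Substituting the piecewise values of $\nuQk(M_{\vect{J}})$ and grouping terms by the value of $|\vect{j}_{\text{last}}|$ (the weight of the last column of $\vect{J}$), one gets an alternating sum over the number and sizes of the fine breakpoints. I expect the signs $(-1)^{\len(\vect{J})+|\vect{J}|}$ and $(-1)^{\len(\vect{J})}$ to telescope: summing $\sum (-1)^{\text{number of parts}}$ over refinements of a fixed composition is a standard Möbius-type cancellation on the Boolean lattice of breakpoints (cf.\ the computation of $\mu([\vect{I},\vect{J}])$ already recorded in \S\ref{SS:lpartite}), leaving a nonzero contribution only when the surviving refinement forces the special shape $\vect{I} = (\vect{E},\vect{i})$ with $\vect{E}$ a coordinate vector composition. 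The three subcases (1)–(3) should correspond to the three ways the ``$\le\vect{k}$'' constraint can be met: $\Sigma\vect{I}\le\vect{k}$ directly; $|\vect{E}|$ odd with $\Sigma\vect{E}\le\vect{k}$ (so a length-$\len(\vect{E})$ truncation survives with odd-weight bookkeeping); or $|\vect{E}|$ even with $|\vect{i}| > 1$, where splitting off one coordinate vector $\vect{e}$ from $\vect{i}$ via the $\cleq$ order (the condition $(\vect{e},\vect{i}-\vect{e}) > \vect{i}$ pins down $\vect{e}$ uniquely) lands the relevant partial sum in the $\le\vect{k}$ branch at degree $\Sigma\vect{E}+\vect{e}$.

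The main obstacle will be the sign/cancellation bookkeeping in deriving \eqref{E:nuQkF} from \eqref{E:nuQkM}: one must carefully partition the set $\{\vect{J} : \vect{I}\cleq\vect{J}\}$ according to which case of \eqref{E:nuQkM} applies (which itself depends on two independent conditions, $\Sigma\vect{J}\le\vect{k}$ and $\Sigma\vect{J}-\vect{j}_{\text{last}}\le\vect{k}$, together with a parity), and then show that within each block the alternating sum over refinement-breakpoints either vanishes or collapses to a single $\pm 2$. I would organize this by first handling the coarsest stratification — whether the last column weight $|\vect{j}_{\text{last}}|$ in a refinement is forced, and which prefix lies $\le\vect{k}$ — and only then performing the telescoping within each stratum. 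A secondary, more routine obstacle is verifying \eqref{E:nuQkM} itself cleanly handles the boundary cases (empty last column, $m\le 1$, the vanishing of $\zetaQ$ off length $\le 1$); these are mechanical but need to be stated so the reader is not left guessing. I would also double-check that ``$\vect{E}$ a coordinate vector composition'' in case (2)/(3) is exactly the condition that makes all intermediate $\zetaQ(M_{\vect{L}})$ evaluations collapse correctly, i.e.\ that no extra surviving terms sneak in from refinements of $\vect{E}$ — this follows because any nontrivial refinement of a coordinate vector composition has length $> 1$ and hence is killed by $\zetaQ$, but it is worth saying explicitly.
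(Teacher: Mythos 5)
Your proposal matches the paper's proof essentially step for step: \eqref{E:nuQkM} is obtained exactly as you describe, from the two-term collapse of the convolution (since $\zetaQ$ kills $M_{\vect{K}}$ of length $\ge 2$), the antipode formula \eqref{E:M-antipode}, and the piecewise definition of $\zetaQk$; and \eqref{E:nuQkF} is obtained by expanding $F_{\vect{I}}$ in the $M$-basis and running the Boolean-lattice cancellation stratified by the weight of the last column, just as the paper does. One detail to settle when you write it up: for nonempty $\vect{J}$ the empty composition is not $\tleq\rev{\vect{J}}$, so your inner sum collapses to the single term $\vect{L}=(\Sigma\vect{J})$ (not two terms), which is what makes the sign bookkeeping come out as in \eqref{E:nuQkM}.
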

\begin{proof}
The defining formula $\nuQk(M_{\vect{I}})  =  \sum_{j=0}^m \zetaQk(\apodeQ(M_{(\vect{i}_1, \ldots, \vect{i}_j)})) \zetaQ(M_{(\vect{i}_{j+1},\ldots, \vect{i}_m)})$ reduces to 
\begin{equation} \label{E:nunu} 
\nuQk(M_{\vect{I}})
=  \zetaQk(\apodeQ(M_{(\vect{i}_1,\ldots, \vect{i}_{m-1})})) 
+ \zetaQk( \apodeQ (M_{(\vect{i}_1,\ldots, \vect{i}_m)}))
\end{equation}
using \eqref{E:zetaQSyml}. Applying the antipode formula \eqref{E:M-antipode} and the defining equation \eqref{E:zetaQk} for $\zetaQk$ immediately yields \eqref{E:nuQkM}.

Now we prove \eqref{E:nuQkF}. The case $\vect{I} = \emptycomp$ is trivial, so we assume that $\vect{I}$ is not empty. Suppose that there exists $r\ne m$ such that $|\vect{i}_r| > 1$. Then 
\[
\nuQk(F_{\vect{I}}) = \sum_{\vect{J} \ge \vect{I}} \nuQk(M_{\vect{J}}) 
= \sumsub{s \in [m] - \{r\} \\ \vect{J}_s \ge \vect{i}_s} \sum_{\vect{J}_r \ge \vect{i}_r}
 \nuQk(M_{\vect{J}_1 \cdots \vect{J}_m}). 
\]
For each $s \in [m] - \{r\}$ choose an arbitrary $\vect{J}_s \ge \vect{i}_s$ and consider the sum 
\begin{equation}\label{E:tech-sum}
\sum_{\vect{J}_r \ge \vect{i}_r} \nuQk(M_{\vect{J}_1 \cdots \vect{J}_m}).
\end{equation} 
We wish to show that this sum is zero. Let $\vect{j}$ be the last column of $\vect{J}_m$. There are two cases:
\begin{itemize}
\item Suppose that $|\vect{j}|$ is odd and $\Sigma \vect{I} \le \vect{k}$. By \eqref{E:nuQkM}, the sum \eqref{E:tech-sum} is $\sum_{\vect{J}_r \ge \vect{i}_r} 2 (-1)^{\len(\vect{J}_1 \cdots \vect{J}_m) + |\vect{I}|}$, which simplifies to the form $\pm 2 \cdot \sum_{\vect{J}_r \ge \vect{i}_r} (-1)^{\vect{J}_r}$. This last sum equals zero by inclusion-exclusion and the fact that the poset of vector compositions of $\vect{i}_r$ ordered by $\cleq$ is a Boolean lattice. 
\item Suppose that $|\vect{I}| - |\vect{j}|$ is odd, $\Sigma\vect{I} - \vect{j} \le \vect{k}$, and $\Sigma\vect{I}\nleq \vect{k}$. By \eqref{E:nuQkM}, the sum \eqref{E:tech-sum} simplifies to the form $\pm 2 \cdot \sum_{\vect{J}_r \ge \vect{i}_r} (-1)^{\len(\vect{J}_r)},$ which again must equal zero. 
\end{itemize}
If we are not in either of these two cases, then by \eqref{E:nuQkM} every term in the sum \eqref{E:tech-sum} is zero (Note that $\vect{j}$ and $\Sigma \vect{I}$ remain constant in the sum \eqref{E:tech-sum}). It follows that $\nuQk(F_{\vect{I}}) = 0$ unless all the columns of $\vect{I}$, except possibly the last one, are coordinate vectors.

For the rest of the proof we can therefore assume that $\vect{I} = (\vect{E}, \vect{i})$ where $\vect{E}$ is a sequence of coordinate vectors and $\vect{i} = \vect{i}_m$ is a column vector. For a nonempty vector composition $\vect{J}$ let $\lst(\vect{J})$ denote the last column of $\vect{J}$. We have 
\begin{equation} \label{E:tech-formula}
\nuQk(F_{\vect{I}}) = \sum_{\vect{J} \ge \vect{i}} \nuQk(M_{(\vect{E} , \vect{J})}) =
\sum_{r = 1}^{|\vect{i}|} \sumsub{\vect{J}\ge \vect{i} \\ |\lst(\vect{J})| = r} \nuQk(M_{(\vect{E} , \vect{J})}).
\end{equation}
Note that if $\vect{J}, \vect{J}' \ge \vect{i}$ and $|\lst(\vect{J})| = |\lst(\vect{J}')|$ then $\lst(\vect{J}) = \lst(\vect{J}')$.  Thus, in the last expression in \eqref{E:tech-formula}, once $r$ is fixed the inner sum can be written as
\begin{equation*} 
\sum_{\vect{J} \ge \vect{i} - \vect{v}} \nuQk(M_{(\vect{E} , \, \vect{J}, \, \vect{v})})
\end{equation*}
where $\vect{v}$ is some column vector of weight $r$. This sum 
is of the same general form as \eqref{E:tech-sum}, and by the same argument as before we can conclude that it
vanishes unless $|\vect{i} - \vect{v}| = 0$ or $1$. Now \eqref{E:tech-formula} becomes
\begin{equation} \label{E:nunu2}
\nuQk(F_{\vect{I}}) = 
\begin{cases}
\nuQk(M_{(\vect{E}, \vect{i})}) & \text{if $|\vect{i}| = 1$} \\
\nuQk(M_{(\vect{E}, \vect{i})}) + \nuQk(M_{(\vect{E}, \vect{e}, \vect{i}-\vect{e})}) &
 \text{if $|\vect{i}| > 1$, and $\vect{e}$ is the coordinate} \\
& \text{vector such that $(\vect{e}, \vect{i} - \vect{e}) > \vect{i}$.}
\end{cases}
\end{equation}
Using \eqref{E:nunu}, \eqref{E:M-antipode}, \eqref{E:zetaQk}, we compute
\[
\nuQk(M_{(\vect{E},\vect{i})}) = 
\begin{cases}
2 & \text{if $\Sigma\vect{I} \le \vect{k}$ and $|\vect{i}|$ is odd, or} \\
 & \text{if $\Sigma\vect{I} \nleq \vect{k}$, $\Sigma\vect{E} \le \vect{k}$, and $|\vect{E}|$ is odd} \\
0 & \text{otherwise}
\end{cases}
\]
\[
\nuQk(M_{(\vect{E}, \vect{e}, \vect{i} - \vect{e})}) = 
\begin{cases}
2 & \text{if $\Sigma\vect{I} \le \vect{k}$ and $|\vect{i}|$ is even, or}\\
 & \text{if $\Sigma\vect{I} \nleq\vect{k}$, $\Sigma\vect{E} + \vect{e} \le \vect{k}$,  and $|\vect{E}|$ is even} \\
 0 & \text{otherwise.}
\end{cases}
\]
Substituting these formulas back into \eqref{E:nunu2} yields \eqref{E:nuQkF}. 
\end{proof}

In the following two subsections we will apply Theorem~\ref{T:nuformulas} to explicitly compute $\thetamapk$ in two special cases. As preparation, we now supply general formulas for the maps $\QSyml\to \QSyml$ induced by arbitrary linear functionals. 

\begin{prop} \label{P:InducedMapQSyml}
Let $\zeta:\QSyml \to \field$ be a linear functional and let $\Psi:\QSyml \to \QSyml$ be the unique morphism of multigraded coalgebras such that $\zetaQ \circ \Psi = \zeta$. Let $\vect{I}$ be a vector composition. We have
\begin{equation} \label{E:InducedMapQSyml}
\Psi(M_{\vect{I}}) = \sum_{\vect{I} = \vect{I}_1 \cdots \vect{I}_m} \zeta(M_{\vect{I}_1}) \cdots \zeta(M_{\vect{I}_m}) \, M_{(\Sigma\vect{I}_1, \ldots, \Sigma\vect{I}_m)}
\end{equation}
where the sum is over all ways of writing $\vect{I}$ as a concatenation of vector compositions. Let $(\sigma, u) = (\sigma_1 \ldots \sigma_n, u_1 \ldots u_n)$ be any colored permutation such that $\wDes(\sigma, u) = \vect{I}$ and $\Des(u) \subseteq \Des(\sigma)$. Then
\begin{equation}\label{E:InducedMapF}
\Psi(F_{\wDes(\sigma, u)}) = 
 \sumsub{\sigma = \pi_1 \cdots \pi_m \\ u = v_1 \cdots v_m} \zeta(F_{\wDes(\std(\pi_1), v_1)}) \cdots \zeta(F_{\wDes(\std(\pi_m), v_m)}) 
M_{(\mdeg(v_1), \ldots, \mdeg(v_m))}
\end{equation}
where the sum is over all ways of writing $\sigma$  and $u$ as concatenations of an equal number of subwords such that each subword $\pi_i$ of $\sigma$ has the same length as the corresponding subword $v_i$ of $u$.
\end{prop}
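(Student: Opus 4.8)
The plan is to deduce both formulas from the general expression \eqref{E:InducedMap} for the induced map, specialized to $\hopf = \QSyml$. Recall that for $h \in \QSyml_{\vect{n}}$ we have $\Psi(h) = \sum_{\vect{I}\comp\vect{n}} \zeta_{\vect{I}}(h)\, M_{\vect{I}}$, where $\zeta_{\vect{I}}$ is built from the iterated coproduct $\Delta^{(m-1)}$, the projection onto the appropriate homogeneous components, $\zeta^{\otimes m}$, and multiplication in $\field$. The first step is therefore to unwind $\zeta_{\vect{J}}(M_{\vect{I}})$ for a target composition $\vect{J}$. The iterated coproduct of $M_{\vect{I}}$ is dictated by the deconcatenation coproduct (as recalled in \S\S\ref{SS:QSyml}): $\Delta^{(m-1)}(M_{\vect{I}}) = \sum M_{\vect{I}_1} \otimes \cdots \otimes M_{\vect{I}_m}$ over all ways of writing $\vect{I} = \vect{I}_1 \cdots \vect{I}_m$ as a concatenation of (possibly empty, but here forced nonempty by the projection) vector compositions. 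Projecting onto $\QSyml_{\vect{j}_1} \otimes \cdots \otimes \QSyml_{\vect{j}_m}$ keeps exactly the terms with $\Sigma\vect{I}_r = \vect{j}_r$. Applying $\zeta^{\otimes m}$ and multiplying then gives $\zeta_{\vect{J}}(M_{\vect{I}}) = \sum \zeta(M_{\vect{I}_1}) \cdots \zeta(M_{\vect{I}_m})$, the sum over concatenations $\vect{I} = \vect{I}_1\cdots\vect{I}_m$ with $(\Sigma\vect{I}_1,\ldots,\Sigma\vect{I}_m) = \vect{J}$. Summing over all $\vect{J}\comp\vect{n}$ and collecting terms by the associated $(\Sigma\vect{I}_1,\ldots,\Sigma\vect{I}_m)$ yields \eqref{E:InducedMapQSyml} directly.

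For \eqref{E:InducedMapF}, the strategy is to reindex \eqref{E:InducedMapQSyml} using the colored permutation $(\sigma,u)$. Since $\Des(u) \subseteq \Des(\sigma)$, equation \eqref{E:DofF} (or rather its analogue on $\QSyml$: $F_{\wDes(\sigma,u)} = F_{\vect{I}}$ is the level-$\lev$ quasi-ribbon indexed by $\vect{I} = \wDes(\sigma,u)$) identifies $F_{\wDes(\sigma,u)}$ with $F_{\vect{I}}$. Now I would use the relation $F_{\vect{I}} = \sum_{\vect{I}\cleq\vect{J}} M_{\vect{J}}$ from \eqref{E:Fdef}, apply $\Psi$ termwise, and then reorganize. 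The key bookkeeping device is the bijection between, on one hand, pairs (a refinement-type splitting of a composition refining $\vect{I}$) and, on the other hand, pairs $(\sigma = \pi_1\cdots\pi_m,\, u = v_1\cdots v_m)$ of compatible concatenations of $\sigma$ and $u$ into equal-length blocks: the block $\pi_i$ carries descent data recorded by $\wDes(\std(\pi_i), v_i)$, and the multiset of colors in $v_i$ is $\mdeg(v_i)$. One checks that under this correspondence each inner factor $\zeta(M_{\vect{I}_r})$ in \eqref{E:InducedMapQSyml}, after summing over the $M$'s compatible with a fixed block, aggregates precisely to $\zeta(F_{\wDes(\std(\pi_r),v_r)})$ — this is the same manipulation that produced \eqref{E:DFM} and \eqref{E:GammaPM} from \eqref{E:InducedMap}. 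The outer index $(\Sigma\vect{I}_1,\ldots,\Sigma\vect{I}_m)$ becomes $(\mdeg(v_1),\ldots,\mdeg(v_m))$. Assembling these pieces gives \eqref{E:InducedMapF}.

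I expect the main obstacle to be the second formula, specifically the careful verification that the inner sum $\sum \zeta(M_{\vect{I}_r})$ over $M$'s refining a fixed block genuinely collapses to $\zeta(F_{\wDes(\std(\pi_r),v_r)})$, and that the indexing sets match up without over- or under-counting. The subtlety is that $\Psi$ is only a coalgebra map (not an algebra map, since $\nuQk$ need not be a character), so one cannot simply invoke multiplicativity; one must work directly with the concatenation structure. The cleanest way to handle this is to first prove the auxiliary identity $\Psi(F_{\vect{I}}) = \sum_{\vect{I} = \vect{I}_1\cdots\vect{I}_m} \zeta(F_{\vect{I}_1})\cdots\zeta(F_{\vect{I}_m})\, M_{(\Sigma\vect{I}_1,\ldots,\Sigma\vect{I}_m)}$ at the level of vector compositions — this follows from \eqref{E:InducedMapQSyml} by a change of summation using $F_{\vect{I}} = \sum_{\vect{I}\cleq\vect{J}} M_{\vect{J}}$ together with the fact that the interval $[\vect{I},\vect{J}]$ under $\cleq$ is a Boolean lattice (so the relevant Möbius/grouping argument from \S\S\ref{SS:lpartite} applies) — and then translate into colored-permutation language using the dictionary $\vect{I}\leftrightarrow(\sigma,u)$ and the standardization behavior of concatenated subwords. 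The translation step is routine once the vector-composition version is in hand.
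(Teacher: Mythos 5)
Your derivation of \eqref{E:InducedMapQSyml} is correct and is the same as the paper's: both unwind \eqref{E:InducedMap} using the deconcatenation coproduct on the monomial basis.

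For \eqref{E:InducedMapF}, however, the device you propose to make the argument rigorous contains a genuine error. The ``auxiliary identity''
\[
\Psi(F_{\vect{I}}) = \sum_{\vect{I} = \vect{I}_1\cdots\vect{I}_m} \zeta(F_{\vect{I}_1})\cdots\zeta(F_{\vect{I}_m})\, M_{(\Sigma\vect{I}_1,\ldots,\Sigma\vect{I}_m)}
\]
is false. Already for $\lev=1$ and $\vect{I}=(2)$ one has $F_{(2)} = M_{(2)}+M_{(1,1)}$, so \eqref{E:InducedMapQSyml} gives
\[
\Psi(F_{(2)}) = \bigl(\zeta(M_{(2)})+\zeta(M_{(1,1)})\bigr)\,M_{(2)} + \zeta(M_{(1)})^2\, M_{(1,1)} = \zeta(F_{(2)})\,M_{(2)} + \zeta(F_{(1)})^2\, M_{(1,1)},
\]
whereas your identity produces only the first term, since $(2)$ admits no nontrivial concatenation factorization. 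The underlying issue is that the coproduct of $F_{\vect{I}}$ is \emph{not} deconcatenation in the $F$-basis: splittings that cut through the middle of a column of $\vect{I}$ also contribute, and the way such a column splits in two is governed by the coloring word (the smaller colors must go to the left tensor factor). This is exactly why \eqref{E:InducedMapF} is indexed by factorizations of the colored permutation $(\sigma,u)$ rather than by concatenation factorizations of $\vect{I}$, and why a M\"obius/Boolean-lattice grouping carried out purely at the level of vector compositions cannot see these terms.

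The paper sidesteps the difficulty: by \eqref{E:DofF} we have $F_{\vect{I}} = \mathcal{D}(\vect{F}_{\sigma,u})$, and since $\mathcal{D}$ is a morphism of coalgebras the known deconcatenate-and-standardize coproduct of $\FQSyml$ yields
\[
\Delta(F_{\vect{I}}) = \sumsub{\sigma = \pi_1\pi_2 \\ u = v_1 v_2} F_{\wDes(\std(\pi_1),v_1)}\otimes F_{\wDes(\std(\pi_2),v_2)},
\]
which, iterated and fed into \eqref{E:InducedMap}, gives \eqref{E:InducedMapF} at once. Your direct reorganization of $\sum_{\vect{I}\cleq\vect{J}}\Psi(M_{\vect{J}})$ can in principle be pushed through, but the step you yourself flag as the main obstacle --- that the refinements $\vect{J}$ of $\vect{I}$ with prescribed block sums factor as a product over blocks of refinements of the $\wDes(\std(\pi_i),v_i)$ --- is precisely the content that must be proved, and the auxiliary identity you offer in its place is not a correct substitute. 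Either carry out that factorization argument honestly, or adopt the paper's route through $\FQSyml$.
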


\begin{proof}
Equation~\eqref{E:InducedMapQSyml} follows directly from \eqref{E:InducedMap}. By \eqref{E:DofF} we have $\Psi(F_{\vect{I}}) = \Psi(\mathcal{D}(\vect{F}_{\sigma, u}))$. Since $\mathcal{D}$ is a morphism of  coalgebras, 
\[
\begin{aligned}
\Delta(F_{\vect{I}}) = & \Delta(\mathcal{D}(\vect{F}_{\sigma, u})) = \mathcal{D}\otimes \mathcal{D} (\Delta(\vect{F}_{\sigma, u})) \\ 
& = \sumsub{\sigma = \pi_1 \pi_2 \\ u = v_1 v_2} \mathcal{D}(\vect{F}_{\std(\pi_1), v_1}) \otimes \mathcal{D}(\vect{F}_{\std(\pi_2), v_2}) = \sumsub{\sigma = \pi_1 \pi_2 \\ u = v_1 v_2} F_{\wDes(\std(\pi_1), v_1)} \otimes F_{\wDes(\std(\pi_2), v_2)}.
\end{aligned}
\]
Using this coproduct formula we again apply \eqref{E:InducedMap} to get Equation~\eqref{E:InducedMapF}.
\end{proof}

\subsection{The specialization $\vect{k} = \svect{\infty}$}
\label{SS:nuQInducedMap}

Let $\kodd = \kodd^{\svect{\infty}}(\QSyml)$. We will call this the \newword{peak algebra of level $\lev$}. Let $\thetamap = \thetamap^{\svect{\infty}}$. This map is the multigraded version of Stembridge's descents-to-peaks map. We will give formulas for $\thetamap(M_{\vect{I}})$ and $\thetamap(F_{\vect{I}})$. We will also introduce a basis of \newword{peak functions} for $\kodd$ and describe its relationship to the $\eta$-basis which was defined in \S\S\ref{SS:etabasis}.

Once again we will need some preliminaries about 
vector compositions. 

If $\vect{I}$ is a vector composition whose last column has odd weight, then there is a unique factorization  $\vect{I} = \vect{I}_1 \cdots \vect{I}_m$ into a concatenation of vector compositions such that the last column of each $\vect{I}_j$ has odd weight and all other columns have even weight. For such an $\vect{I}$, we define 
\[ 
\odd(\vect{I}) = (\Sigma \vect{I}_1, \ldots, \Sigma\vect{I}_m).
\]
Every column of $\odd(\vect{I})$ has odd weight. For example, 
\[
\odd(
\begin{pmatrix}
0 & 1 & 2 & 1 & 0 & 0 & 2 \\
3 & 1 & 1 & 0 & 2 & 0 & 0 \\
0 & 0 & 1 & 0 & 0 & 3 & 1 
\end{pmatrix}
) =
\begin{pmatrix}
0 & 4 & 0 & 2 \\
3 & 2 & 2 & 0 \\
0 & 1 & 3 & 1
\end{pmatrix}.
\]

Now let $\vect{I}$ be an arbitrary vector composition.  There is a unique factorization $\vect{I} = \vect{I}_1 \cdots \vect{I}_m$ into a concatenation of vector compositions such that $\vect{I}_m$ is a (possibly empty) sequence of coordinate vectors, and  each $\vect{I}_j$, $j\ne m$, has the form $\vect{I}_j = (\vect{E}, \vect{v})$, where $\vect{E}$ is a (possibly empty) sequence of coordinate vectors and $|\vect{v}| \ne 1$. Define $\Lambda(\vect{I})$ by
\[
\Lambda(\vect{I}) = ( \Sigma \vect{I}_1, \ldots, \Sigma \vect{I}_m)
\]
For example,
\[ 
\Lambda(
\left(
\begin{array}{ccccccccccc}
0 & 1 & 2 & 1 & 0 & 0 & 0 & 1 & 3 & 1 & 0 \\
1 & 0 & 1 & 0 & 2 & 0 & 0 & 0 & 0 & 0 & 0 \\
0 & 0 & 1 & 0 & 0 & 3 & 1 & 0 & 2 & 0 & 1
\end{array}
\right)
) =
\begin{pmatrix}
3 & 1 & 0 & 4 & 1 \\
2 & 2 & 0 & 0 & 0 \\
1 & 0 & 3 & 3 & 1
\end{pmatrix}.
\]
We will call $\Lambda(\vect{I})$ the \newword{peak vector composition of $\vect{I}$} for the following reason. Recall that the \newword{peak set} of a permutation $\sigma = (\sigma_1, \ldots, \sigma_n) \in \perm_n$ is the set 
\[
\Peak(\sigma) = \{ i \in [2, n-1] \mid \sigma_{i-1} < \sigma_i > \sigma_{i+1}\}.
\] 
If $(\sigma, u)$ is a colored permutation then
\[ 
 \dofI(\Lambda(\wDes(\sigma, u))) = \Peak(\sigma).
\]
Accordingly, we define the peak set of any vector composition $\vect{I}$ to be 
\[\pofI(\vect{I}) = \dofI(\Lambda(\vect{I})).\]
In essence, $\pofI(\vect{I})$ keeps track of where the columns of weight at least $2$ occur in $\vect{I}$.


\begin{prop}
\label{P:ThetaMap}
For any vector composition $\vect{I}$ we have
\begin{equation} \label{E:ThetaM}
\Theta(M_{\vect{I}}) = \begin{cases}
1 & \text{if $\vect{I} = \emptycomp$} \\
(-1)^{\len(\vect{I}) + |\vect{I}|} \eta_{\odd(\vect{I})} & \text{if the last column of $\vect{I}$ has odd weight} \\
0 & \text{otherwise,} 
\end{cases} 
\end{equation}
and if $u = \cofI(\vect{I})$ is the coloring word of $\vect{I}$, then
\begin{equation} \label{E:ThetaF}
\thetamap(F_{\vect{I}}) = \sumsub{\vect{J} \tleq \vect{E}_u \\ \pofI(\vect{I}) \subseteq \dofI(\vect{J}) \cup (\dofI(\vect{J}) + 1)}
2^{\len(\vect{J})} M_{\vect{J}}.
\end{equation}
\end{prop}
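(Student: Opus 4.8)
I would first record the specializations of Theorem~\ref{T:nuformulas} at $\vect{k} = \svect{\infty}$. Write $\nu = \nuQ^{\svect{\infty}}$. Since every vector composition satisfies $\Sigma\vect{J} \le \svect{\infty}$, all the parity side-conditions in \eqref{E:nuQkF} disappear and the formulas collapse to: $\nu(M_{\emptycomp}) = 1$; $\nu(M_{\vect{J}}) = 2\,(-1)^{\len(\vect{J}) + |\vect{J}|}$ when the last column of $\vect{J}$ has odd weight, and $\nu(M_{\vect{J}}) = 0$ otherwise; and $\nu(F_{\emptycomp}) = 1$, $\nu(F_{\vect{J}}) = 2$ precisely when $\vect{J} = (\vect{E}, \vect{w})$ with $\vect{E}$ a coordinate vector composition and $\vect{w}$ a single column, while $\nu(F_{\vect{J}}) = 0$ otherwise. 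The whole proof then consists of feeding these values into the induced-map formulas \eqref{E:InducedMapQSyml} and \eqref{E:InducedMapF} of Proposition~\ref{P:InducedMapQSyml}, since $\Theta = \Theta^{\svect{\infty}}$ is the unique coalgebra map with $\zetaQ \circ \Theta = \nu$.

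To prove \eqref{E:ThetaM}, I apply \eqref{E:InducedMapQSyml} with $\zeta = \nu$. A term indexed by a factorization $\vect{I} = \vect{I}_1 \cdots \vect{I}_m$ into nonempty pieces survives only if the last column of each $\vect{I}_j$ has odd weight; in particular, if the last column of $\vect{I}$ has even weight the whole sum is $0$, giving the third case. Otherwise, such factorizations correspond bijectively to subsets $B$ of the set of indices of odd-weight columns of $\vect{I}$ that contain the index $\len(\vect{I})$; the choice $B = \{\text{all odd-column indices}\}$ is exactly the factorization defining $\odd(\vect{I})$, and a general admissible $B$ produces the vector composition obtained by merging consecutive columns of $\odd(\vect{I})$. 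Hence $(\Sigma\vect{I}_1, \ldots, \Sigma\vect{I}_m)$ runs once over every $\vect{J} \tleq \odd(\vect{I})$, with coefficient $\prod_{j=1}^m 2\,(-1)^{\len(\vect{I}_j)+|\vect{I}_j|} = 2^{\len(\vect{J})}(-1)^{\len(\vect{I})+|\vect{I}|}$, and summing against the definition $\eta_{\vect{K}} = \sum_{\vect{J} \tleq \vect{K}} 2^{\len(\vect{J})} M_{\vect{J}}$ gives \eqref{E:ThetaM}.

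To prove \eqref{E:ThetaF}, set $u = \cofI(\vect{I})$, $n = |\vect{I}|$, pick any $\sigma \in \perm_n$ with $\Des(\sigma) = \dofI(\vect{I})$ (so $\wDes(\sigma,u) = \vect{I}$ and $\Des(u) \subseteq \Des(\sigma)$), and apply \eqref{E:InducedMapF}. For a split of $\sigma$ and $u$ at positions $T \subseteq [n-1]$, the factor $\nu(F_{\wDes(\std(\pi_i), v_i)})$ equals $2$ exactly when the descent set of $\std(\pi_i)$ is an initial segment of $[|\pi_i|-1]$ — equivalently, when the descents of $\sigma$ inside the $i$-th block form an initial run — and $0$ otherwise; and $(\mdeg(v_1), \ldots, \mdeg(v_m))$ is the unique vector composition $\vect{J} \tleq \vect{E}_u$ with $\dofI(\vect{J}) = T$. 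Thus $\Theta(F_{\vect{I}}) = \sum 2^{\len(\vect{J})} M_{\vect{J}}$, summed over those $\vect{J} \tleq \vect{E}_u$ for which splitting $\sigma$ at $T = \dofI(\vect{J})$ makes every block's descent set an initial segment. The main obstacle is the purely combinatorial lemma that, for any $\sigma$ with $\Des(\sigma) = S$, this holds iff $\Peak(\sigma) \subseteq T \cup (T+1)$; I would prove it in two steps. Forward: a peak $p$ lying strictly inside a block would force $p-1 \in S$ unless $p$ is the block's first position, so every peak lies in $T$ or in $T+1$. Reverse: if some block's descent set is not an initial segment, the first index in that block that is a descent but whose predecessor is not a descent (and still in the block) is a peak of $\sigma$ strictly interior to the block, hence in neither $T$ nor $T+1$. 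Finally $\pofI(\vect{I}) = \dofI(\Lambda(\wDes(\sigma,u))) = \Peak(\sigma)$ rewrites the indexing condition into the form appearing in \eqref{E:ThetaF}.
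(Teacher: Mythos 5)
Your proposal is correct and follows essentially the same route as the paper: specialize Theorem~\ref{T:nuformulas} to $\vect{k}=\svect{\infty}$, feed the resulting values of $\nuQ$ on the $M$- and $F$-bases into \eqref{E:InducedMapQSyml} and \eqref{E:InducedMapF}, identify the surviving factorizations of $\vect{I}$ with the coarsenings $\vect{J}\tleq\odd(\vect{I})$ for the monomial formula, and for the fundamental formula reduce to the combinatorial fact that every block of a factorization of $\sigma$ is peak-free (equivalently, has initial-segment descent set) iff $\Peak(\sigma)\subseteq\dofI(\vect{J})\cup(\dofI(\vect{J})+1)$. The only difference is cosmetic: you spell out both directions of that last equivalence, which the paper asserts in a single sentence.
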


\begin{proof} 
Letting $\nuQ = \nuQ^{\svect{\infty}}$, we obtain the following specializations to $\vect{k} = \svect{\infty}$ of the formulas in Theorem~\ref{T:nuformulas}
\begin{equation} \label{E:nuQM}
\nuQ(M_{\vect{I}}) = \begin{cases}
1 & \text{ if $\vect{I} = \emptycomp$} \\
2 \cdot (-1)^{\len(\vect{I}) + |\vect{I}|} & \text{ if the last column of $\vect{I}$ has odd weight} \\
0 & \text{ otherwise,}
\end{cases}
\end{equation}
and
\begin{equation} \label{E:nuQF}
\nuQ(F_{\vect{I}}) = \begin{cases}
1 & \text{if $\vect{I} = \emptycomp$} \\
2 & \text{if $\vect{I} = (\vect{E}, \vect{v})$, where $\vect{E}$ is a (possibly empty) sequence} \\
 & \text{of coordinate vectors and $\vect{v}$ is any column vector} \\
0 & \text{ otherwise.}
\end{cases}
\end{equation}

Let $\lst(\vect{J})$ denote the last column of any vector composition $\vect{J}$. By \eqref{E:InducedMapQSyml} and then \eqref{E:nuQM}, we have 
\[ 
\thetamap(M_{\vect{I}}) = \sum_{\vect{I} = \vect{I}_1 \cdots \vect{I}_m} \nuQ(M_{\vect{I}_1}) \cdots \nuQ(M_{\vect{I}_m}) M_{(\Sigma\vect{I}_1, \ldots, \Sigma\vect{I}_m)}
= \sumsub{\vect{I} = \vect{I}_1 \cdots \vect{I}_m \\ \forall i \; |\lst(\vect{I}_i)| \text{ is odd}} 
(-1)^{\len(\vect{I}) + |\vect{I}|} 2^m M_{(\Sigma\vect{I}_1, \ldots, \Sigma\vect{I}_m)}.
\]
In the last sum, every $\vect{I}_i$ ends in a column of odd weight if and only if $\Sigma\vect{I}_i$ is the sum of columns in $\odd(\vect{I})$. Thus \eqref{E:ThetaM} follows. 

Turning to the proof of \eqref{E:ThetaF},
let $(\sigma, u) \in \perm_n \times [0, \lev-1]^n$ be a colored permutation such that $\Des(u) \subseteq \Des(\sigma)$ and $\wDes(\sigma, u) = \vect{I}$. By \eqref{E:InducedMapF} and then \eqref{E:nuQF} we have
\begin{equation*} 
\thetamap(F_{\vect{I}}) = \sumsub{\sigma = \pi_1 \cdots \pi_m \\ u = v_1 \cdots v_m \\ \wDes(\std(\pi_i), v_i) = (\vect{e}_*, \ldots, \vect{e}_*, *)} 
2^m M_{(\mdeg(v_1), \ldots, \mdeg(v_m))}.
\end{equation*}
Here the sum is over all ways of writing $\sigma = \pi_1 \cdots \pi_m$  and $u = v_1 \cdots v_m$ as concatenations of an equal number of subwords such that for every $i$, the subwords $\pi_i$ and $v_i$ have the same length, and the vector composition $\wDes(\std(\pi_i), v_i)$ is a (possibly empty) sequence of coordinate vectors followed by some column vector. Since $\wDes(\std(\pi_i), v_i)$ has this form if and only if $\Peak(\pi_i) = \emptyset$ (the colors are irrelevant), a given factorization $\sigma = \pi_1 \cdots \pi_m$ appears in the sum if and only if there is a break immediately before or after every position where a peak occurs in $\sigma$. In other words, if $\alpha = (\len(\pi_1), \ldots, \len(\pi_m))$ is the sequence (composition) of lengths of the $\pi_i$, then in order for the factorization $\sigma = \pi_1 \cdots \pi_n$ to appear in the sum it is necessary and sufficient that $\Peak(\sigma) \subseteq \dofI(\alpha) \cup (\dofI(\alpha) + 1)$.

Since $\len(\pi_i) = \len(v_i) = |\mdeg(v_i)|$ for each $i$, based on our characterization of what length sequences $(\len(\pi_1), \ldots, \len(p_m))$ can occur in the sum, we can write
\[ 
\thetamap(F_{\vect{I}}) = \sum_{\vect{J}} 2^{\len(\vect{J})} \, M_{\vect{J}}
\]
where the sum is over all vector compositions $\vect{J}$ for which there exists a factorization $u = v_1 \ldots v_m$ such that (1): $\vect{J} = (\mdeg(v_1), \ldots, \mdeg(v_m))$, and (2): $\Peak(\sigma) \subseteq \dofI(\vect{J}) \cup (\dofI(\vect{J}) + 1)$. Condition (1) is equivalent to $\vect{J} \tleq \vect{E}_u$, and in Condition (2) one can replace $\Peak(\sigma)$ with $\pofI(\vect{I})$. Hence the formula we obtained for $\thetamap(F_{\vect{I}})$ agrees with \eqref{E:ThetaF}.
\end{proof}

An immediate consequence of \eqref{E:ThetaF} is that $\thetamap(F_{\vect{I}})$ depends only on the peak set of $\vect{I}$ and the coloring word of $\vect{I}$. Imitating the $\lev =1$ case, we will define multigraded analogues of peak functions as the images of the $F_{\vect{I}}$ under $\thetamap$. In the following, by a \newword{peak subset of $[n]$} we mean a subset $S\subseteq \{2,\ldots, n-1\}$ such that $i \in S \implies i-1 \notin S$; i.e., $S$ is the peak set of some permutation of $[n]$. 

\begin{definition}
Let $u$ be a coloring word of length $n$ and let $S$ be a peak subset of $[n]$. The \newword{peak quasisymmetric function of level $\lev$} (or \newword{peak function}) indexed by $S$ and $u$  is defined by
\begin{equation} \label{E:theta-M}
\theta_{S, u} = \sumsub{\vect{J} \tleq \vect{E}_u \\ S \subseteq \dofI(\vect{J}) \cup (\dofI(\vect{J}) + 1)} 
2^{\len(\vect{J})} M_{\vect{J}}.
\end{equation}
Here $\dofI(\vect{J})+1$ stands for the set $\{ s + 1 \mid s \in \dofI(\vect{J})\}$. 
\end{definition}

Observe that, by \eqref{E:ThetaF}, if $\vect{I}$ is any vector composition then
\[
\thetamap(F_{\vect{I}}) = \theta_{\pofI(\vect{I}), \cofI(\vect{I})}.
\]
If $\lev = 1$ then the peak functions are precisely the $K_{\Lambda}$ of Stembridge \cite{Stembridge97}. These form a basis of the peak subalgebra of $\QSym$. However, when $\lev > 1$ the peak functions are not linearly independent. For example, when $\lev = 2$ we have
\[ 
\theta_{\{2\}, 010} = \theta_{\emptyset, 010} - \theta_{\emptyset, 001} + \theta_{\{2\}, 001}.
\]
Note also that, unlike the $\lev = 1$ case, the higher level peak functions are not always $F$-positive. For example, when $\lev = 2$ we have
\[
\theta_{\emptyset, 010} = 
2 F_{\colvec{2 \\ 1}} - 2 F_{\colvec{2 & 0 \\ 0 & 1}} + 2 F_{\colvec{1 & 1\\0 & 1}} 
+ 4 F_{\colvec{1 & 1 \\ 1 & 0}} - 2 F_{\colvec{1 & 1 & 0 \\ 0 & 0 & 1}}.
\]

Our next goal is to describe a subset of peak functions that form a basis for $\kodd$. We will also describe how this basis relates to the $\eta$-basis. Before we proceed we will need some additional definitions and a lemma. Let $u \in [0, \lev-1]^n$ be a color word. Given an \newword{odd vector composition} $\vect{I}$ (meaning every column of $\vect{I}$ has odd weight), we define $\tilde{\vect{I}}$ to be the vector composition with the same coloring word as $\vect{I}$, obtained by replacing each column of $\vect{I}$ by a new vector composition of equal weight whose last column has weight $1$ and all other columns have weight $2$. For example, 
\[
\vect{I} = 
\begin{pmatrix}
2 & 0 & 3 \\
0 & 0 & 2 \\
1 & 1 & 0
\end{pmatrix}
\implies
\tilde{\vect{I}} =
\begin{pmatrix}
2 & 0 & 0 & 2 & 1 & 0 \\
0 & 0 & 0 & 0 & 1 & 1 \\
0 & 1 & 1 & 0 & 0 & 0 
\end{pmatrix}.
\]
Next, let $u$ be a color word of length $n$ and let $S$ be a peak subset of $[n]$. We define $\vect{I}_{S,u}$ to be the unique vector composition satisfying the following three properties: (1) every column of $\vect{I}_{S,u}$ has weight $1$ or $2$; (2) the coloring word of $\vect{I}_{S,u}$ is $u$; and (3) $\pofI(\vect{I}_{S,u}) = S$. Note that the last column of $\vect{I}_{S,u}$ must have weight $1$, so it makes sense to talk about $\odd(\vect{I}_{S,u})$. For example, when $\lev = 3$, if $S = \{2, 6, 8\}$ and $u = 002200011$ then
\[ 
\vect{I}_{S,u} = \begin{pmatrix}
2 & 0 & 0 & 2 & 1 & 0 \\
0 & 0 & 0 & 0 & 1 & 1 \\
0 & 1 & 1 & 0 & 0 & 0
\end{pmatrix} 
\quad\text{ and }\quad
\odd(\vect{I}_{S,u}) = 
\begin{pmatrix}
2 & 0 & 3 \\
0 & 0 & 2 \\
1 & 1 & 0
\end{pmatrix}.
\]
The pair $(S, u)$ will be called \newword{admissible} if $\cofI(\odd(\vect{I}_{S, u})) = u$. The previous example is of an admissible pair. When $\lev = 1$ the correspondence $I \leftrightarrow \pofI(\tilde{I})$ is a bijection between odd compositions and peak subsets of $\{1,\ldots, |I|\}$, as observed by Schocker \cite{Schocker05}. This observation generalizes to vector compositions; the proof of the following lemma is similar to that of Proposition~3.1 in \cite{Schocker05} and will be omitted.

\begin{lemma}
Let $u \in [0,\lev - 1]^n$ be a color word. The map $\vect{I} \mapsto (\pofI(\tilde{\vect{I}}), u)$ is a bijection from the set $\OComp(u)$ to the set of admissible pairs with second coordinate equal to $u$. The inverse of this map is given by $(S, u) \mapsto \odd(\vect{I}_{S, u})$. 
\end{lemma}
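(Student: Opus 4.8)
The plan is to establish the bijection by direct construction, checking that the two stated maps $\vect{I}\mapsto(\pofI(\tilde{\vect{I}}),u)$ and $(S,u)\mapsto\odd(\vect{I}_{S,u})$ are mutually inverse. First I would fix the color word $u\in[0,\lev-1]^n$ and recall the definitions of the players: $\OComp(u)$ consists of the odd vector compositions whose coloring word is $u$ (equivalently, odd vector compositions $\vect{I}$ with $\cofI(\vect{I})=u$); for such $\vect{I}$, the tilde operation $\vect{I}\mapsto\tilde{\vect{I}}$ replaces each column of odd weight $w$ by the block consisting of $(w-1)/2$ columns of weight $2$ followed by one column of weight $1$, preserving the coloring word; and $\vect{I}_{S,u}$ is the unique vector composition all of whose columns have weight $1$ or $2$, with coloring word $u$ and peak set $S$. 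The key combinatorial fact I would isolate at the outset is that $\tilde{\vect{I}}$ itself has all columns of weight $1$ or $2$, coloring word $u$, and last column of each odd-block of weight $1$ — so by the uniqueness in the definition of $\vect{I}_{S,u}$, we get $\tilde{\vect{I}}=\vect{I}_{\pofI(\tilde{\vect{I}}),u}$ immediately. This is the heart of the argument.

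Next I would verify $\odd(\vect{I}_{\pofI(\tilde{\vect{I}}),u})=\vect{I}$ for $\vect{I}\in\OComp(u)$: since $\vect{I}_{\pofI(\tilde{\vect{I}}),u}=\tilde{\vect{I}}$ by the fact above, it suffices to check $\odd(\tilde{\vect{I}})=\vect{I}$. This follows from the definition of $\odd$ as the unique factorization into maximal blocks each ending in an odd-weight column with all prior columns of even weight: the tilde construction produces exactly such blocks (each of the form: several weight-$2$ columns, then one weight-$1$ column), and summing each block recovers the original odd column of $\vect{I}$. For the reverse direction, given an admissible pair $(S,u)$ — meaning $\cofI(\odd(\vect{I}_{S,u}))=u$ — I would set $\vect{I}:=\odd(\vect{I}_{S,u})$, which is an odd vector composition, and admissibility says precisely $\cofI(\vect{I})=u$, so $\vect{I}\in\OComp(u)$. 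Then I need $\pofI(\tilde{\vect{I}})=S$, i.e.\ $\pofI(\widetilde{\odd(\vect{I}_{S,u})})=S$. Here I would argue that $\vect{I}_{S,u}$, having all columns of weight $1$ or $2$ with last column weight $1$, is already of the form that its $\odd$-factorization followed by a tilde returns it: $\widetilde{\odd(\vect{I}_{S,u})}=\vect{I}_{S,u}$, because applying $\odd$ groups the weight-$2$ columns with the following weight-$1$ column, and tilde then un-groups them in exactly the same way. Hence $\pofI(\tilde{\vect{I}})=\pofI(\vect{I}_{S,u})=S$ by the defining property of $\vect{I}_{S,u}$.

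I expect the main obstacle to be the bookkeeping around the \emph{admissibility} condition and making the "$\odd$ and $\widetilde{(\cdot)}$ are mutually inverse on the relevant domains" claims fully rigorous, since these two operations interact with the coloring word in a way that is visually obvious from the matrix pictures but requires care to state as an identity of vector compositions. The subtle point is that $\widetilde{\odd(\vect{J})}=\vect{J}$ holds only when $\vect{J}$ has all columns of weight $\le 2$ and its $\odd$-blocks each consist of weight-$2$ columns followed by a single weight-$1$ column — which is exactly the shape of $\vect{I}_{S,u}$ — whereas for a general $\vect{J}$ with columns of weight $\ge 2$ this fails; conversely $\odd(\tilde{\vect{I}})=\vect{I}$ needs $\vect{I}$ odd. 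The admissibility hypothesis is precisely what guarantees the coloring word is respected under these operations, and without it the round trip on pairs can leave $\OComp(u)$. I would cite the parallel argument in Proposition~3.1 of \cite{Schocker05} for the $\lev=1$ case, noting that the colors ride along passively (the peak set depends only on the weights of the columns), so the only genuinely new ingredient is tracking that $\cofI$ is preserved, which is built into the definitions of $\tilde{\vect{I}}$ and $\vect{I}_{S,u}$ and into the admissibility condition. Since the excerpt explicitly says the proof "is similar to that of Proposition~3.1 in \cite{Schocker05} and will be omitted," the proposal would in practice be exactly this reduction plus the identification of admissibility as the correct hypothesis.
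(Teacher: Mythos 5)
Your proof is correct and is essentially the direct verification that the paper has in mind when it omits the argument and points to Proposition~3.1 of Schocker: the two identities $\odd(\tilde{\vect{I}})=\vect{I}$ for $\vect{I}\in\OComp(u)$ and $\widetilde{\odd(\vect{I}_{S,u})}=\vect{I}_{S,u}$ for admissible $(S,u)$ carry the whole content, with admissibility entering exactly where you say it does (to guarantee $\cofI(\odd(\vect{I}_{S,u}))=u$, so that the tilde operation reconstitutes each block with the correct coloring-word segment, by uniqueness of a vector composition with prescribed $\dofI$ and $\cofI$). The only point worth adding for completeness is the quick check that $\pofI(\tilde{\vect{I}})$ is indeed a peak subset of $[n]$ (no two consecutive entries, and $n$ excluded because each tilde-block ends in a weight-$1$ column), so that the forward map really lands in the set of admissible pairs.
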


Here is our main result on peak functions:

\begin{theorem}
The peak functions $\theta_{S, u}$ indexed by admissible pairs $(S,u)$ form a basis for the higher level peak algebra $\kodd$. Moreover, we can explicitly describe the relationship between the $\theta_{S, u}$ and the $\eta_{\vect{I}}$ as follows: If $\vect{I}$ is an odd vector composition and $u = \cofI(\vect{I})$, then
\begin{equation} \label{E:eta-theta}
\eta_{\vect{I}} = \sum_{S\subseteq \pofI(\widetilde{\vect{I}})} (-1)^{|S|}\, \theta_{S, u}, 
\end{equation}
and if $(S, u)$ is any admissible pair then 
\begin{equation} \label{E:theta-eta}
\theta_{S, u} = 
\sum_{T \subseteq S} 
(-1)^{|T|} \, \eta_{\odd(\vect{I}_{T,u})}.
\end{equation}
\end{theorem}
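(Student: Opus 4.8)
The plan is to prove the two change-of-basis formulas first and then deduce the basis claim as a corollary. I would start from the known expansions relating $\eta$, $M$ and $\Phi$ (equations \eqref{E:eta-P} and \eqref{E:P-eta}) together with the formula \eqref{E:ThetaF} for $\thetamap(F_{\vect{I}})=\theta_{\pofI(\vect{I}),\cofI(\vect{I})}$. The cleanest route to \eqref{E:eta-theta} is to observe that $\thetamap$ restricted to $\kodd$ is a projection onto $\kodd$ (indeed, $\thetamap(\QSyml)\subseteq\kodd$ and on $\kodd$ the canonical odd character $\nuQ$ already agrees with $\zetaQ$, so $\thetamap|_{\kodd}=\mathrm{id}$ — this should be checked from $\nuQ(h)=\zetaQ(h)$ for $h\in\kodd$, which follows since the $P_{\vect{I}}$ spanning $\kodd$ have only odd-weight columns). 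Granting that, for an odd vector composition $\vect{I}$ with coloring word $u$ we have $\eta_{\vect{I}}=\thetamap(\eta_{\vect{I}})$, and since $\eta_{\vect{I}}$ is a linear combination of $F_{\vect{J}}$'s, applying $\thetamap$ term by term and using \eqref{E:ThetaF} turns the computation into a purely combinatorial sum over vector compositions $\vect{J}$ refining $\vect{E}_u$ with a peak-set condition. The inclusion-exclusion over subsets $S\subseteq\pofI(\widetilde{\vect{I}})$ should then drop out exactly as in the $\lev=1$ case treated by Stembridge and Schocker, the color word $u$ playing a passive role throughout.

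For \eqref{E:theta-eta} I would invert \eqref{E:eta-theta}. The key structural fact is the lemma just stated: $\vect{I}\mapsto(\pofI(\widetilde{\vect{I}}),u)$ is a bijection from $\OComp(u)$ (odd vector compositions with coloring word $u$) onto admissible pairs with second coordinate $u$, with inverse $(S,u)\mapsto\odd(\vect{I}_{S,u})$. So \eqref{E:eta-theta} says that, fixing $u$, the matrix expressing the $\eta_{\odd(\vect{I}_{T,u})}$ in terms of the $\theta_{S,u}$ (indexed by admissible pairs) is unitriangular with respect to inclusion of peak sets, with entries $(-1)^{|S\setminus\text{something}|}$; more precisely \eqref{E:eta-theta} reads $\eta_{\odd(\vect{I}_{T,u})}=\sum_{S\subseteq T}(-1)^{|S|}\theta_{S,u}$ after the bijection, since $\pofI(\widetilde{\odd(\vect{I}_{T,u})})=T$. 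Möbius inversion on the Boolean lattice of subsets of a fixed peak set then yields $\theta_{S,u}=\sum_{T\subseteq S}(-1)^{|T|}\eta_{\odd(\vect{I}_{T,u})}$, which is exactly \eqref{E:theta-eta}. Here one must be slightly careful that every $T\subseteq S$ gives rise to a genuine admissible pair $(T,u)$ so that the right-hand side makes sense; this is part of what the lemma guarantees, and I would spell out why a subset of an admissible peak set is again admissible (the coloring word $u$ is unchanged and $\odd(\vect{I}_{T,u})$ still has coloring word $u$).

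For the basis statement, linear independence of $\{\theta_{S,u}\mid(S,u)\text{ admissible}\}$ follows immediately from \eqref{E:eta-theta}: these $\theta_{S,u}$ are obtained from the $\eta_{\vect{I}}$, $\vect{I}\in\OComp(u)$, by a unitriangular (hence invertible) transformation, and by Theorem~\ref{T:koddQSym-eta} the $\eta_{\vect{I}}$ with $\vect{I}$ odd form a basis of $\kodd$. Spanning is then a dimension count in each multigraded component: the number of admissible pairs $(S,u)$ of multidegree $\vect{n}$ equals $\sum_u |\OComp(u)|$ over coloring words $u$ of multidegree $\vect{n}$ by the bijection, which is precisely $\dim(\kodd)_{\vect{n}}$. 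Since each $\theta_{S,u}$ lies in $\kodd$ (it is $\thetamap$ of an $F$, and $\thetamap(\QSyml)\subseteq\kodd$), a linearly independent set of the right size is a basis.

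The main obstacle I anticipate is the bookkeeping in the term-by-term application of \eqref{E:ThetaF} to the $F$-expansion of $\eta_{\vect{I}}$: one has to match the peak-set membership condition ``$\pofI(\vect{J})\subseteq\dofI(\vect{K})\cup(\dofI(\vect{K})+1)$'' against the refinement structure and see that the alternating sum over subsets of $\pofI(\widetilde{\vect{I}})$ produces exactly $\eta_{\vect{I}}=\sum_{\vect{J}\tleq\vect{E}_u, \vect{I}\cleq\vect{J}\text{-ish}}2^{\len(\vect{J})}M_{\vect{J}}$. This is where the $\lev=1$ argument of Schocker (Proposition~3.1 of \cite{Schocker05}) and the peak-algebra computations of Stembridge must be adapted; the adaptation is routine in spirit because the coloring word is carried along unchanged, but writing it carefully requires tracking which refinements $\vect{J}$ of $\vect{E}_u$ arise. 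An alternative, possibly cleaner, route that avoids re-deriving the $\lev=1$ identity is to prove \eqref{E:eta-theta} directly on the dual side, where $\eta_{\vect{I}}\leftrightarrow\Upsilon^{\vect{I}}$ and $\theta_{S,u}$ corresponds under $\thetamap^*$ to a monomial in the noncommutative $\tanh$/$\tanh^{-1}$ series \eqref{E:Phi-Upsilon-series}; then \eqref{E:eta-theta} becomes the identity $\Upsilon^{\vect{I}}=\sum_S(-1)^{|S|}(\text{image of }\Upsilon^{\vect{I}_{S,u}})$ inside $\NSyml$, which might be provable by a generating-function manipulation. I would attempt the direct $M$-basis computation first and fall back on the dual approach only if the combinatorics becomes unwieldy.
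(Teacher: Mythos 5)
Your overall architecture (prove \eqref{E:eta-theta}, deduce \eqref{E:theta-eta} by M\"obius inversion over the Boolean lattice, and get the basis statement from Theorem~\ref{T:koddQSym-eta} via invertibility of the change of basis) matches the paper's. But your route to \eqref{E:eta-theta} rests on a false identity: it is not true that $\nuQ(h)=\zetaQ(h)$ for $h\in\kodd$, and consequently $\thetamap$ is \emph{not} the identity on $\kodd$. What holds on $\kodd$ is $\zetabarQ=\zetaQ^{-1}$, hence $\zetabarQ^{-1}=\zetaQ$ there, so $\nuQ=\zetabarQ^{-1}\zetaQ$ restricts to the convolution \emph{square} $\zetaQ\,\zetaQ$ on $\kodd$, not to $\zetaQ$. (The odd-weight-column property of the $P_{\vect{I}}$ spanning $\kodd$ gives you $\zetabarQ=\zetaQ^{-1}$ on them, which is a different statement.) Concretely, already for $\lev=1$ one has $\nuQ(M_{(1)})=2$ while $\zetaQ(M_{(1)})=1$, and by \eqref{E:ThetaM} one gets $\thetamap(\eta_{(1)})=\thetamap(2M_{(1)})=2\,\eta_{(1)}\neq\eta_{(1)}$. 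So the starting point $\eta_{\vect{I}}=\thetamap(\eta_{\vect{I}})$ of your computation fails, and applying \eqref{E:ThetaF} term by term to the $F$-expansion of $\eta_{\vect{I}}$ would at best compute $\thetamap(\eta_{\vect{I}})$, which differs from $\eta_{\vect{I}}$ by a nontrivial diagonal factor; it cannot yield \eqref{E:eta-theta} as written.

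The detour through $\thetamap$ is also unnecessary. The paper proves \eqref{E:eta-theta} directly from the $M$-expansion \eqref{E:theta-M} of $\theta_{S,u}$: interchanging summations gives
\[
\sum_{S\subseteq \pofI(\widetilde{\vect{I}})} (-1)^{|S|}\, \theta_{S, u}
= \sum_{\vect{J} \tleq \vect{E}_u } 2^{\len(\vect{J})} M_{\vect{J}}
\sum_{S \subseteq \pofI(\widetilde{\vect{I}}) \cap (\dofI(\vect{J}) \cup (\dofI(\vect{J}) + 1))} (-1)^{|S|},
\]
and the inner alternating sum vanishes unless the intersection is empty, which, by the identity $[n-1]\setminus\bigl((\pofI(\widetilde{\vect{I}})-1)\cup\pofI(\widetilde{\vect{I}})\bigr)=\dofI(\vect{I})$ (Schocker's Equation~(8)), happens exactly when $\dofI(\vect{J})\subseteq\dofI(\vect{I})$, i.e.\ $\vect{J}\tleq\vect{I}$; this yields $\eta_{\vect{I}}$ on the nose. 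If you want to salvage your plan, replace the false projection claim with this direct inclusion--exclusion (or pursue your proposed dual computation in $\NSyml$). Your treatment of \eqref{E:theta-eta} and of the basis statement is otherwise sound, and your attention to the point that subsets of admissible peak sets must again be admissible is a detail the paper glosses over.
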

\begin{proof}
As shown in Theorem~\ref{T:koddQSym-eta}, the $\eta_{\vect{I}}$ such that $\vect{I}$ is an odd vector composition form a basis for $\kodd$. It therefore suffices to prove \eqref{E:eta-theta}, which is equivalent to \eqref{E:theta-eta} by M\"obius inversion. 

To prove \eqref{E:eta-theta}, we apply the definition of $\theta_{S, u}$ (see \eqref{E:theta-M}) to get
\[
\sum_{S\subseteq \pofI(\widetilde{\vect{I}})} (-1)^{|S|}\, \theta_{S, \vect{E}_u}
=
\sum_{\vect{J} \tleq \vect{E}_u } 2^{\len(\vect{J})} M_{\vect{J}} 
\sum_{S \subseteq \pofI(\tilde{\vect{I}}) \cap (\dofI(\vect{J}) \cup (\dofI(\vect{J}) + 1))} (-1)^{|S|}.
\]
Now 
$\pofI(\tilde{\vect{I}}) \cap (\dofI(\vect{J}) \cup (\dofI(\vect{J}) + 1))=\emptyset
\iff
\dofI(\vect{J}) \cup (\dofI(\vect{J}) + 1) \subseteq [n-1] \setminus \pofI(\tilde{\vect{I}}) 
\iff
\dofI(\vect{J}) \subseteq [n-1] \setminus ((\pofI(\tilde{\vect{I}})  - 1) \cup \pofI(\tilde{\vect{I}}))
=\dofI(\vect{I}).$ The last equality is essentially Equation~(8) of \cite{Schocker05}. Thus the inner sum is $1$ if $\dofI(\vect{J}) \subseteq \dofI(\vect{I})$ and $0$ otherwise. Since both $\vect{J} \tleq \vect{E}_u$ and $\vect{I} \tleq \vect{E}_u$, the condition $\dofI(\vect{J}) \subseteq \dofI(\vect{I})$ is equivalent to $\vect{J} \tleq \vect{I}$, as required.
\end{proof}

\begin{remark}
When $\lev = 1$, our $\eta$-basis for $\kodd$ is, up to sign, dual to the $\Gamma$-basis introduced by Schocker \cite[Equation~(9)]{Schocker05}.
\end{remark}

\subsection{The specialization $\lev = 1$}

In this case, $\QSyml = \QSym$, and for each $k\in \NN$, we have a $k$-odd linear functional $\nuQ^{(k)}$ on the Hopf algebra of (ordinary) quasisymmetric functions $\QSym$. Therefore we get an induced morphism of coalgebras $\thetamap^{(k)}:\QSym \to \QSym$ whose image is contained in the $k$-odd Hopf subalgebra $\kodd^{k}(\QSym)$. We will apply Theorem~\ref{T:nuformulas} to give a formula for $\thetamap^{(k)}$.

\begin{prop} \label{P:theta1kF}
Let $k>0$ be an odd integer and $I$ be a composition of $n$. We have $\thetamap^{(k)} = \thetamap^{(k+1)}$ and
\begin{equation*} 
\thetamap^{(k)}(F_I) = 
\sumsub{J \comp n \\ \pofI(I) \subseteq \dofI(J) \cup (\dofI(J) + 1) \\ 
\dofI(I) \subseteq  \dofI(J) \cup (\dofI(J) + 1) \cup \cdots \cup (\dofI(J) + k)} 2^{\len(J)} \, M_J.
\end{equation*}
\end{prop}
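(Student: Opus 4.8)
The plan is to specialize the general formulas from Theorem~\ref{T:nuformulas} and Proposition~\ref{P:InducedMapQSyml} to the case $\lev=1$, where vector compositions become ordinary compositions, coordinate vector compositions become sequences of $1$'s, and coloring words carry no information. First I would establish $\thetamap^{(k)} = \thetamap^{(k+1)}$ for odd $k$: since $\thetamap^{(\vect{k})}$ is determined by $\nuQ^{(\vect{k})}$, it suffices to show $\nuQ^{(k)} = \nuQ^{(k+1)}$, which follows from \eqref{E:nuQkM} once one checks that the conditions ``$|\vect{i}_m|$ odd and $\Sigma\vect{I}\le k$'' versus ``$\le k+1$'' give the same answer when $k$ is odd (if $\Sigma\vect I \le k+1$ but $\Sigma\vect I \not\le k$ then $|\vect I| = k+1$ is even, and the second branch requires $|\vect i_m|$ odd forcing $|\vect I|-|\vect i_m|$ even, so the third branch is also vacuous). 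Equivalently, one can invoke the Corollary stating $\ideal\kodd^{\vect k} = \ideal\kodd^{\vect k + \vect e_i}$ when all coordinates of $\vect k$ are even, applied with $\vect k = k+1$; but the direct computation with $\nuQ^{(k)}$ is cleaner here.

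Next I would compute $\thetamap^{(k)}(F_I)$ by combining \eqref{E:InducedMapF} with the $\lev=1$, $\vect k = k$ specialization of \eqref{E:nuQkF}. In the $\lev=1$ setting, $\wDes(\sigma) = \text{comp}(\Des\sigma)$ and a composition $J = (j_1,\ldots,j_r)$ has the form $(\vect E, \vect i)$ with $\vect E$ a sequence of $1$'s precisely when $j_1 = \cdots = j_{r-1} = 1$, i.e.\ when $J$ has at most one part exceeding $1$ and that part (if present) is the last one. So by \eqref{E:nuQkF}, $\nuQ^{(k)}(F_J) = 2$ iff $J = (1,1,\ldots,1,j)$ with $j\ge 1$ and one of: (1) $|J|\le k$; (2) the number of leading $1$'s (that is $r-1$) is odd and $r-1\le k$; (3) $r-1$ is even, $j>1$, and $r\le k$ (since in the $\lev=1$ case $\vect e = 1$ is the only coordinate vector and $(\vect e,\vect i - \vect e) > \vect i$ always holds). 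Feeding this into \eqref{E:InducedMapF}, $\thetamap^{(k)}(F_I)$ becomes a sum over factorizations $\sigma = \pi_1\cdots\pi_m$ (for a fixed $\sigma$ with $\wDes(\sigma) = I$) in which each $\std(\pi_i)$ has $\nuQ^{(k)}(F_{\wDes(\std(\pi_i))}) = 2$; each such factorization contributes $2^m M_{(\len\pi_1,\ldots,\len\pi_m)}$.

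The combinatorial heart is translating the allowed factorizations into conditions on $J = (\len\pi_1,\ldots,\len\pi_m) \comp n$. The condition $\nuQ^{(k)}(F_{\wDes(\std\pi_i)}) = 2$ means $\wDes(\std\pi_i)$ is of the shape $(1,\ldots,1,j)$, i.e.\ $\std\pi_i$ has at most one descent and it is at the last position (the subword is increasing then makes one final drop) --- equivalently $\Peak(\std\pi_i) = \emptyset$ --- together with the arithmetic constraint on the position of that lone descent relative to $k$. The ``$\Peak = \emptyset$'' part, as in the $\lev=1$ case of Proposition~\ref{P:ThetaMap}, exactly forces $\pofI(I) \subseteq \dofI(J) \cup (\dofI(J)+1)$: there must be a break right before or after each peak of $\sigma$. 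The refinement to level $k$ comes from cases (1)--(3): within a block $\pi_i$, a descent of $\sigma$ at position $p$ (one of the elements of $\dofI(I)$) that falls strictly inside $\pi_i$ becomes the lone final descent of $\std\pi_i$ only if that position is at most $k$ steps past the left end of the block, which one checks amounts to $\dofI(I) \subseteq \dofI(J)\cup(\dofI(J)+1)\cup\cdots\cup(\dofI(J)+k)$. I expect this last translation --- carefully matching the three branches (1), (2), (3) of $\nuQ^{(k)}(F_J)=2$ against ``descent at distance $\le k$ from the previous break'' and verifying nothing is double-counted or omitted at the boundary positions --- to be the main obstacle; the rest is bookkeeping. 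A clean way to organize it is to argue that a fixed factorization of $\sigma$ contributes iff for every $i$, the block $\pi_i$ contains no peak of $\sigma$ and contains at most one descent of $\sigma$, located within the last $k$ positions of the block; then a short inclusion-exclusion-free argument shows the set of length-sequences $J$ arising this way is exactly $\{J\comp n : \pofI(I)\subseteq \dofI(J)\cup(\dofI(J)+1),\ \dofI(I) \subseteq \bigcup_{t=1}^{k}(\dofI(J)+t)\cup\dofI(J)\}$, with each such $J$ arising from a unique factorization, giving the coefficient $2^{\len(J)}$.
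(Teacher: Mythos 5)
Your overall strategy --- specialize \eqref{E:nuQkF} to $\lev=1$ and feed the result into \eqref{E:InducedMapF} --- is exactly the paper's, but the key combinatorial translation is done backwards, and this breaks the proof. The condition $\nuQ^{(k)}(F_{\wDes(\std(\pi_i))})=2$ requires $\wDes(\std(\pi_i))=(1^a,j)$, i.e.\ $\Des(\std(\pi_i))=\{1,2,\ldots,a\}$: the block \emph{begins} with a run of at most $k$ descents and then only ascends (a ``valley'' shape). You read $(1,\ldots,1,j)$ as ``increasing then one final drop,'' which is the composition $(n-1,1)$, not $(1^a,j)$. Your description is not even peak-free for blocks of length at least $3$ (a word that rises and then drops once has a peak at its penultimate position), contradicting your own parenthetical ``equivalently $\Peak(\std(\pi_i))=\emptyset$''; it also allows only one descent per block instead of up to $k$, and it places the descents at the wrong end of the block. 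Carried through, your characterization (``at most one descent, located within the last $k$ positions of the block'') would yield the reflected condition $\dofI(I)\subseteq\bigcup_{t=0}^{k}(\dofI(J)-t)$ rather than $\bigcup_{t=0}^{k}(\dofI(J)+t)$, so the identification with the set of $J$'s in the statement does not follow from your argument. The correct translation, as in the paper, is that a descent of $\sigma$ interior to a block must lie in that block's initial descent run of length at most $k$, i.e.\ at most $k$ positions to the right of the preceding break, which is exactly what $\dofI(I)\subseteq\dofI(J)\cup(\dofI(J)+1)\cup\cdots\cup(\dofI(J)+k)$ encodes.

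A smaller but still substantive slip occurs in your argument for $\thetamap^{(k)}=\thetamap^{(k+1)}$: in the boundary case $|\vect{I}|=k+1$ (even) with $|\vect{i}_m|$ odd you assert that $|\vect{I}|-|\vect{i}_m|$ is even and conclude that both relevant branches of \eqref{E:nuQkM} are vacuous. In fact $|\vect{I}|-|\vect{i}_m|$ is \emph{odd} there, and the point is precisely that the second branch at level $k+1$ and the third branch at level $k$ both fire and both evaluate to $2(-1)^{\len(\vect{I})}$; if both were vacuous as you claim, the two functionals would disagree in weight $k+1$ and the equality would fail. (The paper sidesteps this by checking $\nuQ^{(k)}(F_I)=\nuQ^{(k+1)}(F_I)$ directly on the $F$-basis.)
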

\begin{proof}
Equation \eqref{E:nuQkF} becomes
\[
\nuQ^{(k)}(F_I) = \nuQ^{(k+1)}(F_I) = 
\begin{cases}
1 & \text{if $I = \emptycomp$} \\
2 & \text{if $I = (j)$ or $I = (1^i, j)$ or some $i \in \{1,\ldots, k\}$ and $j \ge 1$} \\
0 & \text{otherwise.}
\end{cases}
\]
Let $\sigma = (\sigma_1, \ldots, \sigma_n)$ be a permutation with descent composition $\wDes(\sigma) =\wDes(\sigma, 11\ldots 1) = I$. Then by \eqref{E:InducedMapF},
\[ 
\thetamap^{(k)}(F_I) = \sum
2^{m} \, M_{(\len(\pi_1), \ldots, \len(\pi_m))}
\]
where the sum is over all ways of writing $\sigma = \pi_1 \cdots \pi_m$ as a concatenation of subwords $\pi_i$ such that for every $i$, $\wDes(\std(\pi_i))$ has for form $(j)$ or $(1^i, j)$ for some $i\in \{1,\ldots, k\}$ and $j\ge 1$. The composition $\wDes(\std(\pi_i))$ has this form if and only if $\pi_i$ begins with at most $k$ descents followed by only ascents; in particular $\pi_i$ has no peaks, so a break occurs in the factorization $\sigma = \pi_1 \cdots \pi_m$ immediately before or after every peak. Thus in order for a factorization $\sigma = \pi_1 \cdots \pi_m$ to appear in the sum it is necessary and sufficient that the composition $J = (\len(\pi_1), \ldots, \len(\pi_m))$ satisfy $\wDes(\sigma) \subseteq \dofI(J) \cup (\dofI(J) + 1) \cup \cdots \cup (\dofI(J) + k)$ and $\Peak(\sigma) \subseteq \dofI(J) \cup (\dofI(J) + 1)$.  
\end{proof}

We can also write the expression in Proposition \ref{P:theta1kF} 
using operations on vector compositions rather than sets, in the following way. For any composition $I = (i_1, \ldots, i_m)$, let $I^{\star k}$ be the composition obtained by replacing every component $i_r$ with $(1^{i_r})$ if $i_r\le k$, and with $(i_r^k, i_r-k)$ if $i_r>k$. Write $I^{\star}$ for $I^{\star 1}$. Then 
we can rewrite the expression in Proposition \ref{P:theta1kF} as:
\[
\thetamap^{(k)}(F_I) = 
\sumsub{J \comp n \\ \Lambda(I) \tleq J^{\star} \\ 
I \tleq  J^{\star k}} 2^{\len(J)} \, M_J.
\]

Consider the case $k=2$. It was already observed in Proposition~\ref{P:shifted} that $\kodd^{2}$ is the Hopf algebra spanned by all shifted quasisymmetric functions (sqs-function). Going a bit further, let us remark that the map $\thetamap^{(2)}$ takes each fundamental basis element $F_I$ to an sqs-functions. Let $I=(i_1, \ldots, i_m)$ be a composition with $i_1>1$, and let $\theta_I$ be the sqs-function as defined in \cite[Definition~3.1]{BMSW02}. It is shown in \cite[Theorem~3.6]{BMSW02} that the $\theta_I$ are precisely the functions defined by Billey and Haiman in \cite[Equation~(3.2)]{BH95}. It follows from Proposition~\ref{P:theta1kF} that
\[ 
\thetamap^{(2)}(F_{I}) = \theta_I.
\]
Furthermore, clearly $\thetamap^{(2)}(F_{(1, i_1-1, i_2, \ldots, i_m)}) = \thetamap^{(2)}(F_I)$, so $\thetamap^{(2)}(F_J)$ is an sqs-function for {\it every} composition $J$.






\end{document}